\newcommand{\vol}{{\rm vol}}
\newcommand{\ord}{{\rm ord}}
\newcommand{\fm}{\mathfrak{m}}
\newcommand{\fa}{\mathfrak{a}}
\newcommand{\cO}{\mathcal{O}}
\newcommand{\bR}{\mathbb{R}}
\newcommand{\bC}{\mathbb{C}}
\newcommand{\bZ}{\mathbb{Z}}
\newcommand{\lct}{{\rm lct}}
\newcommand{\wt}{{\rm wt}}
\newcommand{\Val}{{\rm Val}}
\newcommand{\hvol}{{\widehat{\rm vol}}}
\newcommand{\Vol}{{\rm vol}}
\newcommand{\cF}{{\mathcal{F}}}
\newcommand{\bV}{{\mathbb{V}}}
\newcommand{\fb}{{\mathfrak{b}}}
\newcommand{\bQ}{{\mathbb{Q}}}
\newcommand{\cX}{{\mathcal{X}}}
\newcommand{\cL}{{\mathcal{L}}}
\newcommand{\ft}{{\mathfrak{t}}}
\newcommand{\cR}{{\mathcal{R}}}
\newcommand{\cY}{{\mathcal{Y}}}
\newcommand{\bG}{{\mathbb{G}}}
\newcommand{\bP}{{\mathbb{P}}}
\newcommand{\bin}{{\bf in}}
\newcommand{\cS}{{\mathcal{S}}}
\newcommand{\cA}{{\mathcal{A}}}
\newcommand{\sddb}{{\sqrt{-1}\partial\bar{\partial}}}
\newcommand{\cD}{{\mathcal{D}}}
\newcommand{\vphi}{{\varphi}}
\newcommand{\cE}{{\mathcal{E}}}
\newcommand{\Spec}{\mathrm{Spec}}
\newcommand{\mult}{{\rm mult}}
\newcommand{\bA}{{\mathbb{A}}}
\newcommand{\kb}{{\mathfrak{b}}}
\newcommand{\ka}{{\mathfrak{a}}}
\newcommand{\km}{{\mathfrak{m}}}
\newcommand{\DR}{{\mathcal{DR}}}
\newcommand{\dlt}{{\rm dlt}}
\newcommand{\Fut}{{\rm Fut}}
\newcommand{\rin}{\mathbf{in}}
\newcommand{\gr}{{\rm gr}}
\newcommand{\fD}{{\mathfrak{D}}}
\newcommand{\rint}{{\rm int}}
\newcommand{\bN}{{\mathbb{N}}}
\newcommand{\what}{\widehat}
\newtheorem{thm}{Theorem}[section]
\newtheorem{lem}[thm]{Lemma}
\newtheorem{def-prop}[thm]{Definition-Proposition}
\newtheorem{cor}[thm]{Corollary}
\newtheorem{defn}[thm]{Definition}
\newtheorem{prop}[thm]{Proposition}
\newtheorem{conj}[thm]{Conjecture}
\newtheorem{rem}[thm]{Remark}
\newtheorem{exmp}[thm]{Example}
\begin{document}

\title{Stability of Valuations: Higher Rational Rank}
\author{Chi Li and Chenyang Xu}
\date{}

\maketitle
\abstract{Given a klt singularity $x\in (X, D)$, we show that a quasi-monomial valuation $v$ with a finitely generated associated graded ring is the minimizer of the normalized volume function $\hvol_{(X,D),x}$, if and only if $v$ induces a degeneration to a K-semistable log Fano cone singularity.  Moreover, such a minimizer is unique among all quasi-monomial valuations up to rescaling. As a consequence, we prove that for a klt singularity $x\in X$ on the Gromov-Hausdorff limit of K\"ahler-Einstein Fano manifolds, the  intermediate K-semistable cone associated to its metric tangent cone is uniquely determined by the algebraic structure of $x\in X$, hence confirming a conjecture by Donaldson-Sun.}
\tableofcontents

\section{Introduction}

Throughout this paper, we work over the field $\mathbb{C}$ of complex numbers. In \cite{Li15a}, the normalized volume function was defined for any klt singularity $o\in (X,D)$ and the question of studying the geometry of its minimizer was proposed. 
See \cite{Li15b, Liu16, LL16, Blu16, LX16} for the results obtained recently. In particular, the existence of a minimizer of the normalized volume conjectured in \cite{Li15b} was confirmed in \cite{Blu16}. On the other hand, in \cite{LX16}, we intensively studied the case when the minimizer is a divisorial valuation.  In the current paper, we want to investigate the case when the minimizing valuations  are quasi-monomial of rational rank possibly greater than one. We note that this kind of cases do occur (see e.g. \cite{Blu16} or  Section \ref{sec-exmp}) and it was conjectured in \cite{Li15a} that any minimizer is quasi-monomial. 
 
 \subsection{The strategy of studying a minimizer}\label{ss-strategy}
After the existence being settled in \cite{Blu16}, the remaining work of the theoretic study of the minimizer of the normalized volume function is to understand its geometry (see the conjectures in \cite{Li15a, LX16}). Our method does not say much about the part of the conjecture saying that the minimizer must be quasi-monomial. Therefore, in the following we will always just assume that the minimizer is quasi-monomial. Partly inspired by the differential geometry theory on the metric tangent cone,  the strategy of understanding it consists of two steps, for which  we apply quite different techniques: 

\bigskip

In the first step, we deal with a special case of {\it a log Fano cone singularities}. Such a singularity has a good torus action and  a valuation induced by an element $\xi\in \ft_{\bR}^+$. This is indeed the case that has been studied in Sasakian-Einstein geometry on the link of a cone singularity (see e.g. \cite{MSY08, CS15} etc). In particular, when an isolated Fano cone singularity $(X_0, \xi_0)$ admits a Ricci-flat K\"{a}hler cone metric, it was shown in \cite{MSY08} that the normalized volume achieves its minimum at the associated Reeb vector field $\xi_0$ among all $\xi\in \ft^+_\bR$. Here we work on the algebraic side and only assume K-semistability instead of the existence of a Ricci-flat K\"{a}hler cone metric. We  also improve this result by removing the isolated condition on the singularity and more importantly showing that $\xi$ is  indeed the only minimizer among all quasi-monomial valuations centered at $o$, which form a much more complicated space than just the Reeb cone. In the rank 1 case, namely the case of a cone over a Fano variety, this question was investigated in \cite{Li15b, LL16} which used arguments from \cite{Fuj15}. Here to treat the higher rank case, we work along a somewhat different approach using more ingredients from the convex geometry inspired by a circle of ideas from the Newton-Okounkov body construction (see e.g. \cite{Oko96, LM09}) (see Section \ref{s-Tvarieties}). 

\medskip

In the second step, given a quasi-monomial valuation in $\Val_{X,x}$ which minimizes the normalized volume function $\hvol_{(X,D)}$, we aim to obtain a degeneration from the klt singularity $o\in(X,D)$ to a log Fano cone singularity $(X_0,D_0, \xi)$, and then we can study this degeneration family to deduce results for general klt singularities from the results for log Fano cone singularities.  Our study of the degeneration heavily relies on recent developments in the minimal model program (MMP)  based on \cite{BCHM10} (cf. e.g.  \cite{LX14, Xu14} etc.). In the rank 1 case, i.e., when the minimizing valuation is divisorial, we showed in \cite{LX16} that the valuation yields a Koll\'ar component (see also \cite{Blu16}).  With such a Koll\'{a}r component, we can indeed complete the picture. For the case that the quasi-monomial minimizing valuation has a higher rational rank, the birational models we construct should be considered as asymptotic approximations. In particular, unlike the rank 1 case,  we can not conclude the conjectural finite generation of the associated graded ring. Thus we have to post it as an assumption. Nevertheless, once we have the finite generation, then we can degenerate both the approximating models and the sequences of ideals to establish a process of passing the results obtained in the Step 1 for the log Fano cone singularity in the central fiber to prove results for the general fiber (see Section  \ref{s-constructmodel}). 

\bigskip


\subsection{Geometry of minimizers}
Now we give a precise statement of our results. Assume $x\in X$ is a singularity defined by the local ring $(R, \fm)$. Denote by $\Val_{X,x}$ the set of real valuations on $R$ that are centered at $\fm$. For any $v\in \Val_{X,x}$, we denote by  ${\rm gr}_v(R)$ the associated graded ring. 
First we prove the following result, which partially generalizes \cite[Theorem 1.2]{LX16} to the higher rank case. We refer to Section \ref{sec-Pre} for other notations used in the statements. 
\begin{thm}\label{t-unique}
Let $x\in (X,D)$ be a klt singularity.  Let $v$ be a quasi-monomial valuation in $\Val_{X,x}$ that minimizes $\hvol_{(X,D)}$ and has a  finitely generated associated graded ring  ${\rm gr}_v(R)$. Then the following properties hold:
\begin{enumerate}
\item There is a natural divisor $D_0$ defined as the degeneration of $D$ such that  
$$\big(X_0:={\rm Spec}\big({\rm gr}_v(R)\big), D_0\big)$$ is a klt singularity;
\item $v$ is a K-semistable valuation;
\item Let $v'$ be another quasi-monomial valuation in $\Val_{X,x}$ 
that minimizes $\hvol_{(X,D)}$. Then $v'$ is a rescaling of $v$.
\end{enumerate}
\end{thm}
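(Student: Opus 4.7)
The plan is to leverage the finite generation assumption to build a $\bT$-equivariant degeneration $\pi\colon(\cX,\cD)\to T$ whose general fiber is $(X,D)$ and whose central fiber is $(X_0,D_0):=(\Spec\gr_v(R),\overline{D})$, where $\overline{D}$ is the natural flat limit of $D$. Because $v$ is quasi-monomial of rank $r$, its value semigroup lies in a finitely generated free abelian group, and $\gr_v(R)$ carries a torus $\bT$-action with a Reeb vector $\xi$ recovering $v$. When $v$ is $\bQ$-valued (up to scale) the Rees-algebra construction produces $\cX$ directly; in the irrational case I would first approximate $v$ by rational quasi-monomial valuations $v_k$ converging to $v$ from a common simplicial cone of quasi-monomial valuations, take the Rees families of the $v_k$, and pass to a limiting flat family. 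The two-step strategy from Section \ref{ss-strategy} then lets me transfer statements between $(X,D)$ and the log Fano cone $(X_0,D_0,\xi)$.

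\textbf{Part (1).} Klt-ness of $(X_0,D_0)$ should follow from inversion of adjunction applied along the central fiber of $(\cX,\cD+\cX_0)$: the generic fiber $(X,D)$ is klt, $\cX_0$ is irreducible and reduced (since $\gr_v(R)$ is a domain), so the pair $(\cX,\cD+\cX_0)$ is plt near $\cX_0$, and adjunction extracts a klt structure on $(X_0,D_0)$. The MMP-based techniques assembled in Section \ref{s-constructmodel} following \cite{BCHM10,LX14,Xu14} are the right vehicle for checking the relevant hypotheses on $\cX$.

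\textbf{Part (2).} K-semistability of $v$ would be proved by contradiction using the convex-geometric machinery of Section \ref{s-Tvarieties}. If $(X_0,D_0,\xi)$ were not K-semistable, the Step~1 theory on log Fano cone singularities produces a special test configuration, equivalently a $\bT$-equivariant quasi-monomial valuation $w_0$ on $X_0$, with $\hvol_{(X_0,D_0)}(w_0)<\hvol_{(X_0,D_0)}(\xi)$. Because the graded rings match, $\hvol_{(X_0,D_0)}(\xi)=\hvol_{(X,D)}(v)$. I would then pull $w_0$ back through the flat family to produce a quasi-monomial valuation $\wtv$ on $X$ with $\hvol_{(X,D)}(\wtv)\le\hvol_{(X_0,D_0)}(w_0)<\hvol_{(X,D)}(v)$, using lower semicontinuity of normalized volume under $\bT$-equivariant degenerations. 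This contradicts the minimality of $v$.

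\textbf{Part (3), the main obstacle.} Given a second quasi-monomial minimizer $v'$, the idea is to compare $v'$ with $v$ through the degeneration. After a $\bT$-equivariant limiting procedure with respect to the torus coming from $v$, one obtains a quasi-monomial valuation $v'_0$ on $X_0$; by the same semicontinuity used in Part (2), $\hvol_{(X_0,D_0)}(v'_0)\le\hvol_{(X,D)}(v')=\hvol_{(X,D)}(v)=\hvol_{(X_0,D_0)}(\xi)$, so $v'_0$ is itself a minimizer on the K-semistable log Fano cone $(X_0,D_0,\xi)$. The uniqueness statement from Step~1 on log Fano cone singularities then forces $v'_0$ to be a positive rescaling of $\xi$. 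It remains to lift this proportionality back: if the degeneration of $v'$ is already a rescaling of $v$ on the central fiber, one argues that $v'$ itself must equal the rescaling of $v$ on $X$, for otherwise the approximation would contradict the strict equality of normalized volumes. The hard part is controlling the degeneration of $v'$: it need not be $\bT$-equivariant, its rational rank may differ from $v$'s, and a limiting valuation on $X_0$ must be shown to exist, to be quasi-monomial, and to compute the same normalized volume. Managing this compatibility rather than any individual step is the technical heart of the argument.
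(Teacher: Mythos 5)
Your outline captures the paper's high-level strategy (degenerate along $v$ to the log Fano cone $(X_0,D_0,\xi_v)$, transfer the minimization problem, invoke the Step~1 uniqueness and K-semistability results for $T$-singularities), but several of your key steps as stated do not go through, and the missing pieces are precisely the technical core of the theorem.

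For Part (1), the implication ``generic fiber klt and central fiber irreducible/reduced $\Rightarrow$ $(\cX,\cD+\cX_0)$ is plt along $\cX_0$'' is false. Irreducibility of $\cX_0 = \Spec\gr_v(R)$ plus klt-ness of the nearby fibers does not control the discrepancies of $(\cX,\cD+\cX_0)$; nothing prevents the degeneration from being a bad one. The actual content here is Proposition \ref{p-closekollar}, which shows that after applying Lemma \ref{l-finiteass} to replace $v$ by a nearby divisorial valuation $v'$ with $\gr_{v'}R\cong\gr_vR$, the divisor $S_i$ realizing $v'$ is a \emph{Koll\'ar component}. That argument requires its own contradiction scheme (via Lemma \ref{v-lccase}, Lemma \ref{l-semilogc}, and a second degeneration), and only afterward can one invoke inversion of adjunction (or the structure theory of orbifold cones over log Fano pairs from \cite{LX16}) to get klt-ness of $(X_0,D_0)$.

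For Part (2), the phrase ``pull $w_0$ back through the flat family'' hides the real difficulty: there is no canonical map from valuations on the central fiber to valuations on the general fiber. The paper's Theorem \ref{t-Ksta} instead restricts to Koll\'ar components over $(X_0,D_0)$ (Proposition \ref{prop-testKollar}), builds an explicit two-step Gr\"obner/Rees degeneration $X\rightsquigarrow X_0\rightsquigarrow Y_0$ with an auxiliary weight function $\lambda_\epsilon$, and uses Lemma \ref{lem-quasi} to extract from the filtration a quasi-monomial valuation $w_\epsilon$ on $X$ whose normalized volume equals that of $\wt_{\xi_\epsilon}$ on $Y_0$ (an \emph{equality}, not a one-sided semicontinuity). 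Your invocation of ``lower semicontinuity of normalized volume'' also runs the wrong way for the contradiction you want; what actually closes the argument is the equality $\hvol_{(X,D)}(w_\epsilon)=\hvol_{(Y_0,D'_0)}(\xi_\epsilon)$ coming from the matching associated graded rings (Lemma \ref{lem-irrid}).

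For Part (3), you correctly identify that the degeneration of $v'$ is the heart of the matter, but leave it unresolved. The paper handles it by first constructing a $\bQ$-factorial weak lc model $Y\to X$ for $v'$ (Theorem \ref{t-wlcmodel}), then showing this model degenerates equivariantly along $v$ (Lemma \ref{l-degquasi}), which supplies a well-defined quasi-monomial valuation $w_0$ on $X_0$ via the q-dlt structure (Definition-Proposition \ref{dp-deg}). The equality $v'(f)=w_0(\mathbf{in}(f))$ for all $f$ (Lemma \ref{l-valdeg}) and the automatic $T$-equivariance of any quasi-monomial minimizer on a $T$-singularity (Proposition \ref{p-Tequiv}) are then needed before Theorem \ref{thm-uniquefano} can be applied on the central fiber and lifted back. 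Without these, ``$v'_0$ is a rescaling of $\xi$ on $X_0$ hence $v'$ is a rescaling of $v$ on $X$'' is an unjustified leap.
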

For the definition of K-semistable valuations, see Definition \ref{d-ksemi}. The definition uses the notion of K-semistability of log Fano cone singularities (see \cite{CS12}), which in turn generalizes the original K-semistability introduced by Tian (\cite{Tia97}) and Donaldson (\cite{Don01}).
 In fact, this leads to a natural refinement of \cite[Conjecture 6.1]{Li15a}.
 \begin{conj}Given any arbitrary klt singularity $x\in (X={\rm Spec}(R),D)$. The unique minimizer $v$ is quasi-monomial with a finitely generated associated graded ring, and the induced degeneration
 $$(X_0={\rm Spec}(R_0),D_0, \xi_v)$$ is K-semistable. In other words, any klt singularity $x\in (X,D)$ always has a unique K-semistable valuation up to rescaling.  
 \end{conj}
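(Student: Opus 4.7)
My plan follows the two-step strategy announced in Section~\ref{ss-strategy}: first treat the log Fano cone case (which is handled separately in the paper via $T$-variety and convex geometry arguments), and then bootstrap to the general klt singularity via the degeneration made possible by the finite generation hypothesis on $\gr_v(R)$. Throughout I write $\cX \to \bA^r$ (or, after a suitable one-parameter reduction, $\cX \to \bA^1$) for the flat family built from the Rees-type algebra associated to the $v$-adic filtration on $R$; its general fibre is $X$ and its special fibre is $X_0 = \Spec(\gr_v R)$, which carries a torus action $\bT = \bG_m^r$. The divisor $D$ extends to a flat relative boundary $\cD$ whose special fibre $D_0$ is $\bT$-invariant, and $v$ corresponds to a Reeb-type element $\xi_v \in \ft^+_\bR$ whose induced valuation on $X_0$ has value semigroup equal to the value semigroup of $v$.

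For part~(1), klt-ness of $(X_0, D_0)$, I would combine (a) lower semicontinuity of log canonical thresholds in flat families with (b) an approximation of $v$ by divisorial valuations $v_m$ extracting Koll\'ar components over $X$, in the spirit of the rank-one analysis of \cite{LX16}, which degenerate to divisorial valuations $v_{m,0}$ on $X_0$ of controlled log discrepancy and log canonical threshold. Taking the limit along the tower and applying inversion of adjunction to the total family $(\cX, \cD + \cX_0)$ forces $(X_0, D_0)$ to be klt.

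For part~(2), K-semistability of $(X_0, D_0, \xi_v)$, I argue by contradiction. If the log Fano cone were not K-semistable, then in the sense of \cite{CS12} there is a $\bT$-equivariant non-product test configuration with non-positive generalized Futaki invariant. By the log Fano cone case of the paper's Step~1, such a destabilization produces a valuation $\tilde v$ on $X_0$ centered at the vertex with $\hvol_{(X_0, D_0)}(\tilde v) < \hvol_{(X_0, D_0)}(\xi_v)$. The equivariance of the family $\cX \to \bA^r$ allows me to lift $\tilde v$ to a valuation $v''$ on $X$ centered at $x$, and lower semicontinuity of $\hvol$ under degeneration of valuations in a klt family then gives $\hvol_{(X,D)}(v'') \le \hvol_{(X_0, D_0)}(\tilde v) < \hvol_{(X,D)}(v)$, contradicting the minimality of $v$.

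For part~(3), let $v'$ be another quasi-monomial minimizer. I degenerate $v'$ along $\cX \to \bA^r$ to obtain a quasi-monomial valuation $v'_0$ on $X_0$ centered at the vertex. By lower semicontinuity of $\hvol$ and minimality, $v'_0$ is itself a minimizer of $\hvol_{(X_0,D_0)}$, and the uniqueness statement from the log Fano cone case (the Reeb cone contains a unique quasi-monomial minimizer up to scaling) yields $v'_0 = \lambda\, \xi_v$ for some $\lambda > 0$. The \textbf{main obstacle} is the lifting step: concluding $v' = \lambda v$ on $X$ from $v'_0 = \lambda \xi_v$ on $X_0$. A priori, distinct valuations on $X$ can degenerate to the same valuation on $X_0$, so a rigidity argument is needed. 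My plan is to show that both $\gr_{v'}(R)$ and $\gr_{\lambda v}(R)$ are isomorphic as graded algebras to $\gr_v(R)$ up to the $\bT$-action, and then to promote this to an equality of filtrations on $R$ by comparing Hilbert--Samuel-type invariants and exploiting the equality $\hvol_{(X,D)}(v') = \hvol_{(X,D)}(v)$ on the nose rather than only asymptotically.
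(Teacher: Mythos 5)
The statement you are proving is labelled a \emph{Conjecture} in the paper, and the paper does not claim to prove it. The conjecture asserts three things unconditionally: that the minimizer of $\hvol_{(X,D)}$ is quasi-monomial, that its associated graded ring $\gr_v(R)$ is finitely generated, and that the induced degeneration is K-semistable (with uniqueness). Your proposal never addresses the first two; in fact it \emph{uses} them as hypotheses (``the degeneration made possible by the finite generation hypothesis on $\gr_v(R)$''), and it tacitly assumes $v$ is already known to be quasi-monomial so that a Rees-type degeneration $\cX\to\bA^1$ can be constructed. What you have reconstructed is essentially the proof of Theorem~\ref{t-unique}, the \emph{conditional} version: if a minimizer is quasi-monomial and has finitely generated associated graded ring, then the degeneration is klt and K-semistable and the minimizer is unique among quasi-monomial valuations. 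That is a genuinely weaker statement. The paper is explicit that the quasi-monomial and finite-generation parts remain open (``unlike the rank 1 case, we can not conclude the conjectural finite generation of the associated graded ring. Thus we have to post it as an assumption''), so no amount of lifting, lower semicontinuity, or convexity in the Reeb cone will close this gap without a fundamentally new input.

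On the parts you do sketch, your argument is broadly aligned with the paper's proof of Theorem~\ref{t-unique}, with two technical caveats worth flagging. For part~(2), the key step is not a generic ``lower semicontinuity of $\hvol$ under degeneration''; the paper constructs a \emph{doubled} degeneration (first via $v$, then via the destabilizing Koll\'ar component $S$ over $X_0$) and shows that the resulting $T'$-invariant valuations $w_\epsilon$ on $X$ satisfy the exact identity $\hvol_{(X,D)}(w_\epsilon)=\hvol_{(X_0,D_0)}(\wt_{\xi_\epsilon})$ (not merely an inequality), using Lemma~\ref{l-finiteass} and Lemma~\ref{lem-irrid}. Without that equality one does not get a contradiction from negativity of the Futaki invariant. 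For part~(3), the rigidity obstacle you correctly identify — two distinct valuations on $X$ degenerating to the same valuation on $X_0$ — is resolved in the paper by Lemma~\ref{l-valdeg}, which shows the degeneration map is injective on minimizers via a direct volume-counting argument, not by comparing graded algebras as isomorphism types. Your proposed route through ``promoting an isomorphism of $\gr_{v'}(R)$ and $\gr_v(R)$ to an equality of filtrations'' is not obviously workable, since a graded-ring isomorphism does not determine the filtration on $R$.
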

\bigskip

\bigskip

 We shall also prove the following converse to Theorem \ref{t-unique}.2, which  was known in the rank 1 case by \cite[Theorem 1.2]{LX16}.
 
 \begin{thm}\label{t-KtoM}
 If $(X,D)$ admits a $K$-semistable valuation $v$ over $o$, then $v$ minimizes $\hvol_{(X,D)}$. In particular, if $(X,D,\xi)$ is a K-semistable Fano cone singularity, then $\wt_{\xi}$ is a minimizer of $\hvol_{(X,D)}$.
 \end{thm}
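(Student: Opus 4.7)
The proof naturally breaks into the two steps outlined in Section \ref{ss-strategy}. I would first settle the ``in particular'' statement, where $(X,D,\xi)$ is itself a K-semistable log Fano cone singularity, and then bootstrap to the general statement by a degeneration argument.

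For the Fano cone case, the goal is to show $\wt_\xi$ minimizes $\hvol_{(X,D)}$ among all quasi-monomial valuations centered at the vertex $o$. For valuations lying in the Reeb cone $\ft_\bR^+$ this will follow from the convex-geometric analysis to be developed in Section \ref{s-Tvarieties}: one expresses $\hvol(\wt_{\xi'})$ as an integral of $\langle \xi',\cdot\rangle$ against a natural measure on a Newton--Okounkov-type body built from the $\bT$-weight grading of $R$, and derives the minimization from K-semistability via a Fujita-style convexity argument extending \cite{Li15b, LL16, Fuj15} to higher rank. For a general quasi-monomial $v'$, one passes to its $\xi$-initial filtration to degenerate $v'$ into the Reeb-cone regime and invokes the lower semicontinuity of $\hvol$ along this degeneration.

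To deduce the general statement, let $v$ be a K-semistable valuation over $o\in(X,D)$. By definition, $\gr_v R$ is finitely generated and $(X_0:=\Spec(\gr_v R),D_0,\xi_v)$ is a K-semistable log Fano cone, so the Fano cone case applied on $X_0$ gives that $\wt_{\xi_v}$ minimizes $\hvol_{(X_0,D_0)}$ among quasi-monomial valuations. I would then establish two comparisons. First, the equality $\hvol_{(X,D)}(v)=\hvol_{(X_0,D_0)}(\wt_{\xi_v})$, which follows because the filtration $\fa_\lambda(v)\subset R$ and its initial ideals $\rin_v(\fa_\lambda(v))=\fa_\lambda(\wt_{\xi_v})\subset \gr_v R$ have equal colengths by flatness of the Rees degeneration, while the log discrepancies agree by the klt-preservation statement of Theorem \ref{t-unique}(1). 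Second, for any $v'\in\Val_{X,o}$ there is a degenerated valuation $v_0'\in\Val_{X_0,o_0}$, namely the $v$-initial valuation of $v'$ on $\gr_v R$ constructed as in Section \ref{s-constructmodel}, satisfying
$$\hvol_{(X_0,D_0)}(v_0')\leq \hvol_{(X,D)}(v'),$$
because the colengths $\len(R/\fa_\lambda(v'))$ are preserved under passage to initials while the log discrepancy can only drop under degeneration. Chaining these inequalities with the Fano cone minimization yields $\hvol_{(X,D)}(v)\leq \hvol_{(X,D)}(v')$ for quasi-monomial $v'$, and the general case follows by an Izumi-type approximation of arbitrary real valuations by quasi-monomial ones.

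The main obstacle is the Fano cone minimization itself, since one must handle \emph{all} quasi-monomial valuations centered at $o$, a space of dimension far larger than the Reeb cone $\ft_\bR^+$. The rank-one version already relied on the approximation technology of \cite{Fuj15}, and its higher-rank extension forces one to work with a family of Okounkov bodies varying with $\xi$, together with a careful analysis of how these bodies interact with arbitrary quasi-monomial valuations through their $\xi$-initial degenerations. The bootstrap step (degenerating an arbitrary valuation on $X$ to one on $X_0$ and comparing normalized volumes) is technical but parallels the divisorial argument of \cite{LX16} once the finite generation of $\gr_v R$ is available.
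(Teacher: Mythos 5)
Your high-level roadmap is the right one and matches the paper's: first establish the Fano cone case, which is exactly the paper's Theorem \ref{thm-semin}, then transfer the result across the special degeneration $X \rightsquigarrow X_0 = \Spec(\gr_v R)$ using the equality $\hvol_{(X,D)}(v)=\hvol_{(X_0,D_0)}(\wt_{\xi_v})$, which is the paper's Lemma \ref{lem-irrid}. Where your argument diverges is the bootstrap inequality, and there is a real gap there.

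You propose to degenerate an arbitrary quasi-monomial $v'\in\Val_{X,o}$ to a valuation $v_0'$ on $X_0$ (``the $v$-initial valuation of $v'$'') and to prove $\hvol_{(X_0,D_0)}(v_0')\le \hvol_{(X,D)}(v')$. Two problems. First, the $v$-initial of the filtration $\{\fa_\lambda(v')\}$ gives a graded sequence of ideals in $\gr_v R$, but this need not be the valuative filtration of any valuation at all: by Lemma \ref{lem-quasi}.(1) this requires the corresponding associated graded ring to be an integral domain, which fails for a general pair $(v,v')$. The paper does produce such a degenerated valuation, but only for a second \emph{minimizer} $w$, and even then it requires the full machinery of weak lc models and their equivariant degenerations (Theorem \ref{t-wlcmodel}, Lemma \ref{l-degquasi}, Definition-Proposition \ref{dp-deg}); there is no analogue for arbitrary $v'$. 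Second, even granting that $v_0'$ exists, the asserted inequality $A_{(X_0,D_0)}(v_0')\le A_{(X,D)}(v')$ does not follow from ``log discrepancy drops under degeneration.'' The semicontinuity that is actually available is for log canonical thresholds of ideals, not for log discrepancies of valuations; Lemma \ref{lem-irrid} shows these agree for $v$ itself only after substantial work via the $T$-variety structure.

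The paper avoids both issues by degenerating \emph{ideals} rather than valuations. By Liu's theorem (Theorem \ref{t-liu}), $\hvol(X,D)=\inf_{\fa\in{\rm PrId}_{X,x}}\lct_{(X,D)}^n(\fa)\cdot\mult(\fa)$, so it suffices to show this infimum is $\ge\hvol_{(X,D)}(v)$. For each $\fa$, one degenerates $\fa^k$ to its initial ideal $\fb_k=\bin_v(\fa^k)$ on $X_0$; flatness preserves colengths so the multiplicities match, while lower semicontinuity of $\lct$ gives the needed inequality between normalized multiplicities. Then Theorem \ref{thm-semin} applied to the K-semistable $(X_0,D_0,\xi_v)$ bounds the resulting quantity from below by $\hvol_{(X_0,D_0)}(\wt_{\xi_v})$, and Lemma \ref{lem-irrid} converts back to $\hvol_{(X,D)}(v)$. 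This replaces your two problematic claims with completely standard facts about ideal degenerations, and also eliminates the extra ``Izumi-type approximation of real valuations by quasi-monomial ones'' step at the end, which is absorbed into Theorem \ref{t-liu}. One last small point: you cite Theorem \ref{t-unique}.(1) for klt-ness of $(X_0,D_0)$, but that result is proved after Theorem \ref{t-KtoM} in the paper and citing it would be circular; fortunately, klt-ness of the degeneration is already part of the definition of K-semistable valuation (Definition \ref{d-ksemi} via Definition \ref{d-FS}), so no forward reference is needed.
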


\subsection{Applications to singularities on GH limits}

As proposed in \cite{Li15a}, one main application of our work is to study a singularity $x\in X$ appearing on any Gromov-Hausdorff limit (GH) of K\"ahler-Einstein Fano manifolds. 
By the work of Donaldson-Sun and Tian (\cite{DS14, Tia90, Tia12}), we know that $M_\infty$ is homeomorphic to a normal algebraic variety. Donaldson-Sun (\cite{DS14}) also proved that $M_\infty$ has at worst klt singularities.
For any point on the Gromov-Hausdorff limt, we can consider the metric tangent cone $C$ using the limiting metric (see \cite{CC97, CCT02}). In \cite{DS15},  Donaldson-Sun described $C$ as a degeneration of a cone $W$ and they conjectured that both $C$ and $W$ only depend on the algebraic structure of the singularity. Intuitively, $W$ and $C$ should be considered respectively as a `canonical' K-semistable and a K-polystable degeneration of $o\in M_{\infty}$. We refer to Section \ref{sec-DS} for more details of this conjecture. 

Here we want to verify the semistable part of the conjecture, namely we will show that the valuation used to construct $W$ in \cite{DS15} is  K-semistable, and since such a valuation is unique by Theorem \ref{t-unique}, it does not depend on what kind of metric it carries but only the algebraic structure. We note that all the assumptions in Theorem \ref{t-unique} are automatically satisfied in this situation, which essentially follows from the work of \cite{DS15}. 
\begin{thm}[{see \cite[Conjecture 3.22]{DS15}}]\label{t-finiteunique}
 Denote by ${\rm Spec}(R)$ the germ of a singularity $o$ on a Gromov-Hausdorff limit $M_{\infty}$ of a sequence of  K\"ahler-Einstein Fano manifolds. Using the notation in \cite{DS15} (see Section \ref{ss-DS}),  the cone $W$  associated to the metric tangent cone is isomorphic to ${\rm Spec}({\rm gr}_v(R))$ where $v$ is the unique K-semistable valuation in $\Val_{M_{\infty},o}$.  In particular, it is uniquely determined by the algebraic structure of the singularity. 
\end{thm}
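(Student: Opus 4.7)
The plan is to show that the valuation $v$ extracted from the Donaldson-Sun construction satisfies the hypotheses of Theorem~\ref{t-unique} and is itself K-semistable; the statement then follows from uniqueness. First I would translate the analytic construction of \cite{DS15} into the algebraic language of Section~\ref{sec-Pre}. Starting from the metric tangent cone $C=\Spec(A)$ at $o$ with its Reeb vector $\xi\in\ft_\bR^+$, one obtains a multiplicative decreasing filtration on $R$ by the asymptotic Reeb weight of the leading term of each $f\in R$ under the Donaldson-Sun matching. This filtration defines a real-valued valuation $v\in\Val_{M_\infty,o}$, and the intermediate cone of \cite{DS15} is precisely $W=\Spec(\gr_v R)$ equipped with the $\bT$-action generated by $\xi$. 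From their results, $\gr_v R$ is a finitely generated $\bC$-algebra, $W$ is normal, $(W,\xi)$ is a log Fano cone singularity, and $v$ is quasi-monomial since $\xi$ lies in the rational envelope of the weight lattice of $\bT$.

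Next I would establish that $v$ is K-semistable. The essential input is that $C$ carries a Ricci-flat K\"ahler cone metric, so $(C,\xi)$ is K-polystable as a log Fano cone in the sense of \cite{CS12}, by the Sasakian-Einstein analogue of Berman's theorem. The pair $(W,\xi)$ degenerates $\bT$-equivariantly to $(C,\xi)$ via Donaldson-Sun's second-stage degeneration, so I would argue that any special test configuration of $(W,\xi)$ can be composed with this degeneration to yield a test configuration of $(C,\xi)$ with the same sign of generalized Futaki invariant; the K-semistability of $(C,\xi)$ then forces it for $(W,\xi)$. By Definition~\ref{d-ksemi} this amounts to $v$ being a K-semistable valuation, and Theorem~\ref{t-KtoM} then implies that $v$ minimizes $\hvol_{M_\infty,o}$.

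Finally, Theorem~\ref{t-unique}.(3) tells us that any other quasi-monomial minimizer of $\hvol_{M_\infty,o}$ is a rescaling of $v$; since rescaling preserves $\gr_v R$ up to isomorphism, $v$ is the unique K-semistable valuation in $\Val_{M_\infty,o}$ up to rescaling, and $W\cong\Spec(\gr_v R)$ is intrinsically determined by the algebraic structure of $o\in M_\infty$. The main obstacle I anticipate lies in the K-semistability step: one must carefully justify the additivity of generalized Futaki invariants under composition of $\bT$-equivariant test configurations of log Fano cone singularities, and confirm that the Reeb cones of $(W,\xi)$ and $(C,\xi)$ are matched appropriately through the Donaldson-Sun degeneration. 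A secondary technical issue is the rigorous identification of Donaldson-Sun's analytic filtration with a quasi-monomial valuation in our algebraic sense, so that the results of the preceding sections apply cleanly.
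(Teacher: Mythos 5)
Your overall strategy matches the paper's: identify the Donaldson--Sun filtration with a quasi-monomial valuation $v$, show $(W,\xi)$ is a K-semistable log Fano cone, conclude that $v$ minimizes $\hvol$ by Theorem~\ref{t-KtoM}, and then invoke Theorem~\ref{t-unique}.(3) for uniqueness. However, there is a genuine gap at the K-semistability step, precisely the one you flag as a concern.

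You propose to pass K-semistability from $(C,\xi)$ to $(W,\xi)$ by ``composing'' a special test configuration of $(W,\xi)$ with the degeneration $W\rightsquigarrow C$ and appealing to additivity of Futaki invariants. That machinery does not exist: a special test configuration of $W$ has central fiber $Y_0$, which is in general unrelated to $C$, so there is no natural concatenation producing a test configuration of $C$, and no general additivity of $\Fut$ under stacking degenerations. The paper avoids this entirely via Proposition~\ref{l-degKsemi}, whose proof is phrased through the normalized-volume/ideal characterization of K-semistability (Theorem~\ref{thm-semin} and Theorem~\ref{t-liu}): given any $\km$-primary ideal $\fa$ on $W$, one degenerates $\fa^m$ to $\fb_m=\rin(\fa^m)$ on $C$ along the $T$-equivariant family, uses flatness (which preserves multiplicities) together with lower semicontinuity of $\lct$ to get $\lct^n(\fa)\cdot\mult(\fa)\ge\lct^n(\fb_\bullet)\cdot\mult(\fb_\bullet)\ge\hvol_C(\xi_0)=\hvol_W(\xi_0)$, and then concludes $\wt_{\xi_0}$ minimizes $\hvol_W$. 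This is the robust replacement for the Futaki-composition you would need, and it is why the reformulation in terms of $\hvol$ is essential here rather than an optional reinterpretation.

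Two smaller issues. First, you assert that ``from their results'' $(W,\xi)$ is a log Fano cone singularity, but Donaldson--Sun do not establish klt-ness of $W$; the paper must prove this, by perturbing $\xi_0$ to a rational vector generating a $\bC^*$ in the big torus of $C$, observing the resulting divisorial valuation $S$ is a Koll\'ar component (a quotient of the klt cone $C$ by $\bC^*$), and deducing klt-ness of $W$ by inversion of adjunction from the orbifold cone structure over $S$. Second, invoking Berman's theorem to get K-polystability of $(C,\xi)$ is both overkill and unavailable: in the irregular case this is exactly what Remark~\ref{r-polystable} defers to future work, and the argument only needs K-semistability, which is Theorem~\ref{thm-irSE} proved within the paper.
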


A standard point of the proof of Theorem \ref{t-finiteunique} is to deduce the stability from the metric. For now, we only need the following statement, which generalize \cite{CS12} to the case of non-isolated singularities (see Remark \ref{r-flatcone}.) 

\begin{thm}[=Theorem \ref{thm-irSE}]\label{thm-irSemi}
If $(X, \xi_0)$ admits a Ricci-flat K\"{a}hler cone metric, then $A_X(\xi_0)=n$ and $(X, \xi_0)$ is K-semistable.
\end{thm}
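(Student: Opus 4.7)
The plan is to prove the two conclusions separately, following the variational strategy of Martelli–Sparks–Yau (MSY) and Collins–Székelyhidi, adapted to the possibly non-isolated setting. The identity $A_X(\xi_0)=n$ is the easier of the two: the Calabi–Yau condition, which comes for free with a Ricci-flat K\"ahler cone metric, produces a nowhere-vanishing holomorphic $n$-form $\Omega$ on the regular locus $X^{\rm reg}$ that is a weight-$n$ eigenvector for the Reeb action, $L_{\xi_0}\Omega=\sqrt{-1}\,n\,\Omega$. Since $A_X(v_{\xi_0})$ can be computed as the $v_{\xi_0}$-weight of a local generator of $\omega_X$ at a smooth point, the Calabi–Yau condition identifies this weight with $n$, yielding the first claim.

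For the K-semistability claim, consider an arbitrary $T$-equivariant special test configuration $\pi:(\cX,\xi_0)\to\bA^1$ of $(X,\xi_0)$ with central fibre $(X_0,\xi_0,\eta)$, where $\eta$ generates the extra $\bG_m$-action on $X_0$ coming from the test configuration. The goal is to show $\Fut(X_0,\xi_0,\eta)\geq 0$. First, using the Duistermaat–Heckman or equivariant Hilbert series description of the volume on a Fano cone, one expresses the generalized Futaki invariant as a one-sided directional derivative of volume on the extended Reeb cone of $X_0$:
\[
\Fut(X_0,\xi_0,\eta) \;=\; -\,c\,\frac{d}{ds}\bigg|_{s=0^+}\vol(X_0,\xi_0+s\eta),\qquad c>0.
\]
Second, the $T$-equivariant flatness of $\pi$ gives an equality of $T$-equivariant Hilbert series of $R=\cO_{X,x}$ and $R_0=\gr_{v_{\xi_0}}R$, whence $\vol(X_0,\xi)=\vol(X,\xi)$ for every $\xi$ in the Reeb cone of $X$, together with an explicit bigraded formula for $\vol(X_0,\xi_0+s\eta)$ in the extra direction. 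Third, MSY volume minimization applied to the Ricci-flat K\"ahler cone metric shows that $\vol(X,\cdot)$ attains its minimum at $\xi_0$ on the Reeb cone $\ft^+_\bR$; combined with the bigraded formula and the convexity of $\vol(X_0,\cdot)$, this forces the derivative above to be non-positive, so that $\Fut(X_0,\xi_0,\eta)\geq 0$.

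The principal obstacle is that MSY minimization was originally formulated under the assumption that the cone singularity is isolated, so that the link is a smooth Sasaki manifold. In the present non-isolated setting one must re-examine the first and second variation of the volume functional and verify that the singular stratum of the link contributes no boundary term, which should follow from the integrability of the curvature of a Ricci-flat cone metric across the singular set together with finiteness of the cone volume. A secondary subtlety appears in the third step when $\eta$ does not lie in the Lie algebra of the torus acting on $X$: the passage from the minimization of $\vol(X,\cdot)$ on the $T$-cone to a derivative statement on $\vol(X_0,\cdot)$ in the $\eta$-direction requires either approximation by rational Reeb directions already acting on $X$ or a careful direct comparison via the bigraded Hilbert series, and it is here that the nontriviality of the test configuration genuinely enters the argument.
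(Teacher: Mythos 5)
Your treatment of $A_X(\xi_0)=n$ matches the paper's: both compute the $\xi_0$-weight of a pluricanonical/holomorphic volume form and invoke the fact that the log discrepancy of $\wt_{\xi_0}$ is this weight (Lemma \ref{lem-ldwt}). That part is fine.

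For the K-semistability claim, your route differs from the paper's, and there is a real gap at your Step~3. Your plan is essentially the Collins--Sz\'ekelyhidi one: reduce Futaki to a directional derivative of (normalized) volume, use $T$-equivariant flatness to identify Hilbert series on $X$ and $X_0$, and appeal to MSY volume minimization on the Reeb cone of $X$. But MSY minimization only gives that $\xi_0$ is the minimizer of $\hvol$ on the slice $\ft^+_\bR\subset\ft'^+_\bR$; it says nothing about the one-sided derivative of $\hvol_{X_0}$ in the new $\eta$-direction coming from the test configuration. Convexity of $\vol$ on the larger cone $\ft'^+_\bR$ does not rescue this --- a convex function restricted to a subspace can have an interior minimum there while still decreasing in a transverse direction. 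So the step from ``minimum on $\ft^+_\bR$'' to ``$\Fut(X_0,\xi_0;\eta)\geq 0$'' is exactly where the nontriviality of the test configuration enters, and you leave it unfilled.

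You do flag the right escape routes --- approximation by rational Reeb directions (the \cite{CS12} strategy, which relied on orbifold K-semistability results of Ross--Thomas and hence on isolated singularities), or ``a careful direct comparison via the bigraded Hilbert series.'' The paper deliberately avoids both: Remark~\ref{r-flatcone}(1) states that the isolated-case argument of \cite{CS12} is \emph{not} followed, precisely to cover non-isolated singularities. Instead, the paper works with the cone version of the Ding energy $D(\vphi)=E(\vphi)-G(\vphi)$ and shows (i) $E(\vphi(s))$ is concave along the ray generated by $\eta$ (via a positivity-of-currents computation on $X\times\bC$), (ii) the asymptotic slope of $D(\vphi(s))$ equals $\Fut(X_0,\xi_0;\eta)/(n^n\vol(\xi_0))$, and (iii) existence of a Ricci-flat K\"ahler cone metric forces $D$ to be bounded below via Berndtsson-type convexity along bounded geodesics. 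This Ding-energy route directly produces the Futaki inequality along the degeneration, bypassing the need to transfer MSY minimization from $\ft^+_\bR$ into the $\eta$-direction. A secondary notational slip: your displayed formula for $\Fut$ in terms of $\frac{d}{ds}\vol(X_0,\xi_0+s\eta)$ should either use $\hvol$ or carry the normalization term $n A(\xi_0)^{n-1}A(-\eta)\vol(\xi_0)$ (equivalently, compute the derivative of $\vol$ on the affine slice $A=1$ as in \eqref{eq-Dvol}); as written, the $A$-contribution is missing.
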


\begin{rem}\label{r-polystable}
In a forthcoming work \cite{LWX17}, we plan to complete the proof of  \cite[Conjecture 3.22]{DS15} by showing that the metric tangent cone $C$ is also uniquely determined by the algebraic structure of the germ  $(o\in M_{\infty})$.
In fact, after Theorem \ref{t-finiteunique}, what remains to show is that $C$ only depends on the algebraic structure of the K-semistable Fano cone singularity $(W,\xi_v)$. It follows from a combination of two well-expected speculations. The first one is an improvement of Theorem \ref{thm-irSemi} which says that  a Fano cone $C$ with a Ricci flat K\"ahler cone metric is indeed K-polystable. This was solved in \cite{Ber15} for the quasi-regular case.  In \cite{LWX17}, we plan to extend \cite{Ber15} to the irregular case. 
 The second one is that any K-semistable log Fano cone  has a unique K-polystable log Fano cone degeneration. In the case of smoothable Fano varieties (a regular case), this was proved independently in \cite{LWX15} and \cite {SSY16} using analytic tools. 
In \cite{LWX17}, we will use the algebraic tools developed in \cite{Li15b,LX16} and the current paper, especially the ones related to $T$-equivariant degeneration, to investigate the problem again and give a purely algebro-geometric treatment. 
 \end{rem}

As a consequence of our work,  we also obtain the following formula for singularities appearing on the Gromov-Hausdorff limits (GH) of K\"ahler-Einstein Fano manifolds, which sharpens \cite[Proposition 3.10]{SS17}  (see Corollary \ref{c-v=v}) as well as partially \cite[Theorem 1.3.4]{LX17}.
\begin{thm}\label{t-finitedegree}
Let $M_{\infty}$ be a GH limit of K\"{a}hler-Einstein Fano manifolds. Let $o\in M_{\infty}$ be a singularity, and $\pi\colon (Y,y)=(R',\fm_y)\to (M_{\infty},o)=(R,\fm_o)$ be a quasi-\'etale finite map, that is $\pi$ is \'etale in codimension one, then 
$$\hvol(Y,y)=\deg(\pi)\cdot\hvol(M_{\infty},o).$$
\end{thm}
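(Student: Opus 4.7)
The strategy is to identify both sides as the normalized volume of a single compatible valuation, compared via $\pi$. By Theorems~\ref{t-unique} and~\ref{t-finiteunique}, there is a unique (up to rescaling) K-semistable quasi-monomial valuation $v \in \Val_{M_\infty, o}$ with finitely generated associated graded ring; it realizes $\hvol(M_\infty, o) = A_X(v)^n \cdot \vol_X(v)$, where $X := M_\infty$ and $n = \dim X$.

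Since $\pi$ is finite quasi-\'etale and $o$ has $y$ as its unique preimage, the first step is to construct a canonical extension $\tv \in \Val_{Y, y}$ of $v$. Fix a log smooth model $(Z, E) \to X$ on which $v$ is quasi-monomial along a stratum of $E$; pulling back along the normalization $Z'$ of $Z \times_X Y$, quasi-\'etaleness forces the components of $E$ to appear with multiplicity one in $Z'$, so the same weights defining $v$ determine a quasi-monomial valuation $\tv$ centered at $y$. Then the invariants compare as follows: quasi-\'etaleness gives $K_Y = \pi^* K_X$, so the crepant pullback of log discrepancies along $Z' \to Z$ yields $A_Y(\tv) = A_X(v)$; and the identity $\fa_m(\tv) \cap R = \fa_m(v)$, combined with $R'$ being a torsion-free $R$-module of generic rank $\deg(\pi)$, gives by a standard asymptotic length computation $\vol_Y(\tv) = \deg(\pi) \cdot \vol_X(v)$. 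Combining,
\begin{equation*}
\hvol_Y(\tv) = A_Y(\tv)^n \cdot \vol_Y(\tv) = \deg(\pi) \cdot \hvol(M_\infty, o).
\end{equation*}

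The main obstacle --- and the technical heart of the argument --- is to show that $\tv$ is itself K-semistable on $(Y,y)$. By construction, the degeneration $Y_0 := \Spec(\gr_{\tv}(R'))$ is a quasi-\'etale cover of the K-semistable log Fano cone $W = \Spec(\gr_v(R))$, with $\xi_{\tv}$ the natural pullback of $\xi_v$. I would establish that K-semistability descends through such covers in the log Fano cone setting: given any $T$-equivariant special test configuration of $(Y_0, \xi_{\tv})$, pass to its Galois closure over $(W, \xi_v)$ and average over the Galois group to produce a $T$-equivariant test configuration on $(W, \xi_v)$ whose generalized Futaki invariant is a positive rational multiple of the original. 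Making this descent rigorous in the possibly irregular, higher-rank, non-isolated setting developed in this paper is the delicate part; it should go through by combining the $T$-equivariant degeneration machinery employed in Section~\ref{s-constructmodel} with the convex-geometric framework used for log Fano cones in Step~1 of the strategy.

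Once $\tv$ is known to be K-semistable, Theorem~\ref{t-KtoM} implies $\tv$ minimizes $\hvol_Y$, and hence
\begin{equation*}
\hvol(Y,y) = \hvol_Y(\tv) = \deg(\pi) \cdot \hvol(M_\infty, o),
\end{equation*}
which is the desired equality.
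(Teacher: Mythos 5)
Your high-level plan matches the paper: realize $\hvol(M_\infty,o)$ by the unique quasi-monomial minimizer $v$, produce a valuation $\tv\in\Val_{Y,y}$ over it with $A_Y(\tv)=A_X(v)$ and $\vol_Y(\tv)=\deg(\pi)\cdot\vol_X(v)$, and then prove $\tv$ is the minimizer of $\hvol_Y$. (A small caveat on the construction: quasi-\'etaleness is only a codimension-one condition on $X$, so $Z'\to Z$ may ramify over exceptional divisors of $Z\to X$ and the asserted ``multiplicity one'' fails for those; the paper avoids this by approximating $v$ by rescaled Koll\'ar components and pulling those back, since the pullback of a Koll\'ar component under a quasi-\'etale map is again a Koll\'ar component.)

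The genuine gap is in the step you yourself flag as delicate. ``Averaging over the Galois group'' is not a defined operation on test configurations: a general $T$-equivariant special test configuration of $Y_0=\Spec(\gr_{\tv}R')$ has no reason to be $G$-equivariant, and there is no mechanism to push it down to, or average it into, a test configuration of $W=\Spec(\gr_v R)$ with a controlled Futaki invariant. What goes through easily is the \emph{other} direction --- lifting a test configuration of $W$ to one of $Y_0$ by normalizing in the function field --- and the paper uses exactly that, but only to transfer the degeneration $W\rightsquigarrow C$ up to $W_Y\rightsquigarrow C_Y$. Moreover, even granting some descent of test configurations, your scheme would try to verify K-semistability directly on $Y_0\cong W_Y$, which (like $W$) carries no metric; the Ricci-flat K\"ahler cone metric lives on the tangent cone $C$, one degeneration further down. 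The paper's actual route is: pull back that metric from $C$ to the quasi-\'etale cover $C_Y$ (quasi-\'etaleness is precisely what makes the pulled-back radius function satisfy Definition \ref{defn-RFKC}); apply Theorem \ref{thm-irSE} to get K-semistability of $(C_Y,\xi'_0)$; use Proposition \ref{l-degKsemi} along the lifted test configuration to transfer K-semistability up to $(W_Y,\xi'_0)$; and conclude via Theorem \ref{t-KtoM}. This differential-geometric input is the missing ingredient; a purely algebraic ``averaging'' argument is not available here. Finally, the paper first proves the Galois case and then reduces the general quasi-\'etale case by passing to the Galois closure, a reduction your proposal does not address.
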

In fact, for a quasi-\'etale finite covering $(Y,y)\to (X,x)$ between any klt singularities, as we expect the minimizer of $y\in Y$ is  unique and thus $G$-invariant,  such a formula should also hold. However,  for now we can not prove this in the full generality.

\bigskip




\subsection{Outline of the paper}

In this section, we give an outline as well as the organization of the paper. The paper is divided into two parts. In Part \ref{p-1}, we study the geometry of the minimizer of a klt singularity in general. We note that this part is completely {\it algebraic}.  

\bigskip

In Section \ref{s-preliminary}, we recall a few concepts and establish some background results, especially on valuations and $T$-varieties (i.e. varieties with a torus action). 

\bigskip

In Section \ref{s-Tvarieties}, we focus on studying log Fano cone singularity  (see Definition \ref{d-FS}) and put it into our framework as mentioned in Section \ref{ss-strategy}. In particular, we want to show that a K-semistable  log Fano cone singularity does not only minimize the normalized volumes among the valuations in the Reeb cone, but indeed also among all valuations in $\Val_{X,x}$. Furthermore, it is unique among all quasi-monomial valuations. 
 Our main approach is to use the ideas from the construction of Newton-Okounkov body to reduce the volume of valuations to the volume of convex bodies and then apply the known convexity of the volume function in such setting. 
This is obtained by three steps with increasing generality: we first consider toric singularities with toric valuations (see Section \ref{ss-toric}) where we set up the convex geometry problem; then general $T$-singularities with toric valuations (see Section \ref{sss-Ttoric}); and eventually $T$-singularities with $T$-invariant valuations (see Section \ref{sss-TT}). 

\bigskip

In Section \ref{s-models}, we investigate a quasi-monomial minimizer of a general klt singularities by intensively using the minimal model program and degeneration techniques.
First in Section \ref{s-vmodel}, we show that given a quasi-monomial minimizer $v\in \Val_{X}$ of $\hvol_{(X,D)}$, one can find birational models which can be considered to approximate the valuation.  This construction will be used later if we assume that the degeneration exists, i.e., the associated graded ring is finitely generated.  
In particular, we conclude that such a degeneration is also klt. Then in Section \ref{s-Ksta}, we show that any quasi-monomial minimizer is K-semistable, and in Section \ref{s-uniqueness}, we use the degeneration technique and the uniqueness of the quasi-monomial minimizer for a log Fano cone singularity obtained in Section \ref{s-Tvarieties} to conclude the uniqueness of the quasi-monomial minimizer for a general singularity if one minimizer has a finitely generated associated graded ring. 
\bigskip

In Part \ref{p-dg} (=Section \ref{sec-DS}), we apply our theory to the singularities appeared on the Gromov-Hausdorff limit $M_{\infty}$ of K\"ahler-Einstein Fano manifolds. Obviously, we need to establish the results that connect the previous existing differential geometry work to our algebraic results in Part \ref{p-1}. Our aim is to show that the semistable cone $W$ associated to the metric tangent cone in Donaldson-Sun's work depends only on the algebraic structure of the singularity. It follows from the work in \cite{DS15}  such a cone is induced by a valuation $v$ and it always carries an (almost) Sasakian-Einstein metric. What remains to show is then such a cone is K-semistable, i.e., the valuation $v$ is a K-semistable quasi-monomial valuation.  This is achieved in Section \ref{s-tangentconeksemistable}. Then the finite degree multiplication formula is deduced in Section \ref{s-finitedegree}. 
\bigskip

In Appendix \ref{sec-exmp}, we illustrate our discussion on $n$-dimensional $D_{k+1}$ singularities. In particular, we verify that all the candidates calculated out in \cite{Li15a}, including all those irregular ones,  are indeed minimizers of $\hvol$ (except possibly for $D_4$ in dimension 4).
\bigskip

\noindent {\bf Acknowledgement}: We want to thank Harold Blum, Yuchen Liu, Mircea Musta\c{t}\v{a} and Gang Tian for helpful discussions and comments. 
CL is partially supported by NSF DMS-1405936 and Alfred P. Sloan research fellowship.
Part of this work was done during CX's visiting of the Department of Mathematics in MIT, to which he wants to thank the inspiring environment.   CX is partially sponsored by `The National Science Fund for Distinguished Young Scholars (11425101)'.

\part{Geometry of minimizers}\label{p-1}

\section{Preliminary and background results}\label{s-preliminary}\label{sec-Pre}

{\bf Notation and Conventions}: We follow \cite{KM98} and \cite{Kol13} for the standard conventions in birational geometry. For a log pair $(X,D)$, we also use $a_l(E;X,D)$ to mean the log discrepancy of $E$ with respect to $(X,D)$, i.e., $a_l(E;X,D)=a(E;X,D)+1$. Similarly, we also define $a_l(E; X,D+c\cdot \fa)$ for an ideal $\fa\subset \mathcal{O}_X$ and $c\ge 0$. 

When $x\in X$, we use $\Val_{X,x}$ to denote all the real valuations of $K(X)$ whose center on $X$ is $x$. If $x\in (X,D)$ is a klt singularity, for any valuation $v\in \Val_{X,x}$, we can define its log discrepancy $A_{(X,D)}(v)$ (see \cite{JM12}) which is always positive, and its volume $\vol_{X,x}(v)$. Following \cite{Li15a}, we define the normalized volume
$$\hvol_{(X,D),x}(v)=A_{(X,D)}^n(v)\cdot \vol_{X,x}(v)$$
if $A_{X,D}(v)<+\infty$, or $\hvol_{(X,D),x}(v)=+\infty$ if $A_{X,D}(v)=+\infty$.
Let $X=\Spec(R)$ be a germ of algebraic singularity with $x\in X$ corresponding to the maximal ideal $\fm$. We will denote by ${\rm PrId}_{X,x}$ the set of all the $\fm$-primary ideals.

\medskip
In this note, any singularity $x\in X$ means the germ of an algebraic singularity, i.e., $X={\rm Spec}(R)$ where $R$ is essentially of finite type over $\mathbb{C}$.
Let $x\in (X,D)$ be a klt singularity. We call a divisorial valuation $\ord_S$ in $\Val_{X,x}$ gives a Koll\'ar component if there is a model $\mu\colon Y\to X$  isomorphic over  $X\setminus \{x\} $ with the exceptional divisor given by $S$ such that $(Y,\mu_*^{-1}D+S)$ is plt and  $-K_Y-\mu_*^{-1}D-S$ is ample over $X$. We denote by ${\rm Kol}_{X,D,x}$ the set of  Koll\'{a}r components over $x\in (X,D)$. 

\subsection{Normalized volumes}

In this section, we summarize some known results of the normalized volumes. 

\begin{defn}\label{Def-volX}
For any  klt singularity $x\in (X, D)$, we define its normalized volume to be the following positive number:
\[
\hvol(X,D, x):=\inf_{v\in \Val_{X,x}}\hvol(v).
\]
\end{defn}
We often abbreviate it as $\hvol(X,D)$ if $x$ is clear or $\hvol(X,x)$ if $D=0$. 
We have the following description of $\hvol(X,D)$ using the (normalized) multiplicities.

\begin{thm}[\cite{Liu16}]\label{t-liu}There is an equality
\begin{equation}\label{e-liu}
\hvol(X,D)=\inf_{v\in \Val_{X,x}}\hvol(v)=\inf_{\ka \in {\rm PrId}_{X,x}}\lct_{(X,D)}^n(\ka)\cdot \mult(\ka).
\end{equation}
\end{thm}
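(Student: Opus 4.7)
The plan is to prove the equality by squeezing between the two quantities $\inf_v \hvol(v)$ and $\inf_\ka \lct_{(X,D)}^n(\ka)\mult(\ka)$. The bridge between valuations and primary ideals is the comparison
$$\vol_{X,x}(v)\leq \frac{\mult(\ka)}{v(\ka)^n}$$
valid for any $\fm$-primary ideal $\ka$ and any $v\in\Val_{X,x}$ with $v(\ka)>0$. To see this, note the containment $\ka^k\subset \ka_{k\cdot v(\ka)}(v)$, where $\ka_\lambda(v):=\{f\in R:v(f)\geq \lambda\}$, combined with the Hilbert--Samuel identity $\mult(\ka^k)=k^n\mult(\ka)$ and the asymptotic $\mult(\ka_\lambda(v))/\lambda^n\to \vol_{X,x}(v)$ as $\lambda\to\infty$; since inclusion of $\fm$-primary ideals reverses multiplicity, dividing $\mult(\ka^k)\geq \mult(\ka_{k\cdot v(\ka)}(v))$ by $k^n v(\ka)^n$ and letting $k\to\infty$ yields the comparison.

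For the inequality $\inf_v\hvol(v)\leq \inf_\ka \lct_{(X,D)}^n(\ka)\mult(\ka)$, I would fix an $\fm$-primary $\ka$ and extract from a log resolution of $(X,D,\ka)$ a divisorial valuation $v^*$ computing the log canonical threshold, i.e.\ $A_{(X,D)}(v^*)=\lct_{(X,D)}(\ka)\cdot v^*(\ka)$; since $\ka$ is $\fm$-primary, the center of $v^*$ lies in $V(\ka)=\{x\}$, so $v^*\in \Val_{X,x}$. Applying the comparison,
$$\hvol(v^*)=A_{(X,D)}(v^*)^n\,\vol_{X,x}(v^*)\leq \lct_{(X,D)}^n(\ka)\cdot v^*(\ka)^n\cdot \frac{\mult(\ka)}{v^*(\ka)^n}=\lct_{(X,D)}^n(\ka)\cdot \mult(\ka),$$
and passing to the infimum over $\ka$ gives one direction.

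For the reverse inequality, given any $v\in\Val_{X,x}$ with $A_{(X,D)}(v)<\infty$, I would test the right-hand side of \eqref{e-liu} on the graded sequence of valuation ideals $\ka_m:=\ka_m(v)$, which are $\fm$-primary because $v$ is centered at $\fm$. Since $v(\ka_m)\geq m$, the variational characterization of the log canonical threshold yields $\lct_{(X,D)}(\ka_m)\leq A_{(X,D)}(v)/v(\ka_m)\leq A_{(X,D)}(v)/m$, whence
$$\lct_{(X,D)}^n(\ka_m)\cdot \mult(\ka_m)\leq A_{(X,D)}(v)^n\cdot \frac{\mult(\ka_m)}{m^n},$$
and the right-hand side tends to $A_{(X,D)}(v)^n\vol_{X,x}(v)=\hvol(v)$ as $m\to\infty$. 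This proves $\inf_\ka \lct_{(X,D)}^n(\ka)\mult(\ka)\leq \hvol(v)$ for every such $v$, and taking the infimum over $v$ finishes the argument.

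The main obstacle is the asymptotic $\mult(\ka_m(v))/m^n\to \vol_{X,x}(v)$ for a general real valuation centered at $x$: classical Hilbert--Samuel theory covers only finitely generated (or Noetherian) filtrations, while the valuation ideals $\ka_m(v)$ of an arbitrary quasi-monomial or higher-rank valuation need not define a Noetherian graded sequence. The standard way around this is the graded-sequence/Okounkov-body machinery developed by Ein--Lazarsfeld--Smith and Musta\c{t}\v{a}, together with Cutkosky's volume-multiplicity comparison, which together guarantee the required limit for all $v\in\Val_{X,x}$ and thus make both halves of the squeeze rigorous.
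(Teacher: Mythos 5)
Your squeeze argument is correct and is essentially the proof from \cite{Liu16}; the present paper merely cites that result (Theorem~\ref{t-liu}) without reproving it. You have correctly identified the Cutkosky/Ein--Lazarsfeld--Smith--Musta\c{t}\u{a} identity $\lim_m \mult(\ka_m(v))/m^n=\vol_{X,x}(v)$ as the genuine technical input (applicable here since a klt local ring is excellent and normal, hence analytically unramified), while the remaining steps --- the inclusion $\ka^k\subseteq \ka_{k\,v(\ka)}(v)$ giving $\vol(v)\le \mult(\ka)/v(\ka)^n$, and choosing $v^*$ computing $\lct_{(X,D)}(\ka)$ on a log resolution so that $\hvol(v^*)\le \lct_{(X,D)}^n(\ka)\mult(\ka)$ --- are exactly the ones used there.
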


In \cite{LX16}, we showed that the right hand of \eqref{e-liu} is obtained by a minimizer $\fa$ if and only if the minimum is calculated by the valuative ideals of a Koll\'ar component. In general, \cite{Blu16} showed that if we replace an ideal $\fa$ by a graded ideal sequence $\{\fa_{\bullet}\}$, the minimum can always be obtained. Then we can easily show that a valuation $v$ that computes the log canonical threshold of such a graded ideal sequence $\{\fa_{\bullet}\}$ satisfies the identity $\hvol_{(X,D)}(v)=\hvol(X,D)$. 

\begin{thm}[\cite{Blu16}] There exists a valuation $v\in \Val_{X,x}$ such that 
$$\hvol(X,D)=\hvol_{(X,D)}(v).$$
\end{thm}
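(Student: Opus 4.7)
The plan is to follow the strategy of Blum, combining Liu's valuative reformulation of $\hvol$ with a compactness argument in the space $\Val_{X,x}$. First I would use Theorem \ref{t-liu} to pick a sequence of $\fm$-primary ideals $\ka_i\in{\rm PrId}_{X,x}$ such that $\lct^n_{(X,D)}(\ka_i)\cdot\mult(\ka_i)\to \hvol(X,D)$. For each $i$, by the theorem of Jonsson--Musta\c{t}\v{a} on the existence of valuations computing the log canonical threshold, there is a quasi-monomial valuation $v_i$ on some log resolution of $\ka_i$ satisfying $A_{(X,D)}(v_i)=\lct_{(X,D)}(\ka_i)\cdot v_i(\ka_i)$. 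After rescaling so that $v_i(\ka_i)=1$, one has $A(v_i)=\lct(\ka_i)$.

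Next I would estimate the normalized volume of $v_i$ directly. Since $v_i(\ka_i)=1$, the inclusion $\ka_i^k\subseteq \fa_k(v_i)=\{f\in R:v_i(f)\geq k\}$ holds for every $k$, hence $\ell(R/\fa_k(v_i))\leq \ell(R/\ka_i^k)$, which gives $\vol_{X,x}(v_i)\leq \mult(\ka_i)$. Combining these yields
\[
\hvol(X,D)\leq \hvol_{(X,D)}(v_i)= A(v_i)^n\,\vol(v_i)\leq \lct^n(\ka_i)\cdot\mult(\ka_i)\longrightarrow \hvol(X,D),
\]
so $\{v_i\}$ is a minimizing sequence.

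Now I would extract a limit valuation. Since $\hvol$ is invariant under rescaling, I further normalize $v_i(\fm)=1$. Using the fact that $\hvol(v_i)$ is uniformly bounded and that for valuations centered at $\fm$ with $v(\fm)=1$ one has a uniform lower bound $\vol(v)\geq c_0>0$ (obtainable from an Izumi-type estimate comparing $v$ with $\ord_\fm$ on klt singularities), it follows that $A(v_i)$ is bounded above by some constant $M$. By the uniform Izumi inequality on $(X,D)$, valuations $v$ with $v(\fm)=1$ and $A(v)\leq M$ satisfy $v(f)\leq C(M)\cdot \ord_\fm(f)$ for every $f\in R$; hence the set $\{v\in\Val_{X,x}:v(\fm)=1,\,A(v)\leq M\}$ is sequentially compact in the topology of pointwise convergence on $R$. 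Passing to a subsequence, $v_i\to v_\infty$ pointwise. Finally, the lower semicontinuity of $A_{(X,D)}$ (Jonsson--Musta\c{t}\v{a}, Boucksom--de Fernex--Favre) together with an upper semicontinuity argument for $\vol$ along the convergent subsequence (controlling $\fa_k(v_\infty)$ from below by the $\fa_k(v_i)$ via the Izumi bound) yields
\[
\hvol_{(X,D)}(v_\infty)\leq \liminf_{i\to\infty}\hvol_{(X,D)}(v_i)=\hvol(X,D),
\]
so $v_\infty$ is the desired minimizer.

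The main obstacle is the third paragraph: establishing the compactness package together with the correct semicontinuity of the normalized volume along the limit. While the lower semicontinuity of the log discrepancy is available from the literature, the upper semicontinuity of $\vol$ under pointwise convergence of valuations is delicate, since the valuative ideals $\fa_k(v_i)$ need not stabilize or nest monotonically. The essential input that makes the argument go through is the uniform Izumi inequality on klt singularities, which simultaneously produces compactness and enforces enough control on the valuation ideals to compare volumes at the limit.
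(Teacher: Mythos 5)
The paper does not prove this theorem itself; it cites Blum and only gives a one-line summary of his strategy: show that the infimum $\inf_{\fa_\bullet}\lct^n(\fa_\bullet)\cdot\mult(\fa_\bullet)$ over \emph{graded sequences} of $\fm$-primary ideals is actually attained (this is done by a generic-limit construction at the level of ideals), and then observe that any valuation computing the log canonical threshold of the minimizing graded sequence realizes $\hvol(X,D)$. Your proposal takes a genuinely different route: you try to extract a limit directly in $\Val_{X,x}$ from a minimizing sequence of valuations, using Izumi-type bounds for compactness and semicontinuity of $A$ and $\vol$ to pass to the limit. The first two paragraphs of your argument are fine and are essentially the standard Liu-type comparison. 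The third paragraph, however, has two genuine gaps that are precisely the reason Blum's argument works at the level of graded ideal sequences rather than valuations.

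First, the bound $A(v_i)\le M$ is not obtained the way you describe. Izumi's inequality gives $v\le C(v)\cdot\ord_\fm$ with a constant depending on $A(v)$, so the lower bound $\vol(v)\ge c_0$ that you want to use to bound $A(v_i)$ itself already presupposes a bound on $A(v_i)$; the argument is circular as stated. This particular gap can be repaired by invoking the properness estimate from \cite{Li15a} (roughly, $\hvol(v)\gtrsim A(v)$ once one normalizes $v(\fm)=1$), but you need to invoke it rather than attribute the bound to a bare Izumi comparison.

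Second, and more seriously, the step you describe as ``upper semicontinuity of $\vol$ along the convergent subsequence'' is exactly the crux of the matter, and it is not available. Pointwise convergence $v_i\to v_\infty$ gives you, for each fixed $m$, only that $\dim_\bC R/\fa_m(v_\infty)\le\liminf_i\dim_\bC R/\fa_m(v_i)$ (a complementary finite-dimensional subspace of $\fa_m(v_\infty)$ stays complementary to $\fa_m(v_i)$ for $i\gg 0$). Passing from this fixed-$m$ inequality to an inequality for the normalized limit $\vol$ requires a uniformity in $m$ that is not supplied by an Izumi bound; the valuation ideals $\fa_m(v_i)$ need not nest or stabilize, and the limit in $m$ defining $\vol$ is not a monotone limit. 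Continuity of $\vol$ is known on finite simplices such as dual complexes (this is what \cite{BFJ14} gives and is used elsewhere in the paper), but not on the full space $\Val_{X,x}$ under pointwise convergence. Blum's generic-limit construction is designed specifically to avoid this issue: by taking the limit at the level of graded ideal sequences one gets an object whose normalized multiplicity \emph{is} controlled in the limit, and one then appeals to the existence of a valuation computing $\lct$ of that graded sequence (not necessarily quasi-monomial) to conclude. So your proposal, while natural, has a hole exactly where you flag it, and it is not a hole that the Izumi inequality closes.
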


\begin{rem}
In \cite{JM12}, it was conjectured that any valuation computing the log canonical threshold of a graded sequence of ideals at a smooth point is quasi-monomial (see \cite[Conjecture B]{JM12}). We can naturally extend this conjecture from a smooth point to a klt pair $(X,D)$ and then have the following fact observed in \cite{Blu16}:
The strong version of Conjecture B for klt pair $(X,D)$ in \cite{JM12} implies that any minimizer $v$ of $\hvol_{(X,D)}$ is quasi-monomial. 
\end{rem}

Another characterization of the normalized volume is by using the volumes of models.

\begin{thm}[\cite{LX16}]\label{t-LX16}
For any model $Y\to X$ which is isomorphic over  $X\setminus \{x\} $ with a  nontrivial exceptional divisor, we can define its volume $\vol(Y/X)$ as in \cite{LX16}. Then  we have
$$\hvol(X,D)=\inf_{\small Y\to X \mbox{ as above} } \vol(Y/X)=\inf_{
S\in {\rm Kol}_{X,D,x}} \hvol(S).$$
\end{thm}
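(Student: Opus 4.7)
My plan is to use Liu's characterization (Theorem \ref{t-liu}) as a bridge between ideals and K\"ollar components: from an $\fm$-primary ideal $\fa$ whose normalized multiplicity $\lct^n_{(X,D)}(\fa)\cdot \mult(\fa)$ is close to $\hvol(X,D)$, I will extract via the MMP a single Kollár component $S$ whose normalized volume is bounded by $\lct^n_{(X,D)}(\fa)\cdot \mult(\fa)$. The three quantities in the theorem are then sandwiched.

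\textbf{The easy inequalities.} Each $S\in {\rm Kol}_{X,D,x}$ gives a divisorial valuation $\ord_S\in \Val_{X,x}$, so by Definition \ref{Def-volX} trivially $\hvol(X,D)\le \hvol(S)$, and therefore $\hvol(X,D)\le \inf_{S}\hvol(S)$. Moreover, extracting a Kollár component yields a model $Y\to X$ with a single exceptional divisor; by the definition of $\vol(Y/X)$ in \cite{LX16} one has $\vol(Y/X)=\hvol(S)$ in that case, so $\inf_Y \vol(Y/X)\le \inf_S\hvol(S)$. In the reverse direction, for any admissible model $Y\to X$ the quantity $\vol(Y/X)$ is defined via volumes of divisor classes on $Y$ that dominate the normalized volumes of the divisorial valuations attached to the exceptional components, which gives $\hvol(X,D)\le \vol(Y/X)$.

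\textbf{The main inequality.} It remains to prove $\inf_{S}\hvol(S)\le \hvol(X,D)$. By Theorem \ref{t-liu}, pick a sequence of $\fm$-primary ideals $\fa_k$ with
$$\lct_{(X,D)}^n(\fa_k)\cdot\mult(\fa_k)\;\longrightarrow\; \hvol(X,D),$$
and set $c_k=\lct_{(X,D)}(\fa_k)$. The pair $(X,D+c_k\cdot \fa_k)$ is log canonical but not klt. By taking a dlt modification and running an MMP as in \cite{LX16} (powered by \cite{BCHM10}), I extract a single Kollár component $S_k\in {\rm Kol}_{X,D,x}$ that \emph{computes} the log canonical threshold, i.e.,
$$A_{(X,D)}(\ord_{S_k})=c_k\cdot \ord_{S_k}(\fa_k).$$
The key analytic inequality, essentially Liu's $\vol(v)\cdot v(\fa)^n\le \mult(\fa)$ applied to every valuation (a consequence of the proof of Theorem \ref{t-liu}), then gives
$$\hvol(S_k)=A_{(X,D)}(\ord_{S_k})^n\cdot \vol(\ord_{S_k})\le c_k^n\cdot \ord_{S_k}(\fa_k)^n\cdot \vol(\ord_{S_k})\le \lct_{(X,D)}^n(\fa_k)\cdot\mult(\fa_k).$$
Letting $k\to\infty$ finishes the argument and pins down all three infima.

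\textbf{Main obstacle.} The heart of the proof is the MMP step that produces, for each $\fa_k$, a Kollár component $S_k$ computing $\lct_{(X,D)}(\fa_k)$: one must take a dlt modification of $(X,D+c_k\fa_k)$, isolate an lc place of $(X,D+c_k\fa_k)$ centered at $x$, and run a carefully chosen MMP that contracts every other component of the non-klt locus while preserving plt-ness and the anti-ampleness of $-K_Y-\mu_*^{-1}D-S_k$ over $X$. Once this construction is in hand, the rest (Liu's inequality, the definition of $\vol(Y/X)$, and the sandwich) is essentially formal bookkeeping; the birational-geometric extraction carries all of the real content.
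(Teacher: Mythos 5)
The paper states this theorem only as a citation to \cite{LX16} and does not reproduce a proof; the closest thing to an outline appears in the proof of the equivariant analogue, Theorem~\ref{thm-minKol}. Your sketch matches that strategy: starting from $\fm$-primary ideals $\fa_k$ whose normalized multiplicities approach $\hvol(X,D)$ (via Theorem~\ref{t-liu}), use a dlt modification and the MMP of \cite{BCHM10} to produce a Koll\'{a}r component $S_k$ with $A_{(X,D+c_k\fa_k)}(S_k)=0$, then use
\[
\hvol(S_k)=c_k^n\,\ord_{S_k}(\fa_k)^n\,\vol(\ord_{S_k})\le c_k^n\,\mult(\fa_k)=\lct_{(X,D)}^n(\fa_k)\,\mult(\fa_k),
\]
and close the sandwich with the easy inequalities. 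The chain you invoke is essentially the one written in the paper's proof of Theorem~\ref{thm-minKol},
\[
A_{(X,D)}(S)^n\,\vol(\ord_S)\le\vol(Y/X)\le\lct^n(\fa)\,\mult(\fa),
\]
and you correctly flag the single genuinely substantive input — the MMP extraction of a Koll\'{a}r component computing $\lct_{(X,D)}(\fa_k)$, which relies on \cite{Xu14}, \cite{LX16} and \cite{BCHM10} — as carrying the real content. Your outline is sound and consistent with the source.
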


\subsection{Approximation}
 For the latter, we need the following result.   

\begin{lem}\label{l-diri}
Given real numbers $\alpha_i$ $(i=1,2,..,r)$  such that $\alpha_1,...,\alpha_r$ and 1 are $\mathbb{Q}$-linearly independent. Let $\delta_i\in \{-1,1\}$. Then for any $\epsilon>0$, we can find 
$p_1,....,p_r$ and $q\in \mathbb{Z}$ such that for any $i$ $(i=1,2,..,r)$,
  $$ 0<\delta_i\cdot (\frac{p_i}{q}-\alpha_i)\le \frac{\epsilon}{q}.$$
\end{lem}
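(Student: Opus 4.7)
The plan is to reduce the statement to Kronecker's simultaneous approximation theorem. The hypothesis that $\alpha_1,\dots,\alpha_r,1$ are $\mathbb{Q}$-linearly independent is precisely what is needed for the orbit
$$\{(q\alpha_1,\dots,q\alpha_r)\bmod \mathbb{Z}^r : q\in\mathbb{Z}_{>0}\}$$
to be dense (indeed equidistributed, by Weyl) in the torus $(\mathbb{R}/\mathbb{Z})^r$.

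First I would translate the two-sided inequality into a condition on the fractional parts $\{q\alpha_i\}$. If $\delta_i=-1$, I take $p_i=\lfloor q\alpha_i\rfloor$, giving $\delta_i(p_i/q-\alpha_i) = \{q\alpha_i\}/q$. If $\delta_i=+1$, I take $p_i=\lceil q\alpha_i\rceil$, which is well-defined and strictly exceeds $q\alpha_i$ because $q\alpha_i\notin\mathbb{Z}$ for $q\neq 0$ by rational independence; this gives $\delta_i(p_i/q-\alpha_i) = (1-\{q\alpha_i\})/q$. So the target reduces to finding a positive integer $q$ such that $\{q\alpha_i\}$ lies in $(0,\epsilon]$ when $\delta_i=-1$ and in $[1-\epsilon,1)$ when $\delta_i=+1$, for all $i$ simultaneously.

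Next I would invoke Kronecker's theorem: the product of these half-open intervals contains a nonempty open subset of $(\mathbb{R}/\mathbb{Z})^r$, so by the density of the orbit there exists $q\in\mathbb{Z}_{>0}$ with $(\{q\alpha_1\},\dots,\{q\alpha_r\})$ in this product. With $p_i$ defined as above, the inequality $0<\delta_i(p_i/q-\alpha_i)\le \epsilon/q$ follows immediately; the strict lower bound is automatic because $q\alpha_i\notin \mathbb{Z}$.

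The result is essentially a packaging of a classical fact, so there is no substantial obstacle. The only point of care is choosing the correct arc (the side $(0,\epsilon]$ versus $[1-\epsilon,1)$ on each factor of the torus) so that the direction of approximation matches the prescribed sign $\delta_i$; once this bookkeeping is in place, Kronecker's theorem supplies the required $q$.
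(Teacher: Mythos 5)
Your proposal is correct and follows essentially the same route as the paper: both invoke equidistribution (Weyl/Kronecker) of $(q\alpha_1,\dots,q\alpha_r)\bmod\mathbb{Z}^r$ and then read off suitable $p_i$ from the fractional parts. You merely spell out the bookkeeping — choosing $p_i=\lfloor q\alpha_i\rfloor$ or $\lceil q\alpha_i\rceil$ according to $\delta_i$ and noting $q\alpha_i\notin\mathbb{Z}$ — which the paper compresses into ``so our lemma follows easily.''
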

\begin{proof}Let $v=(\alpha_1,...,\alpha_r)\in \mathbb{R}^r$ be a vector, then we consider the sequence 
$$\{v,2v,...,nv,...\} \ \ \mod \mathbb{Z}^r.$$ By Weyl's criterion for equidistribution, using the assumption that $\alpha_1,...,\alpha_r$ and 1 are $\mathbb{Q}$-linearly independent, we know that this sequence is equidistributed in $[0,1]^r$. So our lemma follows easily.   
\end{proof}

 We denote the norm $|\cdot |$ on $\mathbb{R}^r$ to be $|x|=\max_{1\le i\le r}|x_i|$.


\begin{lem}\label{l-approvector}
Let $v\in \mathbb{R}^r$ be a vector. For any $\epsilon>0$. There exists $r$ rational vectors $\{v_1$, $v_2$,..., $v_r\}$ and integers $\{q_1$,...,$q_r\}$  such that
\begin{enumerate}
\item $q_{i}v_i\in \mathbb{Z}^r$,
\item $v$ is in the convex cone generated by $v_1$,..., $v_r$, i.e., $v=\sum a_iv_i$ for some $a_i>0$; and  
\item $|v_i-v|<\frac{\epsilon}{q_i}$.
\end{enumerate}
\end{lem}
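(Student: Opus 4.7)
The plan is to produce $v_1,\ldots,v_r$ by $r$ applications of Lemma \ref{l-diri} with carefully chosen sign vectors $\delta^{(i)}\in\{-1,+1\}^r$, and then to verify condition (2) via the Sherman--Morrison identity combined with $M$-matrix theory. First I would reduce to the generic case in which all coordinates $\alpha_i$ of $v$ are strictly positive and $\alpha_1,\ldots,\alpha_r,1$ are $\mathbb{Q}$-linearly independent: flipping the sign of any coordinate with $\alpha_i<0$ is a $\mathrm{GL}_r(\mathbb{Z})$-change of basis that preserves conditions (1)--(3); a fully rational $v$ is handled trivially by taking all $v_i:=v$, $a_i:=1/r$, and $q_i$ a common denominator; and any $\mathbb{Q}$-linear relation among $\alpha_1,\ldots,\alpha_r,1$ can be broken by a further $\mathrm{GL}_r(\mathbb{Z})$-transformation separating a rational subspace from an irrational one, on which the main argument applies.

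In the generic case I would set $\delta^{(i)}_j:=-1$ if $j=i$ and $\delta^{(i)}_j:=+1$ otherwise, and apply Lemma \ref{l-diri} for each $i=1,\ldots,r$. The proof of that lemma via Weyl's equidistribution theorem allows one to prescribe not only the signs of the errors $p^{(i)}_j/q^{(i)}-\alpha_j$ but also their approximate magnitudes, by arranging for the fractional parts $\{q^{(i)}\alpha_l\}$ to lie in a small neighborhood of a chosen point of $[0,1]^r$. I would use this to demand that $|(v_i-v)_i|\approx \epsilon/(2q^{(i)})$ while $|(v_i-v)_j|\leq \eta/q^{(i)}$ for $j\neq i$, with $\eta$ much smaller than $\epsilon/(r-1)$, all with signs prescribed by $\delta^{(i)}$. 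Setting $q_i:=q^{(i)}$ then yields (1) and (3) immediately.

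For condition (2), form $V:=[v_1|\cdots|v_r]$ and $U:=V-v\mathbf{1}^T=[u_1|\cdots|u_r]$ with $u_i:=v_i-v$. By the magnitude control above, $-U$ is a column-diagonally-dominant $Z$-matrix with all off-diagonal entries strictly negative, hence an irreducible, nonsingular $M$-matrix, so $(-U)^{-1}>0$ componentwise. In particular $U^{-1}v<0$ componentwise since $v>0$, and $\mathbf{1}^T U^{-1}v$ is a large negative quantity dominating $1$, so $1+\mathbf{1}^T U^{-1}v<0$ as well. The Sherman--Morrison identity
\[
V^{-1}v=\frac{U^{-1}v}{1+\mathbf{1}^T U^{-1}v}
\]
then exhibits $\vec{a}:=V^{-1}v$ as the ratio of two componentwise-negative quantities, yielding $a_i>0$ for every $i$ with $v=\sum_i a_i v_i$. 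The hardest part is expected to be the magnitude prescription in the second paragraph, namely promoting Lemma \ref{l-diri} from a sign-only to a joint sign-and-magnitude statement; this is immediate from the same Weyl equidistribution input used in the proof of Lemma \ref{l-diri} but needs to be stated explicitly.
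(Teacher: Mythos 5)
Your argument is correct, but it takes a genuinely different route from the paper's. The paper applies Lemma~\ref{l-diri} for \emph{all} $2^r$ sign patterns $\delta\in\{-1,1\}^r$ to produce $2^r$ candidate vectors, and then argues by induction on $r$ that $0$ lies in the strict positive hull of the $2^r$ difference vectors $w_i=v_i-v$ (it then asserts, somewhat briefly, that one may pass from these $2^r$ vectors to $r$ of them). You instead apply Lemma~\ref{l-diri} only $r$ times, with the $r$ sign patterns that put a single minus sign on the diagonal, and --- this is the new ingredient --- you strengthen Lemma~\ref{l-diri} to prescribe relative magnitudes and not just signs of the errors, so that the matrix $-U$ of differences becomes a strictly column-diagonally-dominant $Z$-matrix. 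That makes $(-U)^{-1}>0$ by $M$-matrix theory, and the Sherman--Morrison identity $V^{-1}v=\frac{U^{-1}v}{1+\mathbf{1}^TU^{-1}v}$ converts this into the strict positivity of the barycentric coefficients $a=V^{-1}v$. The trade-off: your version directly delivers exactly $r$ vectors and makes the ``$v$ in the interior of the cone'' conclusion transparent, at the cost of importing $M$-matrix and rank-one-update machinery and of needing a sharpened form of the Dirichlet-type Lemma~\ref{l-diri} (joint sign-and-magnitude control, which, as you note, does follow from the same Weyl equidistribution input but should be stated explicitly); the paper's argument stays elementary but works with $2^r$ vectors and leaves the reduction to $r$ implicit. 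On the preliminary reduction to the case where $1,\alpha_1,\dots,\alpha_r$ are $\mathbb{Q}$-linearly independent, you and the paper are both relatively terse; that step deserves the same care in either treatment.
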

\begin{proof}After relabelling, we can assume that $v=(\alpha_1,...,\alpha_r)$ with $(1, \alpha_1,... ,\alpha_j)$ is linearly independent and span the space ${\rm span} (1, \alpha_1,...,\alpha_r)$. Then possibly replacing $\epsilon$ by a smaller one, once we could approximate $\alpha_1$,..., $\alpha_j$, we automatically get the approximation of all $\alpha_i$ $(1\le i \le r)$. Therefore, we may and will assume that  $(1, \alpha_1,... ,\alpha_r)$ is linearly independent.

Applying Lemma \ref{l-diri} for all $2^r$ choices of $\delta_1,...,\delta_r$,  we find $v_1$,...., $v_{2^r}$ vectors, it suffices to show that we can choose $r$ vectors out of them so that condition (2) is satisfied.   

Let $w_i=v_i-v$, then we know that the signs of the components of $w_1$,..., $w_{2^r}$ exhaust all $2^r$ possible. Clearly it suffices to show that $0$ can be written as a positive linear combination of $w_1$,..., $w_{2^r}$. We prove this by induction on $r$. Let
$w_1,...,w_{2^{r-1}}$ be precisely the vectors with positive  first component. Then using the induction, we know that there exist positive numbers  $a_1,...,a_{2^{r-1}}$ such that $\sum^{2^{r-1}}_{i=1} a_iw_i$ has the form $(a,0,...,0)$ with $a>0$. Similarly, we find positive numbers $a_{2^{r-1}+1},...,a_{2^r}$ such that $\sum^{2^{r}}_{i=2^{r-1}+1} a_iw_i$ is of the form $(-b,0,...,0)$ with $b>0$. Then we know that
$$(ba_1)w_1+\cdots+(ba_{2^{r-1}})w_{2^{r-1}}+(aa_{2^{r-1}+1})w_{2^{r-1}+1}+\cdots+(aa_{2^r})w_{2^r}=0.$$
\end{proof}

\subsection{Valuations and associated graded ring}\label{ss-valuation}



Given a valuation $v$ whose valuative semi-group is denoted by $\Phi$. Let $\Phi^g$ be the corresponding group generated by $\Phi$, and $\Phi^g_+\supset \Phi$ be the semi-group of nonnegative values. We can define the generalized Rees algebra as in \cite[Section 2.1]{Tei03}
\begin{equation}\label{eq-Rees}
\mathcal{R}_v:=\bigoplus_{\phi\in \Phi^g} \fa_{\phi}(v) t^{-\phi} \subset \mathcal{R}[t^{\Phi^g}]. 
\end{equation}

By \cite[Proposition 2.3]{Tei03}, we know $\mathcal{R}_v$ is faithfully flat over $k[\Phi^g_+]$. 

\bigskip

The valuations that our approach can deal with are called {\it quasi-monomial valuations}. It is known that it is the same as  Abhyankar valuation (see e.g. \cite[Proposition 2.8]{ELS03}). 

\begin{defn}\label{d-quasimono}
$v=v_{\alpha}$ is called a quasi-monomial valuation over $x$ of rational rank $r$: if there exists a log resolution $Z\to X$ with divisors $E_1$,...., $E_r$ over $x$, and $r$ nonnegative numbers $\alpha=(\alpha_1$,...., $\alpha_r)$ which are $\mathbb{Q}$-linearly independent, such that 
\begin{enumerate}
\item $\bigcap^r_{i=1} E_i\neq \emptyset$;
\item There exists a component $C\subset \cap E_i$, such that around the generic point $\eta$ of $C$, $E_i$ is given by the equation $z_i$ and 
\item  for $f\in \mathcal{O}_{X,\eta}\subset  \hat{\mathcal{O}}_{X,\eta}$ can be written as $f=\sum c_{\beta}z^{\beta}$, with either $c_{\beta}=0$ or $c_{\beta}(\eta)$ is a unit,
then 
$$v_{\alpha}(f)=\min \{\sum \alpha_i\beta_i|\ c_{\beta}(\eta)\neq 0\}.$$
\end{enumerate}
\end{defn}

In fact, for any $(\alpha'_1,...., \alpha'_r)\in \mathbb{R}^r_{\ge 0}$, we can define a valuation as above, which is quasi-monomial with rational rank equal to the dimension of the 
$\mathbb{Q}$-linear vector space 
$${\rm span}_{\mathbb{Q}}\{\alpha'_1,..., \alpha_r'\}\subset \mathbb{R}.$$ On the other hand, if we choose $\alpha\in \mathbb{Z}^r_{\ge 0}$, the corresponding valuation is given by the toroidal divisor coming from the weighted blow up with weight $\alpha$. 
\bigskip

In the following, we also need a slight generalization of Definition \ref{d-quasimono}, namely instead of assuming that $(Z, E_1+\cdots+E_r)$ is simple normal crossing at $\eta$, we assume
$$(\eta\in Z,E_1+\cdots +E_r)\cong (\eta' \in Z',E'_1+\cdots +E'_r)/G,$$
where $(\eta' \in Z',E'_1+\cdots +E'_r)$ is a semi-local snc scheme, and $ G$ is an abelian group. Denote the pull back of $E_i$ by $n_iE_i'$. Then for $\alpha=(\alpha_1,...., \alpha_r)\in \mathbb{R}^r_{\ge 0}$, we can define $v_{\alpha}$ to be the restriction of $v_{\alpha'}$ at $\eta'\in (Z',E'_1+\cdots +E'_r)$ as in Definition \ref{d-quasimono}, where $\alpha'=(n_1\alpha_1,..., n_r\alpha_r)$. 

\begin{defn}\label{d-quasi}
Let $(Z,E)$ be a pair which is (Zariski) locally an abelian group quotient of a snc pair (with reduced boundary).  Denote the center of $v$ on $(Z,E)$ by $\eta$, we say $v$ is {\it computed on  $\eta \in (Z,E)$}, if
\begin{enumerate}
\item $\overline{\eta}$ is a component of the intersection of component of $E$;  
\item $v=v_{\alpha}$ for some $\alpha\in \mathbb{R}^{r}_{>0}$.
\end{enumerate}

For a general pair of a normal variety $Z$ and a $\mathbb{Q}$-divisor $E$, we say that $v$ is computed on the center $\eta$ of $v$ in $(Z,E)$ if in a neighborhood $U$ of $\eta$, $(U, E|_U(=E^{=1}|_U))$ is locally a quotient of snc pair and $v$ is computed on $(U, E|_U(=E^{=1}|_U))$ in the above sense. 
\end{defn}

\begin{lem}\label{l-finiteass}
Let $v_{\alpha}$ be a quasi-monomial valuation as defined in Definition \ref{d-quasimono} with the maximal rational rank $r$, whose associated graded ring $ {\rm gr}_{v_{\alpha}}(R)$ is finitely generated. Then we can choose $\epsilon$ sufficiently small such that for any  $\alpha'\in \mathbb{Q}^r$ with $|\alpha-\alpha'|<\epsilon$, 
there is an isomorphism ${\rm gr}_{v_{\alpha}}(R)\to {\rm gr}_{v_{\alpha'}}(R)$ induced by a morphism sending a set of homogeneous generators of ${\rm gr}_{v_{\alpha}}(R)$ to one of ${\rm gr}_{v_{\alpha'}}(R)$. 
\end{lem}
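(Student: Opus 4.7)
My plan is to work locally at the generic point $\eta$ of the center of $v_\alpha$, where in the completion $\hat R_\eta \cong \kappa(\eta)[[z_1, \ldots, z_r]]$ the valuation $v_\alpha$ acts as the monomial valuation $z^\beta \mapsto \langle \alpha, \beta\rangle$. The maximal rational rank of $\alpha$ makes this linear functional injective on $\mathbb{Z}_{\geq 0}^r$, so every nonzero $f \in R$ has a \emph{unique} $\alpha$-minimizing monomial $c_{\beta^{(f,\alpha)}} z^{\beta^{(f,\alpha)}}$ in its expansion $f = \sum c_\beta z^\beta$. This identifies $\gr_{v_\alpha}(R)$ with the $\kappa(\eta)$-subalgebra $S_\alpha \subseteq \kappa(\eta)[z_1, \ldots, z_r]$ consisting of these leading monomials.

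First, I would fix lifts $f_1, \ldots, f_N \in R$ of the given homogeneous generators of $\gr_{v_\alpha}(R)$, set $\beta^{(i)} := \beta^{(f_i,\alpha)}$, so $S_\alpha = \kappa(\eta)\bigl[z^{\beta^{(1)}}, \ldots, z^{\beta^{(N)}}\bigr]$, and fix a finite set of weighted-homogeneous relations $P_1, \ldots, P_M \in \mathbb{C}[T_1, \ldots, T_N]$ generating the kernel of the surjection $\mathbb{C}[T] \twoheadrightarrow \gr_{v_\alpha}(R)$, so that $v_\alpha(P_k(f_1, \ldots, f_N)) > \deg_\alpha P_k$ for each $k$. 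The key observation is that each strict inequality implicit in this data --- the uniqueness of the minimizer $\beta^{(i)}$ on the support of $f_i$, and the strict inequalities for each $P_k$ --- amounts to finitely many strict linear inequalities in $\alpha$, because positivity of $\alpha$ makes only finitely many monomials relevant in each competition for the minimum. These inequalities cut out an open cone $C \subseteq \mathbb{R}_{>0}^r$ containing $\alpha$, so for $\epsilon > 0$ small enough, every $\alpha' \in \mathbb{Q}^r$ with $|\alpha - \alpha'| < \epsilon$ lies in $C$.

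For any $\alpha' \in C$ one has $\rin_{v_{\alpha'}}(f_i) = c_{i,\beta^{(i)}} z^{\beta^{(i)}}$ and the relations $P_k$ continue to vanish in $\gr_{v_{\alpha'}}(R)$, so the map $T_i \mapsto \rin_{v_{\alpha'}}(f_i)$ factors through a well-defined ring homomorphism $\Phi : \gr_{v_\alpha}(R) \to \gr_{v_{\alpha'}}(R)$. The main obstacle is upgrading $\Phi$ to an isomorphism --- equivalently, showing that every $f \in R$ has its $\alpha'$-leading monomial inside $S_\alpha$. I would argue this by a Gr\"obner-type division: the finitely many relations $P_k$ afford a normal-form reduction of any $f \in R$ to a $\kappa(\eta)$-linear combination of monomials in the $z^{\beta^{(i)}}$, and this reduction procedure invokes only the finitely many strict inequalities cutting out $C$. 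Consequently the same reduction is valid for every $\alpha' \in C$, yielding $S_{\alpha'} = S_\alpha$ and hence the desired generator-preserving isomorphism. This stability step is where the finite generation of $\gr_{v_\alpha}(R)$ genuinely enters: without it one could not bound the number of witnessing inequalities needed to control $v_{\alpha'}$ uniformly on all of $R$.
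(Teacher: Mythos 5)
Your overall plan---isolate a finite amount of combinatorial data (leading exponents of generators, leading parts of relations) whose stability under small perturbations of the weight pins down the associated graded ring---is the same idea that drives the paper's proof, so in spirit you are on the right track. But there are two genuine problems, and they are both located exactly where you wave your hands.

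First, a structural imprecision: you write $S_\alpha = \kappa(\eta)[z^{\beta^{(1)}},\dots,z^{\beta^{(N)}}]$, but this is too large. The graded pieces $\fa_m/\fa_{>m}$ of $\gr_{v_\alpha}(R)$ are finite-dimensional $\mathbb{C}$-vector spaces, while $\kappa(\eta)$ is infinite-dimensional over $\mathbb{C}$ (it has transcendence degree $n-r$). So $\gr_{v_\alpha}(R)$ embeds as a $\mathbb{C}$-subalgebra of $\kappa(\eta)[z]$ with fixed coefficients $c_i\in\kappa(\eta)^\times$ in front of the $z^{\beta^{(i)}}$, not as the $\kappa(\eta)$-algebra generated by those monomials. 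This matters, because your Gr\"obner normal-form claim implicitly treats $\kappa(\eta)$ as a ground field for the division, which it isn't.

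Second and more seriously, the step "the finitely many relations $P_k$ afford a normal-form reduction of any $f\in R$ ... and this reduction procedure invokes only the finitely many strict inequalities cutting out $C$" is asserted but not proved, and as written it has two holes. (a) You never establish that the lifts $f_1,\dots,f_N$ generate $R$ as a $\mathbb{C}$-algebra, so "the reduction of $f$" is not even well-posed; the paper secures this by building the Rees algebra $\mathcal{R}^*\to\mathcal{R}_{v_\alpha}$ over $\Spec k[\Phi^g_+]$, proving surjectivity via flatness and a graded Nakayama argument, and reading off from this that the lifted map $k[t_1,\dots,t_k]\to R$ is onto. (b) Even granting a presentation, a general $f\in R$ reduces in infinitely many steps (it is a power series at $\eta$), and each step can bring in a new pairwise comparison of weights. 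The potential for cancellations at arbitrarily high order is real, and it is not obvious that finitely many linear inequalities in $\alpha'$ control all of them. The paper's way around this is not the term-by-term reduction at $\eta$: it passes to the ideal $I=\ker(k[T]\to R)$, produces lifts $h_j\in I$ of generators of the initial ideal $I_0$ \emph{via flatness of the Rees algebra}, shrinks $\epsilon$ so that $\bin_{\alpha'}(h_j)=\bin_\alpha(h_j)$ for those finitely many $h_j$, and concludes that the $\alpha'$-initial ideal equals $I_0$; the flatness is what guarantees the $h_j$ behave like a Gr\"obner basis and that the naive finite set of inequalities is enough.

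Finally, you should note that because you work entirely in the local expansion at $\eta$, you avoid one step the paper must make: showing the $\alpha'$-weight filtration induced on $R$ by the embedding into $\mathbb{C}^k$ coincides with the quasi-monomial valuation $v_{\alpha'}$ defined on the model $(Z,E)$. The paper handles this with Tevelev's result on toroidal embeddings \cite{Tev14}. Your intrinsic setup avoids that issue in principle, but it simply relocates the difficulty to the Gr\"obner step, which you have not filled in and which I do not believe can be done without the Rees-algebra/flatness input the paper uses.
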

\begin{proof}Since ${\rm gr}_{v_{\alpha}}(R)$ is finitely generated, we can find a finite set of homogenous generators $f_1,...,f_k$, and construct a surjection of graded rings
$$\phi\colon k[t_1,...,t_k]\to {\rm gr}_{v_{\alpha}}(R),\qquad t_i\to f_i,$$
where $t_i$ has the same degree as $\deg(f_i)$, which by our assumption can be written as $\sum^r_{i=1}b^j_{i}\alpha_i$. 

We can lift $f_i$ to $g_i\in R$ such that $\bin_v(g_i)=f_i$, and make the morphism
$$\psi\colon k[t_1,...,t_k]\to R,\qquad t_i\to g_i.$$
Consider the filtration 
$$\{ F_b\subset k[t_1,...,t_k]|b\in \Phi, f\in F_b\mbox{ iff all monomials of $f$ have degree at least $b$}  \}$$
for any $b\in \Phi$.
We can similarly construct a Rees algebra
$$\mathcal{R}^*= \sum_{b\in  {\Phi}^g} F_{b} s^{b} \subset k[t_1,...,t_k]\otimes k[\Phi^g].$$
There is a surjection $\mathcal{R}^*\to \mathcal{R}_{v_{\alpha}}$, which degenerates $\psi$ to $\phi$ over ${\rm Spec}R(\Phi)$. Denote by $I$ the kernel of $\psi$, which  we know degenerates to $I_0$ the Kernel of $\phi$. By the flatness over $R(\Phi)$, we know any element $h'\in I_0$ can be lift to an element $h\in I$. 
Geometrically, this gives a pointed embedding $(x\in X)\subset (0\in \mathbb{C}^k)$, whose degeneration along one direction, denoted by $\xi_{\alpha}$, on $\mathbb{C}^n$ gives the embedding $(o\in X_0={\rm Spec}({\rm gr}_vR))\subset (0\in \mathbb{C}^r)$. 

\bigskip


 Let $h_1,....,h_m$ be elements in $I$ whose degenerations $h_1',..., h_m'$ generate $I_0$. 
Assume $h_j=h'_j+h_j''$ with $\deg(h'_j)=\sum^r_{i=1} c^j_i\alpha_i$ and the monomials of $h_j''$ has degree larger than $\sum^r_{i=1} c^j_i\alpha_i$. We can choose $\epsilon$ sufficiently small such that if $\alpha'=(\alpha'_1,...., \alpha'_r)$ satisfies $|\alpha'-\alpha|<\epsilon$, 
then any monomial of $h_j''(t_1,...,t_k)$ has a corresponding degree larger than $c^j_i\alpha'_i$ where $\deg(t_i)$ is set to be $\sum^r_{i=1}b^j_{i}\alpha'_i$. Then the condition that $v$ has the maximal rational rank implies that $h_j'$ is the leading term of $h_j$ if $\deg(t_i)=\sum^r_{i=1}b^j_{i}\alpha'_i$.

Consider the filtration given by setting the degree of $t_j$ to be $\sum_i b^j_i\alpha'_i$ and it induces a filtration on $R$ by its image. We denote the corresponding vector of the degeneration by $\xi_{\alpha'}$.  Our argument above says for the filtration induced by $\xi_{\alpha'}$,  the associated graded ring given by the filtration coincides with ${\rm gr}_v(R)$. Since the graded ring is an integral domain, by Lemma \ref{lem-quasi}(1) the filtration comes from a valuation $v'$. Now we claim 
$v'$ is the same as the one given by $v_{\alpha'}$, which implies what we aim to prove.  

\bigskip

In fact, by \cite{Tev14}, we know that for our embedding  $(x\in X)\subset (0\in \mathbb{C}^k)$, we can indeed assume there is a toroidal morphism $V\to \mathbb{C}^k$, such that the birational transform of $X$ in $V$ gives the model $Z$ in Definition \ref{d-quasimono} and the divisors $E_i$ are from the transversal intersection of $Z$ and components $F_i$ of ${\rm Ex}(V/\mathbb{C}^k)$. Assume $F_i$ at $\eta\in \cap^r_{i=1} F_i$ yield a coordinate $y_i$, and its restriction to $Z$ induces the coordinate $z_i$ for $E_i$. By the transversality of $Z$ and components of $ {\rm Ex} (V/\mathbb{C}^k)$, we know that for any function $f\in R$, if we lift it as a function $\tilde{f} \in k[x_1,...,x_k]$, then
$$v_{\alpha'}(f)= \xi_{\alpha'}(\tilde{f}),$$
where $\xi_{\alpha'}$ is the corresponding toroial valuation induced by $\alpha'$ over $\mathbb{C}^k$. However, by our definition $\xi_{\alpha'}(\tilde{f})=v'(f)$, which implies $v_{\alpha'}=v'$. 
\end{proof}

Let $\Phi^g$ be an ordered subgroup of the real numbers $\bR$. Let $(R, \fm)$ be the local ring at a normal singularity $o\in X$. A $\Phi^g$-graded filtration of $R$, denoted by $\cF:=\{\cF^m\}_{m\in \Phi^g}$, is a decreasing family of $\fm$-primary ideals of $R$ satisfying the following conditions:

{\bf (i)} $\cF^m\neq 0$ for every $m\in \Phi^g$, $\cF^m=R$ for $m\le 0$ and $\cap_{m\ge 0}\cF^m=(0)$;

{\bf (ii)} $\cF^{m_1}\cdot \cF^{m_2}\subseteq \cF^{m_1+m_2}$ for every $m_1, m_2\in \Phi^g$.

Given such an $\cF$, we get an associated order function 
$$v=v_{\cF}: R\rightarrow \bR_{\ge 0} \qquad v(f)=\max\{m; f\in \cF^m\} \mbox{\ \ for any $f\in R$}.$$ Using the above {\bf (i)-(ii)}, it is easy to verify that $v$ satisfies $v(f+g)\ge \min\{v(f), v(g)\}$ and $v(fg)\ge v(f)+v(g)$. We also have the associated graded ring:
\[
\gr_{\cF}R=\sum_{m\in \Phi^g} \cF^m/\cF^{>m}, \text{ where } \cF^{>m}=\bigcup_{m'> m}\cF^{m'}.
\]
For any real valuation $v$ with valuative group $\Phi^g$, $\{\cF^m\}:=\{\fa_m(v)\}$ is a $\Phi^g$-graded filtration of $R$. 
We need the following known facts.
\begin{lem}[see \cite{Tei03, Tei14}]\label{lem-quasi}
With the above notations, the following statement holds:

{\rm (1)} (\cite[Page 8]{Tei14}) If $\gr_{\cF}R$ is an integral domain, then $v=v_{\cF}$ is a valuation centered at $o\in X$. In particular, $v(fg)=v(f)+v(g)$ for any $f,g\in R$.

{\rm (2) (\cite{Pil94})} A valuation $v$ is quasi-monomial if and only if the Krull dimension of $\gr_v R$ is the same as the Krull dimension of $R$.
\end{lem}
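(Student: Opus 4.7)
\textbf{Plan for Lemma \ref{lem-quasi}.} For part (1), the content is that multiplicativity $v(fg)=v(f)+v(g)$ and the ultrametric inequality $v(f+g)\ge \min\{v(f),v(g)\}$ both hold, once $\gr_{\cF}R$ is assumed to be a domain. The ultrametric inequality is immediate from property (i) of a graded filtration. For multiplicativity, my plan is the standard ``initial term'' argument: given nonzero $f,g\in R$ with $v(f)=m_1$ and $v(g)=m_2$, the classes $\mathrm{in}(f)\in \cF^{m_1}/\cF^{>m_1}$ and $\mathrm{in}(g)\in \cF^{m_2}/\cF^{>m_2}$ are nonzero by the very definition of $v_{\cF}$; hence their product is nonzero in $\gr_{\cF}R$ (using that it is a domain), which forces $fg\notin \cF^{>m_1+m_2}$ and therefore $v(fg)=m_1+m_2$. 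The condition $\bigcap_m \cF^m=(0)$ shows $v$ is finite on nonzero elements, so we may extend $v$ to $K(X)$ by $v(f/g):=v(f)-v(g)$; this is well-defined by multiplicativity. The fact that each $\cF^m$ is $\fm$-primary gives $v\ge 0$ on $R$ with $v>0$ exactly on $\fm$, i.e.\ the center is $o$.

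For part (2), I would cite Piltant's theorem, but the plan of a proof is as follows. One direction is constructive: if $v=v_\alpha$ is quasi-monomial in the sense of Definition \ref{d-quasimono}, computed on a log-resolution $Z\to X$ with coordinates $z_1,\dots,z_r$ at $\eta\in \bigcap E_i$, then the associated graded ring is explicitly a polynomial/monomial algebra over the residue field of $\eta$, which has Krull dimension equal to $\mathrm{rat.rank}(v)+\mathrm{tr.deg}(\kappa(v)/\kappa(o))$. By the standard computation for quasi-monomial valuations this sum equals $\dim R$, giving the Abhyankar equality.

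Conversely, for any valuation $v$ centered at $o$, one has the general Abhyankar inequality
\[
\dim \gr_v R \;\le\; \mathrm{rat.rank}(v)+\mathrm{tr.deg}(\kappa(v)/\kappa(o)) \;\le\; \dim R.
\]
The first inequality comes from bounding the Krull dimension of $\gr_v R$ by the dimension of the value semigroup plus the transcendence degree of the residue extension; the second is the classical Abhyankar inequality. Hence the assumption $\dim \gr_v R=\dim R$ forces both to be equalities, and in particular $v$ is an Abhyankar valuation, which coincides with being quasi-monomial by \cite[Proposition 2.8]{ELS03}.

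The main subtlety is the inequality $\dim \gr_v R\le \mathrm{rat.rank}(v)+\mathrm{tr.deg}(\kappa(v)/\kappa(o))$ in the converse direction, which is the heart of Piltant's result; it is obtained by analyzing chains of prime ideals in $\gr_v R$ via the natural grading by the value group, and bounding their length in terms of the rational rank together with the transcendence degree of the residue field. Since the statement is classical and both directions are attributed to the cited sources, my proposal would be to give the short argument for (1) in full and briefly indicate the chain of inequalities above for (2), referring to \cite{Pil94} and \cite{Tei03, Tei14} for the detailed dimension-theoretic estimates.
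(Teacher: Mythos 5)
The paper states this lemma as a known result with references to \cite{Tei03, Tei14, Pil94} and gives no proof, so there is no internal argument to compare against; what follows assesses your sketch on its own terms.

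Your argument for (1) is correct and is exactly the standard one: the ultrametric inequality comes from the filtration being decreasing, multiplicativity comes from the domain hypothesis on $\gr_{\cF}R$ via the initial-form argument, $\bigcap_{m\ge 0}\cF^m=(0)$ makes $v$ finite on nonzero elements, multiplicativity lets you extend $v$ to $K(X)$, and the $\fm$-primariness of each $\cF^m$ for $m>0$ together with multiplicativity forces $v>0$ exactly on $\fm$, i.e.\ the center is $o$.

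Your plan for (2) has the right architecture (Abhyankar equality, Piltant's bound on $\dim\gr_v R$, and the identification of Abhyankar with quasi-monomial via \cite[Prop.~2.8]{ELS03}), and you are explicit about delegating the dimension-theoretic estimates to \cite{Pil94}. One slip to fix in the forward direction: $\gr_v R$ for $R=\cO_{X,o}$ is \emph{not} ``a polynomial/monomial algebra over the residue field of $\eta$.'' Its degree-zero piece is $R/\fm=\bC$, so $\kappa(\eta)\not\subset\gr_v R$; the ring that is cleanly a monomial algebra over $\kappa(\eta)$ is the associated graded of $\cO_{Z,\eta}$ on the log-resolution $Z$, and that ring has Krull dimension equal to $\mathrm{rat.\,rank}(v)$, not $\dim R$. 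A cleaner route to $\dim\gr_v R=\dim R$ when $v$ is quasi-monomial—and the one suggested by the material quoted in the paper just above this lemma, namely \cite[Prop.~2.3]{Tei03}—is the faithful flatness of the generalized Rees algebra $\mathcal{R}_v$ over $k[\Phi^g_+]$, which exhibits $\Spec(\gr_v R)$ as the special fiber of a flat degeneration of $\Spec R$; flatness preserves the relative dimension. Your treatment of the converse direction (Piltant's inequality plus Abhyankar forcing equality) is correct in outline.
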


\subsection{Singularities with good torus actions}

For general results of $T$-varieties in algebraic geometry,  see  \cite{AH06, PS08, LS13, AIPSV11}. 
Assume $X={\rm Spec}_{\bC}(R)$ is an affine variety with $\bQ$-Gorenstein klt singularities. Denote by $T$ the complex torus $(\bC^*)^r$. Assume $X$ admits a good $T$-action in the following sense. 
\begin{defn}[see \cite{LS13}]\label{d-good}
Let $X$ be a normal affine variety. We say that a $T$-action on $X$ is {\it good} if 
it is effective and there is a unique closed point $x\in X$ that is in the orbit closure of any $T$-orbit. We shall call $x$ the vertex point of the $T$-variety $X$.
\end{defn}
For a singularity $x\in X$ (sometimes also denote by $o\in X$) with a good $T$-action, we will also call it a $T$-singularity for simplicity. 

Let $N={\rm Hom}(\mathbb{C}^*, T)$ be the co-weight lattice and $M=N^*$ the weight lattice. We have a weight space decomposition 
\[
R=\bigoplus_{\alpha\in \Gamma} R_\alpha \mbox{  where \ } \Gamma =\{ \alpha\in  M  |\ R_{\alpha}\neq 0\}.
\]
The action being good implies $R_0=\mathbb{C}$, which will always be assumed in the below. An ideal $\ka$ is called homogeneous if $\fa=\bigoplus_{\alpha\in \Gamma}\ka\cap R_\alpha$. Denote by $\sigma^{\vee}\subset M_{\mathbb{Q}}$ the cone generated by $\Gamma$ over $\mathbb{Q}$, which is called the {\it weight cone} (or the {\it moment cone}).  
The cone $\sigma\subset N_{\bR}$, dual to $\sigma^\vee$, is the same as the following set 
$$\mathfrak{t}^+_{\bR}:=\{\ \xi \in N_{\mathbb{R}}\ \ | \  \langle \alpha, \xi \rangle>0 \mbox{ for any }\alpha\in \Gamma\backslash\{0\}\}.$$

For the convenience and by comparison with Sasaki geometry, we will introduce:
\begin{defn}\label{defn-Reeb}
With the above notations, a vector $\xi\in \ft^+_\bR$ will be called a Reeb vector on the $T$-variety $X$.
\end{defn} 


We recall the following structure results for any $T$-varieties.
\begin{thm}[{\cite[Theorem 3.4]{AH06}}]
Let $X={\rm Spec}(R)$ be a normal affine variety and suppose $T={\rm Spec}\left(\bC[M]\right)$ acts effectively on $X$ with the weight cone $\sigma^{\vee} \subset M_{\bQ}$. Then there exists a normal semiprojective variety $Y$ such that $\pi \colon X\to Y$ is the good quotient under $T$-action and a polyhedral divisor $\fD$ and there is an isomorphism of graded algebras:
\[
R\cong H^0(X, \mathcal{O}_X)\cong \bigoplus_{u\in \sigma^{\vee} \cap M} H^0 \big(Y, \mathcal{O}(\fD(u))\big)=: R(Y, \fD).
\] 
In other words, $X$ is equal to ${\rm Spec}_\bC\big( \bigoplus_{u\in \sigma^{\vee} \cap M} H^0(Y, \mathcal{O}(\fD(u)) ) \big)$.
\end{thm}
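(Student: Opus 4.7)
The plan is to reconstruct the base $Y$ and the polyhedral divisor $\fD$ intrinsically from the $M$-graded structure on $R=\bigoplus_{u\in \sigma^\vee\cap M}R_u$. First, to build $Y$, I choose a lattice point $u_0$ in the relative interior of $\sigma^\vee$; the subring $\bigoplus_{k\ge 0}R_{ku_0}$ is a finitely generated $\bN$-graded $\bC$-algebra, so $Y_0:=\mathrm{Proj}\bigl(\bigoplus_{k\ge 0}R_{ku_0}\bigr)$ is projective over $\Spec(\bC)$. To remove the dependence on $u_0$ and obtain a single model over which \emph{every} weight piece $R_u$ can be read off as global sections of a divisor, I pass to a common $T$-equivariant normal modification, for example the normalization of the Chow quotient of $X$ by $T$; this yields a normal semiprojective $Y$ together with a morphism $\pi\colon X\to Y$ that is a good quotient over a big open subset.

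Next, I associate a $\bQ$-Weil divisor $\fD(u)$ on $Y$ to each $u\in \sigma^\vee\cap M$. Each prime divisor $Z\subset Y$ determines a $T$-invariant valuation $\wtv_Z$ on $K(X)$, namely the pullback of $\ord_Z$ via $\pi$. For such $Z$ and $u$, set
\[
h_Z(u):=\min\{\wtv_Z(f):0\ne f\in R_u\}.
\]
The product rule $R_u\cdot R_{u'}\subseteq R_{u+u'}$ together with $\wtv_Z(fg)=\wtv_Z(f)+\wtv_Z(g)$ forces $h_Z$ to be subadditive on $\sigma^\vee$, and a scaling argument using $R_{ku}$ shows it is positively homogeneous of degree one; hence $h_Z$ is convex and piecewise linear there. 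By convex duality there is a polyhedron $\Delta_Z\subset N_\bR$ with recession cone contained in $\sigma$ such that, with the Altmann--Hausen sign convention, $h_Z(u)=-\min_{v\in \Delta_Z}\langle u,v\rangle$. Setting $\fD(u):=\sum_Z (\min_{v\in \Delta_Z}\langle u,v\rangle)\cdot Z$, a direct comparison using the standard description of divisorial algebras on normal varieties identifies $R_u$ with $H^0(Y,\cO_Y(\fD(u)))$.

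Finally, I package the data into the polyhedral divisor $\fD:=\sum_Z \Delta_Z\otimes Z$; by construction its evaluation recovers the divisors above and the identification $R\cong R(Y,\fD)$ follows weight-by-weight. The main obstacle is verifying the integrality axioms that make $(Y,\fD)$ a genuine polyhedral divisor in the Altmann--Hausen sense: only finitely many prime divisors $Z$ satisfy $\Delta_Z\ne \sigma$; $\fD(u)$ is semiample on $Y$ for all $u\in \sigma^\vee\cap M$ and big for $u$ in its relative interior; and $Y$ can be chosen semiprojective so the graded isomorphism holds globally. Finiteness of the contributing divisors is obtained by constructing a $T$-equivariant normal completion $\oX$ of $X$ \`a la Sumihiro and controlling which boundary divisors of $\oX$ dominate prime divisors of $Y$; semiampleness and bigness follow from the finite generation of $R$ over $\bC$ together with the choice of $u_0$ in the interior of $\sigma^\vee$. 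The most delicate step is ensuring that a \emph{single} pair $(Y,\fD)$ works simultaneously for \emph{all} weights $u$, rather than requiring a separate model for each; this is achieved by passing to a sufficiently refined common modification and using normality of $R$ crucially to pin down codimension-one behaviour.
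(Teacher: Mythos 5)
The paper does not prove this theorem; it simply cites it as \cite[Theorem 3.4]{AH06}, so there is no ``paper's own proof'' to compare against. I will therefore evaluate your sketch on its own merits and against the Altmann--Hausen proof.

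Your outline is not the Altmann--Hausen argument: they proceed via variation of GIT, forming the $w$-semistable loci $X^{ss}(w)$ for $w$ ranging over the weight cone, building a ``limit quotient'' that dominates all the GIT quotients $Y_w$, and reading off the polyhedral divisor from the chamber structure. Your route is valuation-theoretic: build $Y$ once, then recover each slice $\Delta_Z$ of $\fD$ by convex duality from a support-type function on $\sigma^\vee$. This is a legitimate and in some ways cleaner package, and it dovetails nicely with the later parts of the paper (Theorem \ref{t-Tcano} already describes $T$-invariant valuations in terms of $(v^{(0)},\xi)$ pairs). But several steps need more than a gesture.

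The most serious issue is the phrase ``the pullback of $\ord_Z$ via $\pi$.'' For a good quotient $\pi\colon X\to Y$, the fibre over the generic point of $Z$ is typically reducible: each vertex $v$ of $\Delta_Z$ contributes a distinct vertical prime divisor $D_{(Z,v)}$ on $X$, and each gives its own $T$-invariant extension of $\ord_Z$. Fixing any one of them does produce a subadditive $h_Z$, but its Legendre dual recovers $\Delta_Z$ only up to translation by the chosen vertex, and a priori only as a convex set rather than a polyhedron. You must (i) make the choice of extension explicit and show the resulting translation freedom is exactly the allowed principal modification of $\fD$, and (ii) prove piecewise linearity of $h_Z$ --- this is where finite generation of $R$ enters and is not automatic from subadditivity alone. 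A second gap: the isomorphism $R_u\cong H^0(Y,\cO(\fD(u)))$ requires the reverse containment $H^0\subseteq R_u$, which is the actual content of the theorem. You need to run the Serre-type criterion over \emph{all} $T$-invariant prime divisors of $X$ --- both the vertical $D_{(Z,v)}$ and the horizontal $E_\rho$ --- and use normality of $X$ to conclude; the sketch acknowledges this but leaves it at the level of slogan. Finally, a small error: $\mathrm{Proj}\bigl(\bigoplus_{k\ge 0}R_{ku_0}\bigr)$ is projective over $\Spec(R_0)=\Spec(R^T)$, not over $\Spec(\bC)$; the theorem assumes neither $R_0=\bC$ nor a good action, which is precisely why the conclusion is ``semiprojective,'' and your construction of $Y$ should carry this base along.
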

Here a variety $Y$ being semiprojective variety means it is projective over an affine variety $Z$, which can be chosen to be
$Z={\rm Spec}(H^0(Y,\mathcal{O}_Y))$. 

We refer to Example \ref{exmp-Tvar} for a concrete example.
From now on, let $X$ be an affine variety with a good action such that $o\in X$ is the vertex point. Then we know that $Y$ is projective since 
$$H^0(Y,\mathcal{O}_Y)=R^T=R_0=\mathbb{C}$$ (see \cite{LS13}). We collect some known results about valuations on $T$-varieties. 

\begin{thm}\label{t-Tcano}
Assume a $T$-variety $X$ is determined by the data $(Y, \sigma, \fD)$ such that $Y$ is a projective variety, $\sigma$ is a maximal dimension cone in $N_{\bR}$ and $\fD$ is a polyhedral divisor.\begin{enumerate}
\item
For any $T$-invariant quasi-monomial valuation $v$, there exist a quasi-monomial valuation $v^{(0)}$ over $Y$ and $\xi\in M_{\bR}$ such that for any $f\cdot \chi^u\in R_u$, we have:
\[
v(f\cdot \chi^u)=v^{(0)}(f)+\langle u, \xi\rangle.
\]
We will use $(v^{(0)}, \xi)$ to denote such a valuation. 

\item
$T$-invariant prime divisors on $X$ are either vertical or horizontal. Any vertical divisor is determined by a divisor $Z$ on $Y$ and a vertex $v$ of $\fD_Z$, and will be denoted by $D_{(Z,v)}$. Any horizontal divisor is determined by a ray $\rho$ of $\sigma$ and will be denoted by $E_\rho$.

\item Let $D$ be a $T$-invariant vertical effective $\mathbb{Q}$-divisor.  If $K_X+D$ is $\bQ$-Cartier, then the log canonical divisor has a representation $K_X+D=\pi^*H+{\rm div}(\chi^{-u_0})$ where $H=\sum_Z a_Z\cdot Z$ is a principal $\bQ$-divisor on $Y$ and $u_0\in M_{\bQ}$. Moreover, the log discrepancy of the horizontal divisor $E_\rho$ is given by:
\begin{equation}
A_{(X,D)}(E_\rho)=\langle u_0, n_\rho\rangle,
\end{equation}
where $n_\rho$ is the primitive vector along the ray $\rho$.

\end{enumerate}
\end{thm}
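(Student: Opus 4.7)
The three parts together encode the Altmann--Hausen dictionary for $T$-varieties at the level of valuations, invariant divisors, and the log canonical class. My strategy is to exploit two pieces of structure in tandem: the weight-space decomposition $R=\bigoplus_{u\in\Gamma}R_u$ and the good quotient $\pi\colon X\to Y$. With these in hand, parts (1) and (2) become restriction-to-quotient statements, while (3) follows from a coefficient comparison in a $T$-invariant representation of $K_X+D$.

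\emph{Part (1).} I restrict $v$ to $K(X)^T=K(Y)$ to obtain a valuation $v^{(0)}$. For any $0\neq f\chi^u\in R_u$ with $f\in K(Y)$, $T$-invariance of $v$ forces $v(f\chi^u)-v^{(0)}(f)$ to depend only on the weight $u$, since two representatives of the same weight differ by a unit in $R_0^\times=\bC^\times$. The function $u\mapsto v(\chi^u)$ is additive on $M$ by the valuation axiom, hence extends $\bR$-linearly to a vector $\xi\in N_\bR$ with $v(\chi^u)=\langle u,\xi\rangle$, yielding the claimed decomposition $v(f\chi^u)=v^{(0)}(f)+\langle u,\xi\rangle$. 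Quasi-monomiality of $v^{(0)}$ follows from Lemma~\ref{lem-quasi}(2): the $T$-grading on $R$ lifts to $\gr_v R$ with $(\gr_v R)^T=\gr_{v^{(0)}}R^T$, and since the Krull dimension of $\gr_v R$ equals $\dim X=\dim T+\dim Y$ (by quasi-monomiality of $v$) while the good $T$-action has relative dimension $\dim T$, the invariant subring has Krull dimension $\dim Y$, making $v^{(0)}$ quasi-monomial.

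\emph{Part (2).} A $T$-invariant prime divisor $E\subset X$ either dominates $Y$ (horizontal) or maps to a prime divisor $Z\subset Y$ (vertical). For horizontal $E$, restriction to the generic fiber $X_\eta$---a toric variety for $T_{K(Y)}$ with fan $\sigma$---shows that horizontal invariant primes are in bijection with rays of $\sigma$, giving the $E_\rho$. For vertical $E$ over $Z$, the Altmann--Hausen construction \cite{AH06,PS08} identifies the components of $\pi^{-1}(Z)$ with vertices of the polyhedron $\fD_Z$, whence the notation $D_{(Z,v)}$.

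\emph{Part (3).} Since $K_X+D$ is $T$-invariant and $\bQ$-Cartier, some multiple $m(K_X+D)$ admits a $T$-linearization. Picking a $T$-semi-invariant rational section of weight $m u_0\in M$ and pulling the residual invariant factor back from $Y$ yields the representation $K_X+D=\pi^*H+\mathrm{div}(\chi^{-u_0})$ with $u_0\in M_\bQ$ and $H=\sum_Z a_Z Z$. By parts (1) and (2), the divisorial valuation $\ord_{E_\rho}$ satisfies $v^{(0)}=0$ and $\xi=n_\rho$, so $\ord_{E_\rho}(\pi^*H)=0$ and $\ord_{E_\rho}(\chi^{-u_0})=-\langle u_0,n_\rho\rangle$. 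Extracting the coefficient of $E_\rho$ from both sides of the divisor equation, and using that the $T$-invariant choice of $K_X$ has order $-A_{(X,D)}(E_\rho)$ along any horizontal $E_\rho\not\subset D$, yields the equality $A_{(X,D)}(E_\rho)=\langle u_0,n_\rho\rangle$.

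\emph{Main difficulty.} The most delicate step is part (1): the decomposition $v=v^{(0)}+\xi$ must be faithful at the level of graded rings, which hinges on $T$-invariance and a precise Krull-dimension count for the quotient. In part (3), one must be careful that the representation $K_X+D=\pi^*H+\mathrm{div}(\chi^{-u_0})$ is an equality of Weil divisors for a specific $T$-semi-invariant choice of $K_X$, not merely a $\bQ$-linear equivalence; otherwise the coefficient comparison giving the log-discrepancy formula would not be meaningful.
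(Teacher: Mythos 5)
The paper itself does not give a self-contained proof of Theorem \ref{t-Tcano}: it refers to \cite{AIPSV11} for part (1) (rational rank one, then limits), \cite[Proposition 3.13]{PS08} for part (2), and \cite[Section 4]{LS13} for part (3). Your proposal is a genuine attempt at a direct argument, which is welcome, but two of the steps in part (1) are incorrect as written.

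First, the claim that the difference $v(f\chi^u)-v^{(0)}(f)$ is independent of $f$ ``since two representatives of the same weight differ by a unit in $R_0^\times=\bC^\times$'' is false: if $f\chi^u,g\chi^u\in R_u$ with $f,g\in K(Y)$ nonzero, then $f/g$ is an element of $K(Y)^\times$, not a nonzero constant (recall $Y$ is a projective variety, so $K(Y)\neq\bC$ unless $\dim Y=0$). The correct reasoning is simply $v(f\chi^u)-v(g\chi^u)=v\bigl((f\chi^u)/(g\chi^u)\bigr)=v(f/g)=v^{(0)}(f/g)=v^{(0)}(f)-v^{(0)}(g)$, which incidentally does not use $T$-invariance at all; the additivity $u\mapsto v(\chi^u)$ then produces $\xi$. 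Second, and more seriously, the Krull-dimension argument for the quasi-monomiality of $v^{(0)}$ does not work. Because the $T$-action is good, $R^T=R_0=\bC$, so $(\gr_vR)^T=\gr_{v^{(0)}}R^T=\bC$ has Krull dimension $0$, not $\dim Y$. Piltant's criterion (Lemma \ref{lem-quasi}(2)) would have to be applied to $\gr_{v^{(0)}}\cO_{Y,\zeta}$ where $\zeta$ is the center of $v^{(0)}$ on the \emph{projective} variety $Y$; this local ring does not arise as a ring of $T$-invariants of anything in $\gr_vR$. The cleanest route remains the one the paper takes: deduce the decomposition for divisorial (rational-rank one) $T$-invariant valuations from \cite{AIPSV11}, then pass to the limit.

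In part (3), be careful that your coefficient comparison, as stated, only literally applies when $E_\rho$ is a prime divisor on $X$, in which case $A_{(X,D)}(E_\rho)=1-\mathrm{coeff}_{E_\rho}(D)=1$ (since $D$ is vertical) and the equation merely records $\mathrm{ord}_{E_\rho}(K_X)=-1$ for the $T$-equivariant choice of $K_X$ — essentially the normalization, not a nontrivial computation. For the content of the formula (which underlies the extension to $A_{(X,D)}(\wt_\xi)=\langle u_0,\xi\rangle$ in Lemma \ref{lem-ldwt}), one needs to compute the discrepancy for rays $\rho$ whose divisor $E_\rho$ is only extracted on a higher $T$-equivariant model $\tilde X\to X$; that requires pulling $K_X+D$ back to the toroidal model and using the toric discrepancy formula, as done in \cite{LS13}.
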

\begin{proof} In the first statement, the case for valuations with rational rank 1 follows from \cite[11]{AIPSV11}. It can be extended to quasi-monomial valuations trivially since any such valuation is a limit of valuation of rational rank 1. The second statement is in \cite[Proposition 3.13]{PS08} and the third statement  is in  \cite[Section 4]{LS13}.
\end{proof}

As mentioned, we have the identity $\sigma=\ft^+_{\bR}$. 
For any $\xi \in \mathfrak{t}^+_{\bR}$, we can define a valuation 
$$\wt_{\xi}(f) = \min_{\alpha\in \Gamma}\{\langle \alpha,\xi \rangle \ | \ f_{\alpha}\neq  0\}.$$
It is easy to verify that $\wt_\xi\in \Val_{X,x}$. We also define the rank of $\xi$, denoted by ${\rm rk}(\xi)$, to be the dimension of the subtorus $T_\xi$ (as a subgroup of $T$) generated by $\xi\in \ft$.

\begin{lem}\label{lem-Tqmv}
For any $\xi\in \mathfrak{t}^+_{\bR}$, $\wt_\xi$ is a quasi-monomial valuation of rational rank equal to the rank of $\xi$. Moreover, the center of $\wt_\xi$ is $x$.
\end{lem}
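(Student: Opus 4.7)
The plan is to reduce every assertion to the weight decomposition $R=\bigoplus_{\alpha\in\Gamma}R_\alpha$ and then apply the criteria of Lemma \ref{lem-quasi}. First I would observe that $\wt_\xi$ is the order function of the decreasing filtration $\mathcal{F}^{\ge c}R:=\bigoplus_{\langle\alpha,\xi\rangle\ge c}R_\alpha$ (which is a genuine filtration by $\fm$-primary ideals because $\xi$ lies in the interior of $\sigma=\ft^+_{\bR}$ and each $R_\alpha$ is finite dimensional, using $R_0=\bC$). Since $R_\alpha\cdot R_\beta\subseteq R_{\alpha+\beta}$, no leading terms cancel under multiplication, so $\gr_{\wt_\xi}(R)$ is canonically identified with $R$ itself as a $\bC$-algebra; only the grading has changed, with the $c$-piece being $\bigoplus_{\langle\alpha,\xi\rangle=c}R_\alpha$. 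In particular $\gr_{\wt_\xi}(R)$ is an integral domain of Krull dimension $\dim R$, so Lemma \ref{lem-quasi}(1) confirms that $\wt_\xi$ is a valuation (re-proving the observation stated just above the lemma) and Lemma \ref{lem-quasi}(2) immediately gives that $\wt_\xi$ is quasi-monomial.

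For the rational rank, the value group of $\wt_\xi$ is the subgroup of $\bR$ generated by $\{\langle\alpha,\xi\rangle:\alpha\in\Gamma\}$. Effectiveness of the $T$-action forces $\bZ\Gamma=M$: otherwise the annihilator in $T$ of $\bZ\Gamma$ would be a nontrivial subgroup acting trivially on every weight space of $R$, hence on $X$. Consequently the value group equals the image of the linear map $M\to\bR$, $m\mapsto\langle m,\xi\rangle$, and its rank is the codimension of $\xi^\perp\cap M$ in $M$, i.e. the dimension of the smallest rational subspace of $N_{\bR}$ containing $\xi$. By definition this number is $\dim T_\xi=\mathrm{rk}(\xi)$.

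For the center, the hypothesis $\xi\in\ft^+_{\bR}$ gives $\langle\alpha,\xi\rangle>0$ for every $\alpha\in\Gamma\setminus\{0\}$, and the good-action assumption $R_0=\bC$ yields $\fm=\bigoplus_{\alpha\ne 0}R_\alpha$. Thus every nonzero $f\in\fm$ satisfies $\wt_\xi(f)>0$ while every unit has $\wt_\xi=0$, so the center of $\wt_\xi$ is precisely $\fm$, i.e., the vertex $x$. I expect the only genuinely non-routine point to be the lattice-theoretic step: carefully verifying $\bZ\Gamma=M$ from effectiveness and then identifying the rank of the image of $M\to\bR$ with $\dim T_\xi$. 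Once this is in place, the rest is a direct unfolding of the weight decomposition combined with Lemma \ref{lem-quasi}.
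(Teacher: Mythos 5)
Your proof is correct, and it takes a genuinely different route from the paper's main argument. The paper proves quasi-monomiality geometrically: it invokes the Altmann--Hausen structure theorem to write $\tilde{X}\to X$ over the quotient $Y$, identifies a generic fiber with the toric variety $Z$ of the cone $\sigma$, and then builds a toroidal resolution $\mathscr{X}\to X$ on which $\wt_\xi$ is literally a monomial valuation; as a by-product it reads off that the Abhyankar inequality is an equality. Your proof instead observes that the $\wt_\xi$-filtration is the direct-sum filtration $\cF^c=\bigoplus_{\langle\alpha,\xi\rangle\ge c}R_\alpha$, so $\gr_{\wt_\xi}R$ is the ring $R$ itself with a regraded product; hence $\gr_{\wt_\xi}R$ is a domain of the same Krull dimension and Lemma~\ref{lem-quasi} applies directly. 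This is exactly the alternative the paper flags, without elaboration, in the Remark immediately following the lemma, and your argument also re-derives $\wt_\xi$ being a valuation from Lemma~\ref{lem-quasi}(1), which the paper asserts separately.

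Two things you do more explicitly than the paper deserve mention. For the rational rank the paper simply states that it equals $\mathrm{rk}(\xi)$, whereas you actually prove it: effectiveness gives $\bZ\Gamma=M$, so the value group is $\langle M,\xi\rangle$, and its rank is $r-\operatorname{rank}(\xi^\perp\cap M)=\dim T_\xi$. (The key point you correctly use is that $\xi^\perp\cap M$ is the same as $W^\perp\cap M$ for $W$ the smallest rational subspace containing $\xi$, since any $m\in M$ with $\langle m,\xi\rangle=0$ cuts out a rational hyperplane containing $\xi$ and hence containing $W$.) Likewise, the paper's proof never addresses the ``center is $x$'' part of the statement, while you verify it directly from $R_0=\bC$ and $\langle\alpha,\xi\rangle>0$ for $\alpha\in\Gamma\setminus\{0\}$. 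What the paper's geometric route buys, and yours does not, is an explicit log smooth model $\mathscr{X}\to X$ on which $\wt_\xi$ is monomial; that concrete model is then reused in the proof of Lemma~\ref{lem-ldwt} to compute $A_{(X,D)}(\wt_\xi)$. Your argument is self-contained and shorter for the present lemma, but if you were going to continue to Lemma~\ref{lem-ldwt} you would still need something like the paper's model construction.
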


\begin{proof}
This follows from the work on $T$-varieties. By \cite{AH06}, $X={\rm Spec}_Y \left(R(Y, \fD)\right)$ for a polyhedral divisor $\fD$ over a semi projective variety $Y$. The rational 
rank of $\wt_\xi$ is equal to the rank of $\xi$. Let $Y_{\xi}=X\sslash T_{\xi}$, then 
$${\rm tr.deg.}(\wt_\xi)=\dim Y_{\xi}=\dim X-\dim T_{\xi}=\dim X-{\rm rat.rk.} \wt_\xi.$$ 

More explicitly, we will realize $\wt_{\xi}$ as a monomial valuation on a log smooth model. By \cite[Theorem 3.4]{AH06}, if we let 
\[
\tilde{X}={\rm Spec}_Y \bigoplus_{u\in \sigma^{\vee}\cap M}H^0(Y, \fD(u)),
\]
then there exists a birational morphism $\mu: \tilde{X}\rightarrow X$ and a projection $\pi: \tilde{X}\rightarrow Y$ such tfiber generic fiber of $\pi$ is a normal affine toric variety of dimension $r$. 
This toric variety, denoted by $Z$, is associated to the polyhedral cone $\sigma\subset N_{\bR}$. 
Each valuation $\wt_{\xi}$ corresponds to a vector $\xi$ contained in the interior of $\sigma$.  Let $U$ be a Zariski open set of $Y$ such that the fiber of $\pi: \tilde{X}\rightarrow X$ over any point $p\in U$ is isomorphic to $Z$. Then $\wt_\xi$ is the natural extension of the corresponding toric valuation on $Z$. As a consequence, it is a quasi-monomial valuation on $U\times Z$ and hence on the original $X$.

Next we can also realize $\wt_\xi$ on a log smooth model. 
Let $\tilde{Z}\rightarrow Z$ be a fixed toric resolution of singularities.
Then we can follow the construction in \cite[Section 2]{LS13} to obtain a toroidal resolution $\mathscr{X}\rightarrow X$ that dominates $\tilde{X}$ and its restriction over $U$ is isomorphic to $\tilde{Z}\times U$. Let $q\in \tilde{Z}$ be a contracting point of the torus action generated by $\xi$ and choose a point $p\in U$. Then it is easy to see that $\wt_{\xi}$ is realized as a monomial valuation with non-negative weights at $(p, q)\in U\times \tilde{Z}$.  
\end{proof}
\begin{rem}
The quasi-monomial property also follows from Lemma \ref{lem-quasi}.(2).
\end{rem}

By the construction in the above proof, the log discrepancy of $\wt_{\xi}$ can indeed be calculated in a similar way as in the toric case, and the toric case is well-known (see e.g. \cite{Amb06}, \cite[Proposition 7.2]{BJ17}). Assume $X$ is a normal affine variety with $\bQ$-Gorenstein klt singularities and a good $T$-action. Let $D$ be a $T$-invariant vertical divisor. As in \cite[2.7]{MSY08}, we can solve for a nowhere-vanishing section $T$-equivariant section $s$ of $m(K_X+D)$ where $m$ is sufficiently divisible (also see Remark \ref{rem-Tnonvanish}). 
\begin{lem}\label{lem-ldwt}
Using the same notion as in the Theorem \ref{t-Tcano}, the log discrepancy of $\wt_{\xi}$ is given by:
$A_{(X,D)}(\wt_\xi)=\langle u_0, \xi \rangle$. Moreover, let $s$ be a $T$-equivariant nowhere-vanishing holomorphic section of $|-m(K_X+D)|$, and denote $\cL_\xi$  the Lie derivative with respect to the holomorphic vector field associated to $\xi$. Then $A_{(X,D)}(\xi)=\lambda$ if and only if
\[
\mathcal{L}_{\xi}(s)=m \lambda s \quad \text{ for } \quad \lambda>0.
\]
\end{lem}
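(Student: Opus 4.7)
The plan is to reduce the computation of $A_{(X,D)}(\wt_\xi)$ to an explicit monomial calculation on a log smooth toroidal resolution and then to deduce the Lie-derivative characterization from the $T$-weight of a distinguished equivariant section. First, as in the proof of Lemma \ref{lem-Tqmv}, I would take a $T$-equivariant toroidal resolution $\mu\colon \mathscr{X}\to X$ which is locally of the form $U\times \tilde{Z}$, where $\tilde{Z}$ is a smooth toric refinement of the generic toric fiber $Z$ associated to $\sigma=\ft^+_\bR$. For $\xi\in \ft^+_\bR$ lying in the interior of a top-dimensional cone $\tau$ of the fan of $\tilde{Z}$, with primitive ray generators $\rho_1,\dots,\rho_n$ and dual basis $\{e_1,\dots,e_n\}$ of $M_\bR$, the valuation $\wt_\xi$ is realized at a point $(p,q)\in U\times \tilde{Z}$ as the monomial valuation whose weight on the local coordinate cutting out $E_{\rho_i}$ is $\alpha_i=\langle e_i,\xi\rangle\ge 0$.

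For the first formula, I would apply the standard monomial-valuation formula for log discrepancy on the log smooth model $\mathscr{X}$,
\[
A_{(X,D)}(\wt_\xi)=\sum_{i=1}^{n}\alpha_i\cdot a_l(E_{\rho_i};X,D).
\]
Each $E_{\rho_i}$ is a horizontal divisor on the $T$-variety $\mathscr{X}$ over $X$, and since the character $\chi^{-u_0}$ appearing in $K_X+D=\pi^*H+\mathrm{div}(\chi^{-u_0})$ is preserved under the $T$-equivariant birational modification $\mu$, Theorem \ref{t-Tcano}.3 applied on $\mathscr{X}$ yields $a_l(E_{\rho_i};X,D)=\langle u_0,\rho_i\rangle$. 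Combined with the duality $\xi=\sum_i\alpha_i\rho_i$ this gives
\[
A_{(X,D)}(\wt_\xi)=\sum_i\langle e_i,\xi\rangle\langle u_0,\rho_i\rangle=\langle u_0,\xi\rangle.
\]
For $\xi$ that happens to lie on a proper face of $\tau$, I would either subdivide $\tilde{Z}$ further so that $\xi$ is pushed into the interior of a top-dimensional cone, or approximate $\xi$ by rational Reeb vectors via Lemma \ref{l-approvector} and pass to the limit, using continuity of $A_{(X,D)}$ on quasi-monomial valuations together with the linearity of $\langle u_0,\cdot\rangle$.

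For the second statement, I would rewrite the identity as $-m(K_X+D)=\mathrm{div}(\chi^{mu_0})-m\pi^*H$, so that for $m$ sufficiently divisible a $T$-equivariant nowhere-vanishing section of $-m(K_X+D)$ on the smooth locus of $X$ is of the form $s_0=\chi^{mu_0}\cdot\pi^*\eta$, with $\eta$ a $T$-invariant trivialization pulled back from $Y$; any other $T$-equivariant nowhere-vanishing section $s$ differs from $s_0$ by a nonzero scalar and thus carries the same $T$-weight $mu_0$. Since $\mathcal{L}_\xi$ acts on $\chi^{mu_0}$ by multiplication by $m\langle u_0,\xi\rangle$ and annihilates the $T$-invariant factor $\pi^*\eta$, I conclude
\[
\mathcal{L}_\xi s=m\langle u_0,\xi\rangle s=m\cdot A_{(X,D)}(\wt_\xi)\cdot s
\]
using part (1), which gives both directions of the biconditional (the ``if'' direction follows by dividing, since $s$ is nowhere-vanishing).

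The principal obstacle I anticipate is the technical bookkeeping around ``the same $u_0$ works on $\mathscr{X}$'': this requires tracking how the polyhedral-divisor data from Theorem \ref{t-Tcano} transforms under the $T$-equivariant toroidal resolution $\mu$, and confirming that the monomial expression at $(p,q)$ for $\wt_\xi$ is compatible with the global $T$-action and the character $\chi^{u_0}$. Existence of a $T$-equivariant nowhere-vanishing pluricanonical section on the smooth locus is standard (as in \cite{MSY08}) but needs to be invoked with care at the non-isolated vertex point.
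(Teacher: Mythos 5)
Your approach to both statements is essentially the same as the paper's: reduce to the toroidal resolution $\mathscr{X}\to X$ from Lemma \ref{lem-Tqmv} for the first part, and compute the $T$-weight of a distinguished equivariant pluricanonical section for the second. The second half of your argument is actually \emph{more} explicit than the paper's, which simply reduces to rational $\xi$ by linearity of $\xi\mapsto\cL_\xi(s)/s$ and then cites \cite{Li15a} and \cite[Proof of Prop.\ 6.16]{Li15b}; your computation of the weight of $\chi^{mu_0}\cdot\pi^*\eta$ is a fine self-contained version of that citation.

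The one step that needs more care in your first-part argument is the sentence ``Theorem \ref{t-Tcano}.3 applied on $\mathscr{X}$ yields $a_l(E_{\rho_i};X,D)=\langle u_0,\rho_i\rangle$.'' Theorem \ref{t-Tcano}.3 as stated computes log discrepancies of horizontal prime divisors \emph{on} $X$ (i.e.\ corresponding to rays of $\sigma$ itself) with respect to the pair $(X,D)$ living on that same $T$-variety; applying it to the $T$-variety $\mathscr{X}$ would yield $A_{(\mathscr{X},\mathscr{D}_{\mathscr{X}})}(E_{\rho_i})$ for a boundary $\mathscr{D}_{\mathscr{X}}$ on $\mathscr{X}$, not $A_{(X,D)}(E_{\rho_i})$ across the birational morphism $\mu$. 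These differ by $\ord_{E_{\rho_i}}$ of the relative log canonical divisor, and that correction term is exactly what the paper's chain of equalities $A_{(X,D)}(\wt_\xi)=A_{(\mathscr{X},\mathscr{D})}(\wt_\xi)+\wt_\xi(K_{(\mathscr{X},\mathscr{D})/(X,D)})=A_{\tilde Z}(\wt_\xi)+\wt_\xi(K_{\tilde Z/Z})=A_Z(\wt_\xi)$ explicitly carries along before landing on the known toric formula. Your observation that the character $\chi^{-u_0}$ is preserved under the $T$-equivariant $\mu$ is the right insight and does drive the computation, but you should either unpack the relative canonical bookkeeping (the horizontality of $E_{\rho_i}$ kills the pullback of the vertical $H$-part, and one then compares against the log smooth toroidal boundary), or reduce to the generic toric fiber $Z$ as the paper does; otherwise the step is essentially assuming the divisorial (rational $\xi$) case of the very formula you are trying to prove. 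Two smaller points: the top-dimensional cones of the fan of $\tilde Z$ have $r$ rays, not $n$, so the sum should run over $i=1,\dots,r$ (the $n-r$ coordinates from $U$ carry weight zero and drop out); and ``subdivide so that $\xi$ lies in the interior of a top-dimensional cone'' does not work for $\xi$ lying on a rational wall --- subdivision cannot move a lattice ray off the skeleton --- but your fallback approximation via Lemma \ref{l-approvector} and continuity is fine.
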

\begin{proof}
Let $\mathscr{X}\rightarrow \tilde{X} \rightarrow X$ be the same morphisms as in the proof of Lemma \ref{lem-Tqmv} and let $\mathscr{D}$ and $\tilde{D}$ be the strictly transform of $D$ on $\mathscr{X}$ and $\tilde{X}$ respectively. Then the situation can be reduced to the toric case (see also \cite[Section 4]{LS13}):
\begin{eqnarray*}
A_{(X,D)}(\wt_\xi)&=&A_{(\mathscr{X},\mathscr{D})}(\wt_\xi)+\wt_\xi(K_{(\mathscr{X}, \mathscr{D})/(X,D)})\\
&=&A_{\tilde{Z}}(\wt_\xi)+\wt_{\xi}(K_{\tilde{Z}/Z})\\
&=&A_{Z}(\wt_\xi)=\langle u_0, \xi\rangle.
\end{eqnarray*}

\bigskip

Next we discuss the second statement, because the map $\xi\mapsto \cL_{\xi}(s)/s$ is linear, we just need to verify the statement for rational $\xi$. Then in the case $D=\emptyset$, this follows from what was already showed in \cite{Li15a} and \cite[Proof of Proposition 6.16]{Li15b}. The same argument applies in the logarithmic case.
\end{proof}
\begin{rem}\label{rem-Tnonvanish}
Using the structure theory of $T$-varieties and under the assumption that $K_X+D$ is $\bQ$-Gorenstein, one can write down a nowhere-vanishing holomorphic section $s$ explicitly by using \cite[Theorem 3.21]{PS08} and \cite[Proposition 4.4]{LS13}. So one can also directly verify the equality $\langle u_0, \xi\rangle=\frac{1}{m}(\cL_\xi s/s)$. 
\end{rem}

As a consequence of the above lemma, we can extend $A_{(X,D)}(\xi)$ to a linear function on $\ft_{\bR}$.
\begin{defn}\label{defn-linearA}
Using the same notation as in the Theorem \ref{t-Tcano}, for any $\eta\in \ft_{\bR}$, we define:
\begin{equation}\label{eq-linearA}
A_{(X,D)}(\eta)=\langle u_0, \eta\rangle.
\end{equation}
\end{defn}
By Lemma \ref{lem-ldwt}, $A_{(X,D)}(\eta)=\frac{1}{m} \cL_\eta s/s$ where $s$ is a $T$-equivariant nowhere-vanishing holomorphic section of $|-m(K_X+D)|$.

We will need the following important convexity property originally discovered in \cite{MSY08} for cones with isolated singularities (see also \cite{DS15} for the case of metric tangent cones).
\begin{prop}[see Proposition \ref{prop-Tconvex}]\label{prop-Tconvex1}
The volume function $\vol=\vol_{X,x}$ is strictly convex on $\ft_{\bR}^+$.
If $\xi_0$ is a minimizer of $\hvol|_{\ft_{\bR}^+}$,  then for any vector $\xi\in \ft^+_\bR$, we have the inequality 
$$\hvol_{(X,D)}(\wt_{\xi_0})\le \hvol_{(X,D)}(\wt_{\xi}),$$
with the equality holds if and only if $\xi$ is a rescaling of $\xi_0$.
\end{prop}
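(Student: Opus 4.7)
The plan is first to establish the strict convexity of the function $\xi\mapsto \vol(\wt_\xi)$ on $\ft^+_{\bR}$, and then to deduce the uniqueness of the minimizer of $\hvol|_{\ft^+_{\bR}}$ up to rescaling.

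For the strict convexity, my starting point is the weight-space decomposition $R = \bigoplus_{\alpha\in\Gamma}R_\alpha$, which gives $\fa_k(\wt_\xi) = \bigoplus_{\langle\alpha,\xi\rangle\ge k} R_\alpha$ and therefore
\begin{equation*}
\vol(\wt_\xi) \;=\; \lim_{k\to\infty}\frac{n!}{k^n}\sum_{\alpha\in\Gamma,\ \langle\alpha,\xi\rangle<k}\dim_{\bC}R_\alpha.
\end{equation*}
The first main step is to reinterpret this limit as an integral against a Duistermaat--Heckman-type measure on the weight cone $\sigma^\vee$; this is the role of the Newton--Okounkov body machinery developed in Section \ref{s-Tvarieties} (inspired by \cite{Oko96,LM09}). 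Concretely, I would produce a nonnegative measure $d\mu$ on $\sigma^\vee$, with strictly positive density on its interior, such that
\begin{equation*}
\vol(\wt_\xi) \;=\; n!\int_{\{\alpha\in\sigma^\vee\,:\,\langle\alpha,\xi\rangle\le 1\}} d\mu(\alpha).
\end{equation*}
Once this integral formula is at hand, strict convexity follows by a second-variation argument along a line $\xi_t = \xi_0 + t\eta$: after a change of variables that makes the domain $\{\langle\alpha,\xi\rangle\le 1\}$ independent of $\xi$, one obtains a positive-definite quadratic form in $\eta$ transverse to the ray $\bR\xi_0$, exactly as in the classical computation of \cite{MSY08}. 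The full-dimensional support of $d\mu$---which is guaranteed because $\sigma^\vee$ is full-dimensional by the effectivity of the $T$-action and $R_0 = \bC$---is what makes the Hessian positive definite rather than just positive semi-definite. Strict convexity along rays is automatic from the homogeneity $\vol(\wt_{c\xi}) = c^{-n}\vol(\wt_\xi)$ and the strict convexity of $t\mapsto t^{-n}$.

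For the uniqueness statement, I combine strict convexity with two elementary facts. First, $\wt_{c\xi} = c\,\wt_\xi$, so $\vol(\wt_{c\xi}) = c^{-n}\vol(\wt_\xi)$. Second, by Lemma \ref{lem-ldwt} together with Definition \ref{defn-linearA}, the function $\xi\mapsto A_{(X,D)}(\wt_\xi) = \langle u_0,\xi\rangle$ is linear on $\ft_{\bR}$. Hence $\hvol(\wt_\xi) = A_{(X,D)}(\wt_\xi)^n\,\vol(\wt_\xi)$ is homogeneous of degree $0$ on $\ft^+_{\bR}$, and restricted to the affine slice $H = \{\xi\in\ft^+_{\bR}\,:\,\langle u_0,\xi\rangle = 1\}$ it agrees with $\vol|_H$. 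Strict convexity of $\vol|_H$ forces any minimizer to be unique in $H$, which is exactly uniqueness up to rescaling on $\ft^+_{\bR}$.

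I expect the main obstacle to be the first step---constructing the Duistermaat--Heckman measure with full support on $\sigma^\vee$ for a general $T$-variety $X$, well beyond the isolated toric cone case treated in \cite{MSY08}. This is precisely what Section \ref{s-Tvarieties} is designed to accomplish by passing in turn from toric singularities with toric valuations, to general $T$-singularities with toric valuations, and finally to $T$-singularities with arbitrary $T$-invariant quasi-monomial valuations (relying on the structure result Theorem \ref{t-Tcano}). Everything else in the proof---the second-variation computation, the scaling argument, and the reduction to the affine slice---is essentially formal once this integral representation is in place.
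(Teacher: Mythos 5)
Your outline matches the paper's proof of Proposition~\ref{prop-Tconvex} in strategy: realize $\vol(\wt_\xi)$ as the volume of a convex region via the Newton--Okounkov body construction, apply the Gigena-type Hessian computation of Lemma~\ref{lem-Gig}, and pass to the affine slice on which $A_{(X,D)}(\wt_\xi)=\langle u_0,\xi\rangle$ is constant to get uniqueness up to rescaling. The one substantive difference is in how the volume formula is realized. You propose $\vol(\wt_\xi)=n!\int_{\{\langle\alpha,\xi\rangle\le 1\}}d\mu$ for a Duistermaat--Heckman-type measure $d\mu$ on $\sigma^\vee\subset\bR^r$, which forces you to verify separately that $d\mu$ has strictly positive density on the relative interior (this requires bigness of $\fD(\alpha)$ for $\alpha$ in the interior, not merely full-dimensionality of $\sigma^\vee$) and to carry a weight $\rho$ through the Gigena computation. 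The paper instead lifts the $M$-grading to a $\bZ^n$-valued valuation $\bV$ (Section~\ref{sss-Ttoric}), proves the value semigroup $\cS$ satisfies properties (P1)--(P3) via the uniform Izumi-type estimate \eqref{eq-bdV2}, and shows $\vol(\wt_\xi)=\vol(\Delta_\xi)$ is the Euclidean volume of a polytope cut from the full-dimensional strictly convex cone $\hat{\sigma}\subset\bR^n$; Lemma~\ref{lem-Gig} then applies verbatim with Lebesgue measure. Your $d\mu$ is precisely the pushforward $P_*(\mathbf{1}_{\hat{\sigma}}\,d\vol_{\bR^n})$, so the two approaches are equivalent; the paper's version is slightly cleaner because (P3) (that $\cS$ generates $\bZ^n$) already absorbs the positivity issue. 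A minor point: the Hessian in Lemma~\ref{lem-Gig} is positive definite in \emph{all} directions, including along $\bR\xi_0$ (since $u\cdot\xi_0\equiv 1$ on $\Pi_{\xi_0}$), so your separate homogeneity argument along rays is unnecessary. The final reduction to the slice $H_0$ is exactly the paper's.
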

Since we allow any klt singularity with good torus action, this is a gentle generalization of Martelli-Sparks-Yau's result. 
We will give an algebraic proof of the above result in Section \ref{sec-Tunique}. In particular, we will interpret this as a phenomenon in convex geometry.

For klt $T$-singularities, we have the following improvement of Theorem \ref{t-liu} in the equivariant case.
\begin{thm}[See \cite{LX16}]\label{thm-minKol}
Let $(X,D)$ be a $T$-equvariant klt singularity. 
Denote by $\Val_{X,x}^{T}$ the set of $T$-invariant valuations centered at $x$,  ${\rm PrId}^T_{X,x}$ the set of homogeneous $\km$-primary ideals,  and ${\rm Kol}^T_{X,x}$ the set of $T$-invariant Koll\'{a}r component.  Then we have the identity:
\begin{equation}
\hvol(X,D,x)=\inf_{S\in {\rm Kol}^T_{X,x}} A_{(X,D)}(S)^n\cdot \vol(\ord_S)=\inf_{v\in \Val^T_{X,x}} \hvol(v)
=\inf_{\ka \in {\rm PrId}^T_{X,x}} \lct^n(\ka)\cdot \mult(\ka).
\end{equation}
\end{thm}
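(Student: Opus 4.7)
The plan is to establish the cyclic chain of inequalities
\[
\hvol(X,D,x)\;\ge\;\inf_{\ka\in {\rm PrId}^T_{X,x}}\lct^n(\ka)\cdot \mult(\ka)\;\ge\;\inf_{S\in {\rm Kol}^T_{X,x}}A_{(X,D)}(S)^n\cdot \vol(\ord_S)\;\ge\;\inf_{v\in \Val^T_{X,x}}\hvol(v)\;\ge\;\hvol(X,D,x).
\]
The last inequality is immediate because $\Val^T_{X,x}\subset \Val_{X,x}$, and the penultimate one is immediate because each $T$-invariant Kollár component yields an element of $\Val^T_{X,x}$. Thus everything reduces to proving the first two inequalities.

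For the first inequality, I would pass from a general $\fm$-primary ideal to a $T$-invariant one via torus degeneration. Pick a generic integral Reeb vector $\xi\in N\cap \sigma^\circ$; for any $\ka\in {\rm PrId}_{X,x}$ the filtration $\{f\in R:\wt_\xi(f)\ge m\}$ gives, through the Rees-algebra construction recalled in Section \ref{ss-valuation}, a flat family $\cK\to \bA^1$ of ideals on $X$ whose general fibre is $\ka$ and whose central fibre is the homogeneous initial ideal $\rin_\xi(\ka)\in {\rm PrId}^T_{X,x}$. Flatness yields $\mult(\rin_\xi(\ka))=\mult(\ka)$, and lower semicontinuity of the log canonical threshold in flat families of ideals on the klt pair $(X,D)$ yields $\lct_{(X,D)}(\rin_\xi(\ka))\le \lct_{(X,D)}(\ka)$. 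Together with Theorem \ref{t-liu}, this gives $\inf_{{\rm PrId}^T}\lct^n\cdot\mult\le \inf_{{\rm PrId}}\lct^n\cdot \mult = \hvol(X,D,x)$, as required.

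For the second inequality, I would run the Kollár component extraction of \cite{LX16} equivariantly. Given a homogeneous $\ka\in{\rm PrId}^T_{X,x}$ with $c=\lct_{(X,D)}(\ka)$, take a $T$-equivariant log resolution $\pi\colon Y\to X$ of $(X,D+c\cdot\ka)$ and run a $T$-equivariant relative MMP with scaling that contracts every exceptional divisor except a single $S$ of log discrepancy zero with respect to $(X,D+c\cdot\ka)$; since each MMP step is canonical it is automatically $T$-equivariant, so $S$ is $T$-invariant. One verifies that the resulting model $(Y',\mu_*^{-1}D+S)$ is plt and $-K_{Y'}-\mu_*^{-1}D-S$ is ample over $X$, making $S$ a $T$-invariant Kollár component. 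The desired bound $A_{(X,D)}(S)^n\cdot\vol(\ord_S)\le \lct^n(\ka)\cdot\mult(\ka)$ then follows from $A_{(X,D)}(\ord_S)=c\cdot v_S(\ka)$ (log discrepancy zero) and $\vol(\ord_S)\cdot v_S(\ka)^n\le \mult(\ka)$ (a standard comparison between valuation volume and ideal multiplicity).

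The main obstacle is guaranteeing that the MMP used in the second step can be run $T$-equivariantly so as to extract a single divisor producing a genuine Kollár component, rather than merely a $T$-invariant divisorial valuation. This relies on the existence of $T$-equivariant log resolutions and the canonicity of each MMP step for a reductive torus action, a point already present in the equivariant framework of \cite{LX16}; the rest is a routine equivariant adaptation of the construction there.
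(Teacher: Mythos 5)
Your second step is essentially sound (the paper uses a $T$-equivariant dlt modification rather than directly extracting a single Koll\'ar component, but both routes reduce to the equivariant version of the constructions from \cite{LX16} and yield the bound via $\vol(Y/X)$). The problem is the first step.

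The claim that ``flatness yields $\mult(\rin_\xi(\ka))=\mult(\ka)$'' is false. Flatness of the degeneration preserves \emph{colength}: $\len(R/\ka)=\len(R/\rin_\xi(\ka))$. But multiplicity is the asymptotic in powers, and $\rin_\xi(\ka)^m\subseteq\rin_\xi(\ka^m)$ with equality failing in general, so
\[
\len\bigl(R/\rin_\xi(\ka)^m\bigr)\;\ge\;\len\bigl(R/\rin_\xi(\ka^m)\bigr)\;=\;\len(R/\ka^m),
\]
which gives only $\mult(\rin_\xi(\ka))\ge\mult(\ka)$. For example, in $R=\bC[x,y]$ with the $\bC^*$-weight $(1,0)$ and $\ka=(x+y^2, xy)$, $\ka$ is a regular sequence with $\mult(\ka)=\len(R/\ka)=3$, while $\rin_\xi(\ka)=\fm^2$ has $\mult(\fm^2)=4$. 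Combined with $\lct(\rin_\xi(\ka))\le\lct(\ka)$, your two inequalities point in opposite directions, so $\lct^n(\rin_\xi(\ka))\cdot\mult(\rin_\xi(\ka))\le\lct^n(\ka)\cdot\mult(\ka)$ does \emph{not} follow.

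The paper fixes this by degenerating not the single ideal but the entire graded sequence: it takes $\kb_m:=\bin(\ka^m)$, obtaining a graded sequence of $T$-equivariant ideals with $\mult(\kb_\bullet)=\mult(\ka)$ (since the colengths agree at every level by flatness) and $\lct(\kb_\bullet)\le\lct(\ka)$ (by semicontinuity applied at each level and passing to the limit), giving $\lim_m \lct^n(\kb_m)\mult(\kb_m)=\lct^n(\kb_\bullet)\mult(\kb_\bullet)\le\lct^n(\ka)\mult(\ka)$. Since each $\kb_m$ lies in ${\rm PrId}^T_{X,x}$, this is the inequality you want. Replacing your single-ideal degeneration by the sequence degeneration closes the gap.
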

\begin{proof} We first show $$\hvol(X,D,x)=\inf_{\ka \in {\rm PrId}^T_{X,x}} \lct^n(\ka)\cdot \mult(\ka).$$
In fact, given any $\km$-primary ideal $\ka$, we consider the initial ideal sequence
$$\{\kb_{\bullet}\}=\bin(\ka^k),$$
which we know is a graded sequence of $T$-equivariant ideals.
Then we know that 
$$\lim_{m\to \infty} \lct^n(\kb_m)\cdot \mult(\kb_m)= \lct^n(\kb_{\bullet})\cdot\mult(\kb_{\bullet})\le \lct^n(\ka)\cdot  \mult(\ka),$$
which confirms our claim. 

\medskip

Then to finish the proof, it suffices to show $$ \inf_{\ka \in {\rm PrId}^T_{X,x}} \lct^n(\ka)\cdot \mult(\ka)=\inf_{S\in {\rm Kol}^T_{X,x}} A_{(X,D)}(S)^n\cdot \vol(\ord_S).$$
We follow the strategy in the proof of \cite{LX16}. Given a $T$-equivariant primary ideal $\ka$, we can take the an $T$-equivariant dlt modification $Y\to X$ by running a $T$-equivariant model on a $T$-equivariant resolution. Then  any exceptional divisor $S$ on $Y/X$ is equivariant, and we know that 
$$  A_{(X,D)}(S)^n\cdot \vol(\ord_S)\le \vol(Y/X)\le  \lct^n(\ka)\cdot \mult(\ka),$$
where the equalities follow from \cite{LX16}. 
\end{proof}
\subsection{K-semistability of log Fano cone singularity}

For a $T$-equivariant singularity, the valuations induced by vector fields in the Reeb cone plays a special role, so we give the following 
\begin{defn}[See also \cite{CS15}]\label{d-FS}
Let $(X, D)$ be an affine klt pair with a good $T$ action (see Definition \ref{d-good}). For any $\xi\in \ft^+_\bR$, we say that the associated valuation ${\rm wt}_{\xi}$ gives a toric valuation. 
For a fixed $\xi$, we call the triple $(X,D,\xi)$ a klt singularity with a {\it log Fano cone} structure that is polarized by $\xi$.
\end{defn}

We proceed to study the K-semistable  log Fano cone singularity $(X,D, \xi)$ in the sense of Collins-Sz\'{e}kelyhidi (\cite{CS12,CS15}), which generalizes the K-semistability for Fano varieties (see \cite{Tia97, Don01}). We first define the special test configurations of log Fano cone singularities.
\begin{defn}[see \cite{CS12, CS15}]\label{d-stc}
Let $(X, D, \xi_0)$ be a log Fano cone singularity and $T$ be the torus generated by $\xi_0$. 

A $T$-equivariant special test configuration (or $T$-equivariant special degeneration) of $(X, D, \xi_0)$ is a quadruple $(\cX, \cD, \xi_0; \eta)$ with a map $\pi: (\cX, \cD)\rightarrow \bA^1(= \bC)$ satisfying the following conditions:
\begin{enumerate}
\item $\pi$ is a flat family of log pairs such that the fibres away from $0$ are isomorphic to $(X, D)$ and $\cX={\rm Spec}(\cR)$ is affine, where $\cR$ is a finitely generate flat $\bC[t]$ algebra. The torus $T$ acts on $\cX$, and we write $\mathcal{R}=\bigoplus_{\alpha}\mathcal{R}_{\alpha}$ as the decomposition into weight spaces;

\item $\eta$ is an algebraic holomorphic vector field on $\cX$ generating a $\bC^*$-action on $(\cX,\cD)$ such that $\pi$ is $\bC^*$-equivariant where $\bC^*$ acts on the base $\bC$ by multiplication (so that $\pi_*\eta=t\partial_t$ if $t$ is the affine coordinate on $\bA^1$)
and there is a $\bC^*$-equivariant isomorphism $\phi: (\cX, \cD) \times_{\bC}\bC^*\cong (X,D) \times \bC^*$;
\item the algebraic holomorphic vector field $\xi_0$ on $\cX\times_{\bC}\bC^*$ (via the isomorphism $\phi$) extends to a holomorphic vector field on $\cX$ (still denoted by $\xi_0$) and generates a $T$-action on $(\cX, \cD)$ that commutes with the $\bC^*$-action generated by $\eta$ and preserves $(X_0, D_0)$;
\item $(X_0, D_0)$ has klt singularities and $(X_0, D_0, \xi_0|_{X_0})$ is a log Fano cone singularity (see Definition \ref{d-FS}).
\end{enumerate}
$(\cX, \cD, \xi_0; \eta)$ is a product test configuration if there is a $T$-equivariant isomorphism $(\cX, \cD)\cong (X, D)\times \bC$ and $\eta=\eta_0+t\partial_t$ with $\eta_0\in \ft$.

 By abuse of notation, we will denote $\xi_0|_{X_0}$ by $\xi_0$.  
For simplicity, we will just say that $(\cX, \cD)$ is a ($T$-equivariant) special test configuration if $\xi_0$ and $\eta$ are clear. We also say that $(X,D,\xi_0)$ specially degenerates to $(X_0,D_0,\xi_0; \eta)$ (or simply to $(X_0, D_0)$).

If $(\cX, \cD, \xi_0;\eta)$ is a special test configuration, then under the base change $\bA^1\rightarrow\bA^1, t\mapsto t^d $, we can pull back it to get a new special test configuration $(\cX\times_{\bA^1,t^d}\bA^1, \cD\times_{\bA^1, t^d}\bA^1, \xi_0; d \cdot (t^d)^*(\eta))$, which will be simply denoted  by $(\cX, \cD, \xi_0;\eta)\times_{\bA^1,t^d}\bA^1$.
\end{defn}

Let $(\cX, \cD, \xi_0; \eta)$ be a $T$-equivariant special test configuration of $(X, D, \xi_0)$. We can define the Futaki invariant $\Fut(X_0, D_0, \xi_0; \eta)$ following \cite{CS12, CS15} where the index character was used. However, for our purpose, we reformulate the definition as the derivative of the normalized volume and we only consider special test configurations.

Since $T$-action and $\bC^*$-action commute with each other, $X_0$ has a $T'=(T\times\bC^*)$-action generated by $\{\xi_0, \eta\}$. Let $\ft'={\rm Lie} (T')$. 
For any $\xi\in \ft'^{+}_{\bR}$, we have $\wt_{\xi}\in \Val_{X_0,o'}$ where $o'\in X_0$ is the vertex point of the central fiber $X_0$. So we can define its volume $\vol(\wt_\xi)$ and normalized volume $\hvol(\wt_\xi)$. For simplicity of notations, we will frequently just write $\xi$ in place of $\wt_\xi$. Recall that the volume $\vol(\xi)$ is equal to:
\begin{equation}
\vol(\xi):=\vol_X(\wt_\xi)=\lim_{m\rightarrow+\infty} \frac{\dim_{\bC} R/\fa_m(\wt_\xi)}{m^n/n!};
\end{equation}
and the normalized volume is given by:
\[
\hvol(\xi):=\hvol_{(X_0,D_0)}(\wt_\xi)=A(\xi)^n\cdot \vol(\xi).
\]
Here $A(\xi)=A_{(X_0,D_0)}(\wt_\xi)$.

\begin{rem}
By \cite{MSY08, CS15}, for any $\xi\in \ft'^{+}_{\bR}$, the volume of $\wt_\xi$ can be obtained by using the index characters. Let
$X_0={\rm Spec}_{\bC} (B)$ and 
$
B=\bigoplus_{\alpha'} B_{\alpha'}
$
be the weight decomposition with respect to $T'$. For any $\xi\in \ft'^+_{\bR}$, the index character is defined as:
\begin{equation}
\Phi(t, \xi)=\sum_{\alpha} e^{-t\alpha'(\xi)}\dim B_{\alpha'}.
\end{equation}
Then by \cite{MSY08, CS15}, 
$\Phi(t, \xi)$ has the expansion (recall that $\dim X=n$):
\begin{equation}
\Phi(t, \xi)=\frac{\vol(\xi)}{t^{n}}+O(t^{1-n}).
\end{equation}
\end{rem}
\begin{defn}[see \cite{CS12, CS15}]\label{def-Futaki}
Let $(X_0, D_0, \xi_0)$ be a log Fano cone singularity with an good action by $T'\cong (\bC^*)^{r+1}$. Denote $\vol=\vol_{(X_0,D_0)}$ on $\ft'^+_\bR$ and $A=A_{(X_0,D_0)}$ on $\ft'_\bR$ (see Definition \ref{defn-linearA}).
Assume $\xi_0\in \ft'^{+}_{\bR}$. 
For any $\eta\in \ft'_{\bR}$, we define: 
\begin{eqnarray*}
{\rm Fut}(X_0, D_0, \xi_0; \eta)&:=&(D_{-\eta}\hvol)(\xi_0)\\
&=&n A(\xi_0)^{n-1} A(-\eta) \vol(\xi_0)+A(\xi_0)^n\cdot (D_{-\eta} \vol)(\xi_0).
\end{eqnarray*}
If $(\cX, \cD, \xi_0; \eta)$ is a special test configuration of $(X, D, \xi_0)$, then the Futaki invariant of $(\cX, \cD, \xi_0; \eta)$, 
denoted by $\Fut(\cX, \cD, \xi_0; \eta)$ is defined to be $\Fut(X_0, D_0, \xi_0; \eta)$. 
\end{defn}

\begin{rem} When $\xi_0$ generates a one dimensional torus (i.e., $T\cong \mathbb{C}^*$), then taking quotient by $T$, we get a special test configuration $(\mathcal{Y},\cE)$ of the log Fano pair $(Y,E)=(X,D)\setminus\{x\}/\langle \xi_0\rangle $, and we have $\Fut(\cX, \cD, \xi_0; \eta)$ is a rescaling of the Futaki invariant of $(\cY, \cE)$ defined in \cite{Tia97, Don01} (see e.g. \cite[Lemma 6.20]{Li15b}). This also verifies that the definition coincides with the one in \cite{CS15} (up to a constant) as any vector can be approximated by rational ones, and the Futaki invariants in both definitions are continuous and coincide when $\xi_0$ is rational. 
\end{rem}

We will need another form of the Futaki invariant later. For any $\xi\in \ft'^{+}_{\bR}$, if we denote $\hat{\xi}=\frac{\xi}{A(\xi)}$  
such that $\hat{\xi}$ lies on the truncated affine hyperplane 
\begin{equation}
P=\{\xi\in \ft'^+_\bR; A(\xi)=1\},
\end{equation} 
then we can transform the normalized volume to the usual volume: 
\begin{equation}\label{eq-hatxi}
\hvol(\xi)=A(\xi)^n\vol(\xi)=\vol\left(\frac{\xi}{A(\xi)}\right)=\vol(\hat{\xi}).
\end{equation} 
Moreover we can calculate:
\begin{eqnarray}\label{eq-Dvol}
\Fut(X_0, D_0, \xi_0; \eta)&=&D_{-\eta}\hvol(\xi_0)=\left.\frac{d}{ds}\right|_{s=0}\hvol(\xi_0-s\eta)\nonumber\\
&=&\left.\frac{d}{ds}\right|_{s=0}\vol(\widehat{\xi_0-s\eta})=\left.\frac{d}{ds}\right|_{s=0}\vol(\widehat{\xi_0}-s\cdot \what{T}_{\xi_0}(\eta))\nonumber\\
&=&(D \vol(\widehat{\xi_0}))\cdot (-\what{T}_{\xi_0}(\eta)),
\end{eqnarray}
where we have denoted:
\begin{equation}\label{eq-Txieta}
\what{T}_{\xi_0}(\eta):=\frac{A(\xi_0)\eta-A(\eta)\xi_0}{A(\xi_0)^2}\in \ft'_\bR.
\end{equation}
Notice that $\what{T}_{\xi_0}(\eta)$ in the tangent space of $P$ at $\what{\xi}_0$. In other words $A(\what{T}_{\xi_0}(\eta))=0$ (see \eqref{eq-linearA}).
 
The calculation \eqref{eq-Dvol} amounts to showing that re-normalization of the test configuration does not change the Futaki invariant:
\[
\Fut(\cX,\cD,\xi_0;\eta)=\Fut(X_0,D_0,\what{\xi_0};\what{T}_{\xi_0}(\eta)).
\]


\begin{defn}\label{d-ksemiSE}
We say that $(X, D, \xi_0)$ is K-semistable, if for any $T$-equivariant special test configuration $\cX$ that degenerates $(X,D,\xi_0)$ to $(X_0, D_0,\xi_0; \eta)$, we have 
$$\Fut(X_0, D_0, \xi_0; \eta)\ge 0.$$ 
\end{defn}

Applying the above discussion, we can then put the study of K-semistablity of a local singularity in the framework of the minimization of normalized volumes.

Let $\ft'$ be the Lie algebra of $T'=T\times\bC^*$, $N'={\rm Hom}(\bC^*, T')$ and $t'_{\bR}=N'\otimes_{\bZ}\bR$. Denote by $\ft'^{+}_{\bR}$ the positive cone of $\ft'_{\bR}$ on the central fiber. 
Denote the ray in $\ft'_{\bR}$ emanating from $\xi_0$ in the direction of $\eta$ by:
\[
\xi_0+\bR_{\ge 0}(-\eta)=\left\{\xi_0-\lambda \eta; \lambda\in \bR_{\ge 0}\right\}.
\]

\begin{lem}\label{lem-Tvolmin}
If $(X,D,\xi_0)$ is K-semistable, then for any special test configuration $(\cX, \cD, \xi_0; \eta)$, 
$$\hvol_{(X_0,D_0)}(\xi)\ge \hvol_{(X_0,D_0)}(\xi_0)$$ for any $\xi\in (\xi_0+\bR_{\ge 0}(-\eta))\cap \ft'^{+}_{\bR}$, where $\ft'_{\bR}=\ft_{\bR}\oplus \bR(\eta)$.
\end{lem}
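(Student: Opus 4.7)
The plan is to reduce the multi-dimensional inequality to a one-variable monotonicity statement, by combining the scale invariance of $\hvol$ with the convexity of $\vol$ on the Reeb cone (Proposition \ref{prop-Tconvex1}) and the positivity of the Futaki invariant.

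First I would transport everything to the affine slice $P=\{A=1\}$ via the identity $\hvol(\xi)=\vol(\hat\xi)$ from \eqref{eq-hatxi}, where $\hat\xi=\xi/A(\xi)$. Writing $\xi_\lambda:=\xi_0-\lambda\eta$, a direct differentiation (the same computation already carried out in \eqref{eq-Dvol}) gives
\[
\frac{d\hat\xi_\lambda}{d\lambda}=-\frac{A(\xi_0)^2}{\bigl(A(\xi_0)-\lambda A(\eta)\bigr)^2}\,\what T_{\xi_0}(\eta),
\]
so $\lambda\mapsto\hat\xi_\lambda$ is, up to monotone reparametrization, the straight line $L(\tau):=\hat\xi_0-\tau\what T_{\xi_0}(\eta)$ in $P$; concretely $\hat\xi_\lambda=L(\tau(\lambda))$ for some strictly increasing $\tau(\lambda)\ge 0$. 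Because $\hat\xi_\lambda$ is a positive rescaling of $\xi_\lambda$, the hypothesis $\xi_\lambda\in\ft'^{+}_{\bR}$ guarantees that $L(\tau(\lambda))$ remains in the Reeb cone throughout.

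Next I would apply convexity. The central fiber $(X_0,D_0)$ is klt with good $T'$-action by Definition \ref{d-stc}, so Proposition \ref{prop-Tconvex1} gives that $\vol$ is convex on $\ft'^{+}_{\bR}$. Hence the one-variable function $h(\tau):=\vol(L(\tau))$ is convex on the interval of $\tau\ge0$ where $L(\tau)\in\ft'^{+}_{\bR}$. Its derivative at the origin is computed by \eqref{eq-Dvol}:
\[
h'(0)=-\bigl(D\vol(\hat\xi_0)\bigr)\cdot\what T_{\xi_0}(\eta)=\Fut(X_0,D_0,\xi_0;\eta),
\]
which is nonnegative by the K-semistability of $(X,D,\xi_0)$ (Definition \ref{d-ksemiSE}).

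Finally I would conclude. A convex function whose derivative at $0$ is nonnegative has nondecreasing derivative, so $h'(\tau)\ge h'(0)\ge 0$ for all $\tau\ge 0$ in the allowed range; in particular $h$ is nondecreasing. Pulling back through the reparametrization $\tau=\tau(\lambda)$ yields
\[
\hvol_{(X_0,D_0)}(\xi_\lambda)=\vol(\hat\xi_\lambda)=h(\tau(\lambda))\ge h(0)=\vol(\hat\xi_0)=\hvol_{(X_0,D_0)}(\xi_0),
\]
which is the desired inequality for every $\xi=\xi_\lambda\in(\xi_0+\bR_{\ge 0}(-\eta))\cap\ft'^{+}_{\bR}$. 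There is essentially no hard step: the substantive inputs are Proposition \ref{prop-Tconvex1} (already proved by convex-geometric methods earlier) and K-semistability; the only delicate point is the recognition that $\lambda\mapsto\hat\xi_\lambda$ is a reparametrized line in $P$, so that the usual one-variable ``convex plus nonnegative derivative at the endpoint implies monotonicity'' argument can be invoked.
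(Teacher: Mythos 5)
Your argument is correct and follows essentially the same route as the paper: transport to the slice $P$ via $\hvol(\xi)=\vol(\hat\xi)$, recognize that $\lambda\mapsto\hat\xi_\lambda$ traces the ray from $\hat\xi_0$ in direction $-\what T_{\xi_0}(\eta)$, invoke Proposition~\ref{prop-Tconvex1} for convexity and K-semistability for $\Fut\ge 0$, and conclude by the one-variable fact that a convex function with nonnegative derivative at the origin is nondecreasing. The only cosmetic difference is that the paper, for each fixed $\lambda$, interpolates linearly between $\hat\xi_0$ and $\hat\xi_\lambda$ by $f(s)=\vol(\hat\xi_0+s(\hat\xi_\lambda-\hat\xi_0))$ and uses $f(1)\ge f(0)$, whereas you work with a single line $L(\tau)$ and a monotone reparametrization $\tau(\lambda)=\lambda A(\xi_0)/A(\xi_\lambda)$; these are interchangeable.
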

\begin{proof}
For any $\xi_\lambda=\xi_0-\lambda\eta\in \ft'^+_\bR$, we have:
$\hvol(\xi_\lambda)=\vol(\widehat{\xi_\lambda})$ (see \eqref{eq-hatxi}).
Notice that:
\begin{eqnarray*}
\widehat{\xi_\lambda}-\widehat{\xi_0}&=&\frac{\xi_0-\lambda\eta}{A(\xi_0)-\lambda A(\eta)}-\frac{\xi_0}{A(\xi_0)}=-\frac{A(\xi_0)\eta-A(\eta)\xi_0}{A(\xi_0) A(\xi_\lambda)}\\
&=&-\what{T}_{\xi_0}(\eta) \frac{A(\xi_0)}{A(\xi_\lambda)}.
\end{eqnarray*}
So $\what{\xi_\lambda}\in \what{\xi_0}+\bR_{\ge 0}(-\what{T}_{\xi_0}(\eta))\cap \ft'^+_\bR$.
Consider the function $f(s)=\vol(\what{\xi_0}+s(\what{\xi_\lambda}-\what{\xi_0}))$.  Then $f(0)=\hvol(\xi)$ and
\[
f'(0)=(D \vol)(\what{\xi_0})\cdot (-\what{T}_{\xi_0}(\eta)) \frac{A(\xi_0)}{A(\xi_\lambda)}=\Fut(X_0, D_0, \xi_0; \eta)\frac{A(\xi_0)}{A(\xi_\lambda)}.
\]
By the K-semistability assumption we have $f'(0)\ge 0$. By Proposition \ref{prop-Tconvex1} (=Proposition \ref{prop-Tconvex}), $f(s)$ is a convex function. So we get $f(1)=\hvol(\xi_\lambda)\ge f(0)=\hvol(\xi_0)$.
\end{proof}

\section{Normalized volumes over log Fano cone singularities}\label{s-Tvarieties}

In this section, we will study {\it log Fano cone singularities $(X,D,\xi)$} (see Definition \ref{d-FS}). In differential geometry, the stability theory in such settings has been investigated in the context of searching for a Sasakian-Einstein metric (see \cite{MSY08, CS15} etc.). In particular, we will focus on the case that when the singularity is K-semistable, and show that in this case the natural toric valuation ${\rm wt}_{\xi}$ is the only minimizer up to rescaling among all $T$-invariant quasi-monomial valuations. For invoking different tools,  we divide the argument in Section \ref{sec-Tunique} into three steps with increasing generality: we first consider toric singularities with toric valuations, then general $T$-singularities with toric valuations and eventually $T$-singularities with $T$-invariant valuations.

\subsection{Special test configurations from Koll\'{a}r components}\label{ss-kollar}

In this section, we study the special test configuration of $T$-varieties associated to Koll\'{a}r components. 

Let $S$ be a Koll\'{a}r component over $o\in (X,D)$ and $\pi: Y\rightarrow X$ be the plt blow up extracting $S$ and let 
$$K_Y+\pi^{-1}_{*}D+S|_S=:K_S+\Delta_S.$$ In \cite{LX16} we used the deformation to the normal cone construction to get a degeneration of $X$ to an orbifold cone over $S$ with codimension one orbifold locus $\Delta_S$. We will simply  call it an orbifold cone over $(S,\Delta_S)$. Here we recall the corresponding algebraic description.

Denote the associated graded ring of $v_0=\ord_S$ by
\[
A=\bigoplus_{k=0}^{+\infty} \fa_k(v_0)/\fa_{k+1}(v_0)=\bigoplus_{k=0}^{+\infty} A_k.
\]

From now on, we always assume the Koll\'{a}r component $S$ is $T$-invariant so that $T$ acts equivariantly on $Y\rightarrow X$, and we have a decomposition:
\[
\fa_k(v_0)=\bigoplus_{\alpha} \fa_k^\alpha(v_0)=\bigoplus_{\alpha}R_\alpha \cap \fa_k(v_0).
\]
$T$ acts equivariantly on the extended Rees algebra:
\[
\cR'=\bigoplus_{k\in \bZ} \cR'_k:=\bigoplus_{k\in \bZ} \fa_k(v_0) t^{-k}\subset R[t, t^{-1}].
\]
Let $\cX=\Spec (\cR')$. Then we get a flat family $\pi: \cX\rightarrow \bA^1$ satisfying $X_t=\cX\times_{\bA^1} \{t\}=X$ and $X_0=\cX\times_{\bA^1}\{0\}=\Spec(A)$. Let $\cD$ be the strict transform 
of $D\times\bA^1$ under the birational morphism $\cX\dasharrow X\times\bA^1$. 
\begin{defn}\label{d-xiS}
Assume that $o\in(X,D)$ is a klt singularity with a good $T$-action and $S$ is a $T$-equivariant Koll\'ar component. Let $\mathcal{X}\to \mathbb{A}^1$ be the associated degeneration which degenerates $(X,D)$ to a $(X_0,D_0)$ and admits a $T'=T\times \mathbb{C}^*$-action.
For any $f=\sum f_k\in \cR'$, $\ord_S(f)=\min\{k; f_k\neq 0\}$. Over $X_0$, $\ord_S$ corresponds to the $\bC^*$-action corresponding to the $\bZ$-grading. Denote the generating vector by 
 $\xi_S\in \ft'^+_\bR$. 
 
With the above notations, we say that $(\cX, \cD, \xi_0; \xi_S)$ is the special test configuration associated to the Koll\'{a}r component
$S$. If $\xi_0$ and $\xi_S$ are clear, we just use $(\cX, \cD)$ to denote the special test configuration.
\end{defn}

\begin{lem}\label{l-degpre}
Let $(\cX, \cD, \xi_0; \xi_S)$ denote the special test configuration associated to a $T$-invarint Koll\'{a}r component $S$. Let $(X_0,D_0)$ be the corresponding pair on the special fiber.  For any $\xi_0\in \ft_{\bR}^+$, let $\xi_0$ also denote the induced Reeb vector on $X_0$ (see Definition \ref{defn-Reeb}). Then we have the following equalities:
\begin{enumerate}
\item
$A_{(X,D)}(\ord_S)=A_{(X_0, D_0)}(\wt_{\xi_S})$ and $\vol_{(X,D)}(\ord_S)=\vol_{(X_0,D_0)}(\wt_{\xi_S})$;
\item
$A_{(X,D)}(\wt_{\xi_0})=A_{(X_0,D_0)}(\wt_{\xi_0})$ and $\vol_{(X,D)}(\wt_{\xi_0})=\vol_{(X_0,D_0)}(\wt_{\xi_0})$ .
\end{enumerate}
\end{lem}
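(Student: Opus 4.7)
The plan is to reduce both parts of the lemma to two complementary principles: (i) volumes of $T$-invariant quasi-monomial valuations are determined by $T$-weight-space dimensions that are preserved under the flat $T\times\bC^*$-equivariant degeneration $\cX\to\bA^1$, and (ii) log discrepancies of such valuations are read off from the $T$-weight of a $T$-equivariant nowhere-vanishing section of $-m(K+D)$, which likewise transports through the degeneration. The basic input is that $\cR'=\bigoplus_k \fa_k(\ord_S)\,t^{-k}$ is a flat $\bC[t]$-algebra carrying compatible $T$- and $\bC^*$-actions, so every $T$-weight-space on $\cR'$ is flat over $\bC[t]$ and restricts consistently to both the general and central fibres.

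For part (2), I would first decompose everything according to the $T$-grading. Since each $\cR'_\alpha$ is flat over $\bC[t]$, one gets $\dim_\bC R_\alpha = \dim_\bC A^\alpha$ for every $\alpha\in M$, where $A^\alpha=\bigoplus_k (\fa_k^\alpha/\fa_{k+1}^\alpha)$. Because $\wt_{\xi_0}$ on either $R$ or $A$ is the weight valuation attached to $\xi_0\in\ft^+_\bR$, the colengths
\[
\dim_\bC R/\fa_m(\wt_{\xi_0}) \;=\; \sum_{\langle\alpha,\xi_0\rangle<m}\dim_\bC R_\alpha \;=\; \dim_\bC A/\fa_m(\wt_{\xi_0})
\]
match term-by-term, yielding the volume equality after dividing by $m^n/n!$ and passing to the limit. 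For the log discrepancy I would invoke Lemma \ref{lem-ldwt}: pick a $T$-equivariant nowhere-vanishing section $s$ of $|-m(K_X+D)|$ of weight $m u_0$ under $T$. Because $S$ is $T$-invariant and the Rees construction is $T$-equivariant, $s$ extends to a $T\times\bC^*$-equivariant relative canonical section over $\cX$ whose restriction $s_0$ to $X_0$ is a $T$-equivariant nowhere-vanishing section of $|-m(K_{X_0}+D_0)|$ with the same $T$-weight $m u_0$. Applying Lemma \ref{lem-ldwt} on both fibres then gives $A_{(X,D)}(\wt_{\xi_0}) = \langle u_0,\xi_0\rangle = A_{(X_0,D_0)}(\wt_{\xi_0})$.

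For part (1), the volume calculation is a direct telescoping: on $A$ the ideal $\fa_k(\wt_{\xi_S})$ coincides with $\bigoplus_{m\ge k}A_m$, so
\[
\dim_\bC A/\fa_k(\wt_{\xi_S}) \;=\; \sum_{m<k}\dim_\bC\bigl(\fa_m(\ord_S)/\fa_{m+1}(\ord_S)\bigr) \;=\; \dim_\bC R/\fa_k(\ord_S),
\]
which yields $\vol(\ord_S)=\vol(\wt_{\xi_S})$ after passing to the asymptotic limit. The log discrepancy identity is the main obstacle. I would use that $\pi\colon Y\to X$ is the plt extraction of $S$ together with the adjunction $K_S+\Delta_S=(K_Y+\pi^{-1}_*D+S)|_S$; this identifies $(X_0,D_0)$ as the orbifold cone over $(S,\Delta_S)$ polarized by $L=-A_{(X,D)}(\ord_S)^{-1}(K_S+\Delta_S)$, with $\xi_S$ corresponding to the canonical cone Reeb vector. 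Applying the log-discrepancy formula of Theorem \ref{t-Tcano}(3) to the $\bC^*$-action generated by $\xi_S$ on this orbifold cone then computes $A_{(X_0,D_0)}(\wt_{\xi_S})$ as exactly $A_{(X,D)}(\ord_S)$; this is the $T$-equivariant avatar of the Koll\'ar component-to-cone dictionary already used in \cite{LX16}. The subtle point will be keeping the two tori straight on $X_0$ — the original $T$ and the new $\bC^*$ generated by $\xi_S$ — and verifying that the relative canonical section on $\cX$ carries the expected $\bC^*$-weight so that the cone adjunction on the central fibre recovers $A_{(X,D)}(\ord_S)$; I expect this to follow from the same $T\times\bC^*$-equivariance of the Rees construction that was used in part (2).
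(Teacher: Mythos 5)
Your proposal follows essentially the same route as the paper. The volume equalities are identical in substance — weight-space dimension counting through the flat degeneration; the paper writes a telescoping filtration identity $\dim R_\alpha=\sum_k\dim A_k^\alpha$, you phrase it via flatness of $\cR'_\alpha$ over $\bC[t]$, which amounts to the same thing. For the log discrepancy in part (2), the one structural difference is the order of dependencies: the paper proves the weight invariance \emph{directly}, without invoking any relative section, by tracking the $T$-weight of the equivariant canonical section $s$ through the adjunction $\mu^*(s)=s'\cdot f^{A-1}$ near $S$ and the Poincar\'{e} residue $dz=(s'/df)|_S$, concluding $u\circ(dz\otimes df^{A})=u^\beta(dz\otimes df^{A})$ on $X_0=C(S,\Delta_S)$. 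You instead lean on the existence of the $T\times\bC^*$-equivariant relative canonical section $\mathscr{S}$ on $\cX$ — but that is Proposition \ref{prop-Teqsec}, which the paper proves \emph{after} this lemma by essentially the same explicit residue computation ($\tilde\mu^*(t^{-mA}s\wedge dt)=A^{-1}d(w^{A})\wedge dz\wedge dt$). So your route would require establishing Proposition \ref{prop-Teqsec} first, and that proof carries out the very adjunction bookkeeping you are hoping to bypass. The ``subtle point'' you flag at the end — verifying that the relative canonical section exists with the expected weights — is not incidental; it is the entire content of the log-discrepancy claim in both parts, and in either packaging it is the Poincar\'{e}-residue computation on the plt extraction that does the work. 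Your plan is correct, but that step must be carried out, not merely expected.
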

\begin{proof}
The first statement is clear (see \cite{LX16}). For the second statement, we first show the equality for log discrepancies. By Lemma \ref{lem-ldwt}, we just need to show the weights of holomorphic pluricanonical forms with respect to $\xi_0$, on $X$ and $X_0$ respectively, are equal.
Assume that $s$ is a non-vanishing $T$-equivariant  section of $\mathcal{O}(m (K_X+D))$. Let $\mu: Y\rightarrow X$ be the extraction of $S$. Then because of the identity 
$$K_Y+\mu_*^{-1}D=\mu^*(K_X+D)+(A-1) S$$ with $A=A_{(X,D)}(S)$, if we denote by $f$ a $T$-invariant local section of $\mathcal{O}_Y(mS)$, locally around $S$ we have the identity:
\[
\mu^*(s)=s' \cdot f^{A-1},
\]
where $s'$ is a local generator of $m (K_Y+\mu_*^{-1}D)$. By taking the Poincar\'e residue, 
$\mathcal{O}_S(m(K_S+\Delta_S))$ is generated by $\left.(s'/df)\right|_S=:dz$. 
For any $u\in T\cong (\bC^*)^r$, we have $u\circ s=u^\beta s$ and $u\circ f=u^{\gamma} f$, for some $\beta, \gamma\in \bZ^r$, by the equivariance of the the data. Then
\[
u^\beta \mu^*s=\mu^*( u\circ s)=u\circ \mu^*s=(u\circ s')\cdot (u\circ f)^{A-1}.
\]
So $u\circ s'=u^{\beta-(A-1)\gamma} s'$. As a consequence:
\[
u\circ dz=u\circ \frac{s'}{df}=u^{\beta-A \gamma} \frac{s'}{df}=u^{\beta-A\gamma} dz.
\]
A non-vanishing holomorphic section of $m(K_{X_0}+D_0)$ on  $X_0=C(S,\Delta)$ is given by $dz \otimes (df^{A})$ (see \cite[6.2.2]{Li15b} and Theorem \ref{t-Tcano}.3). Therefore, 
\begin{eqnarray*}
u\circ (dz\otimes df^A)&=&(u\circ dz)\otimes (u\circ df^A)= (u\circ \frac{s'}{df})\otimes(u\circ df^A)\\
&=& u^\beta (dz\otimes df^A).
\end{eqnarray*}

We also note that 
\begin{align*}
R/\fa_m(\wt_{\xi_0})= \bigoplus_{\alpha: \langle \alpha, \xi_0 \rangle< m} R_\alpha, \quad\quad 
A/\fa_m(\wt_{\xi_0})= \bigoplus_{\alpha: \langle \alpha, \xi_0 \rangle < m} \bigoplus_{k} A_k^\alpha.
\end{align*}
Now the equality of the volumes follows from the identity:
\[
\dim R_\alpha=\sum_k \dim\left( \frac{R_\alpha\cap \fa_k(\ord_S)}{R_\alpha\cap \fa_{k+1}(\ord_S)} \right)=\sum_k \dim A^\alpha_k.
\]
\end{proof}
For a later purpose, we need a little more:
\begin{prop}\label{prop-Teqsec}
Let $S$ be a $T$-invariant Koll\'{a}r component and $(\cX, \cD)$ be the special test configuration associated to $S$. Then there is a $T$-equivariant nowhere-vanishing holomorphic section $\mathscr{S}$ of $m(K_{\cX/\bA^1}+\cD)$ for $m$ sufficiently divisible. 
\end{prop}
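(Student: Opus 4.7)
The plan is to apply the general structure theorem for klt $T$-varieties with a good torus action (Remark \ref{rem-Tnonvanish}, based on \cite[Theorem 3.21]{PS08} and \cite[Proposition 4.4]{LS13}) directly to the total space $\cX$ equipped with its natural $T'=T\times\bC^*$-action, rather than trying to spread out the given section $s$ on $X$ of $m(K_X+D)$ by hand. This will produce a $T'$-equivariant nowhere-vanishing section of $m(K_\cX+\cD)$, and a standard manipulation with the base will convert it into the desired section of $m(K_{\cX/\bA^1}+\cD)$.

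First I would verify that the hypotheses of the structure theorem hold for $\cX$. The $T$-action on $X$ preserves the $T$-invariant ideals $\fa_k(\ord_S)$ by equivariance of $S$, so $T$ acts on $\cR'=\bigoplus_k \fa_k(\ord_S)t^{-k}$, and together with the $\bC^*$-action from the $\bZ$-grading this gives $\cX$ a $T'$-action. This action is \emph{good} in the sense of Definition~\ref{d-good}: the $\bC^*_t$-action contracts $\cX$ onto $X_0$, and the inherited $T$-action on $X_0$ contracts $X_0$ onto its vertex point $o'$, so $o'$ is the unique closed $T'$-orbit. For the klt property, the general fibers are isomorphic to the klt pair $(X,D)$, while by the very definition of a special test configuration the central fiber $(X_0,D_0)$ is klt. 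Since $\cX\to\bA^1$ is flat and $X_0$ is a prime Cartier divisor, inversion of adjunction yields that $(\cX,\cD+X_0)$ is plt in a $T'$-saturated neighborhood of $X_0$; equivalently $K_\cX+\cD$ is $\bQ$-Cartier there and $(\cX,\cD)$ is klt. The existence of a contracting $T'$-action then extends this property globally on $\cX$.

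With $(\cX,\cD)$ a klt $T'$-variety with good action in hand, Remark \ref{rem-Tnonvanish} applied to $(\cX,\cD,T')$ produces, for $m$ sufficiently divisible, a $T'$-equivariant nowhere-vanishing section $\mathscr{S}_0\in H^0(\cX,m(K_\cX+\cD))$. The canonical bundle formula along the smooth base gives
\[
m(K_{\cX/\bA^1}+\cD)=m(K_\cX+\cD)-m\,\pi^*K_{\bA^1},
\]
and $\pi^*(dt)^m$ is a nowhere-vanishing section of $m\pi^*K_{\bA^1}$. Setting
\[
\mathscr{S}:=\frac{\mathscr{S}_0}{(\pi^*dt)^m}
\]
yields a nowhere-vanishing section of $m(K_{\cX/\bA^1}+\cD)$. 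Since $T$ acts trivially on the base $\bA^1$, the form $\pi^*dt$ is $T$-invariant, so $\mathscr{S}$ remains $T$-equivariant (in fact with the same $T$-weight as $\mathscr{S}_0$), as required.

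\textbf{Main obstacle.} The only nontrivial input is the klt verification for the pair $(\cX,\cD)$; once this is in place, the structure theorem of \cite{PS08,LS13} does all the work, and the passage from $K_\cX$ to $K_{\cX/\bA^1}$ is formal. A cross-check one should make, to confirm that $\mathscr{S}$ is the ``right'' section, is that its restriction to the central fiber recovers (up to scale) the $T'$-equivariant generator $dz\otimes df^A$ of $m(K_{X_0}+D_0)$ that already appeared in the proof of Lemma~\ref{l-degpre}; by the uniqueness of $T'$-equivariant nowhere-vanishing sections up to a character (on a $T'$-variety with unique closed orbit), this is automatic.
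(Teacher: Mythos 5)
Your approach is a genuinely different route from the paper's. Rather than the explicit local computation that the paper carries out (pulling back $t^{-mA}s\wedge dt$ along the deformation to the normal cone and identifying it with $A^{-1}d(w^A)\wedge dz\wedge dt$), you propose to apply the structure theorem for $\bQ$-Gorenstein $T$-varieties (Remark \ref{rem-Tnonvanish}) directly to $\cX$ as a $T'=T\times\bC^*$-variety and then divide by $(\pi^*dt)^m$. The final reduction to $m(K_{\cX/\bA^1}+\cD)$ is sound, and your uniqueness-up-to-character cross-check is valid. However, there are two genuine gaps in the verification of the hypotheses.

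First, the claim that ``the $\bC^*_t$-action contracts $\cX$ onto $X_0$'' is false. For $x\in\cX$ with $\pi(x)\neq 0$, the orbit $\lambda\cdot x$ does \emph{not} converge as $\lambda\to 0$: while the $t$-coordinate goes to $0$, the horizontal coordinates diverge. (Take $X=\bC^2$ with $S$ the exceptional divisor of the blow-up of $0$: then $\cX\cong\bC^3$ with coordinates $(u,v,t)=(z_1t^{-1},z_2t^{-1},t)$, and the action is $(a,b,c)\mapsto(\lambda^{-1}a,\lambda^{-1}b,\lambda c)$.) The $T'$-action on $\cX$ is in fact good, but not by a two-step contraction: one must produce a single one-parameter subgroup $\xi_0+b\eta$, $|b|$ small, whose weights on the extended Rees algebra $\cR'=\bigoplus_k\fa_k t^{-k}$ are all positive, which ultimately rests on Izumi's inequality bounding $\ord_S$ on each weight space $R_\alpha$ by a multiple of $\langle\alpha,\xi_0\rangle$.

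Second, the klt verification via inversion of adjunction is circular: inversion of adjunction along $X_0$ requires $K_\cX+\cD+X_0$ to be $\bQ$-Cartier as a \emph{hypothesis}, not as a conclusion, so it cannot be used to establish $\bQ$-Gorensteinness of the total space. That $(\cX,\cD)$ is $\bQ$-Gorenstein klt must be established separately (it does follow from the MMP construction of the test configuration from a Koll\'ar component in \cite{LX16}, but you do not invoke this). The paper's explicit construction sidesteps the issue entirely: exhibiting $\mathscr{S}=\tilde{\mu}^*(t^{-mA}s\wedge dt)\otimes\partial_t$ as a holomorphic, nowhere-vanishing $T$-equivariant section simultaneously proves that $m(K_{\cX/\bA^1}+\cD)$ is trivial near $X_0$ and gives for free the eigenvalue identity $\cL_{t\partial_t}\mathscr{S}=-mA\,\mathscr{S}$ used in Remark \ref{rem-Ltpt}, which your approach would have to re-derive from Lemma \ref{lem-ldwt}.
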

\begin{proof}
We will use the same notations as used in the above proof. Let $\tilde{\mu}=\mu\times {\rm id}: Y\times\bA^1\rightarrow X\times \bA^1$ be the extraction of $S\times \bA^1$. To construct the special test configuration, we first consider the deformation to the normal cone by blowing up $S\times \{0\}$. Then $\tilde{\mu}^*(t^{-mA} s\wedge dt)$ is a meromorphic section of $\tilde{\mu}^* m(K_{X\times\bA^1}+ (D\times \bA^1))$ on $Y\times\bA^1$ satisfying:
\[
\tilde{\mu}^*(t^{-mA} s\wedge dt)=t^{-mA} s'\cdot f^{A-1}\wedge dt. 
\]
In local coordinates, $s'=df\wedge dz$. After blowing up $S\times\{0\}$, $t^{-m}f$ becomes a coordinate, denoted by $w$, along the fiber of the normal bundle of $S\subset Y$. So we have:
\begin{eqnarray*}
\tilde{\mu}^*(t^{-mA} s\wedge dt)&=&t^{-mA} d( t^m w)\wedge dz \cdot (t^m w)^{A-1}\\
&=& \left(w^{A-1} dw\wedge dz+m t^{-1} w^A dt\wedge dz\right)\wedge dt\\
&=&A^{-1} d (w^A)\wedge dz\wedge dt.
\end{eqnarray*}
Notice that $d(w^A)\wedge dz$ is a non-vanishing holomorphic section of $m(K_{X_0}+D_0)$ over $X_0=C(S,\Delta)$, thus $\tilde{\mu}^*(t^{-mA}s\wedge dt)\otimes\partial_t$ descends to a $T$-equivariant holomorphic section 
$\mathscr{S}$ of $m(K_{\cX/\bA^1}+\cD)$. 
\end{proof}
\begin{rem}\label{rem-Ltpt}
Notice that, we have
\begin{equation}\label{eq-Ltpt}
\mathcal{L}_{t\partial_t} \mathscr{S}=-m A \mathscr{S}.
\end{equation}
The minus sign is compatible with the fact that on the central fiber $t\partial_t=-\xi_S$. Moreover, if we restrict both sides of \eqref{eq-Ltpt}, then we see that $A_{(X_0,D_0)}(S)=A_{(X,D)}(S)$ as mentioned. 
\end{rem}

We can now generalize the minimization result in \cite{Li15b, LL16, LX16} to the higher rank case.

\begin{thm}\label{thm-semin}
$(X, D, \xi_0)$ is K-semistable if and only if $\wt_{\xi_0}$ is a minimizer of $\hvol_{(X,D)}$ in $\Val_{X,x}$. 
\end{thm}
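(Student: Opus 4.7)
I will prove the two implications separately by reducing each to a convex-geometric statement on the enlarged Reeb cone $\ft'^{+}_{\bR}$ of the central fibre of an appropriate special test configuration.

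\emph{($\Rightarrow$) Minimizer implies K-semistability.}
For any $T$-equivariant special test configuration $(\cX,\cD,\xi_0;\eta)$, the argument in Lemma \ref{l-degpre}.2---which only relies on the existence of a $T$-equivariant nowhere-vanishing pluricanonical section across the family (cf.\ Proposition \ref{prop-Teqsec})---extends to give $\hvol_{(X,D)}(\wt_{\xi_0})=\hvol_{(X_0,D_0)}(\wt_{\xi_0})$. Combining this identity with the lower semi-continuity $\hvol(X_0,D_0)\le \hvol(X,D)$ of the normalized volume under flat degeneration and the minimizing hypothesis on $X$ forces $\wt_{\xi_0}$ to minimize $\hvol_{(X_0,D_0)}$ on $\Val_{X_0,o_0}$ as well. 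Evaluating along the ray $\xi_0+\bR_{\ge 0}(-\eta)\subset \ft'^{+}_{\bR}$ this yields $\hvol(\xi_0-s\eta)\ge \hvol(\xi_0)$ for all small $s\ge 0$, and differentiating at $s=0$ produces exactly $\Fut(X_0,D_0,\xi_0;\eta)\ge 0$, i.e.\ K-semistability.

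\emph{($\Leftarrow$) K-semistability implies minimizer.}
By the equivariant Theorem \ref{thm-minKol} it suffices to show $\hvol_{(X,D)}(\ord_S)\ge \hvol_{(X,D)}(\wt_{\xi_0})$ for every $T$-invariant Koll\'ar component $S$ over $o$. To each such $S$ attach the special test configuration $(\cX,\cD,\xi_0;\xi_S)$ of Section \ref{ss-kollar}; Lemma \ref{l-degpre} then reduces the target inequality to the cone-side comparison $\hvol_{(X_0,D_0)}(\wt_{\xi_S})\ge \hvol_{(X_0,D_0)}(\wt_{\xi_0})$ on the enlarged Reeb cone $\ft'^{+}_{\bR}$, where $\ft'=\ft\oplus\bR\langle\xi_S\rangle$. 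Applying K-semistability to the product test configurations generated by $\pm\eta$ for $\eta\in \ft$ forces $\xi_0$ to be a critical point, and hence by the strict convexity of Proposition \ref{prop-Tconvex1} the unique (up to rescaling) minimizer of $\hvol$ on the original cone $\ft^{+}_{\bR}$. Combining this criticality in the $\ft$-directions with the ray information at $\xi_0$ in the direction $-\xi_S$ supplied by Lemma \ref{lem-Tvolmin}, and exhausting the segment from $\hat{\xi}_0$ to $\hat{\xi}_S$ on the affine slice $\{A=1\}\cap \ft'^{+}_{\bR}$ via the scale-invariance of $\hvol$ and the strict convexity of $\vol$ on that slice, yields the required inequality.

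\textbf{Main obstacle.} The genuinely delicate step is the last one of $(\Leftarrow)$. Lemma \ref{lem-Tvolmin} only controls $\hvol$ along the ray from $\xi_0$ in direction $-\xi_S$, which moves \emph{away} from $\xi_S$ rather than toward it, while the needed comparison is at $\xi_S$ itself. Converting this one-sided infinitesimal data into the desired global comparison requires the full strength of the strict convexity of $\vol$ on the affine slice (Proposition \ref{prop-Tconvex1}), the criticality of $\xi_0$ in $\ft$-directions from product test configurations, and the scale-invariance of the normalized volume---the convex-geometric heart of the higher-rank argument that replaces the Fujita-type inequality used in the rank-one case of \cite{LL16}.
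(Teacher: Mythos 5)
Your proof of the backward implication (K-semistable $\Rightarrow$ minimizer) is structurally the same as the paper's, but your ``Main obstacle'' paragraph is based on a sign confusion. Lemma \ref{lem-Tvolmin} controls $\hvol$ along the ray $\xi_0+\bR_{\ge 0}(-\eta)$, and since $\xi_S=-\eta$ (this is stated explicitly in the paper's proof), that ray is $\xi_0+\bR_{\ge 0}\,\xi_S$, which points \emph{toward} $\xi_S$, not away from it. Consequently the argument is simpler than you suggest: set $\xi_b=\xi_0+b\,\xi_S$, apply Lemma \ref{lem-Tvolmin} to get $\hvol_{(X_0,D_0)}(\wt_{\xi_b})\ge\hvol_{(X_0,D_0)}(\wt_{\xi_0})$ for all $b\ge 0$, rescale by $1/(1+b)$ (which leaves $\hvol$ unchanged) so that $\xi_b/(1+b)\to\xi_S$, and use continuity. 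Invoking strict convexity and criticality of $\xi_0$ in the $\ft$-directions is unnecessary once the direction is understood correctly.

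The forward implication (minimizer $\Rightarrow$ K-semistable) contains a genuine gap. You claim that the identity $\hvol_{(X,D)}(\wt_{\xi_0})=\hvol_{(X_0,D_0)}(\wt_{\xi_0})$, together with the lower semicontinuity $\hvol(X_0,D_0)\le\hvol(X,D)$ and the minimizing hypothesis on $X$, ``forces'' $\wt_{\xi_0}$ to minimize $\hvol_{(X_0,D_0)}$. But chaining those three facts yields only $\hvol_{(X_0,D_0)}(\wt_{\xi_0})=\hvol(X,D)\ge\hvol(X_0,D_0)$, which is the trivial inequality between a value of the functional and its infimum; it does not give the equality you need. In fact the wanted equality is logically equivalent to the assertion that $\wt_{\xi_0}$ is a minimizer on $X_0$, so this step is circular. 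The paper circumvents this entirely: rather than trying to transfer the minimization statement to the central fibre, it restricts the family of toric valuations $\wt_{\xi_\epsilon}$ on $\cX$ to produce quasi-monomial valuations $w_\epsilon$ on $X$ itself, proves $\hvol_{(X,D)}(w_\epsilon)=\hvol_{(X_0,D_0)}(\wt_{\xi_\epsilon})$ (the volume equality because the two valuations have the same associated graded ring, and the log-discrepancy equality by approximation of $\xi_\epsilon$ by rational Reeb vectors giving Koll\'ar components and then applying Lemma \ref{l-degpre}.1), and finally differentiates $\epsilon\mapsto\hvol_{(X,D)}(w_\epsilon)$ at $\epsilon=0$, which is nonnegative by the minimizing hypothesis on $X$. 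You need this construction of the pulled-back valuations $w_\epsilon$ and the careful verification of the equality of normalized volumes; without it the forward direction does not close.
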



\begin{proof}

First we assume $(X, D, \xi_0)$ is K-semistable. By Theorem \ref{thm-minKol}, we only need to show that for any $T$-invariant Koll\'{a}r component $S$, 
$$\hvol(\ord_S)\ge \hvol(\wt_{\xi_0}).$$

Let $(\cX, \cD)$ be the special test configuration associated to $S$. Since $S$ is $T$-invariant, there is a $T$-action on $\cX$. Let $T'=T\times\bC^*=(\bC^*)^{d+1}$. Then there is a $T'$-action on $\cX$ which is effective if $C(S)$ is not isomorphic to $X$. The canonical valuation $\ord_S$ corresponds to $\wt_{\xi_S}$ for some $\xi_S\in \ft'^{+}_\bR$ which is taken to be $-\eta$ in the definition of the special test configuration. As a consequence the ray $\xi_0+\bR_{\ge 0} (-\eta)$ is equal to
\[
\xi_0+\bR_{\ge 0}(-\eta)=\xi_0+\bR_{\ge 0} (\xi_S).
\] 
For any $b\in \bR_{\ge 0}$, denote $\xi_b=\xi_0+b \xi_S$, then we have
\begin{eqnarray*}
\hvol_{(X_0,D_0)}(\wt_{\xi_b})
&\ge& \hvol_{(X_0, D_0)}(\wt_{\xi_0})\ \   (\mbox{by Lemma \ref{lem-Tvolmin}})  \\
&=&\hvol_{(X,D)}(\wt_{\xi_0})  \ \   (\mbox{by Lemma  \ref{l-degpre}.2}).
\end{eqnarray*}
On the other hand, because $\xi_b/(1+b)\rightarrow \xi_S$ as $b\rightarrow+\infty$, by the rescaling invariance of $\hvol$ and the continuity of $\hvol$ on $\ft'^{+}_{\bR}$, we have
\begin{eqnarray*}
\lim_{b\rightarrow+\infty} \hvol_{(X_0,D_0)}(\wt_{\xi_b})&=&\lim_{b\rightarrow+\infty} \hvol_{(X_0, D_0)}(\wt_{\xi_b}/(1+b))\\
&=&\hvol_{(X_0, D_0)}(\wt_{\xi_S})\\
&=&\hvol_{(X, D)}(\wt_{\xi_S})\ \ (\mbox{by Lemma  \ref{l-degpre}.1}) .
\end{eqnarray*}
Combining the above inequalities, we get $\hvol_{(X,D)}(\ord_{\xi_S})\ge \hvol_{(X,D)}(\wt_{\xi_0})$.
\bigskip

For the converse direction, we assume $\hvol_{(X,D)}$ obtains its minimum at $\wt_{\xi_0}$ and  
let $(\cX, \cD, \xi_0; \eta)$ be any special test configuration. 

If we let $\xi_\epsilon=\xi_0 - \epsilon \eta\in \ft'^+_\bR$, then $\wt_{\xi_\epsilon}$ can be considered as a valuation on $\cX$. Using the embedding $\bC(X)\rightarrow \bC(\cX)=\bC(X\times \bC^*)=\bC(X\times\bC)$, 
$\wt_{\xi_\epsilon}$ can be restricted to become a valuation $w_\epsilon$ on $X$ (see \cite{BHJ17, Li15b} for the regular case). Alternatively by equivariantly embedding of $\cX$ into $\bC^N\times\bC$, $\wt_{\xi_\epsilon}$ is induced by a linear holomorphic vector field, still denoted by $\xi_\epsilon$, on $\bC^N$. The weight function associated to $\xi_\epsilon$ induces a filtration on $R$ whose associated graded ring is equal to 
the coordinate ring of $X_0$. By Lemma \ref{lem-quasi}, this filtration is indeed determined by a valuation $w_\epsilon$ on $X$. As a consequence we have $\vol_{(X,D)}(w_\epsilon)=\vol_{(X_0, D_0)}(\wt_{\xi_\epsilon})$ because $w_\epsilon$ and $\wt_{\xi_\epsilon}$ have the same associated graded ring. On the other hand, we claim that for each fixed $\epsilon$, 
\begin{equation}\label{eq-equalld}
A_{(X,D)}(w_\epsilon)=A_{(X_0,D_0)}(\wt_{\xi_\epsilon}).
\end{equation} 
This follows from the general construction in the proof of Lemma \ref{l-finiteass} (see Lemma \ref{lem-irrid}). If $\epsilon\ll 1$ we can choose a sequence of rational vector fields $\xi_{k,\epsilon}\in \ft^+_{\bQ}$ approaching $\xi_\epsilon$ as $k\rightarrow+\infty$. Then the $\bC^*$-action generated by $\xi_{k,\epsilon}$ corresponds to a Koll\'{a}r component $S_{k,\epsilon}$ which is isomorphic to the quotient $X_0/\langle \exp(\bC\cdot \xi_{k,\epsilon})\rangle$. In other words, up to a base change, $(\cX, \cD, \xi_0; \xi_{k,\epsilon})$ is equivalent to the special test configuration associated to $S_{k,\epsilon}$ and there exists constants $c_{k,\epsilon}>0$ such that $\wt_{\xi_{k,\epsilon}}|_{\bC(X)}=c_{k,\epsilon} \cdot \ord_{S_{k,\epsilon}}\rightarrow w_\epsilon$ as $k\rightarrow+\infty$. So by Lemma \ref{l-degpre}.1, we have $A_{(X,D)}(c_{k,\epsilon}\cdot \ord_{S_{k,\epsilon}})=A_{(X_0,D_0)}(\wt_{\xi_{S_{k,\epsilon}}})$ (see also \cite[Proposition 6.16.2]{Li15b}). Taking $k\rightarrow+\infty$, we get the identity \eqref{eq-equalld}.

Thus we get $\hvol_{(X,D)}(w_\epsilon)=\hvol_{(X_0,D_0)}(\wt_{\xi_\epsilon})$. As a consequence:
\begin{eqnarray*}
{\rm Fut}(X_0,D_0, \xi_0; \eta)&=&\left.\frac{d}{d \epsilon} \hvol_{(X_0,D_0)}(\wt_{\xi_\epsilon})\right|_{\epsilon=0}\\
&=&\left.\frac{d}{d \epsilon} \hvol_{(X,D)}(w_\epsilon)\right|_{\epsilon=0}\ge 0.
\end{eqnarray*}
The last inequality is because $w_{0}=\wt_{\xi_0}$ on $\bC(X)$ and hence by assumption $\hvol_{(X,D)}(w_\epsilon)$ obtains the minimum at $\epsilon=0$.


\end{proof}

By the construction in the above proof and Theorem \ref{thm-minKol}, we also get the following:
\begin{prop}\label{prop-testKollar}
To test K-semistability of a log Fano cone singularity $(X,D,\xi_0)$, we only need to test on the special test configurations associated to Koll\'{a}r components, i.e., we only need to check for any $T$-equivariant Koll\'ar component $S$, the generalized Futaki invariant ${\rm Fut}(X_0,D_0,\xi_0;\eta)\ge 0$, where $(X_0,D_0)$ is the induced special degeneration by $S$ and $\eta=-\xi_S$.
\end{prop}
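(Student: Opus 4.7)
The plan is to bootstrap the equivalence already established in Theorem~\ref{thm-semin} together with the reduction to Koll\'ar components given by Theorem~\ref{thm-minKol}. Denote by $(\star)$ the hypothesis that $\Fut(X_0,D_0,\xi_0;-\xi_S)\ge 0$ for every $T$-equivariant Koll\'ar component $S$ over $o\in(X,D)$ (equivalently, for every special test configuration of the form constructed in Definition~\ref{d-xiS}). We wish to promote $(\star)$ to K-semistability against arbitrary $T$-equivariant special test configurations.

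The first step is to re-examine the forward direction of Theorem~\ref{thm-semin}. Its argument takes a $T$-equivariant Koll\'ar component $S$, forms the associated special degeneration $(\cX,\cD,\xi_0;\xi_S)$, runs Lemma~\ref{lem-Tvolmin} along the ray $\xi_b=\xi_0+b\xi_S$, and then lets $b\to+\infty$ to compare $\hvol(\wt_{\xi_0})$ with $\hvol(\ord_S)$. The only place where K-semistability is used in this chain is precisely the sign $\Fut(X_0,D_0,\xi_0;-\xi_S)\ge 0$ fed into the convexity argument of Proposition~\ref{prop-Tconvex1}. Hence, assuming only $(\star)$, the same computation yields
\[
\hvol_{(X,D)}(\ord_S)\ \ge\ \hvol_{(X,D)}(\wt_{\xi_0})
\quad\text{for every $T$-equivariant Koll\'ar component $S$.}
\]

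The second step is to invoke Theorem~\ref{thm-minKol}, which identifies $\hvol(X,D,x)$ with $\inf_{S\in\mathrm{Kol}^T_{X,x}} A_{(X,D)}(S)^n\cdot\vol(\ord_S)$. Combined with the previous display, this forces $\wt_{\xi_0}$ to be a minimizer of $\hvol_{(X,D)}$ on $\Val_{X,x}$.

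The third and final step is to apply the converse direction of Theorem~\ref{thm-semin}: since $\wt_{\xi_0}$ minimizes the normalized volume, the triple $(X,D,\xi_0)$ is K-semistable in the sense of Definition~\ref{d-ksemiSE}, i.e.\ $\Fut(X_0,D_0,\xi_0;\eta)\ge 0$ for every $T$-equivariant special test configuration $(\cX,\cD,\xi_0;\eta)$, not just those coming from Koll\'ar components. There is no real obstacle here beyond verifying that the forward implication of Theorem~\ref{thm-semin} genuinely uses $(\star)$ rather than the full K-semistability hypothesis; once this bookkeeping is made explicit, the proposition follows by assembling Theorems~\ref{thm-semin} and~\ref{thm-minKol}.
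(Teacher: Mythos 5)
Your proposal is correct and matches the paper's intent: the paper's one-line justification (``by the construction in the above proof and Theorem~\ref{thm-minKol}'') is precisely the bootstrapping you carry out explicitly. You correctly observe that the forward direction of Theorem~\ref{thm-semin} — via Lemma~\ref{lem-Tvolmin} applied to the ray $\xi_0+\bR_{\ge 0}\xi_S$ — only consumes the Futaki sign for Koll\'ar-component test configurations, so $(\star)$ already suffices to conclude $\wt_{\xi_0}$ is a minimizer via Theorem~\ref{thm-minKol}; and the converse direction of Theorem~\ref{thm-semin} (which does not rely on the present proposition) then upgrades minimization to full K-semistability against all $T$-equivariant special test configurations.
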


The following purely algebro-geometric statement can be seen as a generalization of a result in \cite{Li13}, which was proved there by an analytic method.
\begin{prop}\label{l-degKsemi}
Let $(X,D,\xi_0)$ be a log Fano cone singularity, which specially degenerates to $(X_0,D_0,\xi_0; \eta)$ via a $T$-equivariant special test configuration.
Then $(X,D,\xi_0)$ is K-semistable if $(X_0,D_0,\xi_0)$ is K-semistable.  
\end{prop}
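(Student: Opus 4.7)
The plan is to combine Theorem~\ref{thm-semin} with a flat degeneration of ideals and Liu's characterization (Theorem~\ref{t-liu}). The key reduction is to turn the conclusion into the inequality
\[
\hvol(X_0,D_0,o_0)\le \hvol(X,D,o)
\]
between the normalized volumes of the two singularities.

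First, by Theorem~\ref{thm-semin} applied to $(X_0,D_0,\xi_0)$, the K-semistability hypothesis is equivalent to $\hvol_{(X_0,D_0)}(\wt_{\xi_0})=\hvol(X_0,D_0,o_0)$. Combined with Lemma~\ref{l-degpre}.2, which gives $\hvol_{(X,D)}(\wt_{\xi_0})=\hvol_{(X_0,D_0)}(\wt_{\xi_0})$, the desired K-semistability of $(X,D,\xi_0)$---which, again by Theorem~\ref{thm-semin}, is equivalent to $\wt_{\xi_0}$ being a minimizer of $\hvol_{(X,D)}$ on $\Val_{X,o}$---would follow from the displayed inequality via the chain
\[
\hvol_{(X,D)}(\wt_{\xi_0})=\hvol(X_0,D_0,o_0)\le \hvol(X,D,o)\le \hvol_{(X,D)}(\wt_{\xi_0}),
\]
forcing equality throughout.

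To establish the inequality, I would degenerate $\fm$-primary ideals using the $\bC^*$-action generated by $\eta$. This action induces an exhaustive $\bZ$-filtration $F^\bullet R$ whose associated graded ring is the coordinate ring $R_0$ of $X_0$, and the Rees algebra $\cR=\bigoplus_k F^kR\cdot t^{-k}$ is exactly the coordinate ring of $\cX$. For any $\fm$-primary ideal $\ka\subset R$, the Rees-type extension $\tilde\ka=\bigoplus_k (\ka\cap F^kR)\,t^{-k}\subset\cR$ is $\bC[t]$-flat, with generic fiber $\ka$ and special fiber the initial ideal $\ka_0=\bin_\eta(\ka)\subset R_0$. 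By flatness, $\ka_0$ is $\fm_0$-primary of the same colength as $\ka$. Since $R_0$ is a domain (the central fiber of a special test configuration being normal) and $\bin_\eta$ comes from a $\bC^*$-weight, we have $\bin_\eta(fg)=\bin_\eta(f)\bin_\eta(g)$, whence $\bin_\eta(\ka^m)=\ka_0^m$ for every $m\ge 1$, yielding $\mult(\ka_0)=\mult(\ka)$. Lower semi-continuity of log canonical thresholds in the flat family $(\cX,\cD;\tilde\ka)\to\bA^1$ gives $\lct_{(X_0,D_0)}(\ka_0)\le\lct_{(X,D)}(\ka)$. Applying Theorem~\ref{t-liu} on both sides,
\[
\hvol(X_0,D_0,o_0)\le \lct_{(X_0,D_0)}^n(\ka_0)\cdot\mult(\ka_0)\le \lct_{(X,D)}^n(\ka)\cdot\mult(\ka),
\]
and taking the infimum over $\ka$ finishes the argument.

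The two technical points are the multiplicativity $\bin_\eta(\ka_1\ka_2)=\bin_\eta(\ka_1)\bin_\eta(\ka_2)$ (needed to ensure multiplicity is genuinely preserved, not just bounded on one side) and the lower semi-continuity of lct along the flat family, both of which are standard. I do not expect a serious obstacle; the substance of the proof is simply the reduction to normalized-volume minimization provided by Theorem~\ref{thm-semin}, together with Liu's ideal-theoretic formula.
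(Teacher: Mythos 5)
Your high-level plan is the same one the paper uses: reduce via Theorem~\ref{thm-semin} to showing that normalized volume cannot drop from the special fibre to the general fibre, and then prove that inequality by flatly degenerating $\fm$-primary ideals along the $\bC^*$-action generated by $\eta$, using flatness for colengths and lower semicontinuity for log canonical thresholds. The structure of the argument is sound.

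The gap is in your claim that $\bin_\eta(\ka^m)=\ka_0^m$ for all $m$. What the domain property of $R_0$ gives you is multiplicativity of the initial-form map on \emph{elements}, $\bin_\eta(fg)=\bin_\eta(f)\bin_\eta(g)$, and hence the inclusion $\bin_\eta(\ka)^m\subseteq\bin_\eta(\ka^m)$. The reverse inclusion is \emph{not} a formal consequence of multiplicativity on elements, and it does fail in general: an element $h=\sum_i f_{i,1}\cdots f_{i,m}\in\ka^m$ can have cancellation among the leading terms of the summands, so that $\bin_\eta(h)$ has strictly higher weight than every $\prod_j\bin_\eta(f_{i,j})$ and need not lie in $\bin_\eta(\ka)^m$. (This is the same phenomenon as $\mathrm{in}(I^m)\supsetneq(\mathrm{in}\,I)^m$ for Gr\"obner degenerations.) The containment that \emph{does} hold gives you $\mathrm{colength}(\ka_0^m)\ge\mathrm{colength}(\bin_\eta(\ka^m))=\mathrm{colength}(\ka^m)$, hence $\mult(\ka_0)\ge\mult(\ka)$ --- the \emph{wrong} direction for your chain of inequalities. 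So as written the proof does not close.

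The repair is exactly what the paper does: replace the single initial ideal $\ka_0$ and its powers by the graded sequence $\fb_\bullet:=\{\bin_\eta(\ka^m)\}_m$. Flatness gives $\mathrm{colength}(\fb_m)=\mathrm{colength}(\ka^m)$ for every $m$, hence $\mult(\fb_\bullet)=\mult(\ka)$ with no need for multiplicativity of initial ideals; lower semicontinuity of $\lct$ applied to each $\ka^m\rightsquigarrow\fb_m$ yields
\[
\lct(\ka^m)^n\cdot\mathrm{colength}(\ka^m)\ \ge\ \lct(\fb_m)^n\cdot\mathrm{colength}(\fb_m),
\]
and letting $m\to\infty$ gives $\lct(\ka)^n\mult(\ka)\ge\lct(\fb_\bullet)^n\mult(\fb_\bullet)\ge\hvol(X_0,D_0,o')$, where the last inequality is the graded-sequence version of Liu's formula combined with the K-semistability of $(X_0,D_0,\xi_0)$ via Theorem~\ref{thm-semin}. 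From there your chain goes through unchanged.
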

\begin{proof}
We use a degeneration argument which similar to the one used in \cite{LX16,Blu16}.

Let $\fa$ be an $m$-primary ideal on $(X,x)$. Let $T'\cong (\bC^*)^{r+1}$ denote the torus generated by $\xi_0$ and $\eta$, then by choosing a lexicographic order on $\bZ^{r+1}$, we can degenerate $\fa^m$ to $\fb_m:=\rin(\fa^m)$ on $(X_0, o')$ which is $T'$-equivariant. Denote $X_0={\rm Spec}(R^{(0)})$.  By the lower semicontinuity of log canonical thresholds and the flatness of the degeneration, we get:
\begin{eqnarray*}
\lct(\fa^m)^n\cdot l_R(R/\fa^m)\ge \lct(\rin(\fa^m))^n\cdot l_{R^{(0)}}(R^{(0)}/\rin(\fa^m)).
\end{eqnarray*}
Taking $m\rightarrow+\infty$, we get:
\begin{eqnarray*}
\lct(\fa)^n\cdot \mult(\fa)&\ge& \lct(\rin(\fb_\bullet))^n \mult(\fb_\bullet)\\
&\ge & \inf_{v}\hvol_{(X_0, D_0)}(v)=\hvol_{(X_0,D_0)}(\xi_0)\\
&=&\hvol_{(X,D)}(\xi_0).
\end{eqnarray*}
So we know that $(X,D,\xi_0)$ is K-semistable by Theorem \ref{thm-semin}.

\end{proof}

\subsection{Convexity and uniqueness}\label{sec-Tunique}

The results in this section do not depend on other sections.
Here we aim to show the volume function on the Reeb cone is strictly convex and hence conclude the uniqueness of the minimizer when the log Fano cone singularity is K-semistable. Our main approach is to describe the volume of a valuation as the volume of a convex body and then reduce the question to a known result in convex geometry. This is standard in our first step where we treat toric singularities. Then we generalize it to an arbitrary log Fano cone singularity with only toric valuations, by considering valuations with real rank larger than one. In the last step, we study a K-semistable log Fano cone singularity and compare its volume with those of $T$-invariant quasi-monomial valuations. 

\subsubsection{Toric valuations on toric varieties}\label{ss-toric}
In this section we will consider the case of toric singularities, where the minimization problem can be reduced to a convex geometric problem as known from \cite{MSY08}. For the latter application, we will consider a more general setting of strictly convex cones. This problem was considered by Gigena \cite{Gig78} in a different context.

Let $\sigma\subset N\otimes_{\bZ}{\mathbb{R}}$ be a strictly convex cone.
For $u_0\in {\rm relint}(\sigma^{\vee})$, denote $H_{0}=\{\xi\in N_{\bR}; \langle u_0, \xi \rangle =1\}$. Then $\hat{H}_0:=H_0\cap \sigma$ is a bounded convex set on the affine space $H_0$. 
For any $\xi\in {\rm relint}(\sigma)$, we denote the bounded polyhedron 
\[
\Delta_{\xi}=\{y\in \sigma^{\vee}; \langle y, \xi\rangle \le 1\}.
\]
To state the next key result, we denote by $\vol|_{\hat{H}_0}$ the restriction of the function $\xi\mapsto \vol(\Delta_\xi)$ to $\xi\in \hat{H}_0$ and let
\[
\Pi_\xi=\{u\in \sigma^{\vee}; \langle u, \xi\rangle=1\}.
\]
\begin{lem}[{\cite{Gig78}}]\label{lem-Gig}
\begin{enumerate}
\item
The function $\xi\mapsto \vol(\Delta_\xi)$ is a strictly convex function for $\xi\in {\rm relint}(\sigma)$, where ${\rm relint}(\sigma)=\sigma^{\circ}$ denotes the relative interior of the cone $\sigma$. 

\item
$\vol|_{\hat{H}_0}$ is a strictly convex proper function for $\xi\in \hat{H}_0=H_{0}\cap \sigma$. As a consequence, there is a unique minimizer $\xi_0$ of $\vol|_{\hat{H}_0}$. Moreover, 
$\xi_0\in \hat{H}_0=H_0\cap \sigma$ is a minimizer of $\vol(\Delta_\xi)|_{\hat{H}_0}$ if and only if $u_0$ is the barycenter of $\Pi_{\xi_0}$. 
\end{enumerate}
\end{lem}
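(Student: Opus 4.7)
The approach is to represent the volume as a Laplace-type integral over the dual cone and then exploit strict convexity of the exponential. The first step is to establish the identity
\[
F(\xi):=\int_{\sigma^\vee} e^{-\langle y,\xi\rangle}\,dy = n!\,\vol(\Delta_\xi),\qquad \xi\in{\rm relint}(\sigma),
\]
which I would prove using the coarea formula applied to $y\mapsto\langle y,\xi\rangle$ on $\sigma^\vee$ together with the radial decomposition $\Delta_\xi=\bigcup_{0\le s\le 1} s\cdot \Pi_\xi$. For part (1), the function $\xi\mapsto e^{-\langle y,\xi\rangle}$ is strictly convex in $\xi$ whenever $y\ne 0$; since $\sigma$ is strictly convex the dual $\sigma^\vee$ is full-dimensional, so Jensen's inequality for $F$ is strict unless $\langle y,\xi_1\rangle=\langle y,\xi_2\rangle$ for a.e.\ $y\in\sigma^\vee$, forcing $\xi_1=\xi_2$. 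This gives strict convexity of $\vol(\Delta_\xi)$ on ${\rm relint}(\sigma)$.

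For part (2), strict convexity of $\vol|_{\hat H_0}$ is then immediate by restricting a strictly convex function on ${\rm relint}(\sigma)$ to the affine slice $H_0$. Properness is the technical part: if $\xi_k\in\hat H_0\cap{\rm relint}(\sigma)$ approaches $\partial\sigma$, there exists some nonzero $y_0\in\sigma^\vee$ with $\langle y_0,\xi_k\rangle\to 0$ (by duality of faces between $\sigma$ and $\sigma^\vee$). Taking a transverse neighborhood of the ray $\mathbb{R}_{\ge 0} y_0$ inside $\sigma^\vee$ whose cross-sectional $(n-1)$-volume stays uniformly bounded below, the Laplace integral $F(\xi_k)$ is bounded below by a positive multiple of $\int_0^\infty e^{-t\langle y_0,\xi_k\rangle}\,dt=\langle y_0,\xi_k\rangle^{-1}\to+\infty$. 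Strict convexity plus properness on the bounded set $\hat H_0$ yields a unique minimizer $\xi_0$ in its relative interior.

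For the barycenter characterization I would differentiate under the integral to get
\[
\nabla F(\xi)=-\int_{\sigma^\vee} y\,e^{-\langle y,\xi\rangle}\,dy,
\]
and then apply the same radial decomposition to compute
\[
\int_{\sigma^\vee} y\,e^{-\langle y,\xi_0\rangle}\,dy = n\cdot F(\xi_0)\cdot b,\qquad b:=\frac{\int_{\Pi_{\xi_0}} z\,d\mathcal{H}^{n-1}(z)}{\mathcal{H}^{n-1}(\Pi_{\xi_0})},
\]
so $\nabla F(\xi_0)=-n\,F(\xi_0)\,b$. The Lagrange multiplier condition for a critical point of $F|_{\hat H_0}$ says $\nabla F(\xi_0)$ is a scalar multiple of the normal $u_0$ to $H_0$, and therefore $b=\lambda u_0$ for some $\lambda\in\mathbb{R}$. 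Because $b\in\Pi_{\xi_0}$ gives $\langle b,\xi_0\rangle=1$ and $\xi_0\in H_0$ gives $\langle u_0,\xi_0\rangle=1$, we must have $\lambda=1$, so $b=u_0$. Strict convexity from part (1) identifies critical points with the unique minimizer, completing the characterization.

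The main obstacle I expect is the properness step: showing quantitatively that $F$ blows up at every boundary point of $\hat H_0$ requires picking the transverse neighborhood of $y_0$ carefully, using that the face of $\sigma^\vee$ dual to the face of $\sigma$ being approached lies entirely in $\sigma^\vee$ and so contributes the divergence uniformly. Once the Laplace representation is in place, all remaining manipulations are standard convex-analytic computations.
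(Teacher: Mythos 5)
Your proof is correct, and it takes a genuinely different route from the paper's. Where the paper parametrizes $\Delta_\xi$ directly through a cross-section $\tilde\Pi$ of $\sigma^\vee$ and computes the Jacobian to get the explicit formula
\[
\vol(\Delta_\xi)=\frac{1}{n}\int_{\tilde\Pi}\frac{\det(\tilde u,\partial_{y_1}\tilde u,\dots,\partial_{y_{n-1}}\tilde u)}{\langle\tilde u(y),\xi\rangle^{n}}\,dy,
\]
then differentiates under the integral to obtain the gradient $-\vol(\Pi_\xi)\,{\rm bc}(\Pi_\xi)/|\xi|$ and the manifestly positive Hessian $\frac{n+1}{|\xi|}\int_{\Pi_\xi}u\otimes u\;d\vol_{\Pi_\xi}$, you introduce the Laplace transform $F(\xi)=\int_{\sigma^\vee}e^{-\langle y,\xi\rangle}\,dy=n!\,\vol(\Delta_\xi)$. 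That buys you strict convexity essentially for free, since $\nabla^2 F(\xi)=\int_{\sigma^\vee}(y\otimes y)e^{-\langle y,\xi\rangle}\,dy$ is positive definite because $\sigma^\vee$ is full-dimensional, without any Jacobian bookkeeping. The barycenter characterization then reduces, in both proofs, to a Lagrange-multiplier computation; your radial identity $\int_{\sigma^\vee}y\,e^{-\langle y,\xi_0\rangle}\,dy=n\,F(\xi_0)\,b$ plays exactly the role of the paper's gradient formula, and the final step (both $b$ and $u_0$ pair to $1$ with $\xi_0$, forcing the scalar to be $1$) is identical. For properness the paper simply observes that $\Delta_\xi$ becomes unbounded as $\xi\to\partial\hat H_0$; your tube estimate is a more quantitative version of the same fact. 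One small precision worth making: a fixed-width transverse tube around the ray $\mathbb{R}_{\ge 0}y_0$ need not lie inside $\sigma^\vee$ when $y_0\in\partial\sigma^\vee$; what does lie in $\sigma^\vee$ is the Minkowski sum $\mathbb{R}_{\ge 0}y_0+B_\delta(y_1)$ for any interior point $y_1\in{\rm int}(\sigma^\vee)$ and small $\delta>0$, since a convex cone is closed under addition. On this translated tube $\langle y,\xi_k\rangle=t\langle y_0,\xi_k\rangle+\langle y',\xi_k\rangle$ with $\langle y',\xi_k\rangle$ uniformly bounded on the compact set $\hat H_0$, so your $\langle y_0,\xi_k\rangle^{-1}$ divergence survives.
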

For the reader's convenience, we provide a proof of this result by deriving the volume formula and reducing the minimization to a calculus problem as in \cite{Gig78}. 
\begin{proof}
We first derive a formula for $\vol(\xi)=\vol(\Delta_\xi)$ for any $\xi\in {\rm relint}(\sigma)$. To do this, fix a cross section $\tilde{u}: \tilde{\Pi} \rightarrow \sigma^{\vee}$. For example, we can choose $\tilde{\xi}\in {\rm relint}(\sigma)$ and let $\tilde{\Pi}=\sigma^{\vee}\cap \{u\in y; \langle u, \tilde{\xi}\rangle=1\}$. Consider the parametrization:
\begin{equation}
U: [0,1]\times \tilde{\Pi} \rightarrow \Delta_{\xi} , \quad 
(t, y)\mapsto t \frac{\tilde{u}(y)}{\langle \tilde{u}(y), \xi\rangle}.
\end{equation}
Denote $u(y)=\tilde{u}(y)/\langle \tilde{u}(y), \xi\rangle$. 
The Jacobian determinant of $F$ is equal to
\begin{eqnarray*}
\det({\rm Jac}(U))&=&\det\left(u(y), t \partial_{y_1}u(y), \dots, t \partial_{y_{n-1}}u(y)\right)\\
&=&t^{n-1}\det\left(u(y), \partial_{y_1}u(y), \dots, \partial_{y_{n-1}}u(y) \right)\\
&=&t^{n-1}\langle \tilde{u}(y), \xi\rangle^{-n}\det(\tilde{u}, \partial_{y_1}\tilde{u}, \dots, \partial_{y_{n-1}}\tilde{u}).
\end{eqnarray*}

So we get:
\begin{eqnarray}\label{eq-volDeltaXi}
\vol(\Delta_{\xi})&=&\int_{0}^1 \int_{\tilde{\Pi}}t^{n-1}\det({\rm Jac}(U)) dt dy=\frac{1}{n}\int_{\tilde{\Pi}} \det\left(u, \partial_{y_1}u, \dots, \partial_{y_{n-1}}u\right)dy\nonumber\\
&=&\frac{1}{n}\int_{\tilde{\Pi}}\frac{\det(\tilde{u}, \partial_{y_1}\tilde{u}, \dots, \partial_{y_{n-1}}\tilde{u})}{\langle \tilde{u}(y), \xi\rangle^{n}}dy.
\end{eqnarray}
For the simplicity, denote $\Phi(\xi)=\vol(\Delta_\xi)$. Then its first order derivative is equal to:
\begin{eqnarray}\label{eq-diffvol}
\frac{\partial}{\partial \xi}\Phi(\xi)&=&-\int_{\tilde{\Pi}}\frac{\tilde{u}(y)\det(\tilde{u}, \partial_{y_1}\tilde{u}, \dots, \partial_{y_{n-1}}\tilde{u})}{\langle \tilde{u}(y), \xi\rangle^{n+1}}dy\nonumber\\
&=&-\int_{\tilde{\Pi}} u(y) \det(u, \partial_{y_1}u, \dots, \partial_{y_{n-1}}u) dy \nonumber\\
&=&-\frac{1}{|\xi|} \int_{\Pi_\xi} u(y) s dy=-\frac{\vol(\Pi_\xi)}{|\xi|} {\rm bc}(\Pi_\xi),
\end{eqnarray}
where ${\rm bc}(\Pi_\xi)$ is the Euclidean barycenter of the bounded cross section $\Pi_{\xi}$.
In the last identity, we used the expression for the volume element $d\vol_{\Pi}$ of $\Pi_{\xi}:=F(1, \Pi)$ which is equal to $s\cdot dy$ with $s$ being equal to:
\begin{eqnarray*}
s&=&\left(\det\left(\partial_{y_i} u(y), \partial_{y_j}u(y)\right)\right)^{1/2}=\det(u, \partial_{y_1}u, \dots, \partial_{y_{n-1}}u)\frac{|\xi|}{\langle u, \xi\rangle}\\
&=&\det(u, \partial_{y_1}u, \dots, \partial_{y_{n-1}}u)|\xi|.
\end{eqnarray*}
Similarly, we get the expression for the second order derivative of $\vol(\Delta_\xi)$:
\begin{eqnarray*}
{\rm Hess}_{\xi}\left(\Phi(\xi)\right)&=&(n+1) \int_{\tilde{\Pi}}\frac{(\tilde{u}\otimes \tilde{u})\det(\tilde{u}, \partial_{y_1}\tilde{u}, \dots, \partial_{y_{n-1}}\tilde{u})}{\langle\tilde{u}(y), \xi\rangle^{n+2}}dy\\
&=&\frac{(n+1)}{|\xi|}\int_{\Pi_\xi}u\otimes u\;  d\vol_{\Pi_\xi}.
\end{eqnarray*}
We see that $\Phi(\xi)$ is strictly convex with respect to $\xi\in {\rm relint}(\sigma)$:
\begin{equation}
{\rm Hess}_{\xi}(\Phi(\xi))(\eta, \eta)> 0.
\end{equation}
This proves item 1 of Lemma \ref{lem-Gig}. 

Since $H_0$ is affine, $\hvol|_{\hat{H}_0}$ is also strict convex function for $\xi\in \hat{H}_0\cap H_0\cap \sigma$.
As $\xi\rightarrow \partial \hat{H}_0$, $\Delta_{\xi}$ becomes unbounded, so $\vol(\Delta_\xi)$ approaches $+\infty$. So we see that $\vol(\xi)|_{\hat{H}_0}$ is a strictly proper function. As a consequence,
there exists a unique minimizer of $\vol|_{\hat{H}_0}$.

If $\vol(\Delta_\xi)$ obtains the minimum at $\xi_0\in H_0$, then by Lagrangian multiplier method there exists $\lambda_0\in \bR$ such that $(\xi_0, \lambda_0)$ is a critical point of the function:
\[
\tilde{\Phi}(\xi, \lambda)=\Phi(\xi)-\lambda (\langle u_0, \xi\rangle-1).
\]
In other words, $(\xi_0, \lambda_0)$ satisfy:
\begin{align*}
\partial_{\xi}\tilde{\Phi}(\xi_0, \lambda_0)=\partial_\xi \Phi(\xi_0) - \lambda_0 u_0=0, \quad \partial_\lambda \tilde{\Phi}(\xi, \lambda)=-\langle u_0, \xi_0\rangle+1=0.
\end{align*}
Combining this with \eqref{eq-diffvol}, we see that
\[
u_0={\rm bc}(\Pi_{\xi_0}), \quad \lambda_0=-\frac{|\xi|}{\vol(\Pi_{\xi_0})}.
\]
This completes the proof of Lemma \ref{lem-Gig}.
\end{proof}
\begin{rem}
We notice that there is a similarity of the volume formula in \eqref{eq-volDeltaXi} with the formula for the volumes of $\bC^*$-invariant valuations derived in \cite{Li15b}. We will see in Proposition \ref{prop-volpoly} that this is not a coincidence. The properness of $\hvol(\wt_\xi)$ with respect to $\xi$ also follows from the properness estimate in \cite{Li15a}.
\end{rem}

Now we can easily deal with the case of toric singularities. Let  $\sigma \subset N\otimes_{\bZ}\bR$ be a strictly convex rational polyhedral cone and $X=X(\sigma)$ is the associated toric variety. The dual cone is $\sigma^{\vee}=\{u\in M_{\mathbb{R}}; \langle u, \xi\rangle \ge 0 \text{ for any } \xi\in \sigma\}$. There is a one-to-one correspondence between toric valuations in $\Val_{X,x}$ and the vectors in the relative interior ${\rm relint}(\sigma)$ of $\sigma$. Indeed, if we can write $X={\rm Spec} (R)$ where 
\[
R=\bigoplus_{u\in \sigma^{\vee}\cap M} \bC[\chi_u],
\]
then the valuation associated to $\xi\in {\rm relint}(\sigma)$ is given by:
\[
v_\xi(f)=\min\left\{\langle u, \xi\rangle, f=\sum_{u} f_u \text{ with } f_u\neq 0 \right\}.
\]

Then it is easy to verify that:
\[
\vol(v_\xi)={\rm vol}(\Delta_\xi)=:\vol(\xi).
\]
By our assumption $K_X+D=K_X+\sum a_iD_i$ is $\mathbb{Q}$-Cartier. Let $\{v_i\}_{i\in I}$ be primitive integral vectors along the edges of $\sigma$ which corresponds to the divisor $D_i$.
Then there exists $u_0\in {\rm relint}(\sigma^{\vee})\cap M_{\mathbb{Q}}$ such that $\langle u_0, v_i\rangle=1-a_i$ for any $i\in I$. The log discrepancy of any toric valuation has a simple expression:
\[
A(v_\xi)=\langle u_0, \xi \rangle.  
\] 
So the normalized volume of $v_\xi$ is given by:
\[
\hvol(v_\xi)=\langle u_0, \xi\rangle^n\cdot {\rm vol}(\Delta_\xi)={\rm vol}\left(\Delta_{\xi/\langle u_0, \xi\rangle}\right).
\]
Then the existence and uniqueness of the minimizer of $\hvol$ among toric valuations $v_{\xi}$ immediately follows from Lemma \ref{lem-Gig}.

\subsubsection{Toric valuations on $T$-varieties}\label{sss-Ttoric}
In this section, we treat a general Fano cone singularity with varying toric valuations. By using the Newton-Okounkov body technique, we will show that the volume function on the space of toric valuations associated to elements from the Reeb cone can be interpreted as a volume function of convex bodies as considered in the previous section. By applying Lemma \ref{lem-Gig}, we get the strict convexity (see Proposition \ref{prop-Tconvex}). As mentioned before, this is a generalization, from the case of isolated singularities to general klt singularities with good torus actions, of the important convexity property originally discovered in \cite{MSY08}.  Unlike their use of analytic tools, our proof is algebraic. Notice that Proposition \ref{prop-Tconvex} was used in the proof Theorem \ref{thm-semin}. 

\bigskip

Let $X$ be an $n$-dimensional normal affine variety with an effective algebraic action by $T=(\bC^*)^r$. 
Then by \cite{AH06}, there exists a normal semi-projective variety $Y$ of dimension $d:=n-r$ and a proper polyhedral divisor
$\fD$ such that $X={\rm Spec} (R)$, where 
\[
R:=\bigoplus_{u\in \sigma^\vee\cap M} H^0(Y, \cO(\fD(u))).
\]
Fix any $u_0\in {\rm relint}(\sigma^{\vee})$, we shall use the same notation as before. For example, $H_{0}=\{\xi\in N_{\bR}; \langle u_0, \xi\rangle=1\}$ and $\hat{H}_0=H_0\cap \sigma$. 
Denote by $\vol|_{\hat{H}_0}$ the restriction of $\vol$ to $\hat{H}_0$.
The goal of this section is to prove the following result.

\bigskip

\begin{prop}\label{prop-Tconvex}
The function $\vol\colon \xi\mapsto \vol(\wt_{\xi})$ is a strictly convex function of $\xi\in {\rm relint}(\sigma)$.
The function $\vol|_{\hat{H}_0}$ is a strictly convex and proper function of $\xi\in \hat{H}_0=H_0\cap \sigma$. As a consequence, there exists a unique minimizer of $\vol|_{\hat{H}_0}$.
\end{prop}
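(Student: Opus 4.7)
The plan is to mimic Section \ref{ss-toric} by realizing $\vol(\wt_\xi)$ as an integral over a fixed cross-section of $\sigma^\vee$ in which $\xi$ enters the integrand only through a denominator of the form $\langle \tilde u(y), \xi\rangle^{n}$. Once this is done, strict convexity and properness follow from exactly the same calculus as in the proof of Lemma \ref{lem-Gig}, together with nonnegativity of the new weighting density $\vol_Y(\mathfrak{D}(\cdot))$.

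First, using the Altmann--Hausen presentation $R = \bigoplus_{u\in \sigma^\vee\cap M} H^0(Y,\mathcal{O}(\mathfrak{D}(u)))$, the Reeb valuation $\wt_\xi$ is homogeneous of weight $\langle u, \xi\rangle$ on $R_u$, so
\begin{equation*}
\dim R/\fa_m(\wt_\xi) \;=\; \sum_{u\in \sigma^\vee\cap M,\ \langle u,\xi\rangle < m} h^0(Y,\mathcal{O}(\mathfrak{D}(u))).
\end{equation*}
A standard Newton--Okounkov-body style asymptotic (using that $\vol_Y(\mathfrak{D}(u))$ is a continuous, degree-$d$ homogeneous function on ${\rm relint}(\sigma^\vee)$ with $d=\dim Y$, and positive there) converts the lattice sum into a Riemann integral, yielding
\begin{equation*}
\vol(\wt_\xi) \;=\; \frac{n!}{d!}\int_{\Delta_\xi} \vol_Y(\mathfrak{D}(u))\,du,\qquad \Delta_\xi=\{u\in \sigma^\vee : \langle u,\xi\rangle \le 1\}.
\end{equation*}
Then parametrizing $\Delta_\xi$ as in the proof of Lemma \ref{lem-Gig} by $(t,y)\mapsto t\,\tilde u(y)/\langle \tilde u(y),\xi\rangle$ with $t\in[0,1]$ and $y$ in a fixed cross-section $\tilde\Pi$ of $\sigma^\vee$, the homogeneity of $\vol_Y(\mathfrak{D}(\cdot))$ collapses the $t$-integral to a factor $1/n$, producing
\begin{equation*}
\vol(\wt_\xi) \;=\; \frac{(n-1)!}{d!}\int_{\tilde\Pi} \frac{\vol_Y(\mathfrak{D}(\tilde u(y)))\,\det(\tilde u,\partial_{y_1}\tilde u,\dots,\partial_{y_{r-1}}\tilde u)}{\langle \tilde u(y),\xi\rangle^{\,n}}\,dy.
\end{equation*}

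With this formula, differentiating twice in $\xi$ gives a Hessian whose integrand is a positive multiple of $\vol_Y(\mathfrak{D}(\tilde u(y)))\,\tilde u(y)\otimes \tilde u(y)/\langle \tilde u(y),\xi\rangle^{\,n+2}$. Since $\vol_Y(\mathfrak{D}(\cdot))$ is strictly positive on the interior of $\sigma^\vee$, the Hessian is positive definite, proving strict convexity of $\vol(\wt_\xi)$ on ${\rm relint}(\sigma)$; restriction to the affine slice $\hat H_0$ preserves this. Properness on $\hat H_0$ is then the same as in Lemma \ref{lem-Gig}: as $\xi\to\partial\hat H_0$, the region $\Delta_\xi$ becomes unbounded into the interior of $\sigma^\vee$ where the density is strictly positive, so the integral diverges. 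Strict convexity plus properness immediately yields the unique minimizer.

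The main obstacle I expect is making Step 1 rigorous, i.e.\ the passage from the lattice-point sum to the integral with density $\vol_Y(\mathfrak{D}(u))$. One must control $h^0(Y,\mathcal{O}(\mathfrak{D}(u)))$ uniformly in $u$ as $u$ scales to infinity inside $\sigma^\vee$, and show that $u\mapsto \vol_Y(\mathfrak{D}(u))$ extends to a continuous, degree-$d$ homogeneous function on $\sigma^\vee$ that is strictly positive on the interior. This is essentially built from (now standard) Okounkov-body theory on $Y$ together with the polyhedral structure of $\mathfrak{D}$, but it is the single step where the geometry of $Y$, beyond pure convex geometry, really enters the argument.
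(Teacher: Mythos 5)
Your proposal is correct in outline, but it takes a genuinely different route from the paper's proof, and it is worth spelling out the comparison.

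The paper's proof constructs a full-rank $\bZ^n$-valued Okounkov valuation $\bV$ on $\bC(X)$ by composing the $\bZ^r$-valued torus-weight valuation $\bV_1$ with a rank-$(n-r)$ quasi-monomial valuation $w_\alpha$ on the quotient $Y$, proves in Lemma \ref{lem-sg3prop} that the associated valuation semigroup $\cS$ satisfies the Kaveh--Khovanskii ``good valuation'' conditions (the crucial one being the Izumi-type bound $|w_\alpha(f)|\le C\langle u,\xi\rangle$), and then obtains the exact identity $\vol(\wt_\xi)=\vol(\Delta_\xi)$ where $\Delta_\xi=\{y\in\hat\sigma : \langle P(y),\xi\rangle\le 1\}$ is a genuine polyhedron cut from the $n$-dimensional Okounkov cone. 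The problem is thereby reduced to pure convex geometry and Lemma \ref{lem-Gig} applies verbatim, with no density. Your proposal never leaves the $r$-dimensional weight lattice: you express $\vol(\wt_\xi)$ as an integral over $\Delta_\xi\subset\sigma^\vee\subset M_\bR$ weighted by the continuous, degree-$(n-r)$ homogeneous density $u\mapsto\vol_Y(\mathfrak{D}(u))$, which is strictly positive on ${\rm relint}(\sigma^\vee)$, and then repeat the Hessian computation of Lemma \ref{lem-Gig} against that density. This is correct, and arguably more transparent conceptually: the geometry of $Y$ is visible as a density rather than being absorbed into extra Okounkov-body coordinates. The bookkeeping in your weighted parametrization (Jacobian $t^{r-1}\langle\tilde u,\xi\rangle^{-r}$, homogeneity of $p(u)=\vol_Y(\mathfrak{D}(u))$ producing $\langle\tilde u,\xi\rangle^{-(n-r)}$, collapsing the $t$-integral to $1/n$) is right, and the resulting Hessian integrand $p(\tilde u)\,\tilde u\otimes\tilde u/\langle\tilde u,\xi\rangle^{n+2}$ is positive definite on the interior, so strict convexity and properness both follow.

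Two caveats are worth flagging. First, your Step 1 --- converting $\sum_{\langle u,\xi\rangle<m}h^0(Y,\mathfrak{D}(u))$ into $\tfrac{m^n}{(n-r)!}\int_{\Delta_\xi}\vol_Y(\mathfrak{D}(u))\,du+o(m^n)$ --- needs the asymptotic $h^0(\mathfrak{D}(u))=\tfrac{\vol_Y(\mathfrak{D}(u))}{(n-r)!}+O(|u|^{n-r-1})$ to hold \emph{uniformly in the direction} $u/|u|$, including as $u$ approaches $\partial\sigma^\vee$. This uniformity is not quite the off-the-shelf Okounkov-body theorem on a single graded linear series; it is the same technical hurdle that the paper addresses inside Lemma \ref{lem-sg3prop} via the piecewise linearity of $\mathfrak{D}$ and the intersection-theoretic bound \eqref{eq-ub1}. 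So you are not avoiding the hard part, only repackaging it. Second, the paper's full-rank valuation $\bV$ is reused in Section \ref{sss-TT} to compare $\wt_\xi$ against arbitrary $T$-invariant quasi-monomial valuations (Proposition \ref{prop-volpoly}, Theorem \ref{thm-uniquefano}); your reduction, being tailored to toric valuations only, would have to be set up again from scratch there, whereas the paper's construction amortizes. For Proposition \ref{prop-Tconvex} alone, though, your route is a valid alternative.
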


To prove Proposition \ref{prop-Tconvex}, we apply the ideas from the theory of Newton-Okounkov body to realize the volumes of $\wt_{\xi}$ as volumes of convex bodies, and then apply Lemma \ref{lem-Gig}.
\begin{proof}
We start by choosing a lexicographic order on $\bZ^r$ such that there is a $\bZ^r$ valued valuation:
\[
\bV_1(f)=\min\left\{u; f=\sum_{u\in \sigma^\vee\cap M} f_u \text{ with } f_u\neq 0\right\}.
\]
We extend this valuation to a $\bZ^n$-valued valuation in the following way. Fix a smooth point $p\in Y$ and algebraic coordinates $\{z_1, \dots, z_{n-r}\}$ at $p$. Choose $n-r$ $\mathbb{Q}$-linearly independent positive real numbers: ${\bf \alpha}=\{\alpha_1, \dots, \alpha_{n-r}\}$. Denote by $w_\alpha$ the quasi-monomial valuation on $\bC(Y)$ associated to these data. In other words, for any $f\in \bC(Y)$, we have:
\[
w_\alpha(f)=\min\left\{\sum_{i=1}^{n-r} \alpha_i m_i;\;\; z_1^{m_1}\cdots z_{n-r}^{m_{n-r}} \text{ appears in the Laurent expansion of } f \text{ at } p \right\}.
\]
Then the valuative group $G$ of $w_\alpha$ is a subgroup of $\bR$ and $G$ is isomorphic to $\bZ^{n-r}$. We now define the following lexicographic order on $\bZ^r\times G \cong \bZ^{r}\times\bZ^{n-r}=\bZ^n$:
\[
(u, \nu)\le (u', \nu') \text{ if and only if either } u< u', \text{ or } u=u' \text{ and } \nu \le \nu'.
\]
Any $f\in \bC(X)$ can be decomposed into nonzero weight components $f=\sum_{u\in \sigma^{\vee}\cap M} f_u\cdot \chi^u$ with $f_u\in \bC^*(Y)$. We let $u_f=\bV_1(f)\in \sigma^{\vee}$ and denote by $f_{u_f}$ the corresponding nonzero component. Then we can define a $\bZ^n$-valued valuation on $\bC(X)$ (see Remark \ref{rem-composite}):
\[
\bV(f)=\left(u_f, w_\alpha(f_{u_f})\right).
\]
The valuation group of $\bV$ is isomorphic to $\bZ^n=\bZ^r\times\bZ^{n-r}$ and the valuation semigroup $\cS$ of $\bV$ generates a convex cone $\hat{\sigma}$ in $\bZ^n$. Let $P: \bZ^n\rightarrow \bZ^r$ denote the projection from $\bZ^n$ to $\bZ^r$. Then $P(\hat{\sigma})=\sigma$. For any $\xi\in \sigma\subset N_{\bR}\cong \bR^r$, we can extend it by zeros to become $\hat{\xi}=(\xi, 0)\in \bR^n$. Then we have $\langle y, \hat{\xi}\rangle=\langle P(y), \xi\rangle$. For any $\xi\in {\rm relint}(\sigma)$, denote
\[
\Delta_\xi=\{y\in \hat{\sigma}; \langle P(y), \xi\rangle \le 1\}.
\]
We want to relate the volume of valuation to the volume of $\Delta_\xi$. Following \cite{LM09}, we need to show that $\bV$ satisfies the following properties (which is equivalent to $\bV$ being a {\it good} valuation in the sense of \cite[Definition 7.3]{KK14}). 
\begin{lem}\label{lem-sg3prop}
Let $\cS$ denote the semigroup of the valuation $\bV$. Then $\cS$ satisfies the following three properties:
\begin{enumerate}
\item[(P1)] $\cS\cap \{y\ | \ \langle P(y), \xi\rangle=0\}=\{0\}$;
\item[(P2)] Denote by $e_i$ the $i$-th standard vector of $\bZ^r$. There exist finitely many vectors $\left(e_i, v^{(i)}_k\right)$ spanning a semigroup $B\subset \bZ^n$ such that $\cS\subset B$;
\item[(P3)] $\cS$ generates $\bZ^n$ as a group.
\end{enumerate} 
\end{lem}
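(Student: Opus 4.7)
The three conditions are precisely the admissibility hypotheses required to apply the Newton--Okounkov / Kaveh--Khovanskii machinery, and the plan is to verify them by exploiting the $T$-equivariant structure $R=\bigoplus_{u\in\sigma^\vee\cap M}R_u$ together with a choice of $T$-homogeneous generators $g_1,\ldots,g_N$ of $R$, setting $\bV(g_i)=(u^{(i)},w^{(i)})$. For (P1), suppose $(u,\nu)\in\cS$ satisfies $\langle u,\xi\rangle=0$. By construction $u=\bV_1(f)\in\sigma^\vee\cap M$, and since $\xi$ lies in the relative interior of the full-dimensional strictly convex cone $\sigma$, the pairing $\langle\,\cdot\,,\xi\rangle$ is strictly positive on $\sigma^\vee\setminus\{0\}$; hence $u=0$. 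Then the element $f\in R$ realizing the valuation lies in $R_0$, which equals $\bC$ by the good $T$-action inherent to the log Fano cone setup, so $f$ is a nonzero constant and $w_\alpha(f)=0$, giving $\nu=0$.

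For (P2), the plan is to show $\cS\subset B$, where $B$ is the semigroup generated by $\{\bV(g_1),\ldots,\bV(g_N)\}$. Equip $\widetilde R:=\bC[x_1,\ldots,x_N]$ with the monomial valuation $\widetilde\bV$ determined by $\widetilde\bV(x_i)=\bV(g_i)$, and consider the surjection $\phi:\widetilde R\to R$, $x_i\mapsto g_i$. A standard argument shows this induces a surjection of $\bZ^n$-graded algebras $\widetilde R\twoheadrightarrow\gr_\bV R$ with kernel equal to the initial ideal $\mathrm{in}_{\widetilde\bV}(\ker\phi)$. In particular, $\gr_\bV R$ is generated as a $\bC$-algebra by $\{\mathrm{in}_\bV(g_i)\}$, so its support, which is precisely $\cS$, is contained in $B$. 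After choosing a basis of $M$ aligned with a finite set of generators of $\sigma^\vee\cap M$, the $B$-generators take the form $(e_i,v^{(i)}_k)$ claimed in the statement.

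For (P3), I need the group generated by $\cS$ in $\bZ^n$ to be all of $\bZ^n$. Projection to the first factor contains the vectors $u^{(i)}$, whose $\bZ$-span recovers $M\cong\bZ^r$ since $\sigma^\vee$ is full-dimensional. To generate the vertical direction $G\cong\bZ^{n-r}$, I would pick $u$ in the interior of $\sigma^\vee$ so that $\cO_Y(\fD(mu))$ becomes big, and then ample after further enlarging $m$; the Altmann--Hausen description together with the growth $\dim R_{mu}\sim c\,m^{n-r}$ (forced by $\dim X=n$) guarantees this. For such $u$, any $g\in G$ is realized as $w_\alpha(\varphi)$ for some $\varphi\in\bC(Y)^\times$, and writing $\varphi=\varphi_1/\varphi_2$ with $\varphi_1,\varphi_2\in R_{mu}$ yields $(0,g)=\bV(\varphi_1)-\bV(\varphi_2)$ in the group generated by $\cS$.

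The main obstacle is step (P2): a naive expansion $f=\sum_m c_m g^m$ with $\bV(f)=\min_m\sum_i m_i\bV(g_i)$ fails precisely when cancellation at the minimum weight pushes $\bV(f)$ strictly above the naive minimum, producing values not obviously in $B$. Passing to $\gr_\bV R$ and the initial ideal $\mathrm{in}_{\widetilde\bV}(\ker\phi)$ is the cleanest way I see to absorb these cancellations: the relations in $\ker\phi$ responsible for jumps of $\bV$ become elements of the initial ideal that kill exactly the corresponding low-weight monomials, leaving a bigraded quotient whose support is automatically in $B$.
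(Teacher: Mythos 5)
Your argument for (P1) is correct and self-contained: full-dimensionality of $\sigma$ (a good $T$-action makes $\sigma$ a maximal-dimensional cone) means $\xi\in\rint(\sigma)$ pairs strictly positively with $\sigma^\vee\setminus\{0\}$, so $\langle u,\xi\rangle=0$ forces $u=0$, and then $R_0=\bC$ gives $w_\alpha=0$ on $f$. Your argument for (P3) is more elaborate than the paper's, which simply notes that the valuative semigroup of $\bV$ automatically generates the valuative group of $\bV$ (every element of $\bC(X)^*$ is a ratio of elements of $R$), and that group is $\bZ^n$ by construction; but your version reaches the same conclusion.

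The genuine gap is in (P2). The central claim --- that $\gr_{\widetilde\bV}\bC[x_1,\dots,x_N]\twoheadrightarrow\gr_\bV R$ with kernel $\mathrm{in}_{\widetilde\bV}(\ker\phi)$, so that $\gr_\bV R$ is generated by $\{\mathrm{in}_\bV(g_i)\}$ --- is exactly the assertion that $\{g_i\}$ is a Khovanskii (SAGBI) basis for $(R,\bV)$, equivalently that $\bV$ coincides with the quotient filtration $\bV(f)=\max\{\widetilde\bV(h):\phi(h)=f\}$. This is an additional hypothesis, not a consequence of surjectivity of $\phi$: the valuation $\bV$ here is defined intrinsically via the $T$-weight decomposition and $w_\alpha$, not as an image filtration, and there is no reason it is monomial in any given finite generating set. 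The cancellations you flag as ``the main obstacle'' are precisely the witnesses of this failure, and the proposed fix is circular: the statement ``$\ker(\gr\phi)=\mathrm{in}(\ker\phi)$'' is \emph{equivalent} to surjectivity of $\gr\phi$, so you cannot use one to prove the other. Moreover, if the claim did hold, $\gr_\bV R$ would be finitely generated, which is much stronger than (P2) requires ($(P2)$ only asks $\cS$ to sit inside \emph{some} finitely generated semigroup, not to be one).

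The paper's actual proof of (P1)--(P2) bypasses this entirely: it establishes the uniform estimate \eqref{eq-bdV2}, namely a constant $C>0$ with $|w_\alpha(f)|\le C\langle u,\xi\rangle$ for all $u\in\sigma^\vee$ and $f\in H^0(Y,\fD(u))$. This linearly bounds the fiber $P^{-1}(u)\cap\cS$ and gives both (P1) (the case $\langle u,\xi\rangle=0$) and (P2). The estimate is proved geometrically: write $w_\alpha(f)=w_\alpha(fg_u^{-1})-w_\alpha(g_u^{-1})$ for a local multisection $g_u$ of the $\bQ$-line bundle $\fD(u)$ at $p$; compare $w_\alpha$ to $\ord_p$ by Izumi's theorem; bound $\ord_p(fg_u^{-1})$ by the intersection number $L_u\cdot\tilde H^{d-1}/\epsilon^{d-1}$ on a blow-up of $Y$ at $p$; and control $L_u\cdot H^{d-1}$ linearly in $\langle u,\xi\rangle$ using the convex piecewise-linear structure of the polyhedral divisor $\fD$ from Altmann--Hausen. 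Your proposal never establishes any version of this estimate, and without it the boundedness required by the Newton--Okounkov machinery does not follow.
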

\begin{proof}
The last condition follows from the fact the valuative semigroup $\cS$ of $\bV$ generates the valuative group which is isomorphic to $\bZ^n$. 
To verify the first two conditions, we just need to show that there exists a constant $C>0$ such that for any $u$ and $f\in H^0(Y, \fD(u))$, we have:
\begin{equation}\label{eq-bdV2}
|w_\alpha (f) |\le C \langle u, \xi\rangle.
\end{equation} 
The following argument to prove this estimate is motivated by the argument in \cite{LM09}.

For $b$ sufficiently divisible, $\fD(m u)$ is Cartier. Denote by $L_{mu}$ the line bundle associated to $\fD(mu)$ and $L_u$ the $\mathbb{Q}$-line bundle associated to $\fD(u)$. 
Fix a global section $g_{mu}\in H^0(Y, \fD(m u))$ such that $g_{mu}^{-1}$ is a local equation for $\fD(mu)$ near $p\in Y$. Then for any $f\in H^0(Y, \fD(u))$ we have the identity:
\begin{equation}\label{eq-waf}
w_\alpha(f)=\frac{1}{m} w_\alpha(f^m g_{mu}^{-1})-\frac{1}{m}w_\alpha(g_{mu}^{-1}).
\end{equation}
For simplicity of notation, we write $g_u:=g_{mu}^{1/m}$ as a multi-section of the $\bQ$-line bundle $L_u$. Then \eqref{eq-waf} can be written  as:
\begin{equation}\label{eq-waf2}
w_\alpha(f)=w_\alpha(f g_u^{-1})-w_\alpha (g_u^{-1}).
\end{equation}
We will bound both terms on the right hand side of \eqref{eq-waf2}. By Izumi's theorem, $w_\alpha$ is comparable to $\ord_{p}$. In other words there exists a constant $C>0$ such that: 
\begin{equation}
C^{-1}\cdot  \ord_{p}\le w_\alpha\le C\cdot \ord_{p}.
\end{equation}
So to show the inequality \eqref{eq-bdV2}, we can replace $w_\alpha$ by $\ord_{p}$.

Let $k=\ord_{p}(f g_u^{-1})=\frac{1}{m}\ord_{p} (f^m g^{-1}_{mu})$. Then $$f\in H^0(Y, \fD(u)\otimes_{\cO_Y}\fm^k_{p}).$$ Let $\mu: \tilde{Y}\rightarrow Y$ 
be the blow up of $Y$ at $p$. Fix a very ample divisor $H$ on $Y$. Then $\tilde{H}:=\mu^*H-\epsilon E$ is ample for $\epsilon$ sufficiently small. 
So we get $(\pi^*L_u-k E)\cdot \tilde{H}^{d-1}\ge 0$ and hence the estimate:
\begin{equation}
k\le \frac{\pi^*L_u\cdot \tilde{H}^{d-1}}{E\cdot \tilde{H}^{d-1}}=\frac{L_u\cdot H^{d-1}}{\epsilon^{d-1}}.
\end{equation}
So we are left with showing that 
\begin{equation}\label{eq-ub1}
L_u\cdot H^{d-1}\le C \langle u, \xi\rangle.
\end{equation} 
Now by \cite[Proposition 2.11]{AH06}, the polyhedral divisor $\fD: u\rightarrow \fD(u)$ is a convex, piecewise linear, strictly semi-ample maps from $\sigma^{\vee}$ to ${\rm CaDiv_{\bQ}}(Y)$ . More precisely, we have  (see \cite[Definition 2.9]{AH06} for relevant definitions)
\begin{enumerate}
\item $\fD(u)+\fD(u')\le \fD(u+u')$ holds for any two elements $u, u'\in \sigma^{\vee}$,
\item $u\rightarrow \fD(u)$ is piecewisely linear, i.e. there is a quasi-fan $\Lambda$ in $M_{\bQ}$ having $\sigma^{\vee}$ as its support such that $\fD$ is linear on the cones of $\Lambda$,
\item
$\fD(u)$ is always semi-ample and, $\fD(u)$ is big for $u\in {\rm relint}(\sigma^{\vee})$. 
\end{enumerate}
In the second item above, $\Lambda$ being quasi-fan means that it is a finite collection of cones in $M_{\bQ}$ satisfying natural compatible properties. Any $u\in \sigma^{\vee}$ is contained in some cone $\lambda$ of $\Lambda$. Choose a finite set of generators $\{u_i\}_{i\in I}\subset M_\bQ$ of $\lambda$, then $u=\sum_{i\in I} a_i u_i$ with $a_i\ge 0\in \bQ$ and $\fD(u)=\sum_{i\in I} a_i \fD(u_i)$ as $\mathbb{Q}$-Cartier divisors.  

Because $\xi\in {\rm relint}(\sigma)$, $\langle u_i, \xi\rangle>0$ for any $i\in I$.
So it is easy to see that there exists $C_\lambda>0$ depending only on $\lambda$ such that:
\[
L_u\cdot H^{d-1}\le \sum_{i\in I} a_i L_{u_i}\cdot H^{d-1}\le \sum_{i\in I} a_i C_\lambda \langle u_i ,\xi\rangle=C_\lambda \langle u, \xi\rangle. 
\]

Because there are finitely many cones in $\Lambda$, we get estimate \eqref{eq-ub1}. 

By using piecewise linearity, we can use the same argument to show that there exists $C>0$ independent of $u\in \sigma^{\vee}$, satisfying:
\begin{equation}\label{eq-ub2}
|w_\alpha(g_u^{-1})|\le C \langle u, \xi\rangle. 
\end{equation}
Combining \eqref{eq-waf2}-\eqref{eq-ub2} and the above discussions, we get the wanted estimate \eqref{eq-bdV2}. 
\end{proof}

\begin{lem}\label{l-conevolume}
With the above notation, for any $\xi\in {\rm relint}(\sigma)$, we have:
\begin{equation}\label{eq-eqvol}
\vol(\wt_\xi)=\vol(\Delta_{\xi}). 
\end{equation}
\end{lem}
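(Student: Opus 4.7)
The plan is to interpret the valuation volume as an asymptotic count of elements of the semigroup $\cS$ of $\bV$, and then apply the Newton--Okounkov body theorem of Lazarsfeld--Musta\c{t}\v{a} \cite{LM09} and Kaveh--Khovanskii \cite{KK14}, for which the three properties (P1)--(P3) established in Lemma \ref{lem-sg3prop} are the precise hypotheses.

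First I reduce the dimension count to a semigroup count. Since $\fa_m(\wt_\xi)$ is $M$-homogeneous, the weight decomposition gives $\fa_m(\wt_\xi) = \bigoplus_{\langle u,\xi\rangle\ge m} R_u$, and hence
\[
\dim_\bC R/\fa_m(\wt_\xi) = \sum_{u\in \sigma^\vee\cap M,\; \langle u,\xi\rangle<m} \dim_\bC R_u.
\]
For each fixed $u$, the restriction of the $\bZ^{n-r}$-valued quasi-monomial valuation $w_\alpha$ to the finite-dimensional space $R_u = H^0(Y,\cO(\fD(u)))$ induces a decreasing filtration whose successive one-dimensional quotients occur precisely at the values $\nu\in\bZ^{n-r}$ such that $(u,\nu)\in \cS$. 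Thus $\dim_\bC R_u = \#\{\nu : (u,\nu)\in\cS\}$, and summing over $u$ yields
\[
\dim_\bC R/\fa_m(\wt_\xi) = \#\{y \in \cS : \langle P(y),\xi\rangle < m\}.
\]

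Second, I invoke the Kaveh--Khovanskii counting theorem. Properties (P1) and (P2) together with the estimate \eqref{eq-bdV2} imply that $\Delta_\xi = \hat\sigma\cap\{y : \langle P(y),\xi\rangle \le 1\}$ is a bounded convex body, while (P3) asserts that $\cS$ generates $\bZ^n$ as a group, so the lattice covolume is $1$. The asymptotic lattice-point formula then yields
\[
\lim_{m\to\infty} \frac{\#\{y\in\cS : \langle P(y),\xi\rangle < m\}}{m^n/n!} = \vol(\Delta_\xi),
\]
in the same normalization used in the toric case verification of Section \ref{ss-toric}. Combining with Step~1 gives \eqref{eq-eqvol}.

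The main obstacle is the appeal to \cite{KK14}: the standard applications of Newton--Okounkov bodies concern graded linear series on projective varieties, whereas here we must invoke the more abstract semigroup-counting version. The three conditions (P1)--(P3) are designed exactly for this setup; concretely, the sandwich inclusion $\cS\subset B$ from (P2) allows one to approximate the lattice count of $\cS$ by Riemann sums against the Euclidean volume on the slice $\hat\sigma\cap\{\langle P(\cdot),\xi\rangle \le 1\} = \Delta_\xi$, using that the rescaled sets $(1/m)\cdot (\cS\cap mK)$ become equidistributed in $\Delta_\xi$ as $m\to\infty$.
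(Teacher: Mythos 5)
Your proof is correct and rests on the same ideas as the paper's: reduce $\dim_\bC R/\fa_m(\wt_\xi)$ to a lattice-point count in the valuation semigroup $\cS$ and apply the Okounkov/Lazarsfeld--Musta\c{t}\u{a}/Kaveh--Khovanskii asymptotics, with the three properties (P1)--(P3) of Lemma \ref{lem-sg3prop} as the required hypotheses. Your Step 1 is a small but genuine simplification in packaging: using the $M$-grading of $R$ and the one-dimensional leaves of $w_\alpha$ on each $R_u$, you show \emph{directly} that $\cS\setminus\bV(\fa_m)=\{y\in\cS:\langle P(y),\xi\rangle<m\}$, whereas the paper takes the standard Okounkov identity $\dim R/\fa_\lambda=\#(\cS\setminus\bV(\fa_\lambda))$ and only reveals the shape of $\bV(\fa_\lambda)$ at the level of Okounkov bodies. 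Where your Step 2 is less explicit than the paper's is in the actual invocation of the counting theorem: the slice-counting statement of \cite{LM09, KK12, KK14} is for a graded semigroup $S\subset\bN\times\bZ^n$ and its slices $S_k$, not for cumulative counts in a truncated rank-$n$ cone. The paper bridges this precisely by constructing the auxiliary $\bZ^{n+1}$-semigroups $\tilde\Gamma,\tilde\Gamma'$ with the cutoff $\ell(\theta)\le C\lambda$, expressing $\dim R/\fa_\lambda=\#\tilde\Gamma'_\lambda-\#\tilde\Gamma_\lambda$, applying the slice-count theorem twice, and then computing the difference $\Delta(\tilde\Gamma')\setminus\Delta(\tilde\Gamma)=\Delta_\xi$. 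Your appeal to equidistribution/Riemann sums is morally correct, but to be fully rigorous you would need to either reproduce this two-semigroup device or cite a version of the counting theorem stated for truncated cones (as in \cite[Theorem 7.4]{KK14}); as written the reader has to supply that reduction. This is a gloss rather than a gap, since both the observation and the fix are standard.
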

\begin{proof}
With the good properties of $\bV$ obtained via Lemma \ref{lem-sg3prop},
we can prove this result by using the general theory developed in \cite{Oko96, LM09, KK12, KK14}. We use the argument similar to the one used in \cite{KK12, Cut12, KK14}. Denote $\fa_\lambda=\{f\in R; \wt_\xi(f)\ge \lambda\}$. By the estimate \eqref{eq-bdV2}, we know that the cone $\hat{\sigma}\subset \mathbb{Z}^n$ does not contain a line. Moreover, we can choose a linear function 
$\ell: \bR^n\rightarrow \bR$ such that $\hat{\sigma}$ lies in $\ell_{\ge 0}$ and intersects the hyperplane $\ell^{-1}(0)$ only at the origin. In fact, we can choose $\ell$ to be the linear function associated to any $\xi\in {\rm relint}(\sigma)$. Then we have the following identity:
\begin{eqnarray}\label{eq-dimct}
{\rm codim_{\bC}}(R/\fa_\lambda)&=&\# \{(\theta_1, \dots, \theta_n)\in \bV(R); \ell(\theta) \le C \lambda, i=1, \dots, n\}\nonumber\\
&&\hskip 10mm -\#\{(\theta_1, \dots, \theta_n)\in \bV(\fa_\lambda); \ell(\theta) \le C \lambda, i=1, \dots, n \}.
\end{eqnarray}
Define the semigroups of $\bZ^{n+1}$:
\begin{align*}
&\tilde{\Gamma}=\left\{(\theta_1, \dots, \theta_n, \lambda); (\theta_1, \dots, \theta_n)\in \bV(\fa_\lambda) \text{ and } \ell(\theta) \le C\lambda \right\},\\
&\tilde{\Gamma}'=\left\{(\theta_1, \dots, \theta_n, \lambda); (\theta_1, \dots, \theta_n)\in \bV(R) \text{ and } \ell(\theta) \le C\lambda \right\}.
\end{align*}
Because $\cS=\bV(R)$ generates $\bZ^n$, we can choose $C\gg 1$ such that $\tilde{\Gamma}'_1$ generates $\bZ^n$. Then $\tilde{\Gamma}'$ also generates $\bZ^{n+1}$. By the general
theory of Newton-Okounkov bodies from \cite{Oko96, LM09, KK12}, we have:
\begin{equation}\label{eq-OkVol}
\lim_{\lambda\rightarrow+\infty} \frac{\#\; \tilde{\Gamma}_\lambda}{ \lambda^n }=\vol(\Delta(\tilde{\Gamma})),\quad
\lim_{i\rightarrow+\infty} \frac{\#\; \tilde{\Gamma}'_\lambda}{\lambda^n}=\vol(\Delta(\tilde{\Gamma}')),
\end{equation}
where
\[
\Delta(\tilde{\Gamma})=\overline{\bigcup_{\lambda>0}\left\{\frac{\theta}{\lambda}; (\theta, \lambda)\in \tilde{\Gamma} \right\}}, 
\quad
\Delta(\tilde{\Gamma}')=\overline{\bigcup_{\lambda>0}\left\{\frac{\theta}{\lambda}; (\theta, \lambda)\in \tilde{\Gamma}' \right\}}
\]
are the Newton-Okounkov bodies of $\tilde{\Gamma}$ and $\tilde{\Gamma}'$ respectively. By \eqref{eq-dimct} we have:
\begin{equation}\label{eq-codimcount}
{\rm codim}_{\bC}(R/\fa_\lambda)=\cS\setminus \bV(\fa_\lambda)=\tilde{\Gamma}'_\lambda\setminus \tilde{\Gamma}_\lambda.
\end{equation}
Combining this with \eqref{eq-OkVol}, we get:
\begin{eqnarray}\label{eq-OkVol2}
\lim_{i\rightarrow+\infty} \frac{\# (\cS\setminus \bV(\fa_\lambda))}{\lambda^n}=\vol(\Delta(\tilde{\Gamma}'))-\vol(\Delta(\tilde{\Gamma}))=\vol\left(\Delta(\tilde{\Gamma}')\setminus \Delta(\tilde{\Gamma})\right).
\end{eqnarray}
If $\xi=\sum_{j=1}^r a_j e_j$, we have $\ell(\theta)=\ell_{\xi}(\theta)=\sum_j a_j \theta_j$ and:
\begin{equation*}
\tilde{\Gamma}=\left\{(\theta_1, \dots, \theta_n, \lambda); (\theta_1, \dots, \theta_n)\in \cS, \sum_{j=1}^r a_j \theta_j\ge \lambda, \ell(\theta)\le C \lambda \right\}.
\end{equation*}
 By \cite{LM09, KK12, KK14}, we know that:
\[
\Delta(\tilde{\Gamma})=\sigma_{\ell_{\xi}\ge 1} \cap \sigma_{\ell\le C}, \quad \Delta(\tilde{\Gamma}')=\sigma_{\ell\le C}. 
\]
So we get:
\begin{eqnarray}\label{eq-eqbody}
\Delta(\tilde{\Gamma}')\setminus \Delta(\tilde{\Gamma})&=&\{y\in \hat{\sigma}; \sum_{j=1}^r a_i y_i \le 1\}
=\{y\in \hat{\sigma}; \langle y, \hat{\xi}\rangle \le 1 \}=\Delta_\xi.
\end{eqnarray}
By combining the above identities \eqref{eq-codimcount}-\eqref{eq-eqbody}, we get the identity \eqref{eq-eqvol}.

\end{proof}

We can complete the proof of Proposition \ref{prop-Tconvex} by using the similar argument as in the toric case. Indeed,  Lemma \ref{lem-Gig}.1 and Lemma \ref{l-conevolume} together imply that $\vol(\wt_\xi)$ is a proper strictly convex function of $\xi\in \hat{H}_0=H_0\cap \sigma$. So there exists a unique minimizer of $\vol(\wt_\xi)$ among $\xi\in \hat{H}_0=H_0\cap \sigma$. Similarly to the item 2 of Lemma \ref{lem-Gig}, $\xi_0$ is a minimizer of $\vol(\wt_\xi)$ if and only if $\xi_0$ is the barycenter of the measured convex domain 
$(\Pi_{\xi_0}, (P_{\Pi_{\xi_0}})_*(d\vol_{\Pi_{\xi_0}}))$ 
where $\Pi_{\xi_0}:=\{y\in \hat{\sigma}; \langle P(y),\xi_0\rangle=1\}$ and $d\vol_{\Pi_{\xi_0}}$ is the standard Euclidean volume form on $\Pi_{\xi_0}$.
\end{proof}

Now assume that $(X, D)$ is a klt singularity. In particular, $K_X+D$ is $\mathbb{Q}$-Gorenstein. Then by Theorem \ref{t-Tcano}.3, there is a $u_0\in M$ and a principal divisor ${\rm div}(f)=\sum_Z a_Z \cdot Z$ on $Y$ such that ${\rm div}(f\cdot \chi^{u_0})=K_X+D$ and the log discrepancy of the toric valuation $\wt_\xi$ is calculated as
\[
A_{(X,D)}(\wt_\xi)=\langle u_0, \xi\rangle.
\]
So the normalized volume is given by:
\[
\hvol(\wt_\xi)=\langle u_0, \xi\rangle^n\cdot {\rm vol}(\Delta_\xi)=\Vol\left(\Delta_{\xi/\langle u_0, \xi\rangle}\right).
\]
In particular,  we have $\hvol(\wt_\xi)=\vol(\xi)$ for $\xi\in \hat{H}_0=H_0\cap \sigma$. As a corollary of Proposition \ref{prop-Tconvex}, we immediately get Proposition \ref{prop-Tconvex1}.


\subsubsection{T-invariant quasi-monomial valuations on $T$-varieties}\label{sss-TT}

As in the previous section, we assume that there is an effective good $T=(\bC^*)^{r}$ action on an affine normal variety $X={\rm Spec}(R)$. In this section, we aim to show the following theorem
\begin{thm}\label{thm-uniquefano}
Let $(X,D,\xi)$ be a Fano cone singularity. If $\wt_\xi$ is the minimizer of $\hvol_{(X,D)}$, then it is unique among all $T$-invariant quasi-monomial valuations. 
\end{thm}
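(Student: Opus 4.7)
The plan is to extend the convex-geometric analysis of Sections \ref{ss-toric}--\ref{sss-Ttoric} from the toric valuations $\wt_\xi$ to all $T$-invariant quasi-monomial valuations. By Theorem \ref{t-Tcano}.1, any such $v$ can be written $v=(v^{(0)},\xi_v)$ with $v^{(0)}$ a quasi-monomial valuation on the base $Y$ of the polyhedral divisor presentation and $\xi_v\in N_\bR$. Given a candidate $v$ to compare with $\wt_\xi$, I would first realize $v^{(0)}$ as a monomial valuation with weight vector $\beta\in\bR_{\ge 0}^{n-r}$ along exceptional divisors $F_1,\ldots,F_{n-r}$ meeting transversally at a point $p'$ of a log smooth model $Y'\to Y$ (extending $v^{(0)}$ to a full rank quasi-monomial valuation if necessary by adding $\bQ$-linearly independent directions), and then use the polyhedral divisor $\fD$ to lift these data to a $T$-equivariant log smooth model $Z\to X$. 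On $Z$ the valuation $v$ becomes a monomial valuation with weight vector $\tilde\xi_v=(\xi_v,\beta)\in\bR^n$ in coordinates combining torus characters with local parameters cutting out the $F_j$, while $\wt_\xi$ has weight vector $(\xi,0)$ in the same coordinates.

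Next I would adapt the Newton--Okounkov construction of Section \ref{sss-Ttoric} to this enlarged coordinate system, replacing the auxiliary quasi-monomial valuation $w_\alpha$ on $Y$ there by the full rank valuation on $Y'$ determined by the $F_j$ and $\beta$. With an appropriate lexicographic order, this produces a $\bZ^n$-valued valuation $\bV$ on $R$ whose leading piece is aligned with both the Reeb cone and the chosen $v^{(0)}$; properties (P1)--(P3) of Lemma \ref{lem-sg3prop} should carry over, with the same piecewise-linear and semi-ample input from \cite{AH06} bounding the $w$-component by the torus weight as in \eqref{eq-bdV2}. The proof of Lemma \ref{l-conevolume} would then give
\[
\vol(v)=\vol(\Delta_{\tilde\xi_v}),\qquad \vol(\wt_\xi)=\vol(\Delta_{(\xi,0)}),
\]
where $\Delta_{\tilde\xi}=\{y\in\hat\sigma:\langle y,\tilde\xi\rangle\le 1\}$ and $\hat\sigma\subset\bR^n$ is the cone spanned by the value semigroup of $\bV$.

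After this, I would extend $A_{(X,D)}$ linearly to the vectors $\tilde\xi\in\hat\sigma$ under consideration, using the linearity of log discrepancy on monomial valuations on a fixed log smooth model together with the formula \eqref{eq-linearA} in the toric directions, writing $A(\tilde\xi)=\langle u_0,\xi\rangle+\langle u_1,\beta\rangle$ for an appropriate $u_1$. Applying Lemma \ref{lem-Gig}.2 to $\hat\sigma$ and the affine slice $\hat H_0=\{A=1\}$ then yields a unique minimizer on $\hat H_0$ of $\tilde\xi\mapsto\vol(\Delta_{\tilde\xi})$. Since $\wt_\xi$ is assumed to minimize $\hvol$, the point $(\xi,0)$ must be that unique minimizer; hence any other $T$-invariant quasi-monomial minimizer $v$ must satisfy $\tilde\xi_v/A(\tilde\xi_v)=(\xi,0)$, which forces $\beta=0$ and $\xi_v$ proportional to $\xi$, i.e., $v$ is a positive rescaling of $\wt_\xi$.

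The main obstacle I anticipate is the second step: verifying that the Newton--Okounkov valuation $\bV$, built from both the torus weights and the divisors $F_j$ over $Y'$, has a value semigroup satisfying the good properties of Lemma \ref{lem-sg3prop}, most importantly a uniform bound analogous to \eqref{eq-bdV2} relating the $w$-component to the torus weight $u\in\sigma^\vee$. The toric argument uses piecewise linearity of $u\mapsto\fD(u)$ together with Izumi's theorem at $p\in Y$, both of which should remain available since $w$ is still a quasi-monomial valuation centered at a point $p'\in Y'$; but one has to push the estimate through the birational model $Y'\to Y$ and choose the lexicographic order carefully so that the volume identity $\vol(v)=\vol(\Delta_{\tilde\xi_v})$ genuinely holds for the non-toric valuation $v$, not merely for its toric shadow.
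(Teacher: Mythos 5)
Your high-level plan matches the paper's: both build a rank-$n$ Newton--Okounkov valuation $\bV$ by composing the torus weight, the quasi-monomial valuation $v^{(0)}$ on the base, and an auxiliary extension, and both reduce the uniqueness question to strict convexity of $\tilde\xi\mapsto\vol(\Delta_{\tilde\xi})$ via Lemma \ref{lem-Gig}. You have also correctly located the technical core: the uniform Izumi-type estimate generalizing \eqref{eq-bdV2}. In the paper this is \eqref{eq-Izuest}, proved in Proposition \ref{prop-volpoly} by approximating $v^{(0)}$ by divisorial valuations coming from weighted blow-ups of $Y$ along $Z=\bigcap\{z_i=0\}$ and controlling the resulting intersection numbers via weighted Segre classes (Lemmas \ref{l-uniformalg}, \ref{l-segre}); this is more delicate than the Izumi argument for $\ord_p$ in Lemma \ref{lem-sg3prop} and you should not expect it to ``carry over'' verbatim.

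Where you diverge, and where there is a genuine gap, is the final step. You propose to apply Lemma \ref{lem-Gig}.2 to the full affine slice $\hat H_0=\{A=1\}$ of $\hat\sigma^\vee\subset\bR^n$ and argue that, since $\wt_\xi$ minimizes $\hvol$, the point $(\xi,0)/A(\xi,0)$ must be the unique minimizer of $\tilde\xi\mapsto\vol(\Delta_{\tilde\xi})$ on $\hat H_0$. This inference is not available: the auxiliary directions $\gamma\in\bR^{d-s}$ appearing in $\bV_3$ do not correspond to any actual valuation of $R$, so points of $\hat H_0$ with nonzero $\gamma$-coordinate are not constrained by the minimality of $\hvol(\wt_\xi)$, and the unique minimizer of $\vol(\Delta_\cdot)$ over the full slice could a priori sit there. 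You could repair this by restricting to the sub-slice $\hat H_0\cap(\bR^{r+s}\times\{0\})$, but then you need the identity $\vol(v_{\Xi})=\vol(\Delta_{(\Xi,0)})$ for \emph{every} $\Xi=(\eta,\beta')$ in that sub-slice, which is stronger than what the paper's Proposition \ref{prop-volpoly} actually establishes. The paper sidesteps all of this by connecting $\wt_\xi$ to the given $v=(\zeta,v^{(0)})$ by the specific affine path $v_t=((1-t)\xi+t\zeta,\,tv^{(0)})$, normalizing so that $A_{(X,D)}(v_t)\equiv 1$, and invoking only the strict convexity of $\vol(\Delta_\cdot)$ (Lemma \ref{lem-Gig}.1) restricted to the segment $t\mapsto\tilde\Xi_t$: if $v\neq\wt_\xi$ then $\tilde\Xi_t$ is non-constant, so $\phi(t)=\vol(\Delta_{\tilde\Xi_t})$ is strictly convex, and since $\phi(0)=\hvol(\wt_\xi)$ is minimal one gets $\phi(1)=\hvol(v)>\phi(0)$. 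This one-dimensional argument needs the volume identity only along a single path rather than on a whole slice, which is why it is the cleaner route and why the paper never has to say anything about a global minimizer on $\hat H_0$.
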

\begin{rem} 
{\rm (i)}
By Theorem \ref{thm-semin}, the assumption is indeed equivalent to $(X,D,\xi)$ being K-semistable.

{\rm (ii)}
Let $v$ be another $T$-invariant minimizer. If we could show the associated graded ring ${\rm gr}_v(R)$ is finitely generated, then similar to the argument in Theorem \ref{thm-semin}, we can degenerate $X$ to $X_0$ via $v$ and both $\xi$ and $\xi_v$ would be the minimizers of $\hvol_{X_0,D_0}$, which is contradictory to Proposition \ref{prop-Tconvex}. Using this method, we can give another proof of uniqueness of divisorial minimizers proved in \cite{LX16} with a different argument. However, for general quasi-monomial minimizers, since we do not known yet the finite generation of the associated graded ring, we have to adapt a different argument. We also note that later in Proposition \ref{p-Tequiv} we will show any quasi-monomial minimizer is automatically $T$-invariant.
\end{rem}

The idea of the proof is to first connect any $T$-invariant quasi-monomial valuation $v$ with $v_\xi$ by a family of $T$-invariant quasi-monomial valuations $v_t$. This depends on the description of $T$-invariant valuations in \cite{AIPSV11}.  Next we extend $v$ to a valuation $\bV$ of rational rank $n$ and prove that it satisfies properties as in Lemma \ref{lem-sg3prop}. Then we can use the works of Newton-Okounkov bodies to realize the volumes of valuations as volumes of convex bodies as has been done in the previous sub-section. Finally we use the previous convex geometric result to get the strict convexity of the volumes $\vol(v_t)$ with respect to $t$ which implies the uniqueness of the minimizer.

\begin{proof}
By \cite{AH06}, there exists a normal semi-projective $Y$ of dimension $d:=n-r$ and a proper polyhedral divisor $\fD: \sigma^{\vee}\rightarrow {\rm CarDiv}(Y)$ such that:
\begin{equation}\label{eq-repTvar}
X={\rm Spec}_{\bC} \left(\bigoplus_{u\in \sigma^{\vee}} H^0(Y, \fD(u))\right).
\end{equation}

By Theorem \ref{t-Tcano}.1, any (quasi-monomial) $T$-invariant valuation is of the form $v=(v^{(0)}, \zeta)$ defined via the identity:
\[
v(f\cdot \chi^u)=v^{(0)}(f)+\langle u, \zeta\rangle.
\]
If $v$ is quasi-monomial, then $v^{(0)}$ is a quasi-monomial valuation on $\bC(Y)$. Let $s$ be the rational rank of $v^{(0)}$. There exists a birational morphism $\psi: Y'\rightarrow Y$, a regular closed point $p\in Y'$, algebraic coordinates $\{z'; z''\}=\{z_1, \dots, z_{s}; z_{s+1}, \dots, z_{d}\}$ and $s$ rationally independent positive real numbers $\beta=(\beta_1, \dots, \beta_{s})$ such that 
$v^{(0)}$ is the quasi-monomial valuation associated to these data. More precisely, if the Laurent series of an $f\in \bC(Y')$ has the form
\begin{equation}\label{eq-flaurent}
f=\sum_{m\in \bZ^{s}} z_1^{m_1}\cdots z_{s}^{m_{s}} \chi_m(z''),
\end{equation}
we will say that $z_1^{m_1}\dots z_{s}^{m_{s}}$ appears in the Laurent expansion of $f$ if $\chi_m(z'')\neq 0$. Then we have:
\[
v^{(0)}(f)=\min\left\{\sum_{i=1}^{s} \beta_i m_i;  z_1^{m_1}\cdots z_{s}^{m_{s}} \text{ appear in the Laurent expansion of } f \right\}. 
\]
In the representation \eqref{eq-repTvar}, we can replace $Y$ by $Y'$ and $\fD(u)$ by $\psi^*\fD(u)$. So for the simplicity of notations, in the following discussion, we will still denote $Y'$ by $Y$.
Moreover, if we let $D_i=\{z_i=0\}$ for $i=1, \dots, d=n-r$, then by resolving the singularities of $(Y, \sum_i D_i)$, we can also assume $\sum_{i=1}^{n-r} D_i$ has simple normal crossings by possibly replacing $Y$ by a new birational model.

As before, we fix a lexicographic order on $\bZ^{r}$ and define for any $f\in R$, 
\[
\bV_1(f)= \min\{u; f=\sum_u f_u \text{ with } f_u \neq 0\}=\bV_1(f).
\]
Again we will first extend this $\bZ^{r}$-valuation $\bV_1$ to become a $\bZ^n$-valued valuation. Denote $u_f=\bV_1(f)\in \sigma^{\vee}$ and $f_{u_f}$ the corresponding nonzero component. 
Define $\bV_2(f)=v^{(0)}(f_{u_f})$. Because $\{\beta_i\}$ are $\mathbb{Q}$-linearly independent, we can write
$\bV_2(f)=\sum_{i=1}^{s} m^{*}_i \beta_i$ for a uniquely determined $m^*:=m^*(f_{u_f})=\{m^*_i:=m^*_i(f_{u_f})\}$. Moreover, the Laurent expansion of $f$ has the form:
\begin{equation}\label{eq-flaurent*}
f_{u_f}=z_1^{m^*_1}\dots z_{s}^{m^*_{s}} \chi_{m^*}(z'')+\sum_{m\neq m^*} z_1^{m_1}\dots z_{s}^{m_{s}} \chi_m(z'').
\end{equation}
 Then $\chi_{m^*}(z'')$ in the expansion of \eqref{eq-flaurent*} is contained in $\bC(Z)$, where $Z=\{z_1=0\}\cap \dots \{z_s=0\}=D_1\cap \dots\cap D_{s}$. 

Extend the set $\{\beta_1, \dots, \beta_{s}\}$ to $d=n-r$ $\bQ$-linearly independent positive real numbers $\{\beta_1, \dots, \beta_{s}; \gamma_1, \dots, \gamma_{d-s}\}$. 
Define
$\bV_3(f)=w_{\gamma}(\chi_{m^*}(z''))$ where $w_{\gamma}$ is the quasi-monomial valuation with respect to the coordinates $z''$ and the $(d-s)$ tuple $\{\beta_1, \dots, \beta_{s}; \gamma_1, \dots, \gamma_{d-s}\}$.

\medskip

Now we assign the lexicographic order on $\bG:=\bZ^{r}\times G_2\times G_3\cong \bZ^{r}\times \bZ^{s}\times \bZ^{n-r-s}$ and define $\bG$-valued valuation:
\begin{equation}
\bV(f)=(\bV_1(f), \bV_2(f_{u_f}), \bV_3(\chi_{m^*})).
\end{equation}
\begin{rem}\label{rem-composite}
The construction of $\bV$ is an example of composite of valuations (see \cite[VI.16]{ZS60}).
\end{rem}

Let $\cS$ be the valuative semigroup of $\bV$. Then $\cS$ generates a cone $\tilde{\sigma}$. Let $P_1: \bR^n\rightarrow \bR^{r}$, $P_2: \bR^n\rightarrow \bR^{s}$ and $P=(P_1, P_2): \bR^n\rightarrow \bR^{r+s}$ be the natural projections. Then $P_1(\tilde{\sigma})=\sigma\subset \bR^r$.

For any $\xi\in \rint(\sigma)$, denote by $\wt_\xi$ the valuation associated to $\xi$. We can connect $\wt_\xi$ and $v$ by a family of quasi-monomial valuations: $v_t= ((1-t)\xi+t \zeta, t v^{(0)})$ defined as
\[
v_t(f\cdot \chi^u)=t v^{(0)}(f)+\langle u, (1-t)\xi+t\zeta\rangle.
\]
So the vertical part of $v_t$ corresponds to the vector $\Xi_t:=((1-t)\xi+t\zeta, t\beta)\in \bR^{r+s}$. Extend $\Xi_t$ to 
$\tilde{\Xi}_t:=(\Xi_t, 0)\in \bR^n$ and define the following set:
\[
\Delta_{\tilde{\Xi}_t}=\left\{y\in \tilde{\sigma}; \langle y, \tilde{\Xi}_t\rangle \le 1 \right\}=\left\{y\in \tilde{\sigma}; \langle P(y), \Xi_t\rangle \le 1\right\}.
\]
Because $\hvol$ is rescaling invariant, we can assume $A_{(X,D)}(v)=A_{(X,D)}(\xi)=1$. Then by the $T$-invariance of $v_t$, we easily get:
$$A(v_t)=t A(v^{(0)})+A_{(X,D)}((1-t)\xi+t\zeta)=t A_{(X,D)}(v)+(1-t)A_{(X,D)}(\xi)\equiv 1.$$
So by Proposition \ref{prop-volpoly} we have:
\[
\hvol(v_t)=\vol(v_t)=\vol(\Delta_{\tilde{\Xi}_t})
\]
Because $\tilde{\Xi}_t$ is linear with respect to $t$, by Lemma \ref{lem-Gig} $\phi(t):=\vol(\Delta_{\tilde{\Xi}_t})$ is strictly convex as a function of $t\in [0,1]$. By assumption $\phi(0)=\vol(v_0)=\hvol(\wt_{\xi})$ is a minimum. So by the strict convexity we get $\phi(1)=\vol(\Delta_{\tilde{\Xi}_1})=\hvol(v)$ is strictly bigger than $\hvol(\wt_\xi)$.

\end{proof}
\begin{prop}\label{prop-volpoly}
For any $\xi\in {\rm relint}(\sigma)$ and the quasi-monomial valuation $v=(\eta, v^{(0)}) \in \Val_{V, o}$ as above, we
have the identity:
\begin{equation}
\vol(v_t)=\Vol(\Delta_{\tilde{\Xi}_t}).
\end{equation}
\end{prop}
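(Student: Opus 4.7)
The plan is to mirror the Newton--Okounkov body calculation carried out in Lemma \ref{l-conevolume}, replacing the toric valuation $\wt_\xi$ by $v_t$ and the linear functional $\ell_\xi$ by $\ell_{\tilde{\Xi}_t}(\theta):=\langle \tilde{\Xi}_t,\theta\rangle$. First I would verify that the $\bZ^n$-valued composite valuation $\bV=(\bV_1,\bV_2,\bV_3)$ constructed above satisfies the three properties (P1)--(P3) of Lemma \ref{lem-sg3prop}. Properties (P1) and (P3) follow from the construction of $\bV$ together with the facts that $R_0=\bC$ and that the valuative group of $\bV$ has rank $n$. For (P2) one needs the estimate
\[
|\bV_2(f)|+|\bV_3(f)|\;\le\; C\,\langle u,\xi\rangle \quad \text{for every } f\in H^0(Y,\fD(u)),
\]
which is obtained by combining \eqref{eq-bdV2} (bounding the $v^{(0)}$-contribution on $\bC(Y)$) with an identical Izumi-type argument for the auxiliary quasi-monomial valuation $w_\gamma$ acting on the leading coefficients $\chi_{m^*}(z'')\in \bC(Z)$.

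The key new step will be to compute the image of the filtration of $v_t$ under $\bV$:
\[
\bV\bigl(\fa_\lambda(v_t)\bigr)\;=\;\{\theta\in \bV(R):\ell_{\tilde{\Xi}_t}(\theta)\ge\lambda\}.
\]
For the inclusion $\subseteq$, if $f=\sum_u f_u\chi^u\in\fa_\lambda(v_t)$ then every single-weight component satisfies $t v^{(0)}(f_u)+\langle u,\Xi_t^{(1)}\rangle\ge\lambda$, and in particular the lex-leading one yields
\[
\ell_{\tilde{\Xi}_t}(\bV(f))\;=\; t v^{(0)}(f_{u_f})+\langle u_f,\Xi_t^{(1)}\rangle \;\ge\;\lambda.
\]
For the reverse inclusion, given $\theta=\bV(g)$ with $\ell_{\tilde{\Xi}_t}(\theta)\ge\lambda$ one passes to the single-weight element $h:=g_{u_g}\chi^{u_g}$, which has $\bV(h)=\theta$ and $v_t(h)=\ell_{\tilde{\Xi}_t}(\theta)\ge\lambda$, so $h\in\fa_\lambda(v_t)$. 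Because $\bV$ has valuative group of rank $n$ on the Krull-dimension-$n$ domain $R$, its associated graded pieces are one-dimensional and indexed by $\cS=\bV(R)$, giving
\[
\dim_\bC R/\fa_\lambda(v_t)\;=\;\#\bigl(\cS\setminus \bV(\fa_\lambda(v_t))\bigr)\;=\;\#\{\theta\in \cS:\ell_{\tilde{\Xi}_t}(\theta)<\lambda\}.
\]

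With these two ingredients in place, I would introduce the truncation semigroups $\tilde{\Gamma},\tilde{\Gamma}'\subset \bZ^{n+1}$ exactly as in the proof of Lemma \ref{l-conevolume} (using a linear functional $\ell$ provided by (P2) for the cutoff), apply the Newton--Okounkov asymptotic $\#\tilde{\Gamma}_\lambda\sim \lambda^n\Vol(\Delta(\tilde{\Gamma}))$ of \cite{Oko96,LM09,KK12,KK14}, and conclude by the identification
\[
\Delta(\tilde{\Gamma}')\setminus \Delta(\tilde{\Gamma})\;=\;\{y\in \tilde{\sigma}:\ell_{\tilde{\Xi}_t}(y)\le 1\}\;=\;\Delta_{\tilde{\Xi}_t},
\]
once $C$ is taken large enough that this bounded body is contained in $\{\ell\le C\}$. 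The delicate point will be the leading-component identity: a priori one only has $v_t(f)\le \ell_{\tilde{\Xi}_t}(\bV(f))$, and the reverse containment for $\bV(\fa_\lambda(v_t))$ is recovered only by passing from a general $g$ to its lex-leading monomial $g_{u_g}\chi^{u_g}$, which exploits both the $T$-weight decomposition $R=\bigoplus_u H^0(Y,\fD(u))$ and the composite structure of $\bV$.
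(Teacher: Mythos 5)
Your overall framework --- verifying (P1)--(P3) from Lemma~\ref{lem-sg3prop} for the composite valuation $\bV$, identifying $\bV(\fa_\lambda(v_t))=\{\theta\in\cS:\langle P(\theta),\Xi_t\rangle\ge\lambda\}$ via the passage to the single-weight element $g_{u_g}\chi^{u_g}$, and then feeding this into the Newton--Okounkov truncation argument exactly as in Lemma~\ref{l-conevolume} --- is the paper's strategy, and your filtration computation is correct as far as it goes. However, you describe the bound $|\bV_3(f)|\le C\langle u,\xi\rangle$ as following from ``an identical Izumi-type argument'' on $\bC(Z)$, and this is where your proposal has a genuine gap: establishing this one estimate is nearly the entire content of the paper's proof (everything from \eqref{eq-Ftaylor} through Lemma~\ref{l-segre}).

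The obstruction is that $\bV_3(f)=w_\gamma(\chi_{m^*})$ acts on the leading Laurent coefficient $\chi_{m^*}\in\bC(Z)$, which is \emph{not} naturally a section of a line bundle on $Z$ whose degree is controlled by $\langle u,\xi\rangle$; indeed $\chi_{m^*}$ arises from expanding $f g_u^{-1}$ to depth $m^*=m^*(f)$ along $Z$, and $m^*$ grows with $u$. Izumi on $Z$ only reduces the problem to bounding $\ord_p(\chi_{m^*})$, and there is no direct line-bundle argument for that. The paper handles it by approximating $\beta$ by rational $\beta'$, forming the weighted blowup $\mu_{Y,\beta'}:\tilde{Y}\to Y$ along $Z$ with weights $q\beta'$, identifying (up to a local unit) $\chi_{m^*}$ with the restriction of $\mu_Y^*(f^b g_{bu}^{-1})-k_0 bqE$ to the exceptional $\bP({\bf a})$-bundle $E\to Z$ near $\pi_E^{-1}(p)$, and then bounding $\ord_p$ via intersection numbers on the further blowup $\tilde{E}\to E$. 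The delicate point --- which your sketch does not register --- is that the resulting constants a priori depend on $\beta'$, so Lemmas~\ref{l-uniformalg} and~\ref{l-segre} are required to show the ampleness thresholds $\epsilon_0,\epsilon_1$ and the weighted Segre-class intersection numbers stay uniform as $\beta'\to\beta$. Without this uniformity the bound degenerates in the irrational-weight limit and (P2) fails. You have therefore reduced the Proposition to the right estimate \eqref{eq-Izuest}, but have not proved it, and it cannot be dispatched as a copy of the $\bV_2$ case.
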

\begin{proof}
The rest of this subsection is devoted to proof of the Proposition \ref{prop-volpoly}. Similar to the proof of Lemma \ref{l-conevolume}, we know Proposition \ref{prop-volpoly} follows if we can show that $\bV$ constructed above satisfies those properties stated in Lemma \ref{lem-sg3prop}, which in turn follow from the following uniform estimates: there exists a constant $C$ such that for any $f\in R_u$, 
\begin{equation}\label{eq-Izuest}
|\bV_2(f)|\le C\langle u, \xi\rangle, \quad |\bV_3(\chi_{m^*})| \le C \langle u, \xi\rangle.
\end{equation}
So we only need to concentrate on proving \eqref{eq-Izuest}.
To get the first inequality, we will use the same argument leading to \eqref{eq-bdV2}. To get the second estimate, we will use a sequence of divisorial valuations to approximate and prove that the estimates obtained are uniform with respect to approximations. 

Fix $u\in \sigma^{\vee}$ and $f=f_u \cdot \chi^u\in R_u$. For $b$ sufficiently divisible, $\fD(b u)$ is Cartier. We will denote by $L_{bu}$ the line bundle associated to $\fD(bu)$ and $L_u$ the $\mathbb{Q}$-line bundle associated to $\fD(u)$. 
Choose a global section $g_{bu}\in H^0(Y, \fD(b u))$ such that $g_{bu}^{-1}$ is a local equation for $\fD(bu)$ near $p\in Y$. Then for any $f\in H^0(Y, \fD(u))$ we have:
\begin{equation}\label{eq-decVf}
\bV_2(f)=\frac{1}{b} \bV_2(f^b g_{bu}^{-1})-\frac{1}{b}\bV_2(g_{bu}^{-1}).
\end{equation}
For the simplicity of notation, we write $g_u:=g_{bu}^{1/b}$ as a multi-section of the $\bQ$-line bundle $L_u$. Then \eqref{eq-decVf} can be written  as:
\begin{equation}\label{eq-decVf2}
\bV_2(f)=\bV_2(f g_u^{-1})-\bV_2 (g_u^{-1}).
\end{equation}
We will bound both terms on the right hand side of \eqref{eq-decVf2}. Using the piecewise linearity, we can  easily show as before that there exists $C>0$ independent of $u\in \sigma^{\vee}$, satisfying:
\begin{equation*}
|\bV_2(g_u^{-1})|\le C \langle u, \xi\rangle. 
\end{equation*}

So we only need to bound the term $\bV_2(f g_u^{-1})$. Denote $\tilde{f^b}=f^b g_{bu}^{-1}$. Consider the Taylor expansion of $F$ at $p$ as in \eqref{eq-flaurent}:
\begin{equation}\label{eq-Ftaylor}
\tilde{f^b}=z_1^{m^*_1}\dots z_s^{m^*_s}\chi_{m^*}(z'')+\sum_{m\in \bN^{s}, m\neq m^*} z_1^{m_1}\dots z_{s}^{m_{s}} \chi_m(z''),
\end{equation}
where $m^*=v^{(0)}(\tilde{f^b})$. Notice that $\tilde{f^b}$ is now regular at $p$. We can choose $\beta=(\beta'_1, \dots, \beta'_{s})\in \bQ^{s}$ sufficiently close to $(\beta_1, \dots, \beta_{s})$ such that:
\begin{equation}\label{eq-defk0}
k_0:=k_0(\beta')=\langle \beta', m^*\rangle < \langle \beta', m\rangle 
\end{equation}
for any $m\neq m^*$ appearing in \eqref{eq-Ftaylor}.

\bigskip

Next consider the weighted blow up of $Y$ along the smooth subvariety $Z=\bigcap_{i=1}^s \{z_i=0\}$ with weights ${\bf a}=(a_1, \dots, a_{s}):=(q \beta'_1, \dots, q \beta'_{s})$ where $q$ is the least common multiple of the denominators of $\beta'$. We will denote this weighted blow up by $\mu_{Y}=\mu_{Y, \beta'}: \tilde{Y}\rightarrow Y$ with the exceptional divisor denoted by $E=E_{\beta'}$. Since $Z$ is a smooth subvariety of $Y$, we have $E=\bP(N_Z, {\bf a})=(N_Z\setminus \{Z\})/\bC^*$, where $N_Z$ is the normal bundle of $Z\subset Y$ and $\tau\in \bC^*$ acts along the fiber of the normal bundle $N_Z\to Z$ by $\tau\circ (x_1, \dots, x_{s})= (\tau^{a_1}x_1, \dots, \tau^{a_{s}} x_{s})$. So we have a fibration $\pi_E: E\rightarrow Z$ with each fiber being isomorphic to the weighted projective space $\bP({\bf a}):=\bP(a_1, \dots, a_s)$. In particular, the inverse image of $p\in Z\subset Y$ under $\mu_Y$, denoted by $E_p$,  is a fiber of $\pi_E$ which is isomorphic to the weighted projective space $\bP({\bf a})$. 

Denoted by $v^{(0)}_{\beta'}$ the quasi-monomial valuation centered at $p\in Y$ but with the weight $\beta'$ instead of $\beta$. Then $v^{(0)}_{\beta'}$ is a divisorial valuation and  equal to $q^{-1}\cdot\ord_{E}$. Because of \eqref{eq-defk0}, 
\begin{eqnarray*}
k_0
=\frac{1}{b}v^{(0)}_{\beta'} (f^{bu}g_{bu}^{-1})=\frac{1}{bq}\ord_E \left(\widetilde{f^b}\right).
\end{eqnarray*} 
Then 
$$e:=(\mu_{Y}^*f^b-k_0 b q E)|_E \in H^0(E, (\mu_{Y}^*L_{bu}-k_0 b q E)|_E)$$ 
and moreover near $\pi_E^{-1}(p)$ we have
$e=\pi_E^*(\chi_{m^*}) \cdot s$,
where $s$ is a local generator of $(\mu_Y^*L_{bu}-k_0bq E)|_E$. 

Fix a very ample divisor $H$ on $Y$. There exists a sufficiently small positive constant $0<\epsilon_0=\epsilon_0(\beta')\ll 1$ such that
$$\tilde{H}:=\tilde{H}_{\epsilon_0}=\mu_Y^*H-\epsilon_0 q E$$ is still ample on $\tilde{Y}$. Then we have:
\[
(\mu_Y^*\fD(bu)-k_0 b q E)\cdot (\mu_Y^*H-\epsilon_0  q E)^{d-1}>0.
\]
This implies:
\begin{equation}\label{e-k0}
k_0 \le \frac{\mu_Y^*\fD(bu)\cdot (\mu_Y^*H-\epsilon_0 q E)^{d-1}}{b q E\cdot (\mu_Y^*H-\epsilon_0 q E)^{d-1}}=\frac{\mu_Y^*\fD(u)\cdot (\mu_Y^*H-\epsilon_0 q E)^{d-1}}{qE\cdot (\mu_Y^*H-\epsilon_0 q E)^{d-1}}.
\end{equation}
Thus it suffices to get a uniform estimate of the right hand side of \eqref{e-k0}, when $\beta'$ converges to $\beta$. This is similar to (but easier than) the argument of Lemma \ref{l-segre}. By Lemma \ref{l-uniformalg}, we know that $\epsilon_0>0$ can be fixed as a uniform constant. Using \eqref{eq-Segre}, the term contain $(qE)^i$ appears in the intersection will become rational Segre classes $s^i(\beta')$ which continuously depend on $\beta'$.   We also see that the denominator will be of the form 
$$\epsilon_{0}^{s-1}((-1)^s(\mu^*H)^{n-s}(qE)^{s}+(\mbox{higher order term of $\epsilon_0$}))$$
which is nonzero, if we fix $\epsilon_0$ to be sufficiently small. We postpone more details into the proof of the more complicated case treated by Lemma \ref{l-segre}. 
\begin{rem}
As mentioned above, this argument for the estimate of $k_0$ is essentially the same as the the argument in the proof of Lemma \ref{lem-sg3prop}. 
\end{rem}

Next we estimate $w_\gamma(\chi_{m^*})$. By Izumi's theorem, there exists a positive constant $C>0$ such that
\[
C^{-1}\cdot \ord_p\le w_{\gamma}\le C\cdot \ord_p 
\]
as valuations on $\bC(Z)$. So to prove the second estimate in \eqref{eq-Izuest}, we just need to estimate $\ord_p(\chi_{m^*})=\ord_p(e)$.

Let $\mu_E: \tilde{E}\rightarrow E$ denote the blow up of $E$ along the fiber $E_p$.  The exceptional divisor denoted by $F$ is isomorphic to $\bP^{d-s-1}\times \bP({\bf a})$. In fact, since we have a fibration $\bP({\bf a})\rightarrow E\rightarrow Z$, if $\mu_Z: \tilde{Z}\rightarrow Z$ denotes the blow up of $p\in Z$ with the exceptional divisor $F_Z\cong \bP^{d-s-1}$, then $\tilde{E}=\mu_Z^*(\bP(N, {\bf a}))$ and there is an induced fibration $\pi_{\tilde{E}}: \tilde{E}\rightarrow \tilde{Z}$. 
If we let 
$$k_1=\ord_{p}(e)=\ord_p(\chi_{m^*}),$$ then
$\mu_E^* e-k_1 b F\in H^0(\tilde{E}, \mu_E^*M-k_1 b F)$ where $M=(\mu_{Y}^*L_{bu}-k_0 b q E)|_E$. 

Since $\tilde{H}_{\epsilon_0}=\mu_Y^*H-\epsilon_0 q E$ is ample on $E$, there exists $\epsilon_1$ such that $\mu_E^*(\tilde{H}_{\epsilon_0}|_E)-\epsilon_1 F$ is ample on $\tilde{E}$. 
Then we have the inequality:
\[
(\mu_E^*e-k_1 b F) \cdot \left(\mu_E^*(\tilde{H}_{\epsilon_0} |_E)-\epsilon_1 F\right)^{d-2}> 0. 
\]
So we get the estimate:
\begin{eqnarray}\label{eq-k1est}
k_1 &\le& \frac{\mu_E^*\left( (\mu_Y^*L_{bu}-k_0 b q E)|_E\right) \cdot \left(\mu_E^*(\tilde{H}_{\epsilon_0} |_E)-\epsilon_1 F\right)^{d-2} }{b F\cdot \left(\mu_E^*(\tilde{H}_{\epsilon_0}|_E)-\epsilon_1 F\right)^{d-2}}\nonumber\\
&=&\frac{\mu_E^*\left((\mu_Y^*\fD(u)-k_0 q E)|_E\right) \cdot \left(\mu_E^*(\tilde{H}_{\epsilon_0}|_E) -\epsilon_1 F\right)^{d-2}}{F\cdot \left(\mu_E^*(\tilde{H}_{\epsilon_0}|_E)-\epsilon_1 F\right)^{d-2}}.
\end{eqnarray}
Finally we want to show that the above estimate can be made uniformly with respect to $\beta'$ that is sufficiently close to $\beta$. We first bound $\epsilon_0$ and $\epsilon_1$ uniformly in the following.
\begin{lem}\label{l-uniformalg}
$\epsilon_0$ and $\epsilon_1$ can be chosen to be uniform with respect to $\beta'$ that is close to $\beta$. More precisely, there exists $\delta=\delta(\beta)>0$, $\epsilon_0=\epsilon_0(\beta)>0$, $\epsilon_1=\epsilon_1(\beta)>0$ such that if $|\beta'-\beta|\le \delta$ then $\tilde{H}_{\epsilon_0}:=\mu_{Y, \beta'}^*H-\epsilon_0 q E_{\beta'}$ is ample on $\tilde{Y}$ and $\mu_E^*(\tilde{H}_{\epsilon_0}|_E)-\epsilon_1 F$ is ample on $\tilde{E}$.
\end{lem}
\begin{proof}We first verify there is a uniform $\epsilon$ such that $\tilde{H}_{\epsilon_0}$ is ample. In fact, it suffices to show the uniform $\epsilon$ for nefness.  We can assume $H$ is sufficiently ample, such that $H-D_i$ is ample for any $i=1,...,s$. Then $\mu_{Y, \beta'}^*D_i=\tilde{D}_i+q\beta_i'E$ where $\tilde{D}_i$ is the birational transform on $\tilde{Y}$. As $\bigcap^s_{i=1}\tilde{D}_i$ is empty, a curve $C$ on $\tilde{Y}$ is not contained in at least one $\tilde{D}_i$, which implies $C\cdot \tilde{D}_i\ge 0$. Thus 
$$C\cdot (\mu_{Y, \beta'}^*H-q\beta_i'E)\ge C\cdot \mu_{Y, \beta'}^*(H-D_i)\ge 0.$$
From this, we can easily see that we could take $\epsilon_0=\frac{1}{2}\min \{\beta_i\}$ and $\tilde{H}_{\epsilon_0}$ is ample for $|\beta'-\beta|<\epsilon_0$. 

\bigskip

Now we can similarly argue for $\epsilon_1$. For this we need  $H-D_i$ is ample for any $i=1,...,n-r$. Denote by $F_1$,..., $F_{d-s}$ the restrictions of the birational transforms of $D_{s+1},..., D_{d}$ on $ \tilde{E}$. Then for an irreducible curve $C$ on $\tilde{E}$, if its image on  it is not contained in one of $F_j$ for some $j$. Then $\mu_E^*(\mu^*_YD_{j}|_E)=F_j+F,$ which implies that 
$$\left(\mu_E^*(\tilde{H}_{\epsilon_0}|_E)-\epsilon_1 F\right)\cdot C\ge  \left(\mu_E^*(\tilde{H}_{\epsilon_0}-\epsilon_1\mu_Y^*D_{j} )|_E\right)\cdot C \ge 0.$$
Thus we can take $\epsilon_1=1$ and replace  $\tilde{H}_{\epsilon_0} $ by $\tilde{H}_{\epsilon_0}+\mu_Y^*H$. 
\end{proof}

Notice that we have
the commutative diagram:
\begin{equation}\label{CD-blow}
\begin{CD}
F @>>> \tilde{E} @>\mu_E>> E @>>> \tilde{Y}\\
@VVV @VV \pi_{\tilde{E}}V @VV\pi_E V @VV\mu_Y V \\
F_Z@>>> \tilde{Z} @>\mu_Z >> Z @>>> Y.
\end{CD}
\end{equation}
We also have $(-E)|_E=\bP(N_Z, {\bf a})(1)$ and $\mu_E^*(-E)|_E=\bP(\mu_Z^*N_Z, {\bf a})(1)$.
So 
\begin{eqnarray}\label{eq-lb1}
\mu_E^*((\mu_Y^*H-\epsilon_0 q E)|_E)-\epsilon_1 F 
&=&\pi_{\tilde{E}}^*(\mu_Z^*H|_Z-\epsilon_1 F_Z)-\epsilon_0 q \cO_{\bP(\mu_Z^*N_Z, {\bf a})}(1)
\end{eqnarray}
is ample.

\begin{lem}\label{l-segre}
The right-hand-side of \eqref{eq-k1est} is uniformly bounded independent of $\beta'$ if $|\beta'-\beta|\le \delta$ where $\delta=\delta(\beta)$ is the same one as that in Lemma \ref{l-uniformalg}.
\end{lem}
\begin{proof}
As $F\cong \bP^{d-s-1}\times \bP({\bf a})$, by \eqref{eq-lb1} we get the denominator of \eqref{eq-k1est} to be 
\[
F\cdot \left(\mu_E^* (\tilde{H}_{\epsilon_0}|_E)-\epsilon_1 F\right)^{d-2}=\epsilon_1^{d-s-1} \epsilon_0^{s-1} q^{s}\frac{1}{a_1\cdots a_{s}}=\epsilon_1^{d-s-1}\epsilon_0^{s-1}\frac{1}{\beta'_1\cdots\beta'_s}.
\]
By the commutative diagram \eqref{CD-blow}, we have:
\begin{equation}\label{eq-lb2}
\mu_E^*\left[(\mu_Y^*\fD(u)-k_0 q E)|_E\right]=\pi_{\tilde{E}}^*(\mu_Z^* (\fD(u)|_Z))-k_0 q \cO_{\bP(\mu_Z^*N_Z, {\bf a})}(1)
\end{equation}
For simplicity, we denote 
$$B_1=\pi_{\tilde{E}}^*(\mu_Z^* \fD(u)|_Z), B_2=\mu_Z^*H|_Z-\epsilon_1 D, G=\cO_{\bP(\mu_Z^*N_Z, {\bf a})}(1).$$
Using \eqref{eq-lb1} and \eqref{eq-lb2}, the numerator of \eqref{eq-k1est} is equal to:
\begin{eqnarray}\label{eq-simplenum}
&&(B_1-k_0 q G)\cdot (B_2-\epsilon_0 q G)^{d-2}\nonumber\\
&&\hskip 7mm =(B_1-k_0 qG)\cdot \sum_{i=0}^{d-2} \binom{d-2}{i} B_2^{d-2-i} (\epsilon_0 q)^{i}(-G)^{i}.
\end{eqnarray}
By the standard intersection theory (cf. \cite[Section 4]{Ful98}), we have:
\begin{equation}\label{eq-Segre}
\pi_{\tilde{E}}^*A_1\cdots\pi_{\tilde{E}}^* A_{d-1-i}\cdot (-G)^i=A_1\dots A_{d-1-i}\cdot s_i(Q, {\bf a}),
\end{equation}
where $s_i(Q, {\bf a})$ is the weighted Segre class which can be defined as follows (cf. \cite[Section 3.1]{Ful98}): Let the total weighted Chern class of $Q=\mu_Z^*N_Z$ be given by 
$$c(Q, {\bf a})=\prod_{i=1}^r \left(1+a_i^{-1} c_1(L_i) t\right) \mbox{ where } L_i=\mu_Z^*(D_i|_Z),$$ 
and 
$\sum_i s_i(Q, {\bf a}) t^i=c(Q, {\bf a})^{-1}$, then $s_i(Q, {\bf a})=q^{-i} \tilde{s}_i(Q, \beta')$ where $\tilde{s}_i(Q, \beta')$ depends only on $\beta'$ and $c_1(L_i)$. Using \eqref{eq-Segre}, we see that each term in \eqref{eq-simplenum} depends continuously on 
$\beta'$. So we can indeed make the numerator of \eqref{eq-k1est} uniform with respect to $\beta'$. 
\end{proof}
\end{proof}

\section{Models and degenerations}\label{s-models}\label{s-constructmodel}

In \cite{LX16}, we showed that a divisorial minimizer always comes from a Koll\'ar component, and it can yield a degeneration which is the key for us to deduce results on general klt singularities from cone singularities. However, it is less clear, at least to us, what should be the corresponding construction for a higher rank quasi-monomial valuations. Nevertheless, in this section, we try to develop an approach to use models to approximate a quasi-monomial valuations, with possibly higher rank.

\subsection{Weak lc model of a quasi-monomial minimizer}\label{s-vmodel}

Let us first fix some notation. Let $v\in \Val_{X,x}$ be a quasi-monomial valuation. We know that there exists a log smooth model $(Y,E)$ over $X$ such that $v$ is computed at its center $\eta$ on $(Y,E)$ (see Definition \ref{d-quasi}). Denote by $E_j$ ($j=1,...,r$) the components of $E$ containing $\eta$.  In the below, we look at valuations $v_{\alpha}$ computed on $\eta\in (Y,E)$ for $\alpha\in  \mathbb{R}^r_{\ge 0}$. In fact, if we rescale $v_{\alpha}$ such that $A_{X,D}(v_{\alpha})=1$, then all such points canonically form a simplex $\Delta\subset \Val_{X,x}$ with the vertices given by $v_j=\ord_{E_j}/ A_{X,D}(E_j)$. 

In the above setting, for any two toroidal valuations which can be written $v_{\alpha}$ and $v_{\alpha'}$ on the fixed model $Y\to (X,D)$, we define 
$$|v_{\alpha}-v_{\alpha'}|:=|\alpha-\alpha'|_{\rm sup}=\max_{1\le j \le r } |\alpha_j-\alpha'_j|.$$  Recall we have defined the volume of a model $\vol(Y/X)$ (or abbreviated as $\vol(Y)$) over a klt singularity $(X,D)$ in \cite{LX16}. 

\begin{defn}\label{d-wlc}
Let $x\in (X,D)$ be a klt singularity and $v\in \Val_{x,X}$ a quasi-monomial valuation. We say that $v$ admits a {\bf weak  lc model} if there exists a birational morphism $\mu\colon Y^{\rm wlc}\to X$ such that
\begin{enumerate}
\item[(a)] ${\rm Ex} (\mu)=\mu^{-1}(x)=\sum^r_{j=1} S_j$;
\item[(b)]  $(Y^{\rm wlc},\mu_*^{-1}D+\sum^r_{j=1} S_j)$ is log canonical;
\item[(c)]  $-K_{Y^{\rm wlc}}-\mu_*^{-1}D-\sum^r_{j=1} S_j$ is nef over $X$; and 
\item[(d)]  $(Y^{\rm wlc},\mu_*^{-1}D+\sum^r_{j=1} S_j)$ is q-dlt  at the generic point $\eta$ of a component of the intersection of $S_j$ $(j=1,2,..., r)$, where $v$ can be computed (see Definition \ref{d-quasi}).
\end{enumerate}
\end{defn}

The main theorem of this section is the following. 

\begin{thm}\label{t-wlcmodel}
Let $x\in (X,D)$ be a klt point. If $v\in \Val_{X,x}$ is a quasi-monomial valuation which minimizes $\hvol_{(X,D)}$, then it admits a $\mathbb{Q}$-factorial weak log canonical model. 
\end{thm}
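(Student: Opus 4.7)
My plan is to construct $Y^{\rm wlc}$ by running a minimal model program (MMP) starting from a log resolution adapted to $v$, and then to use the minimization property of $v$ together with a volume comparison to verify the nefness condition (c).

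First, fix a log smooth model $\mu_0: Y_0 \to X$ on which $v = v_\alpha$ can be computed at the generic point $\eta$ of a component $Z$ of $\bigcap_{j=1}^r E_j$, where $E_1, \ldots, E_r$ are prime exceptional divisors over $x$ with associated weights $\alpha_j>0$. Let $F_1, \ldots, F_s$ denote the remaining exceptional prime divisors over $x$. Choose a small rational $0 < \delta < \min_i A_{(X,D)}(F_i)$ and consider the dlt boundary $\Delta := \mu_{0*}^{-1}D + \sum_j E_j + (1-\delta) \sum_i F_i$. Then
\[
K_{Y_0}+\Delta = \mu_0^*(K_X+D) + \sum_j A_{(X,D)}(E_j)\, E_j + \sum_i \bigl(A_{(X,D)}(F_i)-\delta\bigr) F_i,
\]
so all coefficients on the $\mu_0$-exceptional part are strictly positive.

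Second, run the relative $(K_{Y_0}+\Delta)$-MMP over $X$ with scaling of an ample divisor, invoking \cite{BCHM10}. The positive exceptional part drives the MMP to contract (and flip) curves meeting $\sum_i F_i$. Two properties must be checked: (i) termination, and (ii) every step takes place away from $\eta$ (since $\eta$ lies in the interior of the simple normal crossing locus $\bigcap_j E_j$, no extremal curve through $\eta$ can have the required negativity for small $\delta$). The outcome is a $\bQ$-factorial model $\mu: Y \to X$ whose only exceptional divisors over $x$ are the birational transforms $S_j$ of $E_j$; the pair $(Y, \mu_*^{-1}D + \sum_j S_j)$ is dlt (hence lc, giving (b)) and remains log smooth in a neighborhood of $\eta$ (giving (d)), and $v$ is still computed at the (unchanged) generic point $\eta$.

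Third, to verify (c) I would argue by contradiction. Suppose $-K_Y - \mu_*^{-1}D - \sum_j S_j$ is not nef over $X$; by the cone theorem applied to the dlt pair $(Y,\mu_*^{-1}D + \sum_j S_j)$ over $X$, there is a $(K_Y + \mu_*^{-1}D + \sum_j S_j)$-positive extremal ray, which corresponds combinatorially to deforming the weight vector $\alpha$ within the rank-$r$ simplex of quasi-monomial valuations computable on $(Y,\sum_j S_j)$ near $\eta$. Using Lemma \ref{l-approvector} to approximate such deformations by rational (divisorial) valuations and the volume formula $\vol(Y/X)$ of \cite[Theorem 3.11]{LX16} extended to higher rank by continuity, one produces a quasi-monomial valuation $v'$ arbitrarily close to $v$ with $\hvol_{(X,D)}(v') < \hvol_{(X,D)}(v)$, contradicting the minimality of $v$. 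The main obstacle here is Step 3: in the rank-one case of \cite{LX16}, $\hvol$ of a Koll\'ar component is directly an intersection number on a single extracting model, whereas in higher rank $\hvol(v)$ is only a limit of such intersection numbers, so the nefness-to-minimization link has to be established through a careful Diophantine approximation and continuity argument tying perturbations of $\alpha$ to extremal rays on $Y$; controlling the MMP in Step 2 to terminate at a $\bQ$-factorial model with exactly the prescribed exceptional structure, without disturbing $\eta$, is the other delicate technical point.
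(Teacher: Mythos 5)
Your proposal has a structural resemblance to the paper's argument (MMP starting from a resolution, appeal to \cite{BCHM10}, final nefness step), but it misses the key mechanism that makes the construction work, and the step you flag as the "main obstacle" is in fact a genuine gap.

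The central issue is in your Steps 1--2. You extract the original divisors $E_1,\dots,E_r$ from an arbitrary log smooth model on which $v$ is computed. But these $E_j$ can have arbitrarily large log discrepancies $A_{(X,D)}(E_j)$, and a divisor with $A_{(X,D)}(E_j)>1$ cannot in general be extracted onto a model $Y\to X$ where $(Y,\mu_*^{-1}D+\sum_j S_j)$ is lc; moreover, running the $(K_{Y_0}+\Delta)$-MMP over $X$ with $\Delta=\mu_{0*}^{-1}D+\sum_j E_j+(1-\delta)\sum_i F_i$ does not protect the $E_j$ from being contracted just because they sit at coefficient $1$ --- the class $K_{Y_0}+\Delta\sim_{\bQ,X}\sum_j A_{(X,D)}(E_j)E_j+\sum_i(A_{(X,D)}(F_i)-\delta)F_i$ is effective and exceptional, and the MMP may contract it entirely. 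Your local argument "no extremal curve through $\eta$ can be negative" does not prevent $E_j$ from being contracted away from $\eta$. The paper's proof addresses exactly this through Lemma \ref{l-extractdivisor}: it first uses the minimization hypothesis to show $\lct(X,D;\fa_\bullet)=A_{(X,D)}(v)$, and from this finds \emph{new} toroidal divisors $S_j$ (given by rational integer vectors close to the weight $\alpha$, via Lemma \ref{l-approvector}) whose log discrepancies with respect to a perturbed klt pair $(X,D+(1-\epsilon_0)c\cdot\frac{1}{m}\fa_m)$ are smaller than any prescribed $\epsilon$. Only such $S_j$ can be extracted by the standard \cite{BCHM10} machinery with coefficients $a_j$ between $1-\epsilon$ and $1$. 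This is the one essential place where the minimizing hypothesis enters, and it is absent from your construction.

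Your Step 3 is also not correct as stated. You try to prove nefness of $-K_Y-\mu_*^{-1}D-\sum_j S_j$ by contradiction, claiming a positive extremal ray "corresponds combinatorially to deforming the weight vector $\alpha$." There is no such correspondence in general; positive curve classes on $Y$ need not have anything to do with the simplex of toroidal weights. The paper does not prove nefness on the first model at all: it runs a \emph{further} relative MMP $Y\dasharrow Y_1$ for $-K_Y-\mu_*^{-1}D-\sum_j S_j$ (which terminates because this is $\sim_{\bQ,X}-\sum_j a_l(X,D;S_j)S_j$), and then uses the ACC of log canonical thresholds from \cite{HMX14} to deduce $(Y_1,\mu_{1*}^{-1}D+\sum_j g_*S_j)$ is lc, together with a negativity-lemma argument to show the MMP is an isomorphism near every lc center, so that condition (d) survives. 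None of the ACC input or the lc-center preservation appears in your outline, and without the small log-discrepancy control from Lemma \ref{l-extractdivisor}, the lc-center argument would not go through even if you had the right divisors.
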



\begin{proof}
We fix a model as in Definition \ref{d-quasimono} which computes $v=v_{\alpha}$. We can further assume that the codimension of $\eta$ in the model is the same as the rational rank of $v$.  Let $\fa_{\bullet}$ be the valuative ideals of $v$, i.e., $\fa_{k}=\{f |\ v(f)\ge k\}$. Let $c={\rm lct}(X,D, \fa_{\bullet})$. 
\begin{lem}\label{l-extractdivisor}
For any $\epsilon$, there exists toroidal divisors $S_1,...,S_r$ given by vectors $s_1$,..., $s_r\in \mathbb{Z}^r_{>0}$ and $\epsilon_0>0$, such that 
\begin{enumerate}
\item $(X, D+(1-\epsilon_0)c\cdot \frac{1}{m}\mathfrak{a}_m)$ has a positive log discrepancy for any divisor $E$;
\item  $v$ is in the convex cone generated by $s_1,...,s_r$, and 
\item for any $j$, the log discrepancy $a_l(S_j, X,D+(1-\epsilon_0)c\cdot \frac{1}{m}\mathfrak{a}_m)<\epsilon$ for any $m\gg 0$.
\end{enumerate}
\end{lem}
\begin{proof}Applying Lemma \ref{l-approvector}, for any $\epsilon$, we can find vectors $ s_j=(s_{ij})=(\frac{r_{ij}}{q_j})_{1\le i \le r}$  $(j=1,2,...r)$ with $r_{ij}, q_j\in \mathbb{N}$,
such that
\begin{enumerate}
\item the vector $\alpha=(\alpha_i)$ can be written as 
$$\alpha=\sum^r_{j=1} c_js_j \qquad\mbox{with $c_j>0$};$$
\item for any $i$ and $j$,  
$$|s_{ij} -\alpha_j|<\frac{\epsilon}{q_j}.$$
\end{enumerate}
Clearly, we can assume ${\rm gcd}(r_{1j},..., r_{rj},q_j)=1$. Thus we conclude that for the integral divisor corresponding to the toric blow up of
$(E_1,...,E_r)$ with coordinates $(r_{1j},..., r_{rj})$, we get an exceptional divisor $S_j$. Fix  a sufficiently large positive number $N_0$ such that $q_j\le N_0$.

Since $v$ is a minimizer of $\hvol(X,D)$, we know that 
\begin{eqnarray*}
\hvol(v)&= &\lim_{k\to \infty}A_{(X,D)}(v)^n \cdot \frac{\mult(\fa_k)}{k^n}\\
 &\ge& \lim_{k\to \infty}\lct (X,D;\fa_k)^n \cdot \mult(\fa_k)\\
 & \ge&\hvol(v),
\end{eqnarray*}
where $\mathfrak{a}_k=\{f|\ v(f)\ge k\}$. 
 So we conclude that (see \cite{Mus02})
$$\lct(X,D;\fa_{\bullet}):=\lim_{k\to \infty} k\cdot \lct (X,D;\fa_k)=A_{(X,D)}(v),$$ 
which we denote by $c$. 
Denote by $\epsilon_0=\frac \epsilon {N_0}$, and pick up $N$ such that
$$a_{l}(E_i, X, D+c \cdot  \fa_{\bullet})\le N \qquad \mbox{for any $i$},$$
then for any  $m$ sufficiently large such that 
\begin{enumerate}
\item[(a)] $( X,D+(1-\epsilon_0 )c\cdot \frac{1}{m}\fa_m)$ is klt, 
\item[(b)] $a_{l}(E_i, X, D+(1-\epsilon_0)c \cdot \frac{1}{m} \fa_m)\le N$,
\end{enumerate}
 we have
\begin{eqnarray*}
& &  a_l(\frac{S_j}{q_j}; X, D+(1-\epsilon_0 )c\cdot \frac 1 m\fa_m)-a_l(v; X, D+(1-\epsilon_0 )c\cdot \frac{1}{m} \fa_m)\\
&\le & \sum^r_{i=1} |\alpha_i-\frac{r_{ij}}{q_j}|\cdot a_l(E_i; X, D+(1-\epsilon_0 )c\cdot \frac{1}{m} \fa_m) \\
&\le&\sum^r_{i=1}\frac{\epsilon}{q_j}\cdot a_l(E_i; X, D+(1-\epsilon_0 )c\cdot \frac{1}{m} \fa_m).\\
&\le&\frac{rN\epsilon }{q_j}.
\end{eqnarray*}
So for any $j$, since $a_l(v; X, D+c\cdot \frac{1}{m} \fa_m)\le 0$, we have
\begin{eqnarray*}
a_l(S_j; X, D+(1-\epsilon_0 )c\cdot \frac{1}{m} \fa_m) &\le & {rN\epsilon }+q_j\cdot a_l(v; X, D+(1-\epsilon_0 )c\cdot \frac{1}{m} \fa_m)\\
&= &{rN\epsilon }+ q_j( c-(1-\epsilon_0)c\frac{1}{m}v(\fa_m))\\
&\le &{rN\epsilon }+ N_0\epsilon_0c\\
&\le&(rN+c)\epsilon. 
\end{eqnarray*} 
Here all the constants $r$, $N$ and $c$ only depend on the fixed log resolution $Z\to (X,D)$ and $v$ but not the choices of $S_j$. So we can replace the constant $\epsilon$ and obtain the lemma. 
\end{proof}

By Lemma \ref{l-extractdivisor}, we can use \cite{BCHM10} to construct a $\mathbb{Q}$-factorial model $\mu\colon Y\to X$ such that 
\begin{enumerate}
\item $S_1$,..., $S_r$ are the only exceptional divisors,
\item  there is an effective $\mathbb{Q}$-divisor $L$ on $X$, such that
$(Y, \mu_*^{-1}(D+L)+\sum^r_{j=1} a_jS_j)$ is klt with $1-\epsilon<a_j<1$, and 
$$K_Y+\mu_*^{-1}(D+L)+\sum^r_{j=1} a_jS_j\sim_{\mathbb{Q},X}0.$$ 
\end{enumerate}
Furthermore, we can assume $Y$ is obtained by running an MMP from a toroidal blow up of $Z$ extracting $S_1$,..., $S_r$. In particular, $Z\dasharrow Y$ does not change the generic point of the component of the intersection of $\bigcap^r_{j=1} S_j$. 

Then by running a relative MMP $g\colon Y\dasharrow Y_1$ over $X$ for 
$$-K_{Y}-\mu_*^{-1}D-S\sim_{\mathbb{Q},X}-\sum^r_{j=1}a_l(X,D,S_j)S_j,$$
we obtain a model $\mu_1\colon Y_1\to X$,  such that $-K_{Y_1}-\mu_{1*}^{-1}D-g_*S$ is nef. 

By the ACC of the log canonical thresholds (see \cite{HMX14}), we know that there exists a constant $\beta<1$ which only depends on $\dim X$ and the coefficients of $D$ such that if we choose $1-\epsilon>\beta$, then we can conclude that $(K_{Y}, \mu_*^{-1}D+S)$ is log canonical, as  $(K_{Y}, \mu_*^{-1}D+\beta \cdot S)$ is log canonical. Further, the same holds for $Y_1$ as
$\left(Y_1,g_*(\mu_*^{-1}(D+L)+\sum^r_{j=1} a_jS_j)\right)$ is log canonical. This implies that the MMP process $Y\dasharrow Y_1$ is isomorphic around any lc center of $({Y},\mu_*^{-1}D+S)$, since otherwise we will have for a divisor $E$ over this center with
$$-1=a(E; Y,\mu^{-1}_{*}D+ S)>a(E; Y_1, \mu_{1*}^{-1}D+g_*(S)),$$
which is a contradiction. 

Therefore, $Y_1\to X$ gives a weak log canonical model. 
\end{proof}

\begin{rem}The above argument indeed  implies that any  quasi-monomial valuation which computes the log canonical threshold of a graded sequence of ideals has a a weak log canonical model. 
\end{rem}

In the below, we want to show that the volumes of the models produced in Theorem \ref{t-wlcmodel} converge to $\hvol(v)$ if the corresponding simplexes converge to $v$. 

\begin{lem}\label{l-logdisapp}
Fix $f\colon Y\to (X,D)$ a log resolution, with $Z$ a component of the intersection of some exceptional divisors $E_i$ $(i\in J)$ on $Y$. Then there exists a constant $N$  (depending $Y\to (X,D)$) which satisfies the following property: Let $S$ be a toroidal divisor over $\eta(Z)\in (Y,E=\sum_{i\in J} E_i)$ and $\mu\colon Y_{S}\to X$ a weak lc model over $X$ with the only exceptional divisor $S$, then for any quasi-monomial valuation $v$ computed at the generic point $\eta(Z)\in (Y,E)$ satisfying $|a\cdot \ord_S-v|<\epsilon$ for some $a>0$, we have
$$A_{Y_S, \mu^{-1}_*D+S}(v)<N\cdot \epsilon.$$ 
\end{lem}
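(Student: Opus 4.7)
My plan is to derive an explicit piecewise-linear formula for $A_{Y_S,\mu_*^{-1}D+S}(v_\alpha)$ as a function of the weight vector $\alpha$, and then bound it via a Lipschitz estimate exploiting the fact that the function vanishes at $\alpha=as$.

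First I parametrize: since $v$ is computed at $\eta(Z)\in(Y,E)$, we have $v=v_\alpha$ for some $\alpha\in\mathbb{R}^r_{\geq 0}$ with $\alpha_i=v(E_i)$, and the toroidal divisor $S$ corresponds to a primitive vector $s=(s_1,\ldots,s_r)\in\mathbb{Z}^r_{>0}$, so $a\cdot\ord_S=v_{as}$ and the hypothesis becomes $|\alpha-as|_{\sup}<\epsilon$. Because $S$ is the only exceptional divisor of $\mu\colon Y_S\to X$, the discrepancy identity reads
\[
K_{Y_S}+\mu_*^{-1}D+S=\mu^*(K_X+D)+A_{X,D}(S)\cdot S,
\]
whence for every valuation $v$ one has
\[
A_{Y_S,\mu_*^{-1}D+S}(v)=A_{X,D}(v)-A_{X,D}(S)\cdot v(S).
\]
Near $\eta(Z)$ the ideal $\fb_m:=\{f:\ord_S(f)\geq m\}$ is generated by monomials $z^\beta$ with $s\cdot\beta\geq m$; combined with $\alpha\geq 0$ this gives $v_\alpha(S)=\lim_m v_\alpha(\fb_m)/m=\min_i\alpha_i/s_i$. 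Substituting and using the linearity of $A_{X,D}$ on the simplex of quasi-monomial valuations at $\eta(Z)$ (so $A_{X,D}(v_\alpha)=\sum_i\alpha_iA_{X,D}(E_i)$ and $A_{X,D}(S)=\sum_js_jA_{X,D}(E_j)$) yields
\[
A_{Y_S,\mu_*^{-1}D+S}(v_\alpha)=\sum_iA_{X,D}(E_i)\bigl[\alpha_i-s_i\min_j(\alpha_j/s_j)\bigr],
\]
which is non-negative (reflecting log canonicity of $(Y_S,\mu_*^{-1}D+S)$) and vanishes exactly at $\alpha=as$.

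Finally, writing $\alpha_i=as_i+\delta_i$ with $|\delta_i|<\epsilon$ and using $s_j\geq 1$ to get $|\alpha_j/s_j-a|\leq\epsilon$, each summand is bounded by $2s_iA_{X,D}(E_i)\epsilon$, whence $A_{Y_S,\mu_*^{-1}D+S}(v_\alpha)\leq 2\epsilon\cdot A_{X,D}(S)$. The main obstacle is absorbing the weight-dependence of $A_{X,D}(S)=\sum_is_iA_{X,D}(E_i)$ into a constant $N$ depending only on $Y\to(X,D)$: in the intended applications $v$ (hence $\alpha$) is essentially fixed, and the constraint $|as-\alpha|_{\sup}<\epsilon$ then forces $a|s|_{\sup}\leq|\alpha|_{\sup}+\epsilon$; combining with the bound $A_{X,D}(S)\leq r|s|_{\sup}\max_iA_{X,D}(E_i)$ and rescaling gives the desired estimate $A_{Y_S,\mu_*^{-1}D+S}(v)<N\epsilon$, with $N$ controlled by $\max_iA_{X,D}(E_i)$ and the fixed norm $|\alpha|_{\sup}$.
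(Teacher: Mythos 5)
The paper's proof is structurally different from yours. It constructs the toroidal blow-up $Y'\to Y$ extracting $S$, equips $Y'$ with a boundary $F$ chosen so that $A_{Y',F}(\ord_S)=0$ and $A_{Y',F}(E_i)=b_i:=A_{Y_S,\mu_*^{-1}D+S}(E_i)$, uses linearity of $A_{Y',F}$ in the toroidal chart of $Y'$ to bound $A_{Y',F}(v)$ by $\epsilon\sum_{i}b_i$, and finally invokes the negativity lemma (using the nefness of $-(K_{Y_S}+\mu_*^{-1}D+S)$, i.e.\ the weak lc model condition) to conclude $A_{Y_S,\mu_*^{-1}D+S}(v)\le A_{Y',F}(v)$. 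The crucial feature is that $b_i<a_i=A_{X,D}(E_i)$, so the final constant $\sum a_i$ depends only on the fixed log resolution $Y\to(X,D)$. Your route bypasses both the auxiliary model and the negativity lemma by writing the exact crepant pull-back $K_{Y_S}+\mu_*^{-1}D+S=\mu^*(K_X+D)+A_{X,D}(S)\cdot S$ (valid for any extraction of a single divisor, nef or not) and computing $v_\alpha(S)$ directly, which is a genuinely different and in some respects more elementary decomposition.

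However, there is a real gap at the end. Your estimate gives $A_{Y_S,\mu_*^{-1}D+S}(v_\alpha)\le 2\epsilon\,A_{X,D}(S)=2\epsilon\sum_i s_i a_i$, and that factor $\sum_i s_i a_i$ is unbounded as $S$ varies. The attempted rescue does not close this: from $|a s_i-\alpha_i|<\epsilon$ you obtain $|s|_{\sup}\le(|\alpha|_{\sup}+\epsilon)/a$, so the bound becomes of order $\epsilon\cdot\max_i a_i\cdot(|\alpha|_{\sup}+\epsilon)/a$, which involves $1/a$ and $|\alpha|_{\sup}$, neither of which is controlled by $Y\to(X,D)$ alone. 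In fact in the intended applications $S$ ranges over a sequence of toroidal divisors whose primitive vectors $s$ go to infinity, which forces $a\to 0$ at the same rate; so $1/a$ blows up exactly when the approximation improves. The point your argument misses (and which the paper's reformulation exploits) is that the coefficients one should sum against are not $s_i a_i$ but rather $b_i=A_{Y_S,\mu_*^{-1}D+S}(E_i)$, which by negativity satisfy $b_i<a_i$ and so sum to a constant independent of $S$. Recast in your notation: instead of bounding $A_{X,D}(S)\cdot v_\alpha(S)$ crudely, one should expand $A_{Y_S,\mu_*^{-1}D+S}(v_\alpha)$ as $\sum_i\alpha_i'\,b_i$ where $\alpha_i'=\alpha_i-s_i\min_j(\alpha_j/s_j)$, and it is the appearance of $b_i$ rather than $s_i a_i$ that makes the constant uniform.
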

\begin{proof} Denote by $E_i$ ($i\in I\supset J$) all the exceptional divisors of $Y$ over $X$ and by $A_{(X,D)}(E_i)=a_i$, then we know that 
$$b_i=_{\rm defn}A_{Y_S, \mu^{-1}_*D+S}(E_i)< a_i.$$
We will show $N=\sum_{i\in I} a_i$ suffices. 
Let $Y'\to Y$ be the toroidal blow up extracting $S$ (if $S$ is on $Y$, then we let $Y'=Y$) and denote by  $g\colon Y'\to X$.
Define the divisor $F=g^{-1}_*D+S+\sum_{i\in I}b_iE_i$ on $Y'$.
Then as $|a\cdot \ord_S-v|<\epsilon$ and $A_{Y',F}(\ord_S)=0$ , let $J_0\subset J$ be all the indices $E_i$ which contain the center of $v$, we know 
$$A_{Y',F}(v)\le \epsilon \cdot \left(\sum_{i\in J_0} A_{Y',F}(E_i)\right)=\sum_{i\in J_0} b_i \cdot \epsilon<\sum_{i\in I}a_i \cdot\epsilon.$$ 
Finally, since $K_{Y_S}+\mu^{-1}_*D+S$ is anti-nef, by the negativity lemma, we know that  
$$A_{Y_S, \mu^{-1}_*D+S}(v)\le A_{Y',F}(v).$$
\end{proof}


 \begin{lem}\label{l-modelconverge}
Let $\Delta_i\subset \Delta$ be a sequence of subsimplices with rational vertices, such that each $\Delta_i$ corresponds to a weak lc model constructed in Theorem \ref{t-wlcmodel} and the vertices $v^j_{i}$ of $\Delta_i$ converge to $v$.  
Then $$\lim_{i\to \infty} \vol(Y_i)=\hvol(v). $$
\end{lem}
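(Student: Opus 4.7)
We sandwich $\vol(Y_i)$ between $\hvol(v)$ from below and a sequence tending to $\hvol(v)$ from above.

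\textbf{Lower bound.} Since $v$ minimizes $\hvol_{(X,D)}$, Theorem \ref{t-LX16} gives
\[
\vol(Y_i) \;\ge\; \inf_{Z\to X}\vol(Z/X) \;=\; \hvol(X,D) \;=\; \hvol(v),
\]
for each $i$. Hence $\liminf_i \vol(Y_i)\ge\hvol(v)$.

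\textbf{Upper bound.} We aim at $\limsup_i \vol(Y_i)\le \hvol(v)$. Recall from the construction in Theorem \ref{t-wlcmodel} that the vertices of $\Delta_i$ correspond to divisors $S_{i,j}$ chosen (via Lemma \ref{l-extractdivisor}) so that
\[
a_l\bigl(S_{i,j};\,X,\,D+(1-\epsilon_0^{(i)})\,c\cdot m_i^{-1}\fa_{m_i}(v)\bigr) \;<\; \epsilon_i,
\]
with $c=A_{(X,D)}(v)=\lct(X,D;\fa_\bullet(v))$, $\epsilon_i\downarrow 0$, and $m_i\uparrow+\infty$. Since $a_l$ is linear along the simplex $\Delta_i$, the same small bound propagates to every $w\in\Delta_i$, so the pair
\[
\bigl(Y_i,\;\mu_{i*}^{-1}D + \sum_j S_{i,j} + (1-\epsilon_0^{(i)})\,c\cdot m_i^{-1}\,\mu_i^{-1}\fa_{m_i}(v)\bigr)
\]
is $\epsilon_i$-close to being log canonical at the generic point of $\bigcap_j S_{i,j}$. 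Mimicking the upper-bound chain from the proof of Theorem \ref{thm-minKol} (which gives $\vol(Z/X)\le \lct^n(\fa)\cdot\mult(\fa)$ for the dlt modification $Z$ associated to $\fa$), the near-lc property should yield
\[
\vol(Y_i/X) \;\le\; \lct(X,D;\fa_{m_i}(v))^n\cdot\mult(\fa_{m_i}(v)) \;+\; o(1).
\]
Because $v$ computes the log canonical threshold of the graded sequence $\fa_\bullet(v)$ (as already verified in the proof of Lemma \ref{l-extractdivisor}), the right-hand side tends to $A_{(X,D)}(v)^n\cdot \vol(v) = \hvol(v)$ as $i\to\infty$. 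Combining the two bounds yields the desired equality $\vol(Y_i)\to \hvol(v)$.

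\textbf{Main obstacle.} The technical heart is the upper-bound inequality
$\vol(Y_i/X)\le \lct^n(\fa_{m_i})\cdot\mult(\fa_{m_i})+o(1)$.
As recalled, the argument of Theorem \ref{thm-minKol} establishes such a bound for the dlt modification coming from a log resolution of $\fa_{m_i}$, while our $Y_i$ only extracts the chosen divisors $S_{i,j}$ approximating $v$. To bridge the gap, one may either pass to a common further modification $Y_i'\to Y_i$ dominating a log resolution of $\fa_{m_i}$ and control $\vol(Y_i/X)-\vol(Y_i'/X)\to 0$ via Izumi-type estimates on the difference of the relevant $\mathbb{Q}$-divisors, or adapt the volume-ideal comparison directly on $Y_i$, feeding in the small log-discrepancy estimates from Lemma \ref{l-extractdivisor} as the quantitative input. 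Ensuring uniformity of the $o(1)$ term in $i$ (as both $\epsilon_i\downarrow 0$ and $m_i\uparrow\infty$) is the delicate bookkeeping step.
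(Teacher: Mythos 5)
Your lower bound is correct and, while not spelled out in the paper's proof, is the implicit half of the sandwich: $v$ is a minimizer, so $\vol(Y_i)\ge\hvol(X,D)=\hvol(v)$ by Theorem~\ref{t-LX16}.

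The genuine gap is in the upper bound. You want $\vol(Y_i/X)\le \lct(X,D;\fa_{m_i})^n\cdot\mult(\fa_{m_i})+o(1)$, but the inequality $\vol(Z/X)\le\lct^n(\fa)\cdot\mult(\fa)$ from the proof of Theorem~\ref{thm-minKol} is for a dlt modification $Z$ of $(X,D+\lct\cdot\fa)$ (every exceptional divisor of $Z$ has zero log discrepancy against the lc pair), and $Y_i$ is \emph{not} that model: the $S_{i,j}$ have small-but-nonzero log discrepancy against $(X,D+(1-\epsilon_0)c\cdot m_i^{-1}\fa_{m_i})$. You acknowledge this under ``Main obstacle,'' but the two routes you suggest (Izumi-type control of $\vol(Y_i)-\vol(Y'_i)$, or an adapted volume-ideal comparison on $Y_i$) are not carried out, and neither is obviously easier than what the paper actually does. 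As written, the proposal asserts rather than proves the inequality you need, so it does not close the argument.

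The paper's actual route to the upper bound is different and more direct: fix a small $\epsilon>0$, pick a \emph{rational} $v_*\in\Delta$ close to $v$ (so that $|\ord_{S_{i,j}}-q_j v_*|\le\epsilon$), and take a weak lc model $\mu_*\colon Y_*\to X$ of $v_*$ extracting the single divisor $S_*$, which exists by Theorem~\ref{t-wlcmodel}. Using the argument of Lemma~\ref{l-logdisapp} one shows
\[
A_{(Y_*,\mu_{*}^{-1}D+S_*)}(S_{i,j}) < \epsilon\cdot A_{(X,D)}(S_{i,j}),
\]
and then, by the negativity lemma and nefness of $-(K_{Y_*}+\mu_*^{-1}D+S_*)$, the crepant pullback of $K_{Y_*}+\mu_*^{-1}D+S_*$ dominates the pullback of $K_{Y_i}+\mu_{i*}^{-1}D+\sum_j(1-\epsilon A_{(X,D)}(S_{i,j}))S_{i,j}$. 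This gives $\hvol(v_*)\ge(1-\epsilon)^n\vol(Y_i)$, and letting $v_*\to v$ and $\epsilon\to 0$ finishes the sandwich. The point is that comparing $Y_i$ with the one-divisor weak lc model $Y_*$ sidesteps entirely the mismatch between $Y_i$ and a dlt modification of $\fa_{m_i}$ — which is exactly the step you flagged but did not resolve. To repair your proposal, replace the ideal-theoretic upper bound by this model-to-model comparison.
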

\begin{proof} 
By the proof of Theorem \ref{t-wlcmodel} we know
\begin{enumerate}
\item  there exists $\mu_i\colon Y_i\to X$ a $\mathbb{Q}$-factorial weak lc model which precisely extracts the divisors $S_{i,j}$ corresponding to the prime integral vector on $\mathbb{R}_{>0}v_i^j$, and  
\item for any $\epsilon$, we can find $i$ sufficiently large such that for any $(i,j)$ there exists a constant $q_j$ such that  $|\ord_{S_{i,j}}-q_j\cdot v|< \epsilon$.
\end{enumerate}

Then for any $\epsilon$, we can find $i$ sufficiently large such that if we pick a rational vector $v_*$ which after a rescaling by $q_j$ is sufficiently close to $v$ with
$$|\ord_{S_{i,j}}-q_j\cdot v_*|\le |\ord_{S_{i,j}}-q_j\cdot v|+|q_j\cdot v_*-q_j\cdot v|\le \epsilon.$$
By the proof of Theorem \ref{t-wlcmodel}, we can assume there exists $Y_*\to X$ a weak lc model of $v_*$ extracting the corresponding divisor $S_*$. Then we claim  
$$A_{Y_*, \mu^{-1}_*D+S_*}({S_{i,j}})<\epsilon \cdot A_{(X,D)}({S_{i,j}}).$$
 Granted it for now, we know the log pull back of $K_{Y_*}+ \mu^{-1}_*D+S_*$ is larger or equal to the pull back of 
$$K_{Y_i}+\sum^r_{j=1} \left(1-\epsilon A_{(X,D)}({S_{i,j}})\right)S_{i,j}+\mu_{i*}^{-1}D,$$
which means $\hvol(v_*)\ge (1-\epsilon)^n\vol(Y_i)$.
 
 \medskip

We continue to show the claim by a similar argument as in Lemma \ref{l-logdisapp}: fix $Z\to X$ a log resolution. Denote by $E_k$ ($k\in \{1,2,...,q\}$)  the exceptional divisors of $Y$ over $X$ whose intersection gives the center ${\rm Center}_Y(v)$ and denote by $A_{(X,D)}(E_k)=a_k$.
Denote the corresponding vector of $S_{i,j}$  by $(n_1,...,n_q)$, so $A_{(X,D)}(S_{i,j})=\sum^q_{k=1} n_ka_k$. We consider a model $Z_*\to Z$, which extracts  the birational transform $E_*$ of $S_*$. Then for any $S_{i,j}$, after relabelling, we can assume its vector is in the fan generated by $E_1,...,E_{q-1}$ and $E_*$. Therefore, by the same argument as in Proposition \ref{l-logdisapp}, since $|\ord_{S_{i,j}}-q_j\cdot v_*|<\epsilon$, we know the log discrepancy of $S_{i,j}$ with respect to $({Y_*}, \mu^{-1}_*D+S_*)$ is less or equal to 
$$\epsilon(\sum^{q-1}_{i=1} a_in_i)\le \epsilon (\sum_{i=1}^{q}a_in_i)= \epsilon \cdot A_{(X,D)}(S_{i,j}).$$
\end{proof}

\medskip
In the below, we take a detour to illustrate on how we use the models to understand the limiting process in \cite[Theorem 1.3]{LX16} when $v$ is quasi-monomial. 
 
 A weak log canonical model provides us an explicit subset of valuations which can be used to understand the original abstract approximation process by Koll\'ar components in \cite[Theorem 1.3]{LX16}:  
Let $v$ a quasi-monomial minimizing valuation of a klt pair $x\in (X,D)$ and $Y^{\rm wlc}\to X$ be a weak log canonical model of $(X,D)$ for $v$ given by Theorem \ref{t-wlcmodel}. Fix $f^{\rm dlt}\colon Y^{\rm dlt}\to Y ^{\rm wlc}$ a dlt modification of $(Y^{\rm wlc},\mu_*^{-1}D+\sum^r_{j=1} S_j)$. Write the  pull back of $K_{Y^{\rm wlc}}+\mu_*^{-1}D+\sum^r_{j=1} S_j$ to be $K_{Y^{\rm dlt}}+\Delta^{\rm dlt}$. As in \cite{dFKX17}, we can formulate the dual complex $\DR(\Delta^{\rm dlt})$, which does not depend on the dlt modification. Moreover, as in the case of simplex, $\DR(\Delta^{\rm dlt})$ also forms a natural subspace of 
$$\Val^{=1}_{X,x}:=\{v\in \Val_{X,x} |\  A_{X,D}(v)=1\}\subset \Val_{X,x}$$
 (see \cite{MN15,NX16} for more discussions on the background).

We will need a strengthening of \cite[Theorem 1.3]{LX16}. 
\begin{prop}\label{p-qdlt} 
There exists a sequence of Koll\'ar components $T_i$ whose rescalings correspond to rational points on $\DR(\Delta^{\rm dlt})$, i.e. $a_l(T_i; Y^{\rm wlc},\mu_*^{-1}D+\sum^r_{j=1} S_{j})=0$, such that,
$$c_i\cdot \ord_{T_i}\to v,$$
where $c_i=\frac{1}{A_{X,D}(T_i)}$.
\end{prop}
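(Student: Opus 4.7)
The strategy is to realize $v$ as a limit of rescaled divisorial lc places of $(Y^{\rm wlc},\mu_*^{-1}D+\sum_j S_j)$ coming from rational points of the open simplex $\Delta^\circ\subset\DR(\Delta^{\rm dlt})$ spanned by the components of $\Delta^{\rm dlt}$ meeting the center of $v$, and then to upgrade each such divisorial valuation to a genuine Koll\'ar component over $x\in X$ by a perturb-and-MMP argument modelled on the proof of Theorem~\ref{t-wlcmodel} and \cite[Theorem~1.3]{LX16}. After rescaling we may assume $A_{(X,D)}(v)=1$.

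First, by condition (d) of Definition~\ref{d-wlc}, a neighbourhood of the center $\eta$ on $Y^{\rm dlt}$ is a finite abelian quotient of a simple normal crossing pair, so every quasi-monomial valuation computed at $\eta$ is of the form $v_\alpha$ with $\alpha\in\bR^r_{>0}$, and for rational $\alpha\in\bQ^r_{>0}$ the valuation $v_\alpha$ is divisorial: $v_\alpha=q^{-1}\ord_{E_\alpha}$ for some $q\in\bN$ and a toroidal exceptional divisor $E_\alpha$ which is automatically an lc place of $(Y^{\rm wlc},\mu_*^{-1}D+\sum_j S_j)$. I will pick a sequence $\alpha_i\in\bQ^r_{>0}$ with $v_{\alpha_i}/A_{(X,D)}(v_{\alpha_i})\to v$; the corresponding $E_{\alpha_i}$ are the starting divisorial candidates.

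Next I will upgrade each $E_{\alpha_i}$ to a Koll\'ar component $T_i$ over $x\in X$. Let $\fa_\bullet^{(i)}$ be the graded sequence of valuative ideals of $v_{\alpha_i}$, with log canonical threshold $c_i=A_{(X,D)}(v_{\alpha_i})$ along $(X,D)$. Applying Lemma~\ref{l-extractdivisor} with a single target divisor $E_{\alpha_i}$ together with \cite{BCHM10}, I obtain a $\bQ$-factorial extraction $\widetilde{\mu}_i\colon\widetilde{Y}_i\to X$ whose only exceptional divisor is $E_{\alpha_i}$, plus a small auxiliary boundary $L_i$ on $X$ and a coefficient $a_i\in(1-\epsilon_i,1)$ such that
\[
K_{\widetilde{Y}_i}+\widetilde{\mu}_{i*}^{-1}(D+L_i)+a_i E_{\alpha_i}\sim_{\bQ,X}0
\]
is klt. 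A relative $(-K_{\widetilde{Y}_i}-\widetilde{\mu}_{i*}^{-1}D-E_{\alpha_i})$-MMP over $X$ then produces $\mu_i\colon Y_i\to X$ on which $-K_{Y_i}-\mu_{i*}^{-1}D-T_i$ is nef, where $T_i$ denotes the strict transform of $E_{\alpha_i}$. Choosing $\epsilon_i$ small enough and invoking the ACC for lc thresholds \cite{HMX14} together with the negativity lemma, exactly as at the end of the proof of Theorem~\ref{t-wlcmodel}, I expect $(Y_i,\mu_{i*}^{-1}D+T_i)$ to be plt, and a final small perturbation will upgrade nef to ample, making $T_i$ a Koll\'ar component. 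Since $T_i$ and $E_{\alpha_i}$ coincide as valuations on $X$, the equality $a_l(T_i;Y^{\rm wlc},\mu_*^{-1}D+\sum_j S_j)=0$ holds by construction, i.e.\ $T_i$ defines a rational point of $\DR(\Delta^{\rm dlt})$. Convergence $c_i\cdot\ord_{T_i}=v_{\alpha_i}/A_{(X,D)}(v_{\alpha_i})\to v$ then follows immediately from the choice of $\alpha_i$.

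The main obstacle I anticipate is the MMP step: ensuring that $E_{\alpha_i}$ survives the MMP as an honest exceptional divisor and that the output pair is plt rather than merely dlt. This is where Lemma~\ref{l-extractdivisor} and the minimizer hypothesis on $v$ are decisive: they keep the log discrepancy of $E_{\alpha_i}$ strictly positive against the perturbed boundary, so ACC forces the MMP output to have at most the single lc center $T_i$, from which plt-ness follows via the negativity lemma.
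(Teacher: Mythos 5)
Your proposal has a genuine gap at the crucial step of upgrading the toroidal divisor $E_{\alpha_i}$ to a Koll\'ar component. You propose to run a relative $(-K-\mu_*^{-1}D-E_{\alpha_i})$-MMP and then argue via ACC, "exactly as at the end of the proof of Theorem~\ref{t-wlcmodel}," that the output is plt. But that ACC argument only delivers \emph{log canonicity}, never plt-ness; indeed the weak lc models of Theorem~\ref{t-wlcmodel} typically have many lc centers inside the exceptional locus. Even with a single exceptional divisor $T_i$, the pair $(Y_i,\mu_{i*}^{-1}D+T_i)$ can have extra lc centers contained in $T_i$, so $T_i$ is generally an lc place of the weak lc pair but not a Koll\'ar component. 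There is no reason the rational toroidal divisor near $v$ is a Koll\'ar component, and the paper does not claim this.

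Because of this, the identity $T_i=E_{\alpha_i}$ (as valuations) that your proposal relies on to make the convergence "immediate" does not hold, and the limiting step is actually the heart of the argument. The paper's proof instead produces Koll\'ar components $T_i$ with $a_l(T_i;Y_i^{\rm wlc},\mu_{i*}^{-1}D+\sum_j S_{i,j})=0$ via the tie-breaking mechanism of \cite[Lemma~3.8]{LX16} applied to a sequence of shrinking weak lc models $Y_i^{\rm wlc}\to X$ (with rational simplices $\Delta_i\downarrow v$), and these $T_i$ are \emph{not} the toroidal divisors. Since the limit valuation $v'=\lim c_i\ord_{T_i}$ is then not obviously $v$, the paper must show (a) $\vol(v')\le\lim\vol(Y_i^{\rm wlc})=\vol(v)$ using Lemma~\ref{l-modelconverge} and continuity of $\vol$ on $\DR(\Delta^{\rm dlt})$, (b) the pointwise inequality $v'\ge v$ via a delicate estimate involving the valuative ideals $\fa_m$ of $v$ and the coefficients of the weak lc model, and then conclude $v'=v$ from \cite[Proposition~2.3]{LX16}. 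Both of these nontrivial ingredients are absent from your plan and cannot be avoided once you give up the (false) identification of $T_i$ with the toroidal divisor.
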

\begin{proof}We will construct a sequence of Koll\'ar component $T_i$ and choose $c_i=\frac{1}{A_{X,D}(T_i)}$, such that
\begin{enumerate}
\item [a)] $a_l(T_i; Y^{\rm wlc},\mu_*^{-1}D+\sum^r_{j=1} S_{j})=0$.
\item [b)] $\{ c_i\cdot \ord_{T_i }\}$ has a limit $v'\ge v$.
\item [c)] $\vol(v')=\vol(v)$.
\end{enumerate} 
In fact, from \cite[Proposition 2.3]{LX16}, we can conclude $v=v'$. 

\bigskip

Denote by $\Delta_0$ the simplex in $\Delta^{\dlt}$ generated by $S_1$,..., $S_r$ around $\eta$.  By Theorem \ref{t-wlcmodel}, we can find a sequence of rational simplices $\{\Delta_i\}_{i=1}^{\infty}$ with vertices $S_{i,j}$ $(j=1,...,r)$ such that $\Delta_{i+1}\subset \Delta_{i}$, $\lim_i{\Delta_{i}}=v$ and for each $\Delta_i$ we have a weak log canonical model $\mu_i\colon Y^{\rm wlc}_i\to X$.  Furthermore, we can require the constant $\epsilon_i$ and $\epsilon_{0,i}$ in Lemma \ref{l-extractdivisor} converges to 0. 

\medskip
By the negativity lemma we know that on a common resolution the pull back of $K_{Y_i^{\rm wlc}}+\mu_{i*}^{-1}D+\sum^r_{j=1} S_{i,j}$ is larger or equal to the pull back of $K_{Y_{i+1}^{\rm wlc}}+\mu_{i+1*}^{-1}D+\sum^r_{j=1} S_{i+1,j}$. In particular, for any divisor $T$ and $i$ with 
$$a_l(T; Y_i^{\rm wlc}, \mu_{i*}^{-1}D+\sum^r_{j=1} S_{i,j})=0,$$ $T$ is contained in $\DR(\Delta^{\rm dlt})$. By the proof of \cite[Lemma 3.8]{LX16} we can find a Koll\'ar component $T_i$ such that 
$$\hvol(\ord_{T_i})\le \vol(Y_i^{\rm wlc}) \qquad \mbox{and}\qquad  a_l(T_i; Y_i^{\rm wlc}, \mu_{i*}^{-1}D+\sum^r_{j=1} S_{i,j})=0.$$
We denote by $v_i=c_i\cdot \ord_{T_i}$ where $c_i=\frac{1}{A_{X,D}(T_i)}$, then $\vol(v_i)=\hvol(\ord_{T_i})$. Since $\vol(P)$ is a continuous function on the compact set ${\DR(\Delta^{\rm dlt})}$ (see \cite{BFJ14}), we know after replacing by a sequence, we can assume that $v_i$ has a limit $v'$ and we know that
$$\vol(v')=\lim_i \vol(v_i)\le \lim_i \vol (Y_i^{\rm wlc})=\vol(v),$$
where the last equality follows from Lemma \ref{l-modelconverge}. Since $\vol(v)=\hvol(v)\le \hvol(v')=\vol(v')$, indeed the equality holds. 

\medskip

It remains to show property b) holds, which is similar to the proof of \cite[Theorem 1.3]{LX16}. Denote by $v(f)= p$. For a fixed $i$, choose $l=\lceil i/p \rceil$.  Denote by $m_{i,j}$ the vanishing order of $\fa_i$ along $S_{i,j}$. Since $A_{X,D}(v)=1$ and $v$ computes the log canonical threshold of $\{\fa_{\bullet}\}$, we know $c=\lct(X,D; \fa_{\bullet})=1.$  Then by Lemma \ref{l-extractdivisor}.3, there exists $a_{i,j}>(1-\epsilon_i)$  $(1\le j \le r)$ such that
$$K_{Y^{\rm wlc}_{i}}+\mu_{i*}^{-1}D+\sum^{r}_{j=1} a_{i,j}S_{i,j}\sim_{\mathbb{Q},X} (1-\epsilon_{0,i})\cdot\frac{1}{i} \sum m_{i,j}S_{i,j}.$$ 
Therefore, we have 
$$K_{Y^{\rm wlc}_{i}}+\mu_{i*}^{-1}D+\sum^{r}_{j=1} S_{i,j}\sim_{\mathbb{Q},X}\sum^r_{j=1} \left( 1-a_{i,j}+(1-\epsilon_{0,i}) \cdot \frac{1}{i}  m_{i.j}\right)S_{i,j}.$$ 

Then we have the following implications:
\begin{eqnarray*}
 v(f)= p&\Longrightarrow & v(f^l)= pl,\\
&\Longrightarrow & f^l\in \fa_{pl},\\
&\Longrightarrow & f^l\in \fa_{i},\\
&\Longrightarrow &l\cdot  \ord_{S_{i,j}}(f)\ge m_{i,j} \mbox{\ \  for any $1\le j\le r$},\\
&\Longrightarrow& l\cdot  v_i(f)\ge  \min_j\frac{m_{i,j}}{1-a_{i,j}+(1-\epsilon_{0,i})\frac{1}{i} m_{i,j}},\\
&\Longrightarrow&v_{i}(f)\ge  \min_j\frac{i/l}{(1-a_{i,j})\frac{i}{m_{i,j}}+(1-\epsilon_{0,i}) },
\end{eqnarray*}
where the fifth arrow follows from the negative lemma. Recall 
$$\lim_{i\to \infty} a_{i,j}=1, \qquad \lim_{i\to \infty} \epsilon_{0,i}=0, \qquad \mbox{and  \ \  }\lim_{i\to \infty} \frac{i}{l} =p.$$
As $A_{X,D}(S_{i,j})\to \infty$ when $i\to \infty$, we know that $\lim_{i\to \infty} \frac{m_{i,j}}{i}\to \infty$,  Thus 
$$v'(f)=\lim_{j\to \infty} v_{i}(f)\ge p= v(f).$$

\end{proof}

\subsection{K-semistability and minimizing}\label{s-Ksta}

In this section, we aim to prove the a quasi-monomial valuation $v$ is a minimizer only if it is K-semistable. As we mentioned, we need to make the expected technical assumption that the associated graded ring ${\rm gr}_v(R)$ is finitely generated.

 \begin{defn}\label{d-specialreeb}
  Let $x\in X={\rm Spec}(R)$ be a normal singularity.  Let $v\in \Val_{X,x}$ be a valuation and we assume ${\rm gr}_v(R)$ is finitely generated.  Denote $X_0={\rm Spec}({\rm gr}_vR)$.    Let the rational rank of $v$ be $r$. Then there is a $T={(\mathbb{C}^*)^r}$-action on $X_0$ induced by the $\bZ^r$-grading.   We denote by $\xi_v  \in \ft_{\bR}^+$ the natural vector given by the valuation $v$, namely $\xi_v(f)= \min_{f_{\alpha}\neq 0}\{ \alpha\}$ for any $ f=\sum_{\alpha} f_{\alpha}\in {\rm gr}_v(R)$. 
\end{defn}

In this section, we will always consider the degeneration induced by a valuation in the following case: $x\in (X,D)$ is  a klt singularity, $v$ is a quasi-monomial valuation over $x$  whose associated graded ring is finitely generated. We denote $X_0={\rm Spec}({\rm gr}_v R)$.  By Lemma \ref{l-finiteass}, we can choose a sequence $v_i\to v$, where $v_i$ is a rescaling of a divisorial valuation, denoted by $S_i$ over $x$ and we have ${\rm gr}_vR\cong {\rm gr}_{v_i}R$.

\begin{lem}[see \cite{Ish04}]\label{l-fakekollar}
Under the above assumption, we can construct a model $\mu\colon Y\to X$ such that the only exceptional divisor is $S_i$ and $-S_i$ is ample over $X$. 
\end{lem}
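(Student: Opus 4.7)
The plan is to realize $Y$ as the normalized Proj of the Rees algebra associated to $\ord_{S_i}$, exploiting the finite generation of $\gr_v(R)\cong \gr_{v_i}(R)$. Since $v_i$ is a rescaling of a divisorial valuation $\ord_{S_i}$, after multiplying $v_i$ by a positive integer we may assume $v_i=q^{-1}\ord_{S_i}$ for some $q\in\bZ_{>0}$. A Veronese-type argument then shows that the ordinary associated graded ring $\gr_{\ord_{S_i}}(R)$ is also a finitely generated $\bC$-algebra, and consequently the Rees algebra
\[
\cR:=\bigoplus_{m\ge 0}\fa_m(\ord_{S_i})\cdot t^{-m}
\]
is a finitely generated $R$-algebra.

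Set $Y_0:=\mathrm{Proj}_X \cR$ and let $\mu\colon Y\to X$ be its normalization. Then $\mu$ is a projective birational morphism, and the pullback of the tautological Serre twist $\cO_{Y_0}(1)$ is $\mu$-ample by construction. Away from the closed point $x$, every ideal $\fa_m(\ord_{S_i})$ equals $R$, so $\mu$ is an isomorphism there; in particular, every exceptional divisor of $\mu$ is supported over $x$.

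The remaining task is to identify the unique exceptional divisor with $S_i$ and to see that $\cO_Y(-S_i)$ is (a positive multiple of) $\mu^*\cO_{Y_0}(1)$. For this I would invoke the description of valuation blow-ups due to Ishii \cite{Ish04}: whenever the associated graded ring of a divisorial valuation is finitely generated, the normalized Proj-model just constructed extracts exactly the divisor computing that valuation as its only exceptional prime, and the inverse ideal sheaf of the first graded piece is a positive power of $\cO_Y(-S_i)$. Combined with the $\mu$-ampleness of $\cO_{Y_0}(1)$, this gives a model $\mu\colon Y\to X$ with $-S_i$ being $\mu$-ample. The main obstacle is the subtle point of ruling out extra exceptional components after normalization; this is precisely what Ishii's theorem guarantees under the finite generation hypothesis, so the lemma follows once that reference is in hand.
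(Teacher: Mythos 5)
Your overall architecture matches the paper's: the lemma reduces to showing that the Rees algebra $\cR=\bigoplus_{m\ge 0}\fa_m(\ord_{S_i})$ is finitely generated over $R$, and then $Y$ is produced as (the normalization of) $\mathrm{Proj}_X\,\cR$ with the tautological twist giving the $\mu$-ample class proportional to $-S_i$. However, you present the key algebraic step — that finite generation of $\gr_{\ord_{S_i}}(R)$ implies finite generation of the Rees algebra — with a bare ``consequently,'' and that is where the actual work of the paper's proof lies. In general, for a decreasing multiplicative filtration of a Noetherian local ring, finite generation of $\gr$ does \emph{not} trivially give finite generation of the Rees ring; one must lift homogeneous generators $\bar f_i\in \fa_{j_i}/\fa_{j_i+1}$ of $\gr$ to $f_i\in\fa_{j_i}$, adjoin finitely many generators of the $\fa_j$ for $j\le d=\max j_i$, and then prove by induction on $p$ that the subalgebra $\fb_\bullet$ they generate satisfies $\fb_m+\fa_{m+p}=\fa_m$ for all $p\ge 0$, concluding $\fb_m=\fa_m$ via a Krull-intersection argument (valid because $\ord_{S_i}$ is centered at $\fm$, so the $\fa_{m+p}$ shrink inside the $\fm$-adic topology). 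Skipping this leaves a hole exactly where the proof has content.

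A lesser point: the ``Veronese-type argument'' is not needed. Since $v_i=q^{-1}\ord_{S_i}$, one has $\fa_\phi(v_i)=\fa_{\lceil q\phi\rceil}(\ord_{S_i})$, so $\gr_{v_i}(R)$ is \emph{literally} $\gr_{\ord_{S_i}}(R)$ with the grading rescaled by $q$; no passage to a Veronese subalgebra occurs and no finite generation needs to be transferred. Your elaboration of the Proj construction and the identification of the unique exceptional prime with $S_i$ (irreducibility coming from $\gr_{\ord_{S_i}}(R)$ being a domain, with Ishii as the reference) is precisely what the paper compresses into its opening sentence ``it suffices to prove $\bigoplus_i\fa_i$ is finitely generated,'' so the two write-ups are complementary in emphasis; but the finite generation argument itself must be supplied, not asserted.
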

\begin{proof}Let $\{\fa_{\bullet}\}$ be the valuative ideal sequence of $\ord_{S_i}$. It suffices to prove that 
$\bigoplus_i \fa_i$
is finitely generated given the associated graded ring is finitely generated. We lift generators $\bar{f}_i\in \fa_{j_i}/\fa_{j_i+1}$ $(i=1,...,r)$ of ${\rm gr}_v(R)$ to elements $f_i\in \fa_{j_i}$. Let $d=\max j_i$, let $\{g_i\}$ ($1\le i \le k$) be a set of generators of $\fa_j$ for $(0\le j \le d)$, then we show that $\bigoplus_i \fa_i$ is generated by $
\{g_i\}$. We denote the graded ring generated by $\{ g_i\}$ to be $ \bigoplus_i  \fb_i \subset \bigoplus_i  \fa_i$.

Consider $\fa_m$, we claim  $\fb_m+\fa_{m+p}=\fa_m$ for any $p\ge 0$, which clear implies $\fb_m=\fa_m.$ For $p=0$, this is trivial. Assume we have proved this for $p=p_0-1$. Then for any $f\in \fa_m$, we can write $f=g+f'$ where $g\in \fb_m$ and $f'\in \fa_{m+p_0-1}$. Since 
$$[f']\in \fa_{m+p_0-1}/\fa_{m+p_0}=\sum_\alpha a_{\alpha}f_{\overline{i}}^{\alpha},$$
where $f_{\overline{i}}^{\alpha}=f_1^{\alpha_1}\cdots f_r^{\alpha_r}=f_1\cdots f_1\cdot f_2\cdots f_r$ is a product of $\alpha_1+\cdots \alpha_r$ terms and $\sum^r_{i=1} j_i\cdot \alpha_i=m+p-1$. By considering some $f_i$ to be in $\fa_{j_i'}$ instead of $\fa_{j_i}$ for some $0\le j'_{i}\le j_i$ as $\fa_{j_i'}\supset\fa_{j_i}$, we can assume 
$f_1^{\alpha_1}\cdots f_r^{\alpha_r}$ is in $\fa_m$, which is then by definition in $\fb_m$. 
\end{proof}

We assume that $X_0$ is normal, and define $D_0$ to be the closure of $D$ (as $\mathbb{Q}$-divisor) in the following way: for each prime Weil divisor $E$ on $X$ with the ideal $p_E$, we can consider the degeneration $\bin(p_E)$ and let the degeneration $E_0$ to be its divisorial part. Then for a general $\mathbb{Q}$-divisor $D=\sum_i a_iE_i$, we define $D_0=\sum_i a_iE_{i,0}$.

\begin{lem}\label{lem-irrid}
With the above notation, we have $\vol(v)=\vol(\wt_{\xi_v})$. Furthermore,  $K_{X_0}+D_0$ is $\mathbb{Q}$-Cartier and $A_{(X,D)}(v)=A_{(X_0,D_0)}(\wt_{\xi_v})$. 
\end{lem}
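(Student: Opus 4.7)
The plan is to establish the three claims in order, leveraging the approximating sequence of divisorial valuations $v_i\to v$ provided by Lemma \ref{l-finiteass} together with the model from Lemma \ref{l-fakekollar}.

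First I will prove $\vol(v)=\vol(\wt_{\xi_v})$ by a direct dimension count. By the definition of $\gr_v R$ and its Reeb grading, for each real $m>0$ the vector spaces $R/\fa_m(v)$ and $\gr_v R/\fa_m(\wt_{\xi_v})$ both admit a filtration indexed by values $c<m$ in the valuative semigroup of $v$, and the corresponding graded pieces are the same $\bC$-vector spaces $\fa_c(v)/\fa_{>c}(v)$. Since $v$ is centered at $\fm$, each $\fa_m(v)$ is $\fm$-primary, so these quotients are finite dimensional and their dimensions agree for every $m$. Taking $m\to\infty$ yields the equality of volumes.

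For the remaining assertions, pick a rational approximating valuation $v_i=c_i\cdot\ord_{S_i}$ with $\gr_{v_i}R\cong \gr_v R$. By Lemma \ref{l-fakekollar} the extended Rees algebra $\bigoplus_{k\in\bZ}\fa_k(\ord_{S_i})\, t^{-k}$ is finitely generated, so the deformation-to-normal-cone construction of Section \ref{ss-kollar} produces a $\bG_m$-equivariant flat family $\pi\colon\cX\to\bA^1$ with general fibre $X$ and central fibre $X_0$. The scheme-theoretic closure $\cD\subset\cX$ of $D\times\bG_m$ is flat over $\bA^1$, and component by component its restriction to $X_0$ is precisely the divisorial part of $\bin(p_E)$, namely $D_0$. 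A local $\bQ$-generator of $K_X+D$, pulled back to $\cX$ and truncated to its initial term under the $\ord_{S_i}$-filtration, becomes a local $\bQ$-generator of $K_{\cX/\bA^1}+\cD$ in a neighborhood of the vertex of $X_0$; restricting to $X_0$ then shows that $K_{X_0}+D_0$ is $\bQ$-Cartier. Equivalently, one may adapt the explicit construction of the equivariant nowhere-vanishing section $\mathscr{S}$ from Proposition \ref{prop-Teqsec} to this setting.

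The $\bG_m$-weight of that section under the Reeb vector $\xi_{v_i}$, dictated by Remark \ref{rem-Ltpt}, yields the identity $A_{(X,D)}(\ord_{S_i})=A_{(X_0,D_0)}(\wt_{\xi_{S_i}})$ by the argument of Lemma \ref{l-degpre}.1; rescaling gives $A_{(X,D)}(v_i)=A_{(X_0,D_0)}(\wt_{\xi_{v_i}})$ for every $i$. Passing to the limit $i\to\infty$, the left-hand side converges to $A_{(X,D)}(v)$ by the linearity $A_{(X,D)}(v_\alpha)=\sum_j\alpha_j A_{(X,D)}(E_j)$ on the quasi-monomial simplex, while the right-hand side converges to $A_{(X_0,D_0)}(\wt_{\xi_v})$ by the linear formula $A_{(X_0,D_0)}(\wt_\xi)=\langle u_0,\xi\rangle$ of Lemma \ref{lem-ldwt}. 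The main obstacle will be extending the construction of the equivariant pluricanonical section from Proposition \ref{prop-Teqsec}, originally stated for Koll\'ar components, to the divisorial valuations $S_i$ coming from Lemma \ref{l-fakekollar} where plt-ness of the associated blow-up is unavailable; the argument must be recast to use only the finite generation of the extended Rees algebra together with $\bQ$-Cartierness of $K_X+D$.
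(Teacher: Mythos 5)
Your proof follows essentially the same route as the paper's: a direct dimension count for the volume equality, the observation that $\bQ$-Cartier divisors degenerate to $\bQ$-Cartier divisors via initial forms, and the approximation by divisorial valuations $v_i=c_i\ord_{S_i}$ from Lemma \ref{l-finiteass} plus Lemma \ref{l-fakekollar}, passing to the limit using the linearity of $A$ on the quasi-monomial simplex and on $\ft_\bR$. The paper states the $\bQ$-Cartier claim more economically — the closure of $\mathrm{div}(f)$ in the degeneration is $\mathrm{div}(\bin_v(f))$, full stop — whereas you package the same fact inside a relative canonical sheaf argument; both work.

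Your closing worry about extending Proposition \ref{prop-Teqsec} beyond Koll\'ar components is not actually an obstacle, and you need not recast anything. The paper does not invoke Proposition \ref{prop-Teqsec} here: it instead notes that, since $X_0$ is assumed normal (a standing hypothesis just before the lemma), the degeneration via $S_i$ is the orbifold cone over $S_i$, and then the identity $A_{(X,D)}(\ord_{S_i})=A_{(X_0,D_0)}(\wt_{\xi_{S_i}})$ follows from exactly the Poincar\'e residue computation of Lemma \ref{l-degpre}.1. That computation uses only the extraction $\mu\colon Y\to X$ with exceptional $S_i$, the adjunction $K_Y+\mu_*^{-1}D=\mu^*(K_X+D)+(A-1)S_i$, and the $\bQ$-Cartierness of $K_X+D$; it never uses the plt condition, which in Lemma \ref{l-degpre} serves a different purpose (establishing that the central fibre is klt). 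So the argument transfers verbatim to the $S_i$ of Lemma \ref{l-fakekollar}. With that clarified, your proof is correct and matches the paper's.
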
 
\begin{proof}The first part is straightforward. For the second part, since the closure of a Cartier divisor ${\rm div}(f)$ is given by the Cartier divisor ${\rm div}(\bin_v(f))$, we see that $K_{X_0}+D_0$ is $\mathbb{Q}$-Cartier as $K_X+D$ is $\mathbb{Q}$-Cartier. 

 Then as
${\rm gr}_{v}(R)$ is finitely generated, we conclude that there is a sequence of rescaling of divisorial valuations $\xi_{v_i}$ approximating  $\xi_v$ such that ${\rm gr}_{v_i}(R)$ is isomorphic to $X_0$ (see Lemma \ref{l-finiteass}). By Lemma \ref{l-fakekollar}, we know that the degeneration $X_0$ can be considered as an orbifold cone over $S_i$, which is normal since we assume $X_0$ is normal. In particular, we have
$$A_{(X,D)}(v_i)=A_{(X_0,D_0)}(\wt_{\xi_{v_i}}).$$
Taking the limit $i\to \infty$, we get the statement. 
\end{proof}

If $v$ is a minimizer such that the associated graded ring $\gr_{v}(R)$ is finitely generated, however since the Koll\'ar component produced in Proposition \ref{p-qdlt} may not be in interior of the simplex containing $v$, we can not directly apply  Lemma \ref{l-finiteass}.  We need to show that
\begin{prop}\label{p-closekollar}
Let $v$ be a minimizer of $\hvol$ such that the associated graded ring $\gr_{v}(R)$ is finitely generated. Let $c_i\cdot \ord_{S_i}$ be chosen sufficiently close to $v$ as in Lemma \ref{l-finiteass}, then $S_i$ is a Koll\'ar component. 
\end{prop}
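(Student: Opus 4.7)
The plan is to prove plt-ness of the pair $(Y_i,\mu^{-1}_*D+S_i)$, where $\mu\colon Y_i\to X$ is the model produced by Lemma \ref{l-fakekollar} that extracts only $S_i$ and has $-S_i$ $\mu$-ample. The anti-log-canonical ampleness is then automatic: ${\rm Pic}(Y_i/X)$ has rank one generated by $S_i$, so $-K_{Y_i}-\mu^{-1}_*D-S_i\equiv_\mu -A_{(X,D)}(S_i)\cdot S_i$, which is $\mu$-ample since $A_{(X,D)}(S_i)>0$ and $-S_i$ is $\mu$-ample. Hence only plt remains to be verified.

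The key identification is between $X_0=\Spec(\gr_v R)$ and an orbifold affine cone over $(S_i,{\rm Diff}_{S_i})$. By Lemma \ref{l-finiteass} we have $\gr_{v_i}R\cong \gr_v R$, and the divisorial filtration $\fa_k(\ord_{S_i})=\mu_*\mathcal{O}_{Y_i}(-kS_i)$ furnishes the explicit cone description $\gr_{v_i}R=\bigoplus_k I_{S_i}^k/I_{S_i}^{k+1}$ (up to the rational rescaling of the grading coming from $c_i$). Adjunction along $S_i$ combined with the relative numerical equivalence above yields $L:=-S_i|_{S_i}\sim_{\mathbb{Q}} -\tfrac{1}{A_{(X,D)}(S_i)}(K_{S_i}+{\rm Diff}_{S_i})$, so the polarization of the cone is proportional to the anti-log-canonical bundle of the orbifold base.

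Next I would establish that $(X_0,D_0)$ is klt by using the Rees-algebra degeneration $\pi\colon \mathcal{X}\to \mathbb{A}^1$ associated to $v_i$: this is a $\mathbb{C}^*$-equivariant flat family with general fiber $(X,D)$ and central fiber $(X_0,D_0)$. Exactly as in the opening computation of the proof of Proposition \ref{l-degKsemi}, semicontinuity of $\lct\cdot\mult$ along this family yields $\hvol(X_0,D_0,o')\ge \hvol(X,D,x)>0$; the good $T$-action contracting $X_0$ to $o'$ then propagates klt-ness from the vertex to the rest of $X_0$. Normality of $X_0$ is a prerequisite here; it can be extracted from the cone description together with the finite generation of $\gr_v R$ by passing to a sufficiently positive integer multiple of the polarization, an operation that does not affect the underlying singularity type.

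Once $(X_0,D_0)$ is klt, the standard klt-cone $\Leftrightarrow$ klt log Fano dictionary applied to the cone structure above forces $(S_i,{\rm Diff}_{S_i})$ to be klt log Fano; inversion of adjunction along $S_i\subset Y_i$ then yields plt-ness of $(Y_i,\mu^{-1}_*D+S_i)$ in a neighborhood of $S_i$, and elsewhere the pair is isomorphic to $(X,D)$, which is klt. Hence $S_i$ is a Koll\'ar component. The main technical obstacle is the third paragraph: simultaneously ensuring normality of $X_0$ and carrying out the semicontinuity argument for $\hvol$ under this $\mathbb{C}^*$-equivariant degeneration; both fit naturally within the Rees-algebra framework already exploited at several places in the paper.
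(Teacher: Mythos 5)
The central step of your third paragraph---``semicontinuity of $\lct\cdot\mult$ along this family yields $\hvol(X_0,D_0,o')\ge \hvol(X,D,x)>0$''---has the inequality reversed, and this reversal is fatal. Lower semicontinuity of $\lct$ under a flat degeneration gives, for any $\fm$-primary ideal $\fa$ on $X$ with initial degeneration $\rin(\fa)$ on $X_0$,
\[
\lct^n(\fa)\cdot\mult(\fa)\;\ge\;\lct^n\bigl(\rin(\fa)\bigr)\cdot\mult\bigl(\rin(\fa)\bigr),
\]
exactly as quoted in the proof of Proposition \ref{l-degKsemi}, and therefore $\hvol(X,D,x)\ge\hvol(X_0,D_0,o')$ after taking infima. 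This gives no lower bound whatsoever on $\hvol(X_0,D_0,o')$, so it cannot certify that $X_0$ is klt. Indeed if $S_i$ is \emph{not} a Koll\'ar component then $(X_0,D_0)$ is slc but strictly not klt, and Lemma \ref{v-lccase} shows the infimum of $\hvol_{X_0,D_0}$ over $\bC^*$-invariant valuations is $0$. Your one-step degeneration argument would then just return the tautology $\hvol(X,D)\ge 0$.

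To turn the failure of klt-ness of $X_0$ into a contradiction with the minimality of $v$ on $X$, one cannot argue on $X_0$ alone: one has to pull a near-zero normalized volume on $X_0$ \emph{back up} to a valuation on $X$. The paper does this with a double degeneration: from the slc but non-klt $(X_0,D_0)$ one gets, via Lemma \ref{v-lccase} and Theorem \ref{t-liu}, a $T$-equivariant primary ideal $I$ with $\mult(I)\cdot\lct^n(I)<\hvol_{(X,D)}(v)$; Lemma \ref{l-semilogc} turns this into an extraction of a divisor $G$ computing $\lct$, hence a further $\bC^*$-degeneration $X_0\rightsquigarrow Y$; and then the Rees-algebra/Gr\"obner argument from Theorem \ref{t-Ksta} (producing the two-parameter family and the valuations $w_\epsilon$, culminating in equation \eqref{e-doubledeg}) shows that this composite degeneration is realized by valuations $w_\epsilon$ on $X$ itself with $\hvol_{(X,D)}(w_\epsilon)<\hvol_{(X,D)}(v)$, the desired contradiction. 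Your final paragraph (klt cone $\Leftrightarrow$ klt log Fano base, then inversion of adjunction) is the right way to conclude once klt-ness of $X_0$ is in hand, but be careful about circularity: the paper's Theorem \ref{t-kltdeg}, which asserts that klt-ness, is itself proved \emph{using} Proposition \ref{p-closekollar}, so one cannot invoke it here. Also note that the log-canonicity you need at the start comes from Theorem \ref{t-wlcmodel} (the weak lc model construction), not from Lemma \ref{l-fakekollar}, which only supplies the extraction and the ampleness of $-S_i$.
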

\begin{proof}By the proof of Theorem \ref{t-wlcmodel}, we already know that $S_i$ can extracted alone to get a model $\mu_i\colon Y_i\to X$ such that $(Y_i,\mu^{-1}_*(D)+S_i)$ is log canonical and $-S_i$ is $\mu_i$-ample. Therefore, we can degenerate $x\in (X,D)$ to $o\in (X_0:={\rm Spec}(\gr_{S_i}(R)),D_0)$, where $D_0$ is given by the adjunction. Recall that ${\rm Spec}(\gr_{S_i}(R) \cong {\rm Spec}(\gr_{v}(R))$ and $(X_0,D_0)$ is aways semi-log-canonical (slc), as so is $(S_i, D_{S_i})$ where $(K_{Y_i}+\mu_*^{-1}D+S_i)|_{S_i}:=K_{S_i}+D_{S_i}$.  Here we use the fact that $S_i$ is CM as $Y$ is potentially klt. By Lemma \ref{lem-irrid}, we know that $\hvol_{X,D}(v)=\hvol_{X_0,D_0}(v_0:=\xi_v)$. 

Now we assume $S_i$ is not a Koll\'ar component. By Lemma \ref{v-lccase}, we know that there exists a $T$-invariant valuation $w$ over $o\in (X_0,D_0)$ such that $\hvol_{X_0,D_0}(w)<\hvol_{X,D} (v_0)$. So by Theorem \ref{t-liu}, we know that there exists an $T$-equivariant primary ideal $I$ such that the normalized multiplicity $\mult(I)\cdot \lct^n(I)\le \vol(w)$. Since any valuation over $o$ has its log discrepancy to be positive with respect to $(X_0,D_0)$, by Lemma \ref{l-semilogc}, we can construct a model $Z\to (X_0,D_0)$ which precisely extracts a divisor $G$ calculating the log canonical threshold of $I$ with respect to $(X_0,D_0)$. This yields a $\mathbb{C}^*$-degeneration of $X_0$ to a model $Y$, on which we can define the normalized volume of $\eta$ as in Section \ref{ss-kollar}. Then 
$$\hvol_Y(\eta)=\hvol_{X_0,D_0}(\ord_G)\le \mult(I)\cdot \lct^n(I)\le \hvol(w)<\hvol(v_0)=\hvol_Y(\xi_0).$$
By the argument of Theorem \ref{t-Ksta}, we can construct a degeneration from $(X,D)$ to $(Y={\rm Spec}(\gr_{\ord_F}(\gr_{v}R)),E)$. Moreover,  the calculation as in \eqref{e-doubledeg} (see Remark \ref{r-general}) shows that this contradicts to the assumption that $v$ is a minimizer.
\end{proof}
\begin{lem}\label{v-lccase}Let $(S,D_S)$ be a slc pair with $-K_S-D_S$ ample.
Assume that $(S,D_S)$ is not klt. Let $o\in (Y,D_Y):=C(S,D_S;-r(K_S+D_S))$. Then  $\inf_{v} \hvol_o(v)=0$ where $v$ runs through all $\bC^*$-invariant valuations.  
\end{lem}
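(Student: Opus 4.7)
The strategy is to construct a family of $\bC^*$-invariant valuations $v_{\alpha,\beta}$ on $Y$ centered at $o$ and show $\hvol_o(v_{1,\beta})\to 0$ as $\beta\to\infty$. Since $(S,D_S)$ is slc but not klt, there exists an \emph{lc place} $E$ of $(S,D_S)$, i.e., a divisor over $S$ with $A_{(S,D_S)}(E)=0$. If $S$ is non-normal, we first pass to the normalization and push the conductor into the boundary: the resulting pair is lc and not klt, so such an $E$ still exists. Via a log resolution of $(S,D_S)$ extracting $E$, and the induced birational modification $\tilde Y\to Y$ of $\bC^*$-varieties, $E$ lifts to a $\bC^*$-invariant ``vertical'' divisor $\tilde E$ on $\tilde Y$.

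For $\alpha>0$ and $\beta\ge 0$, define $v_{\alpha,\beta}$ as the quasi-monomial valuation on $Y$ computed on $\tilde Y$ with weight $\alpha$ along the Reeb direction $\xi_0$ (the cone-scaling) and weight $\beta$ along $\tilde E$. Since $\alpha>0$, its center on $Y$ is $o$. By Theorem \ref{t-Tcano}.3 together with Lemma \ref{lem-ldwt}, the log discrepancy is linear in $(\alpha,\beta)$:
\[
A_{(Y,D_Y)}(v_{\alpha,\beta})=\alpha\cdot A_{(Y,D_Y)}(\xi_0)+\beta\cdot A_{(S,D_S)}(E)=\alpha\cdot A_{(Y,D_Y)}(\xi_0),
\]
independent of $\beta$, because $A_{(S,D_S)}(E)=0$. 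Moreover $A_{(Y,D_Y)}(\xi_0)>0$ since $(Y,D_Y)$ is lc and $\xi_0$ lies in the interior of the Reeb cone.

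For the volume, as $\beta$ grows with $\alpha=1$ fixed the function $v_{1,\beta}$ increases pointwise on $R$, so the ideals $\fa_k(v_{1,\beta})$ enlarge and $\vol(v_{1,\beta})$ is monotonically non-increasing in $\beta$. The crucial claim $\vol(v_{1,\beta})\to 0$ we derive via a Newton--Okounkov body analysis in the spirit of Section \ref{sss-Ttoric}: we build a $\bZ^n$-valued valuation $\bV$ on $Y$ refining both the $\bZ$-grading from $\xi_0$ and the one from $\ord_{\tilde E}$, verify the three semigroup properties of Lemma \ref{lem-sg3prop}, and identify $\vol(v_{1,\beta})$ with the Euclidean volume of the slice $\{y\in\hat\sigma:\ y_1+\beta y_2\le 1\}$ of the associated cone $\hat\sigma\subset\bR^n$. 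Since $\hat\sigma$ has bounded transversal sections, this slice is squeezed into the slab $\{y_2\le 1/\beta\}$ (intersected with a fixed bounded set in the remaining coordinates), forcing its volume to tend to $0$. Combining,
\[
\hvol_o(v_{1,\beta})=A_{(Y,D_Y)}(\xi_0)^n\cdot\vol(v_{1,\beta})\longrightarrow 0,
\]
which proves $\inf_v \hvol_o(v)=0$.

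The main obstacle is the volume-degeneration step: it requires extending the Newton--Okounkov machinery of Section \ref{sss-Ttoric} to incorporate the extra vertical divisor $\tilde E$, reverifying the semigroup properties, and establishing the slice-volume bound. A secondary subtlety is the possible non-normality of $S$ in the slc case, handled by passing to the normalization with the conductor added to $D_S$ as indicated above.
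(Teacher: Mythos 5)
Your construction of the test family is the same as the paper's: take an lc place $E$ of $(S,D_S)$ (after normalizing, if $S$ is non-normal), form the $\bC^*$-invariant valuations $v_{1,\beta}(f)=\min_k\{k+\beta\,\ord_E(f_k)\}$ on the section ring, and observe that $A_{(Y,D_Y)}(v_{1,\beta})$ is independent of $\beta$ precisely because $A_{(S,D_S)}(E)=0$. The difference is in how the two arguments show $\vol(v_{1,\beta})\to 0$. The paper invokes the explicit Fujita-type integral formula for such filtration-twists of the cone valuation from \cite[Sections~4.1--4.2]{Li15b},
\[
\hvol(v_s)=-r^{-n}\int_0^{\infty}\frac{d\vol\bigl(R^{(x)}\bigr)}{(1+sx)^{n}},
\]
and concludes by dominated convergence using the finite total mass of $-d\vol(R^{(\cdot)})$; this requires no extra structure beyond the $\bZ_{\ge 0}$-grading and the $\ord_E$-filtration on the section ring. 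You instead propose to run the Okounkov-body machinery of Section~\ref{sss-TT}: build a full-rank $\bZ^n$-valuation $\bV$ refining both the Reeb grading and $\ord_{\tilde E}$, check the semigroup conditions of Lemma~\ref{lem-sg3prop}, and identify $\vol(v_{1,\beta})$ with the Euclidean volume of the slice $\{y\in\hat\sigma:\,y_1+\beta y_2\le 1\}$, which is squeezed to zero as $\beta\to\infty$. This route does work, but it is heavier than necessary: the ``obstacle'' you flag (extending the semigroup estimates and proving the slice-volume bound) is in fact already what Proposition~\ref{prop-volpoly} establishes for $T$-invariant quasi-monomial valuations, so the extension is routine, whereas the paper's citation of \cite{Li15b} is a one-line shortcut that bypasses the whole convex-geometry detour. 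In short: same valuations, same use of the hypothesis that $(S,D_S)$ is not klt, but a more roundabout volume computation on your side. It would strengthen your write-up to either appeal directly to the \cite{Li15b} formula or to note that Proposition~\ref{prop-volpoly} already supplies the $\vol=\vol(\text{convex body})$ identity, rather than leaving the Newton--Okounkov step as an open ``obstacle.''
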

\begin{proof}
Since $(S,D_S)$ is not klt, there exists a divisor $E$ over $S$ such that $A_{(S,D_S)}(E)=0$. Consider a set of valuations $\{v_s; s\in[0,\infty)\}\subset \Val_{Y,o}$ defined in the following way. For any $f=\sum_k f_k\in \bigoplus_{k=0}^{\infty}H^0(S,kr(-K_S-D_S))$, $v_s(f)$ is given by:
\begin{equation}
v_s(f)=\min_k\{ k+ s\cdot \ord_E(f_k); f=\sum_k f_k \text{ with } f_k\neq 0\}. 
\end{equation}
Because $A_{(S,D_S)}(E)=0$, we have $A_{Y,D_Y}(v_s)\equiv r^{-1}$.
By using similar method as in \cite[section 4.1-4.2]{Li15b}, we get the formula for the normalized volume of $v_s$:
\begin{equation}
\hvol(v_s)=-r^{-n}\int_1^{+\infty}\frac{d\vol\left(R^{(\frac{t-1}{s})}\right)}{t^n}=-r^{-n}\int_0^{+\infty}\frac{d\vol\left(R^{(x)}\right)}{(1+sx)^n}.
\end{equation}
Since $-d\vol(R^{(x)})$ is a positive measure with finite total mass, it's clear that $\hvol(v_s)\rightarrow 0$ as $s\rightarrow+\infty$.
\end{proof}
\begin{lem}\label{l-semilogc}Let $o\in (X,D)$ be a slc singularity such that ${\rm mld}_{X,D}(o)>0$. Let $I$ be a primary ideal cosupported on $o$. Then we can find a model $\mu \colon Z\to X$ with extracts precisely a divisor $G$ computing the log canonical threshold of $I$ with respect to $(X,D)$ such that $-G$ is $\mu$-ample.
\end{lem}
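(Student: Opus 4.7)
The plan is to combine the hypothesis on the minimal log discrepancy with a standard extraction construction via the MMP of \cite{BCHM10}, following the pattern in the proof of Theorem \ref{t-wlcmodel}.

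First, I reduce to the klt setting. The assumption that $(X,D)$ is slc with ${\rm mld}_{X,D}(o)>0$ forces $(X,D)$ to be klt in a Zariski neighborhood $U$ of $o$: if $X$ were non-normal at $o$, then a component of the conductor through $o$ would contribute a log discrepancy $0$ place with center containing $o$, and any lc place of $(X,D)$ through $o$ directly violates the mld hypothesis. Since $I$ is cosupported at $o$, after replacing $X$ by $U$ I may assume $(X,D)$ is klt throughout. By standard finiteness results for log canonical thresholds (cf.~\cite{BCHM10, JM12}), there exists a prime divisor $G$ over $X$, necessarily with center $\{o\}$, that computes $c := \lct_{(X,D)}(I)$, i.e., $A_{(X,D)}(G) = c\cdot \ord_G(I)$.

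To produce the model $\mu\colon Z \to X$ with $-G$ being $\mu$-ample, I take a log resolution $\pi\colon Y\to X$ of the triple $(X,D,I)$ on which $G$ is realized as a prime exceptional divisor; write $I\cdot \cO_Y = \cO_Y(-F)$ and denote the remaining prime exceptional divisors by $E_1,\dots,E_k$, with $b_i := A_{(X,D+cI)}(E_i) \ge 0$. After a generic perturbation of $G$ if necessary, I may assume $G$ is the unique lc place of $(X,D+cI)$ with center $o$, so that $b_i > 0$ for all $i$. For sufficiently small positive rationals $\delta$ and $\epsilon_i$, the $\bQ$-divisor
$$\Theta := \pi_*^{-1}D + (c-\delta) F + (1-\delta)\,G + \sum_{i=1}^{k}(1-b_i-\epsilon_i)E_i$$
makes $(Y,\Theta)$ klt and satisfies $K_Y+\Theta \sim_{\bQ,X} -\delta\, G + \Sigma$, where $\Sigma$ is an effective $\bQ$-divisor supported on $\sum_i E_i$ (the equivalence follows from the log-pullback formula $K_Y+\pi_*^{-1}D+cF+G+\sum_i(1-b_i)E_i \sim_{\bQ,X} 0$). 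Running the relative $(K_Y+\Theta)$-MMP over $X$ terminates by \cite{BCHM10}; tracing discrepancies shows that it contracts precisely the divisors $E_1,\ldots,E_k$ and outputs a model $\mu\colon Z\to X$ whose only exceptional divisor is $G$. The relative cone theorem applied to the final extremal ray, together with $K_Z+\mu_*\Theta \sim_{\bQ,X} -\delta\, G$, then forces $-G$ to be $\mu$-ample.

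The main obstacle is the calibration of the perturbation parameters $\delta$ and $\epsilon_i$: they must be small enough to preserve the kltness of $(Y,\Theta)$ and the ordering of log discrepancies, yet chosen so that the MMP terminates with the intended combinatorial outcome (only $G$ surviving, and with $-G$ being $\mu$-ample rather than merely $\mu$-nef). This calibration is handled exactly as in the perturbation argument in the proof of Theorem \ref{t-wlcmodel}, using the ACC for log canonical thresholds from \cite{HMX14} to secure uniform bounds on the allowable perturbations.
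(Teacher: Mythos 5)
Your strategy (reduce to the klt case, then extract $G$ by a perturbed boundary and an MMP over $X$ as in the proof of Theorem \ref{t-wlcmodel}) is a legitimate alternative to the paper's route, which never reduces to klt and instead stays in the slc framework by invoking Koll\'ar's semi-log-resolutions together with the slc modification theorem of Odaka--Xu. But the boundary $\Theta$ you write down is not klt, and the log-pullback identity it rests on has a sign error, so the MMP step breaks as written.

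Since $I$ is cosupported at $o$, the divisor $F$ with $I\cdot\cO_Y=\cO_Y(-F)$ is $\pi$-exceptional and contains $G$ with multiplicity $\ord_G(I)>0$; hence the coefficient of $G$ in your $\Theta$ is $(c-\delta)\ord_G(I)+(1-\delta)=1+A_{(X,D)}(G)-\delta\bigl(\ord_G(I)+1\bigr)$, which exceeds $1$ whenever $\delta<A_{(X,D)}(G)/(\ord_G(I)+1)$, i.e.\ precisely for the small $\delta$ you need (the $E_i$-coefficients can overshoot similarly). Moreover, the correct log-pullback relation is $K_Y+\pi_*^{-1}D+G+\sum_i(1-b_i)E_i\sim_{\bQ,X} cF$, with $-cF$ on the left if you want $0$ on the right; even taking your $\Theta$ at face value, $K_Y+\Theta$ comes out $\bQ$-linearly equivalent over $X$ to a negative combination of all the exceptional divisors, not to $-\delta G$ plus an effective divisor on the $E_i$. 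The repair is to drop $F$ from the boundary altogether and instead pull back an effective $\bQ$-divisor $L$ from $X$ (a scaled general member of a power of $I$), then choose coefficients $\alpha,\beta_i\in(0,1)$ so that $(Y,\pi_*^{-1}(D+L)+\alpha G+\sum_i\beta_iE_i)$ is klt and $K_Y+\pi_*^{-1}(D+L)+\alpha G+\sum_i\beta_iE_i\sim_{\bQ,X}-\delta G+\Sigma$ with $\Sigma\ge 0$ supported on the $E_i$, exactly as in Theorem \ref{t-wlcmodel}; your MMP and negativity-lemma argument can then run. One further caution: the reduction to klt is correct in substance but asserted too quickly. An lc place whose center is a positive-dimensional $V\ni o$ does not directly enter ${\rm mld}_{X,D}(o)$, which is an infimum over divisors centered exactly at $o$; one still needs the cutting-down step showing such a place produces divisors centered exactly at $o$ with log discrepancy tending to $0$.
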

\begin{proof}By \cite[10.56]{Kol13}, there exists a semi log resolutions $\phi\colon X'\to X$ of $(X,D+cI)$ with the properties there, where $c=\lct(I; X,D)$. By a tie-break argument, using the fact that $\phi_*(\mathcal{O}_{X'})=\mathcal{O}_X$, we can find a small $\mathbb{Q}$-divisor $H$ passing through $o$, such that there exists a unique divisor $G$ over $o$ with the log discrepancy $0$ with respect to $(X,D+(c-\epsilon)I+H)$. 

Choose a sufficiently close $\epsilon'<\epsilon$, then on $X'$ the only divisor with negative log discrepancy with respect to $(X,D+(c-\epsilon')I+H)$ is $G$. By \cite{OX12}, we can construct the semi-log-canonical modification $\mu \colon Z\to X$ of $(X,D+(c-\epsilon')I+H)$, which thus only extracts $G$ with $-G$ being $\mu$-ample. 
\end{proof}

\begin{thm}\label{t-kltdeg}
Let $X_0=\Spec ({\rm gr}_vR)$ and denote by $D_0$ the closure of $D$ on $X_0$. Then $(X_0,D_0)$ is klt.
\end{thm}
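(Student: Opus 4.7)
The strategy is to reduce the problem to the case of a degeneration induced by a single Koll\'ar component, where klt-ness is well-understood, and then identify the two pictures of $(X_0, D_0)$.

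\textbf{Step 1 (Reduction to a Koll\'ar component degeneration).} Since $v$ is a quasi-monomial minimizer with finitely generated associated graded ring, Lemma \ref{l-finiteass} produces a sequence of divisorial valuations $c_i \cdot \ord_{S_i}$ approaching $v$ such that the associated graded rings coincide: $\gr_{\ord_{S_i}}(R) \cong \gr_v(R)$. Proposition \ref{p-closekollar} (which is already available by the order of proof in this section) then shows that for $i$ sufficiently large, $S_i$ is a Koll\'ar component over $x\in (X,D)$. Let $\mu_i\colon Y_i \to X$ be the plt blow up extracting $S_i$, and write $(K_{Y_i}+\mu_{i*}^{-1}D+S_i)|_{S_i} = K_{S_i}+D_{S_i}$. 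Then $(S_i, D_{S_i})$ is a klt log Fano pair.

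\textbf{Step 2 (Identification of $X_0$ with an orbifold cone).} The deformation to the normal cone construction recalled in Section \ref{ss-kollar} provides a flat $\bC^*$-equivariant degeneration of $(X,D)$ to the orbifold cone $(C(S_i, D_{S_i}), \widetilde{D})$, whose coordinate ring is exactly $\gr_{\ord_{S_i}}(R)$. Composing with the isomorphism of graded rings from Step 1, we obtain an isomorphism
\[
X_0 = \Spec(\gr_v R) \;\cong\; \Spec(\gr_{\ord_{S_i}} R) \;=\; C(S_i, D_{S_i}).
\]
In particular $X_0$ is normal, so the definition of $D_0$ via initial ideals makes sense.

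\textbf{Step 3 (Matching the divisors).} I would then verify that under the above isomorphism, the divisor $D_0$ defined via initial ideals $\bin_v(p_E)$ of the prime components $E$ of $D$ coincides with $\widetilde{D}$. This follows from the fact that the isomorphism in Lemma \ref{l-finiteass} is precisely the one identifying $\bin_v(f)$ with $\bin_{\ord_{S_i}}(f)$ for each $f\in R$, so the divisorial parts of the initial ideals transport correctly. Hence $(X_0, D_0) \cong (C(S_i, D_{S_i}), \widetilde{D})$ as pairs.

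\textbf{Step 4 (Conclusion).} An orbifold cone over a klt log Fano pair is klt (this is standard: the cone structure together with klt-ness of $(S_i, D_{S_i})$ and negativity of $K_{S_i}+D_{S_i}$ gives klt via the orbifold cone discrepancy formula, as in Section \ref{ss-kollar} or \cite{Kol13}). Therefore $(X_0, D_0)$ is klt.

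The main potential obstacle is Step 3, i.e., ensuring that the abstract divisor $D_0$ defined purely via the filtration of $v$ on $R$ agrees with the concrete divisor $\widetilde{D}$ coming from the geometric deformation to the normal cone along $S_i$. However, both are obtained by taking initial terms of the ideals of components of $D$ under compatible filtrations, so the identification is automatic once the graded rings are matched. Steps 1, 2 and 4 are then essentially applications of previously established results.
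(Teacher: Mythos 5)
Your proposal is correct and follows essentially the same route as the paper's proof: choose a divisorial approximation of $v$ via Lemma \ref{l-finiteass} with the same associated graded ring, invoke Proposition \ref{p-closekollar} to see that it is a Koll\'ar component, and conclude klt-ness from the orbifold cone structure. Your Steps 2 and 3 simply spell out the identification of $(X_0,D_0)$ with the orbifold cone pair more explicitly than the paper's terse reference to the argument in \cite{LX16}, but the underlying argument is identical.
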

\begin{proof}
By Lemma \ref{l-finiteass}, under our assumption, we can choose a valuation divisorial $v'$ such that ${\rm gr}_{v}R$ is isomorphic to ${\rm gr}_{v'}(R)$. Then Proposition \ref{p-closekollar} implies that $v'$ indeed can be chosen to be a Koll\'ar component. Thus by the argument in  \cite{LX16}, we know $(X_0,D_0)$ is a klt pair, as it is an orbifold cone over a log Fano pair. 
\end{proof}

 \begin{defn}\label{d-ksemi}
 Under the above assumptions,  we say that a quasi-monomial valuation over $x\in X$ is K-semistable if ${\rm gr}_v(R)$ is finitely generated and the corresponding triple  $(X_0, D_0,\xi_v)$  as in Definition \ref{d-specialreeb} is K-semistable in the sense of Definition \ref{d-ksemiSE}.
 \end{defn}

\begin{proof}[Proof of Theorem \ref{t-KtoM}] By Theorem \ref{thm-semin}, we already know this for the log Fano cone case when the valuation on the singularity is given by a triple $(X,D,\wt_{\xi})$ as in Theorem  \ref{thm-semin}. In the general case, if $v$ induces  a special test configuration of $(X,D)$ to $ (X_0, D_0, \xi_v)$, then for any ideal $\fa\in {\rm PrId}_{X,x}$, we can get a graded ideal sequence $\fb_{\bullet}=\{\bin(\fa^k)\}$, satisfying
\begin{eqnarray*}
\mult(\fa)\cdot \lct_{(X,D)}^n(\fa)&=&\mult(\fb_{\bullet})\cdot \lct_{(X_0,D_0)}^n(\fb_{\bullet}) \ \ \mbox{(cf. \cite[3.3]{LX16})}\\
 &\ge &\hvol_{(X_0,D_0)}(\wt_{\xi_v})   \ \ \mbox{(Theorem \ref{thm-semin})}\\
 &=& \hvol_{(X,D)}(v)   \ \ \mbox{(Lemma  \ref{lem-irrid})}.
\end{eqnarray*}
\end{proof}

\begin{thm}\label{t-Ksta}
If $x\in (X,D)$ is a klt singularity and $v\in \Val_{X,x}$ which is a quasi-monomial minimizer of $\hvol_{(X,D)}$ such that its associated graded ring ${\rm gr}_v(R)$ is finitely generated, then $v$ is a K-semistable valuation. 
\end{thm}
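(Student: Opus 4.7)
The plan is to argue by contradiction via Theorem \ref{thm-semin}, which says K-semistability of $(X_0,D_0,\xi_v)$ is equivalent to $\wt_{\xi_v}$ being a minimizer of $\hvol_{(X_0,D_0)}$ on $\Val_{X_0,o}$. By Lemma \ref{lem-irrid} we have $\hvol_{(X_0,D_0)}(\wt_{\xi_v})=\hvol_{(X,D)}(v)$, so it suffices to rule out any valuation on $X_0$ beating $\wt_{\xi_v}$: if such a valuation existed, I would convert it into a valuation $w\in\Val_{X,x}$ with $\hvol_X(w)<\hvol_X(v)$, contradicting the minimality of $v$ on $X$.

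By Theorem \ref{t-kltdeg}, $(X_0,D_0)$ is klt, and by Theorem \ref{thm-minKol} applied to this $T$-singularity, if $\wt_{\xi_v}$ fails to minimize then there exists a $T$-equivariant Koll\'ar component $F$ over $o\in X_0$ with $\hvol_{X_0}(\ord_F)<\hvol_{X_0}(\wt_{\xi_v})$. As in Section \ref{ss-kollar}, $F$ induces a $T$-equivariant special degeneration of $X_0$ to the klt log Fano cone $(Y,E):=(\Spec\gr_{\ord_F}(\gr_v R),E_0)$ carrying a good $T':=T\times\bC^*$-action. Writing $\xi_F\in\ft'^{+}_{\bR}$ for the $\bC^*$-Reeb vector induced by $F$, Lemma \ref{l-degpre} yields $\hvol_Y(\wt_{\xi_v})=\hvol_{X_0}(\wt_{\xi_v})$ and $\hvol_Y(\wt_{\xi_F})=\hvol_{X_0}(\ord_F)$, hence $\hvol_Y(\wt_{\xi_F})<\hvol_Y(\wt_{\xi_v})$. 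By the strict convexity of $\vol$ on the slice $\{A_{(Y,E)}=1\}\cap\ft'^{+}_{\bR}$ (Proposition \ref{prop-Tconvex}), for every sufficiently small rational $\epsilon>0$ the vector $\zeta:=\xi_v+\epsilon\,\xi_F\in\ft'^{+}_{\bR}$ satisfies $\hvol_Y(\wt_\zeta)<\hvol_Y(\wt_{\xi_v})=\hvol_X(v)$.

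The remaining step is to pull this improvement back to $X$: I will construct a quasi-monomial $w\in\Val_{X,x}$ with $\gr_w R\cong \gr_{\ord_F}(\gr_v R)$ and $\hvol_X(w)=\hvol_Y(\wt_\zeta)$. The candidate $w$ is the composite of $v$ with $\ord_F$ in the sense of Remark \ref{rem-composite}, combining the two filtrations with weights $(1,\epsilon)$; its associated graded ring is $\gr_{\ord_F}(\gr_v R)$, an integral domain of Krull dimension $\dim R$, so by Lemma \ref{lem-quasi} the composite filtration defines a quasi-monomial valuation on $R$. Applying Lemma \ref{lem-irrid} successively across the two consecutive degenerations yields
\begin{equation}\label{e-doubledeg}
\hvol_X(w)=\hvol_Y(\wt_\zeta)<\hvol_X(v),
\end{equation}
contradicting the minimality of $v$. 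The main technical obstacle is precisely this double-degeneration identification: one must check that the composite filtration is a genuine valuation on $R$ with the predicted associated graded ring, and that the log discrepancy and volume of $w$ agree with those of $\wt_\zeta$ on $Y$, extending Lemma \ref{lem-irrid} to the bi-graded setting. A safer variant that bypasses the composite is to approximate $v$ by divisorial Koll\'ar components $v_k$ of $X$ with $\gr_{v_k} R\cong \gr_v R$ (Proposition \ref{p-closekollar}), realize $\wt_\zeta$ via the chain of test configurations associated to $v_k$ and $F$, and take limits $k\to\infty$ only after the strict inequality for $\zeta$ has been secured.
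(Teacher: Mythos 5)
Your proposal is correct and follows essentially the same strategy as the paper's proof: reduce to $T$-equivariant Koll\'ar components over $X_0={\rm Spec}(\gr_v R)$, set up the double degeneration (from $X$ to $X_0$ via $v$, then from $X_0$ to $Y$ via the Koll\'ar component), and contradict minimality of $v$ on $X$ by pulling back the improvement through a compressed composite filtration $w_\epsilon$. The contradiction framing and the appeal to convexity are cosmetic variants of the paper's direct computation of the Futaki derivative, and you correctly flag the genuine technical heart of the argument---verifying that for $\epsilon$ small the compression of the rank-two composite is a quasi-monomial valuation on $R$ whose associated graded ring is the coordinate ring of $Y$ and which satisfies the volume and log-discrepancy identities of Lemma \ref{lem-irrid}---which is exactly what the paper's Rees-algebra and Gr\"{o}bner-basis construction carries out in detail.
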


The proof of this theorem is similar to the case of Koll\'ar component minimizer as in Section 6 of \cite{LX16}. For reader's convenience, we include a brief sketch here. 

\begin{proof}Using the notation, we can assume the quasi-monomial valuation $v=v_{\alpha}$ on a log smooth model with the weight $\alpha=(\alpha_1,...,\alpha_r)$ and $\alpha_1$,..., $\alpha_r$ are $\mathbb{Q}$-linearly independent. Let $\Phi$ and $\Phi^g$ denote the valuative semigroup and valuative group of $v$. Denote by $\cR$ the extended Rees algebra:
\begin{equation}
\cR=\mathcal{R}_v= \bigoplus_{\phi\in \Phi^g} \fa_{\phi}(v) t^{-\phi} \subset \mathcal{R}[t^{\Phi^g}]. 
\end{equation}
Then $\cR$ is faithfully flat over $\Spec(\bC[\Phi])$ (\cite[Proposition 2.3]{Tei03}). The central fiber $X_0$ is given by $\Spec(\gr_vR)$ where 
\[
\gr_v R=\bigoplus_{\phi\in \Phi^g}\left(\fa_{\phi}(v)/\fa_{>\phi}(v) \right).
\]
Let $\xi_0=\xi_v$ denote the induced valuation on the central fiber $X_0$ as in Definition \ref{d-specialreeb}. 

For any Koll\'ar component $S$ over $o'\in (X_0,D_0)$,
let $(\cY, \cD', \xi_0; \eta)$ be the associated special test configuration which degenerates $(X_0, D_0)$ to an orbifold cone $(Y_0, D'_0)$ over $S$, i.e., $S=Y_0/ \langle e^{\bC \xi_{S}}\rangle $. By Proposition \ref{prop-testKollar}, it suffices to show that 
 \[
 \Fut(\cY, \cD',\xi_0;\eta)\ge 0,
 \]
 for any such special test configurations.
 
Let $\Phi\subset \mathbb{R}_{\ge 0}$ be the valuative monoid of $v$. Then we have a $\Phi\times \mathbb{Z}_{\ge 0}$-valued function on $R$.
\begin{eqnarray*}
w: R& \longrightarrow & \Phi\times \mathbb{Z}_{\ge 0}\\
f & \mapsto &\Large( v(f), \ord_{S}(\rin(f)) \Large).
\end{eqnarray*}
We give $ \Gamma:=\Phi\times \mathbb{Z}_{\ge 0}\subset \mathbb{R}_{\ge 0}\times \mathbb{R}_{\ge 0}$ the lexicographic  order $(m_1, u_1)<(m_2, u_2)$ if and only if $m_1<m_2$, or $m_1=m_2$ and $u_1<u_2$. If we denote
\[
\gr_w R=\bigoplus_{(m, u)\in \Gamma} R_{\ge (m, u)}/ R_{> (m, u)},
\]
then $Y_0={\rm Spec}_{\bC} \left(\gr_w R\right)$. 

Also denote:
\[
A=\bigoplus_{m\in \Phi} R_{\ge m}/R_{>m}=:\bigoplus_{m\in  \Phi} A_m.
\]
Then ${\rm Spec} (A)=X_0$. Moreover if we define the extended Rees ring of $A$ with respect to the filtration associated to $\ord_S$:
\[
\mathcal{A}=\bigoplus_{k\in \bZ} \cA_k:=\bigoplus_{k\in \bZ} \mathfrak{b}_k t^{-k} \subset A[t, t^{-1}],
\]
where $\mathfrak{b}_k=\{f\in A; \ord_S(f)\ge k\}$. Then the flat family $\cY\rightarrow \bA^1$ is given by the ${\rm Spec}_{\bC[t]}\left(\mathcal{A}\right)$. In particular, we have
\[
\cA \otimes_{\bC[t]}\bC[t, t^{-1}]\cong A[t, t^{-1}], \quad \cA \otimes_{\bC[t]}\bC[t]/(t)\cong \gr_v R.
\]

\bigskip

Pick up a set of homogeneous generators $\bar{f}_1, \dots, \bar{f}_p$ for $\gr_w R$ with $\deg(\bar{f}_i)=(m_i, u_i)$. Lift them to generators $f_1, \dots, f_p$ for $A$ such that $f_i\in A_{m_i}$. 
Set $P=\mathbb{C}[x_1, \dots, x_p]$ and give $P$ the $\Gamma$-grading by $\deg(x_i)=(m_i, u_i)$ so that the surjective map 
$$P\rightarrow \gr_v R \qquad\mbox{  by }\qquad x_i\mapsto f_i$$ is a map of graded rings. 
Let $\bar{g}_1, \dots, \bar{g}_q\in P$ be a set of homogeneous generators of the kernel and set $\deg(\bar{g}_j)=(n_j, v_j)$. 

Since $\bar{g}_j(\bar{f}_1, \dots, \bar{f}_p)=0 \in {\rm gr}_wR$, it follows 
$$\bar{g}_j(f_1, \dots, f_p) \in (A_{n_j})_{>v_j} \qquad \mbox{ for each } j.$$ 
By
the flatness of $\cA$ over $\bC[t]$, there exists $g_j\in \bar{g}_j+(P_{n_j})_{> v_j}$ such that $g_j(f_1, \dots, f_p)=0$ for $1\le j\le q$. So $\{g_j\}$ form a Gr\"{o}bner basis of $J$ with respect to the order function $\ord_S$, where $J$ is kernel 
surjection $P\rightarrow A$. In other words, if we let $K=(\bar{g}_1, \dots, \bar{g}_q)$ denote the kernel $P\rightarrow \cA_0$. Then $K$ is the initial ideal of $J$ with respect to the order determined by 
$\ord_{S}$. As a consequence, we have:
\[
\cA=P[\tau]/(\tilde{g}_1, \dots, \tilde{g}_q),
\]
where $\tilde{g}_j=\tau^{v_j} g_j(\tau^{-u_1} x_1, \dots, \tau^{-u_p} x_p)$.

\bigskip

Now we lift $f_1, \dots, f_p$ to generators $F_1, \dots, F_p$ of $R$. Then we have: $g_j(F_1, \dots, F_p)$ lies in $R_{> n_j}$. By flatness of $\cR$ over $\bC[\Phi]$, there exist $G_j\in g_j+P_{> n_j}$ such that $G_j(F_1, \dots, F_p)=0$. Let $I$ be the kernel of $P\rightarrow R$, then $\{G_j\}$ form a Gr\"{o}bner basis with respect to the order function $v$ and the associated initial ideal is $J$. 


\bigskip

Given the above data, we know that there is a action of  $T:=(\bC^*)^{r+1}=(\mathbb{C}^*)^r\times \bC^*$-action on $\bC^p$. The valuation $v$ corresponds to a linear holomorphic vector field $\xi_0$ with an associated weight function denoted by $\lambda_0$. $\ord_S$ corresponds to another linear holomorphic vector field $\xi_S$ on $\bC^p$ whose associated weight function will be denoted by $\lambda_\infty$.

Notice that because $v$ is a real valuation, $\Phi^g$ is a subgroup of $\bR$. We denote by $C\subset \Phi^g\times\bZ\subset \bR\times\bZ$ the finite set consisting of the differences $(n'_j, v'_j)-(n_j, v_j)$. Let $M$ be a positive integer that is larger than all coordinates of $(m,u)-(n,v)$ for all pairs of elments $(m,u), (n,v)\in C$ and let $\epsilon$ be sufficiently small such that
$1>M\epsilon$. Denote by $e_0^*$ (resp. $e_1^*$) the projection of $\bR\times\bZ$ to $\bR$ (resp. $\bZ$) and define:
\[
\lambda_\epsilon=e_0^*-\epsilon e_1^*: \bR\times\bZ\rightarrow \bR.
\]
Then for $\epsilon$ sufficiently small, $\lambda_\epsilon: \bR\times\bZ\rightarrow \bR$ satisfies $0<\lambda_\epsilon(n_j, v_j)< \lambda_\epsilon(n'_j, v'_j)$. As a consequence, the linear holomorphic vector field $\xi_\epsilon\in \ft^+_\bR$ associated with $\lambda_\epsilon$ degenerates both $X$ and $X_0$ to $Y_0$. On the other hand, the weight function $\lambda_\epsilon$ determines a filtration on $P=\bC[x_1,\dots, x_p]$ which in turn induces quotient filtrations on $R$ and $\gr_v R$. Because the associated graded rings of the filtrations are both isomorphic to $\gr_wR$, by Lemma \ref{lem-quasi} $\lambda_\epsilon$ induces a quasi-monomial valuation over $X$ and a quasi-monomial valuation over $X_0$, both of which will be denoted by $w_\epsilon$.

Since $\lambda_\epsilon$ is linear with respect to $\epsilon$, $\{\xi_\epsilon\}$ is a ray in $\ft^+_\bR$ emanating from $\xi_0$. Denoting $\eta=-\frac{d}{d\epsilon}|_{\epsilon=0}\xi_\epsilon\in \ft_\bQ$, we then get a special test configuration $(\cX,\cD,\xi_0;\eta)$ of $(X,D)$ to $(Y_0,D'_0)$, and also a special test configuration of $(X_0,D_0)$ to $(Y_0,D'_0)$.

By Lemma \ref{lem-irrid}, we have the identities of normalized volumes:
$$\hvol_{(X,D)}(w_\epsilon)=\hvol_{(X_0,D_0)}(w_\epsilon)=\hvol_{(Y_0,D'_0)}(\wt_{\xi_\epsilon})=\hvol_{(Y_0,D'_0)}(\xi_\epsilon).$$

Now we use the minimizing assumption: $\hvol_{(X,D)}(w_\epsilon)\ge \hvol_{(X,D)}(v)$, which via the above identities gives $\hvol_{(Y_0,D'_0)}(\xi_\epsilon)\ge \hvol_{(Y_0,D'_0)}(\xi_0)$. 
So we get:
\begin{equation}\label{e-doubledeg}
\Fut(\cY,\cD',\xi_0;\eta)=D\hvol_{(Y_0,D_0)}\cdot (-\eta)=\left.\frac{d}{d\epsilon}\right|_{\epsilon=0}\hvol_{(X,D)}(w_\epsilon)\ge 0.
\end{equation}
Because $S$ is an arbitrary Koll\'{a}r component over $(X_0,D_0)$, we get $(X_0,D_0; \xi_0)$ is K-semistable. As discussed above, this implies $v$ is indeed K-semistable.
\end{proof}
\begin{rem}\label{r-general}
The above argument works as long as we can find an equivariant degeneration $Y_0$ of $X_0={\rm Spec}(\gr_v (R))$, such that we can define the normalized volume on the toric valuations for $\wt_{\xi}$ where $\xi\in \ft_{\mathbb{R}}(Y)$, since we still have the convexity of the normalized volumes in this setting (see Proposition \ref{prop-Tconvex}).
\end{rem}
\subsection{Uniqueness in general}\label{s-uniqueness}

In this section, we will verify the uniqueness of  quasi-monomial minimizer if we assume one of them has a finitely generated associated graded ring. This assumption is always fulfilled when the minimizer is divisorial \cite{Blu16, LX16} and is conjectured to hold in general. 

\bigskip

Assume a quasi-monomial valuation $v$ minimizes $\hvol_{(X,D)}$ with a finitely generated associated graded ring. Then $(X_0,D_0, \xi_v)$ is a K-semistable log Fano cone singularity by Theorem \ref{t-Ksta}. By Lemma  \ref{l-finiteass}, there is a divisorial valuation $v'$ which after scaling is sufficiently close to $v$ and satisfies that $${\rm gr}_v(R)\cong {\rm gr}_{v'}R .$$ 

$X_0={\rm Spec}({\rm gr}_{v'}R)$ is the central fiber of a special test configuration $\mathcal{X}\to \mathbb{A}^1$. 

\bigskip

Let $w$ be a quasi-monomial minimizer of $\hvol_{X,x}$ over $x\in X$. There exists a weak log canonical model $Y\to X$ constructed Theorem \ref{t-wlcmodel} with the exceptional divisor $\sum^r_{i=1}S_i$ on which $w$ is computed. 
Denote by $\{\fa_{\bullet}\}$ the valuative graded ideal sequence of $w$. As in \cite[Section 3]{LX16}, we can degenerate $\{\fa_{\bullet}\}$ to $\{\fb_{\bullet}\}:=\{\bin_v(\fa_{\bullet})\}$ to get a flat family of ideal sequences, and we have
\begin{eqnarray*}
\hvol_{(X_0,D_0)}(\xi_v)& =& \hvol_{(X,D)}(v) \\
  &=& \mult(\fa_{\bullet})\cdot \lct^n  (\fa_{\bullet})\\
    &\ge & \mult(\fb_{\bullet})\cdot \lct^n  (\fb_{\bullet}).
\end{eqnarray*}
Moreover, since $(X_0,D_0,\xi)$ is K-semistable, we know $\wt_{\xi}$ minimizes $\vol_{X_0,D_0}$, thus the last inequality is indeed an equality. As  $\mult(\fa_{\bullet})=\mult(\fb_{\bullet})$, we indeed have  $\lct(\fa_{\bullet})=\lct(\fb_{\bullet})$ and we denote it by $c$.

 \begin{lem}\label{l-degquasi}
There is a $\mathbb{Q}$-factorial equivariant  family $\tilde{\mu}\colon \mathcal{Y}\to \mathcal{X}$ over $ \mathbb{A}^1$, whose general fiber  gives $Y\to X$. Furthermore,
$(\mathcal{Y}, \mu_*^{-1}\mathcal{D}+ \sum \mathcal{S}_i+Y_0)$  is log canonical and $Y_0$ is irreducible.
 \end{lem}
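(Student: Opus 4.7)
The overall strategy is to perform an equivariant version of the MMP construction from Theorem \ref{t-wlcmodel} on the total space $\mathcal{X}$ of the special test configuration associated to $v$, and then to deduce irreducibility of $Y_0$ from the K-semistability of $(X_0,D_0,\xi_v)$.

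First, I would construct a $T$-equivariant flat extension $\tilde{\fa}_\bullet$ of the valuative graded ideal sequence $\fa_\bullet$ of $w$ to $\mathcal{X}$. This can be done via the Rees algebra of $v$: letting $\mathcal{R}=\bigoplus_{k\in\Phi^g}\fa_k(v)t^{-k}$, one sets $\tilde{\fa}_j:=\bigoplus_{k}(\fa_j\cap\fa_k(v))t^{-k}\subset\mathcal{R}$, producing an equivariant graded sequence whose fibre over $t\neq 0$ is $\fa_\bullet$ and whose central fibre is $\bin_v(\fa_\bullet)=\fb_\bullet$. The computation preceding the lemma shows that $\lct(\mathcal{X},\mathcal{D};\tilde{\fa}_\bullet)=c$ is fibrewise constant and achieves the same normalized volume on both fibres. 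Applying the toroidal rational approximation of Lemma \ref{l-extractdivisor} in families and running an equivariant relative MMP in the style of \cite{BCHM10, LX16, Xu14} then extracts divisors $\mathcal{S}_1,\dots,\mathcal{S}_r$ on a $\mathbb{Q}$-factorial model $\tilde{\mu}\colon\mathcal{Y}\to\mathcal{X}$, whose restrictions over the general fibre recover the $S_i$ of $Y\to X$. The lc property of $(\mathcal{Y},\tilde{\mu}_*^{-1}\mathcal{D}+\sum\mathcal{S}_i)$ then follows from ACC \cite{HMX14} exactly as in the proof of Theorem \ref{t-wlcmodel}, and inversion of adjunction along $X_0\subset\mathcal{X}$ (using that $(X_0,D_0)$ is klt by Theorem \ref{t-kltdeg}) upgrades this to lc-ness of the enlarged pair $(\mathcal{Y},\tilde{\mu}_*^{-1}\mathcal{D}+\sum\mathcal{S}_i+Y_0)$.

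The main obstacle is the irreducibility of $Y_0$. If instead $Y_0=\sum_{j=1}^s Y_0^{(j)}$ with $s>1$, then equivariance of the construction together with the degeneration of $\fa_\bullet$ to $\fb_\bullet$ would produce several distinct $T$-invariant quasi-monomial valuations on $X_0$, each computing $\lct(X_0,D_0;\fb_\bullet)$ and therefore each minimizing $\hvol_{(X_0,D_0)}$. But $(X_0,D_0,\xi_v)$ is K-semistable by Theorem \ref{t-Ksta}, and Theorem \ref{thm-uniquefano} then forces any such $T$-invariant quasi-monomial minimizer to be a rescaling of $\wt_{\xi_v}$; this contradicts the existence of multiple components of $Y_0$ giving genuinely different valuations on $X_0$. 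The subtle technical work I expect is to verify carefully (a) that the equivariant MMP can be arranged so that the degenerated valuations on $X_0$ induced by the components of $Y_0$ are genuinely $T$-invariant and quasi-monomial (this should follow from performing the MMP on a $T$-equivariant log resolution and tracking centers of the extracted divisors), and (b) that each of them actually minimizes $\hvol_{(X_0,D_0)}$ (this should follow from the identity $\mathrm{mult}(\fa_\bullet)\lct^n(\fa_\bullet)=\mathrm{mult}(\fb_\bullet)\lct^n(\fb_\bullet)=\hvol_{(X,D)}(v)$ combined with lower semicontinuity of $\lct$ in families). Once these two ingredients are in place, Theorem \ref{thm-uniquefano} closes the argument and gives $s=1$.
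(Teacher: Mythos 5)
Your construction of the equivariant family $\mathcal{Y}\to\mathcal{X}$ and the proof of the log canonical property follow the same outline as the paper's argument: degenerate $\fa_\bullet$ equivariantly via the extended Rees algebra to get the family of ideals $\overline{\fa}_m$, extract the $\mathcal{S}_i$ by an equivariant relative MMP, and conclude lc-ness of $(\mathcal{Y},\tilde\mu_*^{-1}\mathcal{D}+\sum\mathcal{S}_i+Y_0)$ by combining inversion of adjunction along $X_0$ (using Theorem~\ref{t-kltdeg}, so that $(X_0,D_0)$ is klt) with the ACC of log canonical thresholds. This part is essentially correct and matches the paper.

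However, the argument you propose for the irreducibility of $Y_0$ is both more elaborate than needed and problematic in its details. You want to show that each component of a hypothetically reducible $Y_0$ would yield a $T$-invariant quasi-monomial minimizer of $\hvol_{(X_0,D_0)}$, contradicting Theorem~\ref{thm-uniquefano}. But the components of $Y_0$ are prime divisors on $\mathcal{Y}$ lying inside the central fiber of $\mathcal{Y}\to\mathbb{A}^1$, not divisors over $X_0$, and you do not explain how to convert them into valuations in $\Val_{X_0,o'}$ that compute $\lct(X_0,D_0;\fb_\bullet)$; that conversion is exactly where the argument is missing content, and it is far from obvious. Only one component of $Y_0$ can even be birational to $X_0$ (the strict transform), so the others, if they existed, would not give valuations on $\bC(X_0)$ at all. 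The actual reason $Y_0$ is irreducible is elementary and needs neither K-semistability nor the uniqueness theorem: the morphism $\tilde{\mu}$ extracts precisely the divisors $\mathcal{S}_i$, which are the $(T\times\mathbb{C}^*)$-equivariant closures of $S_i\times\mathbb{C}^*$ (cf.\ the construction in Proposition~\ref{p-Tequiv}) and therefore dominate $\mathbb{A}^1$. Hence none of them is a component of the central fiber, so $Y_0$ is the strict transform of $X_0=\Spec(\gr_v R)$ with multiplicity one, and it is irreducible because $\gr_v R$ is an integral domain.
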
 
 \begin{proof}
By Lemma \ref{l-extractdivisor}, for any $\epsilon$,  we can choose $S_j$ and $\epsilon_0$ such that $a(S_j, X, D+(1-\epsilon_0 )c\cdot \frac{1}{m} \fa_m)<\epsilon$ for any sufficiently large $m$. For $\fa_m$, we let $\overline{\fa}_m$ be the family on $\mathcal{X}$, which degenerates $\fa_m$ to its initial ideal.  Then $S_i$ in Theorem \ref{t-wlcmodel} will induce divisors $\mathcal{S}_i$ over $\mathcal{X}$ such that 
 $$a(S_j, X, D+(1-\epsilon_0 )c\cdot \frac{1}{m} \fa_m)=a(\mathcal{S}_j, \mathcal{X}, \mathcal{D}+(1-\epsilon_0 )c\cdot \frac{1}{m} \overline{\fa}_m).$$

It then follows from the standard approximation  that for any $\epsilon_0>0$ we can assume $m$ sufficiently large such that $(X_0, D_0+(1-\epsilon_0 )c\cdot \frac{1}{m} \bin(\fa_m))$ is log canonical. By inversion of adjunction, this implies that  
$$(\mathcal{X},\mathcal{D}+X_0+(1-\epsilon_0 )c\cdot \frac{1}{m} \overline{\fa}_m)$$ is log canonical. 
In particular, we know that we can find $\mathcal{Y}\to \mathcal{X}$ which precisely extracts the divisors $\mathcal{S}_i$ such that a general fiber yields $Y$.

It remains to show that $(\mathcal{Y}, \mu_*^{-1}\mathcal{D}+ \sum \mathcal{S}_i+Y_0)$  is log canonical. Again by ACC of log canonical thresholds (\cite{HMX14}), it suffices to show that for the constant $\beta$ chosen in Theorem \ref{t-wlcmodel}, $(\mathcal{Y}, \mu_*^{-1}\mathcal{D}+ \beta\cdot \sum \mathcal{S}_i+Y_0)$ is log canonical. But this is implied by the fact that the log pull back of 
 $K_\mathcal{Y}+\mu_*^{-1}\mathcal{D}+\beta \sum \mathcal{S}_i$ is less or equal to the log pull back of $K_\mathcal{X}+\mathcal{D}+(1-\epsilon_0 )c\cdot  \frac{1}{m} \overline{ \fa}_m$.
 \end{proof}

Then we can define a  quasi-monomial valuation $w_0$ as in Definition-Proposition \ref{dp-deg} over $X_0$.

\begin{def-prop}\label{dp-deg}Let $\eta_0$ be the generic point of a component of $\overline{\eta}\cap Y_0$. 
Let $T_i$ be the (not necessarily irreducible) reduction  divisor of $\mathcal{S}_i$ at the generic point $\eta_0$.  Then $\ord_{T_1}$, $\ord_{T_2}$,..., and $\ord_{T_r}$  generate a rank $r$ sublattice in $\Val_{Y_0,\eta_0}$. 
Furthermore, we can define a quasi-monomial valuation $v_0$ over $\eta_0$ which is of rational rank $r$, such that $v_0(T_i)=\alpha_i$.
\end{def-prop}
\begin{proof}By \cite[Proposition 34]{dFKX17} we know that  $(\mathcal{Y}, Y_0+\sum \mathcal{S}_i+\tilde{\mu}_*^{-1}\mathcal{D})$ is q-dlt at the generic point of the log canoincal center given by an irreducible component $Z_0$ of $Y_0\cap \mathcal{Z}$, so we can define such a quasi-monomial valuation $v_0$ over $\eta$. 
\end{proof}
The following lemma implies that such a degeneration is indeed uniquely determined.

\begin{lem}\label{l-valdeg}
Let $w_0$ be a degeneration of a quasi-monomial minimizer $w$. 
Then for any function $f\in R$, we have 
$$w(f)=w_0(\bin(f)) 
. 
$$
\end{lem}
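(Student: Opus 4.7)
The plan is to view both $w$ and $w_0$ as restrictions of a single quasi-monomial valuation $\tilde w$ on the equivariant total space $\mathcal{Y}\to\mathbb{A}^1$ from Lemma \ref{l-degquasi}, and derive the identity by a local computation at the q-dlt stratum combined with the $\mathbb{C}^*$-equivariance of the test configuration.

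First, Lemma \ref{l-degquasi} guarantees that $(\mathcal{Y},\tilde\mu^{-1}_*\mathcal{D}+\sum_i\mathcal{S}_i+Y_0)$ is q-dlt at a generic point $\tilde\eta_0$ of a component of $\bigcap_i\mathcal{S}_i\cap Y_0$ lying over $\eta_0$. Let $\tilde\eta$ be the generic point of the unique component of $\bigcap_i\mathcal{S}_i$ specializing to $\tilde\eta_0$, and let $\tilde w$ be the quasi-monomial valuation on $\mathcal{Y}$ at $\tilde\eta$ with weights $\alpha_i$ on $\mathcal{S}_i$. Since $\tilde\eta\not\subset Y_0$, $\tilde w(t)=0$, and a direct local comparison with the model $Y\to X$ defining $w$ (of which $\mathcal{Y}\to\mathcal{X}$ is a family with $\mathcal{S}_i|_{\text{gen.\ fib.}}=S_i$) yields $\tilde w(f)=w(f)$ for every $f\in R$.

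Next, form the Rees-algebra lift $\bar f:=f\cdot t^{-v(f)}\in\mathcal{R}_v$. It is regular on $\mathcal{X}$ (hence on $\mathcal{Y}$), restricts on $Y_0$ to $\bin_v(f)\in R_0$, and is a $\mathbb{C}^*$-eigenvector of weight $-v(f)$ for the action generated by $\xi_v$. Since $\tilde w(t)=0$, one has $\tilde w(\bar f)=\tilde w(f)-v(f)\tilde w(t)=w(f)$. In the q-dlt chart at $\tilde\eta_0$ --- an abelian-quotient of an snc chart with parameters $\tilde z_i$ for $\mathcal{S}_i$ and $\tilde t$ for $Y_0$ --- expand $\bar f=\sum_{\beta,k}c'_{\beta,k}\tilde z^\beta\tilde t^k$ with $c'_{\beta,k}\in\kappa(\tilde\eta_0)$. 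The $\mathbb{C}^*$-eigenvector condition confines all nonzero monomials to a single affine weight hyperplane $\sum_i a_i\beta_i+k=-v(f)$, where $a_i$ is the $\mathbb{C}^*$-weight of $\tilde z_i$ and that of $\tilde t$ is $1$ up to units; the restriction $\bar f|_{Y_0}$ then collects exactly the $k=0$ monomials, which recover $\bin_v(f)$ in terms of the local equations of $T_i$ (with the multiplicity bookkeeping of Definition \ref{d-quasi}). Because $\tilde w(\tilde t)=0$, both quantities become explicit minima of $\sum_i\alpha_i\beta_i$ over subsets $E_1\supseteq E_0$ of admissible $\beta$'s, giving the easy inequality $\tilde w(\bar f)\le w_0(\bin_v(f))$ immediately.

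The main obstacle is the reverse inequality, i.e., showing that a $\tilde w$-minimizing term of $\bar f$ cannot disappear on restriction to $Y_0$. I would derive this from the interplay of the positivity of the $\alpha_i$, the $\mathbb{C}^*$-weight equation, and the minimizing properties of $w$ and $\xi_v$ (the former via its characterization through $\mult(\fa_\bullet)\lct^n(\fa_\bullet)$, the latter via Theorem \ref{t-Ksta} and Theorem \ref{thm-uniquefano}): if a $\tilde w$-minimal term had $k>0$, transferring this phenomenon back to $(X_0,D_0)$ would produce a quasi-monomial valuation with strictly smaller normalized volume than $\wt_{\xi_v}$, contradicting the K-semistability of $(X_0,D_0,\xi_v)$ established earlier. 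Combining with the previous two steps then yields $w(f)=\tilde w(\bar f)=w_0(\bin_v(f))$, as claimed.
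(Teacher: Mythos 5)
Your proposal correctly establishes the easy direction $w(f)\le w_0(\bin(f))$ (your ``min over $E_1\supseteq E_0$'' observation), and the equivariant Rees-algebra framing and local $\mathbb{C}^*$-eigenvector constraint at the q-dlt stratum are both sensible and in the spirit of the paper's earlier constructions. But the heart of the lemma is the reverse inequality, and there your argument has a genuine gap. You explicitly acknowledge not knowing how to rule out a $\tilde w$-minimal monomial of $\bar f$ with $\tilde t$-exponent $k>0$, and the sketch you offer --- that such a term would ``produce a quasi-monomial valuation with strictly smaller normalized volume than $\wt_{\xi_v}$, contradicting K-semistability'' --- does not go through as stated. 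You never specify which valuation on $(X_0,D_0)$ would be produced, and in fact no such valuation arises directly: $w_0$ itself has $\hvol_{(X_0,D_0)}(w_0)=\hvol_{(X_0,D_0)}(\wt_{\xi_v})$ by the equality chain already established before the lemma, so $w_0$ cannot serve as the competitor. The local eigenvector hyperplane $\sum_i a_i\beta_i+k=-v(f)$ constrains the $\mathbb{C}^*$-weights $a_i$, not the $\tilde w$-weights $\alpha_i$, so nothing in the local picture alone forbids a $k>0$ term from strictly dominating.

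The paper's proof closes this exact gap by a global counting argument rather than a local one. It first records that $\vol(w)=\vol(w_0)$ (this comes out of the ideal degeneration $\fa_\bullet\rightsquigarrow\fb_\bullet=\bin_v(\fa_\bullet)$ and the minimality of $\wt_{\xi_v}$ on $(X_0,D_0)$, established in the paragraph preceding the lemma). Then, assuming some $g$ with $w_0(\bin g)=l>s=w(g)$ and setting $r=l-s>0$, it produces, inside each quotient $\fa_k/\bin(\fb_k)$, the family $\{\bin(g^m\cdot g^{(j)}_m)\}$ where the $g^{(j)}_m$ span $\fb_{k-ml}/\fb_{k-ml+r}$; Step~1 shows these land in $\fa_k$, and Step~2 (via the isomorphism $R/\fb_k\cong\gr_vR/\bin(\fb_k)$ from \cite[Lemma~4.1]{LX16}) shows they are linearly independent modulo $\bin(\fb_k)$. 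Summing over $m$ and letting $k\to\infty$ gives $\vol(w)>\vol(w_0)$, the desired contradiction. If you want to repair your approach, you would in effect have to reproduce this leading-order dimension count --- the local q-dlt expansion can exhibit a problematic monomial, but only the global volume identity $\vol(w)=\vol(w_0)$ forces it to be absent, and you should invoke that identity directly rather than route through K-semistability.
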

\begin{proof}
We easily see $w(f)\le w_0(\bin(f))$ and now we assume $w(f)< w_0(\bin(f))$ from some $f$ and we will argue this is contradictory to the fact that $\vol(w)=\vol(w_0)$ as  in the proof of \cite[Proposition 2.3]{LX16}.

We prove it by contradiction. Assume this is not true, we fix $g\in R$ such that 
$$w_0(\bin(g))=l>w(g)=s.$$
Denote by $r= l-s>0.$ Fix $k\in \mathbb{R}_{>0}$. Consider 
$$\fa_k:=\{f_0\in {\rm gr}_vR | \ w_0(f_0)\ge k\}\qquad\mbox{ and} \qquad\fb_k:=\{f\in R |\  w(f)\ge k\}. $$
So $\bin(\fb_k)\subset \fa_k$, and we want to estimate the dimension of 
\begin{eqnarray*}
\dim(R/{\fb_k})-\dim({\rm gr}_vR/{\fa_k})&= & \dim({\rm gr}_vR/{\bin(\fb_k}))-\dim({\rm gr}_vR/{\fa_k})\\
 &=&\dim (\fa_k/\bin(\fb_k)).
\end{eqnarray*}
Fix a positive integer $m<\frac{k}{l}$ and elements
$$g^{(1)}_{m},...,g^{(k_m)}_{m} \in \fb_{k-ml}$$
whose images in $\fb_{k-ml}/\fb_{k-ml+r}$ form a $\mathbb{C}$-linear basis.

We claim that 
$$\{\bin(f^m\cdot  g^{(j)}_{m})\}\ \ (1\le m \le \frac{k}{l}, 1\le j\le k_m)$$ 
are $\mathbb{C}$-linear independent in $\fa_k/\fb_k$. Granted this for now, 
we know since $\vol(v)>0$, then 
$$\limsup_{\lim k\to \infty} \frac{1}{k^n}\sum_{1\le m \le \frac{k}{l}}k_m=\limsup_{\lim k\to \infty} \sum_{1\le m\le \frac{k}{l}} \frac{1}{k^n} \dim (\fb_{k-ml}/\fb_{k-ml+r})>0, $$
which then implies $\vol(w')>\vol(w'_0)$ and yields a contradiction.

\bigskip

Now we prove the claim.

\noindent{\bf Step 1:} For any $1\le m \le \frac{k}{l}, 1\le j\le k_m$,
\begin{eqnarray*}
w_0(\bin(f^m\cdot g^{(j)}_{m}))&=&w_0(\bin(f^m))+w_0(\bin(g^{(j)}_{m}))\\
 &\ge& ml+w(g^{(j)}_{m})\\
 &\ge & ml+k-ml\\
 &\ge &k.
 \end{eqnarray*}
 Thus $\bin(f^m\cdot g^{(j)}_{m})\in \fa_k$. 
 
 \vspace{3mm}
 
 \noindent{\bf Step 2:} Since taking the initial induces an isomorphism of $\mathbb{C}$-linear spaces between $R/\fb_k$ and ${\rm gr}_vR/\bin(\fb_k)$ (\cite[Lemma 4.1]{LX16}), to show $\bin(f^m\cdot  g^{(j)}_{m})$ is linearly independent, it suffices to show that $f^m\cdot  g^{(j)}_{m}$ is linearly independent in $R/\fb_k$. This is verbatim  the same as Step 2 in the proof of \cite[Proposition 2.3]{LX16} once we replace $v$ by $w$. 
 \end{proof}

\begin{prop}\label{p-Tequiv}
Let $x\in X$ be a $T$-singularity. Assume a minimizer $v$ of $\hvol_{X,x}$ is quasi-monomial, then $v$ is $T$-invariant. 
\end{prop}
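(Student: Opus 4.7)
The strategy is to show that the weak log canonical model produced in Theorem \ref{t-wlcmodel} can be constructed $T$-equivariantly, after which $T$-invariance of $v$ falls out from the $T$-invariance of the geometric data defining it. The key leverage is that $T\cong(\bC^*)^r$ is connected, so any finite set of components permuted by $T$ must be fixed pointwise by $T$; this removes essentially every choice that could fail to be equivariant.

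First, I would take a $T$-equivariant log resolution $Z\to X$ dominating the one on which $v=v_\alpha$ is computed, and write $E=\sum E_i$ for its $T$-invariant exceptional divisor (components are $T$-invariant by connectedness of $T$). The rational approximating vectors $s_1,\dots,s_r$ chosen in Lemma \ref{l-extractdivisor} determine toroidal divisors $S_1,\dots,S_r$ obtained from toric blowups along the stratum $\bigcap_i E_i$; because the toroidal structure on $Z$ is $T$-compatible, the $S_j$ are automatically $T$-invariant. Running the MMP in the proof of Theorem \ref{t-wlcmodel} $T$-equivariantly (equivariant relative MMP for klt pairs with torus action is available and is used elsewhere in this paper, cf.\ the proof of Theorem \ref{thm-minKol}), we obtain a $T$-equivariant weak log canonical model $\mu\colon Y^{\rm wlc}\to X$ whose exceptional divisors $S_1,\dots,S_r$ are all $T$-invariant.

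Next, the closed subscheme $\bigcap_j S_j\subset Y^{\rm wlc}$ is $T$-invariant, and connectedness of $T$ forces each of its finitely many irreducible components to be $T$-invariant. By Definition \ref{d-wlc}(d), $v$ is computed at the generic point $\eta$ of one such component, and the generic point of a $T$-invariant irreducible scheme is a $T$-fixed scheme point. Consequently $T$ acts on $\widehat{\mathcal{O}}_{Y^{\rm wlc},\eta}$ preserving each ideal $(z_j)$, where $z_j$ is a local equation for $S_j$; thus $t\cdot z_j=u_j(t)\,z_j$ for units $u_j(t)$. For any $f\in R$ with local expansion $\mu^*f=\sum_\beta c_\beta z^\beta$ as in Definition \ref{d-quasimono}, the action sends this expansion to $\sum_\beta(t\cdot c_\beta)\,u(t)^\beta z^\beta$; evaluating at the $T$-fixed point $\eta$ and noting that units remain nonzero, the set of multi-indices $\beta$ with nonvanishing coefficient is unchanged. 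Therefore $v(t\cdot f)=\min\{\sum_i\alpha_i\beta_i\mid c_\beta(\eta)\neq 0\}=v(f)$, which shows $v$ is $T$-invariant.

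The only genuinely delicate point is the claim that the construction in Theorem \ref{t-wlcmodel} admits a $T$-equivariant refinement; once the log resolution and the toroidal blowups are chosen $T$-equivariantly, this reduces to $T$-equivariant relative MMP, which is standard and has been invoked throughout the paper. Beyond this, the proof is essentially formal: the connectedness of $T$ is what forces every combinatorial choice (which $E_i$ to keep, which component of $\bigcap_j S_j$ carries the center of $v$) to be automatically $T$-invariant, so no further averaging or uniqueness input is needed.
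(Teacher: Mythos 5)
There is a genuine gap, and it appears at the very first step. You propose to ``take a $T$-equivariant log resolution $Z\to X$ dominating the one on which $v=v_\alpha$ is computed.'' Such a $Z$ in general does \emph{not} exist: if $Z\to X$ were $T$-equivariant and factored through $Z_0\to X$, then the strict transforms of the divisors $E_i$ (those computing $v$, which are all exceptional over $x$) would be exceptional divisors of $Z\to X$ and hence $T$-invariant by connectedness of $T$; this would force each $\ord_{E_i}$ to be a $T$-invariant divisorial valuation, which is essentially what one is trying to prove. For a concrete obstruction, take $X=\bC^2$, $T=\bC^*$ acting by $t\cdot(x,y)=(tx,t^2 y)$, and let $v$ be the monomial valuation with respect to $(x,\,y-x)$ with weights $(1,\beta)$, $\beta>1$ irrational. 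This is a quasi-monomial valuation of rational rank $2$ centered at the origin of a $T$-singularity, but it is not $T$-invariant (indeed $v(y-x)=\beta$ while $v(t^2y-tx)=1$ for generic $t$), and the divisor $E_2$ obtained by blowing up the non-$T$-fixed point where the strict transform of $\{y=x\}$ meets the first exceptional curve already gives a non-$T$-invariant $\ord_{E_2}$; no $T$-equivariant resolution can extract it.

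The symptom of the gap is visible in the structure of your argument: the minimizing hypothesis on $v$ is never used. Since there exist non-$T$-invariant quasi-monomial valuations on $T$-singularities (as above), no proof that ignores the minimizing hypothesis can be correct. The paper's argument uses minimization in an essential way. It reduces to $T=\bC^*$, degenerates the valuative ideal sequence $\fa_\bullet$ of $v$ to its $\bC^*$-equivariant initial ideal sequence $\fb_\bullet=\bin(\fa_\bullet)$, and uses the equality $\hvol(v)=\lct^n(\fa_\bullet)\cdot\mult(\fa_\bullet)$ (from minimality) together with lower semicontinuity and flatness to show that $\lct(\fb_\bullet)=\lct(\fa_\bullet)$; this klt control makes a $\bC^*$-equivariant MMP over $X\times\bA^1$ possible, and one then shows that the anti-ample extraction of the closure of $S_i\times\bC^*$ is necessarily a product $Y\times\bA^1$, forcing $S_i$ (and hence the limit $v$) to be $\bC^*$-invariant. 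Your later steps (the $T$-fixedness of the generic point $\eta$ of a stratum of $T$-invariant divisors, and the conclusion that a monomial valuation at such a point is $T$-invariant) are fine as far as they go, but they rest entirely on the unjustified equivariance of the model, so the argument cannot be repaired without bringing in the minimizing hypothesis via something like the paper's degeneration argument.
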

\begin{proof}
It suffices to prove this for $T=\mathbb{C^*}$, since if a valuation $v$ is $\mathbb{C}^*$-equivariant for any $\mathbb{C}^*$, then it is $T$-invariant. 

We have seen that the degeneration sequence $\{\fb_{\bullet}\}:= \{\bin (\fa_{\bullet})\}$ has the same log canonical threshold $c$ as $v$. 
We fix $\epsilon$, and assume that for $ m\in \Phi$ sufficiently large, 
$$\left(X,D, (c-\epsilon)\frac{1}{m}\bin (\fa_m)\right)$$ 
is klt. 

As in the construction of $S_i$, 
and the proof of Theorem \ref{t-wlcmodel} (see Lemma \ref{l-extractdivisor}), we can indeed assume that
$$a_l(S_i; X, D+(c-\epsilon)\frac{1}{m}\fa_m)<1.$$

We are going to show that $S_i$ is equivariant, which then clearly implies $v$ is equivariant.  
We consider the family of ideals $\fa_{m, \mathbb{A}^1}$ which $\mathbb{C}^*$-equivariantly  degenerates $\fa_m$ to $\fb_m$. Since  $\fa_{m, \mathbb{A}^1}$ is $\mathbb{C}^*$-equivariant,  we can construct a $\mathbb{C}^*$-equivariant model $\mathcal{Y}\to \mathcal{X}$, which extracts exactly the closure $\mathcal{S}_i$ of $S_i\times {\mathbb{C}^*}$ as the log discrepancy
$$a_l(S_i\times {\mathbb{C}^*}, X\times \mathbb{A}^1,  D\times \mathbb{A}^1+(c-\epsilon)\frac{1}{m}\fa_{m, \mathbb{A}^1})<1,$$
and $(X\times \mathbb{A}^1,  D\times \mathbb{A}^1+(c-\epsilon)\frac{1}{m}\fa_{m, \mathbb{A}^1})$ is klt. Furthermore, we can assume $\mathcal{S}_i$ is anti-ample over $X\times \mathbb{A}^1$.

Thus $\mathcal{Y}$ is indeed $Y\times \mathbb{A}^1$ where $Y=\mathcal{Y}\times_{\mathbb{A}^1}\{t\}$ for some $t\neq 0$ as they are isomorphic in codimension 1, and both are the anti-ample model of the same divisorial valuation over $X\times \mathbb{A}^1$.  This implies that $S_i$ is equivariant.  
\end{proof}

Now we can complete the proof of Theorem \ref{t-unique}.

\begin{proof}[Proof of Theorem \ref{t-unique}]
For the first part, it is  Theorem \ref{t-kltdeg}.

\medskip

For the second part,  it is  Theorem \ref{t-Ksta}.

\medskip

For the last part, using Proposition \ref{p-Tequiv} and Theorem \ref{thm-uniquefano}, we know that $w_0$ is the same as $\xi_v$ on $(X_0,D_0)$. Then for any $g\in R$, by Lemma \ref{l-valdeg} we have
$$w(g)=w_0(\bin (g))=v_0(\bin (g))=v(g),$$
thus $w=v$.
\end{proof}

\part{Singularities on GH limits}\label{p-dg}

\section{Canonicity of the semistable cone }\label{sec-DS}

\subsection{Metric tangent cones and valuations}\label{ss-DS}

Let $(M_i, \omega_i)$ be a sequence of smooth K\"{a}hler-Einstein Fano manifolds. By Gromov's compactness in Riemannian geometry, it is known that a subsequence converges to a limit metric space in the Gromov-Hausdorff topology:
$$
\lim_{j\rightarrow+\infty} (M_{i_j}, \omega_{i_j})=(M_\infty, d_\infty).
$$
By the work of Donaldson-Sun (\cite{DS14}) and Tian (\cite{Tia90}, \cite[4.14-4.16]{Tia12}, see also \cite{Tia13}), we know that $M_\infty$ is homeomorphic to a normal algebraic variety. Donaldson-Sun (\cite{DS14}) also showed that $M_\infty$ has at worst normal klt singularities and admits 
a K\"{a}hler-Einstein metric $\omega_\infty$ in the sense of pluripotential theory (see also \cite{BBEGZ}). On the regular locus $M^{\rm reg}_\infty$, $\omega_\infty$ is a smooth K\"{a}hler-Einstein metric. To understand the metric behavior
near the singular locus, it is important to understand the metric tangent cones of $(M_\infty, d_\infty)$ which is a metric cone by\cite{CC97}. From now on, fix $o\in M_\infty$ and denote by $C:=C_o M_{\infty}$ a metric tangent cone at $o\in M_\infty$. In other words, there exists a sequence of positive numbers $\{r_k\}_{k\in \bN}$ converging to $0$ such that 
\[
(C, d_C, o)=\lim_{r_k\rightarrow 0} \left(M_\infty, \frac{d_\infty}{r_k}, o\right),
\]
where the convergence is the pointed Gromov-Hausdorff topology. We know that $C$ admits a Ricci-flat K\"{a}hler cone structure by the work of Cheeger-Colding-Tian (\cite{CCT02}). 
More recently, Donaldson-Sun proved in \cite{DS15} that $C$ is an affine variety with a torus action and can be obtained in three steps. In the first step, they defined a filtration $\{\cF^\lambda\}_{\lambda\in \mathcal{S}}$ of the local ring $R=\cO_{M_{\infty},o}$ using the limiting metric structure $d_\infty$. Here $\mathcal{S}$ is a set of positive numbers that they called the holomorphic spectrum which depends on the torus action on the metric tangent cone $C$. In the second step, they proved that the associated graded ring of $\{\mathcal{F}^\lambda\}$ is finitely generated and hence defines an affine variety, denoted by $W$. In the last step, they showed that $W$ equivariantly degenerates to $C$. Notice that this process depends crucially on the limiting metric $d_\infty$ on $M_\infty$. On the other hand, they made the following conjecture. 
\begin{conj}[Donaldson-Sun]\label{conj-DS}
Both $W$ and $C$ depend only on the algebraic structure of $M_\infty$ near $o$.
\end{conj}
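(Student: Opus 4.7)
The plan is to split the conjecture into the two independent statements about $W$ and $C$ and to settle the $W$-part completely using the minimization machinery of Part \ref{p-1}, while indicating how the $C$-part reduces to two additional ingredients beyond this paper. The starting point is the Donaldson--Sun filtration $\{\cF^\lambda\}_{\lambda\in\mathcal{S}}$ of $R=\cO_{M_\infty,o}$. First I would show that this metrically defined filtration is in fact induced by an honest valuation. By the construction of \cite{DS15}, $\gr_{\cF}R$ is the coordinate ring of the normal affine cone $W$ and is therefore an integral domain; then Lemma \ref{lem-quasi}.(1) produces a valuation $v=v_{\cF}\in\Val_{M_\infty,o}$ with $W=\Spec(\gr_v R)$. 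Because $W$ is the flat degeneration of $\Spec R$ in the sense of \cite{DS15}, $\dim \gr_v R=\dim R$, so Lemma \ref{lem-quasi}.(2) forces $v$ to be quasi-monomial; and $\gr_v R$ is finitely generated as an algebra, again by the work of \cite{DS15}. Thus all the hypotheses of Theorem \ref{t-unique} are algebraically verified for $v$.

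The next step, which is the heart of the argument, is to show that this $v$ is a K-semistable valuation in the sense of Definition \ref{d-ksemi}, so that by Theorem \ref{t-KtoM} it minimizes $\hvol_{M_\infty,o}$ and by Theorem \ref{t-unique}.(3) it is the unique such quasi-monomial minimizer up to rescaling. Donaldson--Sun show that $W$ admits a $T$-equivariant special degeneration to the metric tangent cone $C$ and that $C$ carries a Ricci-flat Kähler cone metric. By Theorem \ref{thm-irSemi} (Theorem \ref{thm-irSimi} in the irregular, possibly non-isolated setting), this forces $A_X(\xi_0)=n$ and the log Fano cone singularity $(C,\xi_0)$ to be K-semistable. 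Then the degeneration-preserves-K-semistability statement, Proposition \ref{l-degKsemi}, propagates K-semistability from the central fiber $C$ to the general fiber $(W,D_0,\xi_v)$. Hence $v$ is K-semistable, and consequently $W=\Spec(\gr_v R)$ is uniquely determined by the germ $(o\in M_\infty)$, proving the $W$-half of Conjecture \ref{conj-DS} (i.e.\ Theorem \ref{t-finiteunique}).

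For the $C$-half, as outlined in Remark \ref{r-polystable}, I would reduce to two additional inputs: (i) improving Theorem \ref{thm-irSemi} so that a Ricci-flat Kähler cone $C$ is not merely K-semistable but K-\emph{polystable}, in the irregular case; and (ii) showing that every K-semistable log Fano cone singularity admits a unique K-polystable degeneration. Granted these, one recovers $C$ from $W$ purely algebraically as the unique K-polystable degeneration of $(W,\xi_v)$, with no dependence on the metric.

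The main obstacle I expect is precisely Theorem \ref{thm-irSemi}, i.e.\ the analytic-to-algebraic step that converts the existence of a Ricci-flat Kähler cone metric on $C$ into algebraic K-semistability of the log Fano cone $(C,\xi_0)$, without the isolated-singularity hypothesis of \cite{CS12}. The difficulty is to test K-semistability against arbitrary $T$-equivariant special test configurations of a cone whose singular locus may be positive-dimensional, and to obtain a useful lower bound on the generalized Futaki invariant directly from the Ricci-flat Kähler cone geometry; this requires combining the $T$-variety framework of Section \ref{s-preliminary} (especially Theorem \ref{t-Tcano} and Lemma \ref{lem-ldwt}) with the convex-geometric picture of Section \ref{sec-Tunique}, so that Martelli--Sparks--Yau type volume calculations can be carried out along the Reeb cone of any special degeneration without any smoothness assumption on the link.
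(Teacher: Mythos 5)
Your proposal for the $W$-half follows essentially the same route as the paper's proof of Theorem \ref{t-finiteunique}: realize the Donaldson--Sun filtration as a quasi-monomial valuation $v$ with finitely generated $\gr_v R$ via Lemma \ref{lem-quasi}, invoke Theorem \ref{thm-irSemi} for $(C,\xi_0)$, propagate K-semistability from $C$ to $W$ via Proposition \ref{l-degKsemi}, and then conclude with Theorems \ref{t-KtoM} and \ref{t-unique}.(3); your treatment of the $C$-half as deferred to two extra ingredients is likewise exactly the decomposition in Remark \ref{r-polystable}.

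There is, however, one step you silently assume that the paper spends a paragraph establishing: before you may invoke Proposition \ref{l-degKsemi} for the special degeneration $W\rightsquigarrow C$, you must know that $(W,\xi_0)$ is itself a klt log Fano cone singularity (this is built into Definitions \ref{d-FS} and \ref{d-stc}, and hence into the very statement of Proposition \ref{l-degKsemi}, whose general fiber must be a log Fano cone singularity). The fact that $C$ is klt does not a priori transfer to $W$, since klt-ness is not preserved under generization in families without further input. The paper handles this by perturbing $\xi_0$ to a nearby rational $\xi'$ in the Reeb cone of $C$ generating a $\bC^*$-action, so that the induced divisorial valuation $S$ (the quotient of $C$ by this $\bC^*$) is a Koll\'ar component, and then applying inversion of adjunction to conclude that $W$ is klt. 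Without something along these lines, the K-semistability propagation step does not formally go through, so you should add this verification to your argument.
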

One goal of the project proposed in \cite{Li15a} is to prove this conjecture. We observed in \cite{Li15a, LX16} that $\{\cF^{\lambda}\}$ comes from a valuation $v_0$. This is due to the fact that $W$ is a normal variety since it degenerates to the normal variety $C$. As mentioned in \cite{HS16}, this was implicit in \cite{DS15} and \cite{Don16}. More explicitly, if we denote by $X={\rm Spec}(R)$ the germ of $o\in M_{\infty}$, by the work in \cite{DS15}, one can embed both $X$ and $C$ into a common ambient space $\bC^N$, and $v_0$ on $X$ is induced by the monomial valuation $\wt_{\xi_0}$ where $\xi_0$ is the linear holomorphic vector field with $2{\rm Im}(\xi_0)$ being the Reeb vector field of the Ricci flat K\"{a}hler cone metric on $C$. By this construction, it is clear that the induced valuation by $v_0$ on $W$ is nothing but $\wt_{\xi_0}$.  

\medskip

Here in this paper we also observe that $v_0$ is a quasi-monomial valuation. As pointed out in Lemma \ref{lem-quasi}, this follows from a general fact due to Piltant (\cite{Pil94}, see \cite[Proposition 3.1]{Tei03}) that a valuation $v_0$ is quasi-monomial if and only if the associated graded ring has the same Krull dimension as $\dim X$. See also Lemma \ref{lem-Tqmv} where the quasi-monomial property of $\wt_{\xi_0}$ on $W$ and $C$ is explained.

More importantly we conjectured in \cite{Li15a} that $v_0$ can be characterized as the unique minimizer of $\hvol_{M_\infty, o}$. For now we can not prove this conjecture in the full generality. Nevertheless, as a corollary of the theory developed in this paper (and its predecessors \cite{Li15a, Li15b, LL16, LX16}), we can already prove Theorem \ref{t-finiteunique} and confirm \cite[Conjecture 3.22]{DS15} for $W$. 
\begin{proof}[Proof of Theorem \ref{t-finiteunique}]
By the above discussion, for any valuation $v_0$ as above, we already know that it is quasi-monomial and centered at $o\in M_\infty$ and the induced valuation on $W={\rm Spec}({\gr_{v_0}}R)$ is equal to $\wt_{\xi_0}$. By Theorem \ref{thm-irSE}, we know that $(C, \xi_0)$ is K-semistable. 

We claim that $(W, \xi_0)$ is a klt Fano cone singularity and is K-semistable. To see this, we first note that there exists a prime divisorial valuation $S$ which has a finitely generated associated graded ring and degenerates $X$ to $W$. As in the proof of Theorem \ref{t-Ksta}, such an $S$ can be obtained by perturbing $\xi_0$ in the Reeb cone of $C$ so that we can ensure that this perturbed vector generates a $\bC^*$ in the big torus that preserves the klt Fano cone singularity $C$. As a consequence, $S$ is isomorphic to the quotient of $C$ by the $\bC^*$-action. This implies that $S$ is a Koll\'{a}r component over $C$. By inversion of adjunction, we also conclude that $W$ has klt singularities. Now since $(W, \xi_0)$ equivariantly degenerates to $(C, \xi_0)$, by Lemma \ref{l-degKsemi}, we know that $(W, \xi_0)$ is indeed K-semistable (see also Remark \ref{rem-semiW}).

By Theorem \ref{t-KtoM}, $v_0$ is a minimizer of $\hvol_{M_\infty,o}$ and by Theorem \ref{t-unique}.(3), $v_0$ is the unique minimizer of 
$\hvol_{M_\infty,o}$ among all quasi-monomial valuations in $\Val_{M_{\infty}, o}$, and it only depends on the algebraic structure of $R$. 
Therefore $W$ only depends on the algebraic structure of the germ $o\in M_{\infty}$.
\end{proof}

\begin{rem}\label{rem-semiW}
There is an alternative but essentially equivalent way to show directly that $v_0$ is a minimizer of $\hvol_{M_\infty, o}$. Firstly, in the proof of Theorem \ref{t-Ksta}, we have constructed a degeneration of $(M_\infty, o)$ to $(C, o)$. On the other hand, we know that $\hvol_{M_\infty,o}(v_0)=\hvol_{C,o}(\wt_{\xi_0})$ and $\wt_{\xi_0}$ minimizes $\hvol_{C,o}$. 
We can then use the same ideal-degeneration argument as in the proof of Lemma \ref{l-degKsemi} to conclude that $v_0$ is the minimizer of $\hvol_{M_\infty,o}$. Theorem \ref{t-unique} then also implies $(W, \xi_0)$ is a klt
Fano cone singularity which is indeed K-semistable by Lemma \ref{l-degKsemi}.

\end{rem}

\subsection{Minimizers from Ricci flat K\"{a}hler cone metrics}\label{s-tangentconeksemistable}

Let $(X, \xi_0)$ be a Fano cone singularity. Recall that this implies that $X$ is a normal affine variety with at worst klt singularities. Moreover there is a good $T$ action where $T\cong (\bC^*)^r$ and $\xi_0\in \ft_{\bR}^+$. On $X$ there exists a $T$-equivariant nowhere-vanishing holomorphic $m$-pluricanonical form $s\in |-mK_X|$. Such holomorphic form can be solved uniquely up to a constant as in \cite[2.7]{MSY08}. In the following, we will use the following volume form on $X$:
\begin{equation}
dV=\left(\sqrt{-1}^{mn^2}s\wedge \bar{s}\right)^{1/m}.
\end{equation}

We can assume that $(X, \xi_0)$ is equivariantly embedded into $(\bC^N, \xi_0)$ with $\xi_0=\sum_i a_i z_i\frac{\partial}{\partial z_i}$ with $a_i\in \bR_{>0}$. Choose any reference K\"{a}hler cone metric on $\bC^N$, which is possibly degenerate.\footnote{The following calculations and arguments do not depend on the choice of reference metrics, i.e. remain valid for any choice of reference metric.} For example, we can choose $\delta>0$ such that $\delta a_i<1$, for all $i$, and define a radius function:
\begin{equation}\label{eq-radius}
r^2=\left(\sum_{i=1}^N |z_i|^{\frac{2}{\delta a_i}}\right)^{\delta}.
\end{equation}
Then $r^2$ is a $C^{\lfloor 2/(\delta a_i)\rfloor}$-function on $\bC^N$ and has no critical points on $\bC^N\setminus \{0\}$. The corresponding K\"{a}hler cone metric on $\bC^N$ is equal to:
\begin{eqnarray*}
\omega_{\bC^N}
&=&\sddb r^2\\
&=&\delta(\delta-1) r^{(\delta-2)/\delta}\left(\sum_i \frac{1}{\delta a_i}|z_i|^{2(\frac{1}{\delta a_i}-1)}\bar{z}_idz_i\right)\wedge \left(\sum_j \frac{1}{\delta a_j}|z_j|^{2(\frac{1}{\delta a_j}-1)}\bar{z}_j dz_j\right)+\\
&&\hskip 5mm \delta r^{(\delta-1)/\delta}\left(\sum_i \frac{1}{\delta^2 a_i^2} |z_i|^{2\left(\frac{1}{\delta a_i}-1\right)}dz_i\wedge d\bar{z}_i\right)\ge 0.
\end{eqnarray*}
The restriction $\omega_X:=\omega_{\bC^N}|_{X}$ is a K\"{a}hler cone metric on $X$.
Moreover $2{\rm Im}(\xi_0)=J(r\partial_r)$ is the Reeb vector field of $\omega_{\bC^N}$ and $\omega_X$. For later purpose, we record following identities which can be verified directly:
\begin{equation}\label{eq-xi0r}
\xi_0(r^2)=\bar{\xi}_0(r^2)=r^2, \quad 2{\rm Re}(\xi_0)=r\partial_r;
\end{equation}
\begin{equation}\label{eq-xi0u}
\frac{n \sqrt{-1} 2(\partial v) \wedge (\bar{\partial} r) \wedge (\sddb r^2)^{n-1}}{(\sddb r^2)^n}=\xi_0(v)\cdot \frac{1}{r}.
\end{equation}

Since $T$ acts on $X$, $T$ also acts on the set of functions on $X$ by $\tau\circ f(x)=f(\tau^{-1}x)$ for any $\tau\in T$ and $x\in X$. For convenience, we introduce the following 
\begin{defn}
Denote by $PSH(X,\xi_0)$ the set of bounded real functions $\vphi$ on $X$ that satisfies:
\begin{enumerate}
\item $\tau\circ \vphi=\vphi$ for any $\tau\in T$;
\item $r^2_\vphi:=r^2 e^{\vphi}$ is a proper strictly plurisubharmonic function on $X$. 
\end{enumerate}
\end{defn}
\begin{defn}\label{defn-RFKC}
We say that $r^2_\vphi:=r^2 e^{\vphi}$ where $\vphi\in PSH(X, \xi_0)$ is the radius function of a Ricci-flat K\"{a}hler cone metric on $(X, \xi_0)$ if $\vphi$ is smooth on $X^{\rm reg}$ and there exists a positive constant $C>0$ such that
\begin{equation}\label{eq-RFKC}
(\sddb r^2_\vphi)^n=C\cdot dV.
\end{equation}
\end{defn}
Compared with the weak K\"{a}hler-Einstein case, it is expected that the regularity condition in the above definition is automatically satisfied. With this regularity assumption, on the regular part $X^{\rm reg}$, both sides of \eqref{eq-RFKC} are smooth volume forms and we have $r_{\vphi}\partial_{r_\vphi}=2 {\rm Re}(\xi_0)$ or, equivalently, $\xi_0=r_\vphi\partial_{r_\vphi}-i J(r_\vphi\partial_{r_\vphi})$.
Moreover, taking $\mathcal{L}_{r_\vphi \partial_{r_\vphi}}$ on both sides gives us the identity $\mathcal{L}_{r_\vphi\partial_{r_\vphi}}dV=2n\; dV$. 
Equivalently we have:
\[
\cL_{\xi_0}s= m n\cdot s,
\]
where $s\in |-mK_X|$ is the chosen $T$-equivariant non-vanishing holomorphic section. 
By Lemma \ref{lem-ldwt}, this implies $A_{X}(\wt_{\xi_0})=n$ (see \cite{HS16, LL16} for this identity in the quasi-regular case). The main goal of this section is to give a proof of the following fact.

\begin{thm}\label{thm-irSE}
If $(X, \xi_0)$ admits a Ricci-flat K\"{a}hler cone metric, then $A_X(\xi_0)=n$ and $(X, \xi_0)$ is K-semistable.
\end{thm}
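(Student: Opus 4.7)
The plan is to address $A_X(\xi_0)=n$ as a direct consequence of the Monge-Amp\`ere equation, then reduce K-semistability via Theorem \ref{thm-semin} and Theorem \ref{thm-minKol} to a multiplicity--log canonical threshold inequality, which I will establish by pluripotential-theoretic comparison against the Ricci-flat K\"ahler cone metric.

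For $A_X(\xi_0)=n$: applying $\cL_{r_\vphi\partial_{r_\vphi}}$ to both sides of \eqref{eq-RFKC} and using the scaling $\cL_{r_\vphi\partial_{r_\vphi}}r^2_\vphi=2r^2_\vphi$ forces $\cL_{\xi_0}s=mn\cdot s$ for the $T$-equivariant non-vanishing section $s\in|{-mK_X}|$; Lemma \ref{lem-ldwt} then gives $A_X(\wt_{\xi_0})=n$ directly.

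For K-semistability, by Theorem \ref{thm-semin} it suffices to show that $\wt_{\xi_0}$ minimizes $\hvol_X$ on $\Val_{X,x}$. By the $T$-equivariant normalized-multiplicity formula (Theorem \ref{thm-minKol}), this in turn is equivalent to the inequality
\[
\lct(\fa)^n\cdot\mult(\fa)\;\geq\;n^n\cdot\vol(\wt_{\xi_0})\;=\;\hvol(\wt_{\xi_0})
\]
holding for every $T$-invariant $\fm$-primary ideal $\fa\subset R$. This is the analytic core of the argument. To establish it I will follow the strategy of \cite{MSY08,CS12,Li15b,LL16}: choose $T$-homogeneous generators $f_1,\dots,f_N$ of $\fa$ and form $u_\fa:=\log\sum_i|f_i|^2$. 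This is plurisubharmonic with log pole along $V(\fa)$; $\lct(\fa)$ is the supremum of $c>0$ for which $e^{-cu_\fa}\in L^1_{\rm loc}(X,dV)$, and $\mult(\fa)$ is the total Monge-Amp\`ere mass of $\sddb u_\fa$ at $x$. Using \eqref{eq-RFKC} to identify $dV$ with $(\sddb r^2_\vphi)^n$ up to a constant, and the Reeb identity \eqref{eq-xi0u} to transport integrals along $\xi_0$, a Lelong-Jensen type computation on the sublevel sets $\{u_\fa<-t\}$ delivers the desired inequality; equality (up to rescaling $\fa$) is attained by $u=\log r^2_\vphi$, corresponding to $\wt_{\xi_0}$ itself, which also accounts for the factor $n^n=A_X(\wt_{\xi_0})^n$.

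The main obstacle will be that $x\in X$ need not be an isolated singularity, whereas the Collins-Sz\'ekelyhidi argument of \cite{CS12} proceeds on a smooth compact Sasakian link. I will therefore have to extend the Bedford-Taylor intersection theory of currents, the Demailly-style approximation of $u_\fa$ by divisorial potentials, and the Lelong-Jensen asymptotics to the possibly singular klt link $\{r_\vphi=1\}$. The klt hypothesis guarantees convergence of the relevant integrals and the validity of the standard pluripotential calculus on $X$, while the smoothness of the Ricci-flat metric on $X^{\rm reg}$ combined with Izumi-type comparisons between $u_\fa$ and $\log r^2_\vphi$ near $x$ should then allow the isolated-case argument to be carried over to the general klt setting.
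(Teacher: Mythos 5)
Your treatment of $A_X(\xi_0)=n$ is identical to the paper's: apply $\cL_{r_\vphi\partial_{r_\vphi}}$ to both sides of \eqref{eq-RFKC}, use the homogeneity $\cL_{r_\vphi\partial_{r_\vphi}}r^2_\vphi=2r^2_\vphi$ to get $\cL_{\xi_0}s=mn\cdot s$, and invoke Lemma \ref{lem-ldwt}. Your reduction of the K-semistability assertion to the inequality $\lct(\fa)^n\cdot\mult(\fa)\ge\hvol(\wt_{\xi_0})$ for $T$-invariant $\fm$-primary ideals $\fa$, via Theorem \ref{thm-semin} and Theorem \ref{thm-minKol}, is also a valid starting point and is a genuinely different route from the paper. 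The paper instead proves $\Fut(\cX,\xi_0;\eta)\ge 0$ directly for every special test configuration coming from a Koll\'ar component, by identifying the asymptotic slope of the Ding energy $D(\vphi(s))$ with the Futaki invariant (Propositions \ref{prop-Eslope} and following) and then using the convexity of $D$ along bounded geodesics \`a la Berndtsson and Donaldson--Sun to conclude that this slope is nonnegative. Your route, modelled on the pluripotential arguments in \cite{Li15b,LL16}, would reach the same conclusion but by measuring arbitrary $T$-invariant ideals directly against the Ricci-flat cone potential $\log r^2_\vphi$.

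The gap is the Lelong--Jensen step, which you have only asserted, and it is precisely what makes this theorem non-trivial outside the isolated-singularity case (the paper's Remark \ref{r-flatcone}.1 is explicit that \cite{CS12} covers only that case). On a klt cone with non-isolated singularities, none of the following is automatic: that the Bedford--Taylor mixed Monge--Amp\`ere products between $\sddb r^2_\vphi$ and $\sddb u_\fa$ are well-defined and continuous under the relevant approximations; that the Monge--Amp\`ere mass of $\sddb u_\fa$ actually computes $\mult(\fa)$ (this uses a Demailly-type comparison of Lelong numbers and Samuel multiplicities, which on a singular variety requires real work); and that the Lelong--Jensen formula on the sublevel sets $\{u_\fa<-t\}$ converges with the constants you need when the link $\{r_\vphi=1\}$ is not a smooth Sasakian manifold. "Izumi-type comparisons should allow the isolated-case argument to be carried over" is exactly the kind of claim that hides the hard part. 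The advantage of the paper's approach is that these difficulties are sidestepped: the Ding energy computation happens along a flat algebraic family attached to a Koll\'ar component, the limiting slope is read off from the Futaki invariant (an algebraic quantity, evaluated by Lemma \ref{l-degpre} and the MSY volume-derivative formula), and the only analytic input is Berndtsson's convexity on the regular locus, which degenerates gracefully. Until you supply the pluripotential comparison theorem on the singular cone, your argument does not close.
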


\begin{rem}\label{r-flatcone}
\begin{enumerate}
\item
In the case when $X$ has isolated singularities at $o\in X$, this was proved in \cite{CS12} using an approximation by rational elements in $\ft_\bR^+$ to reduce to the orbifold case studied in \cite{RT11}. The proof given below for the general case is different and is a direct generalization of a corresponding proof in the usual K\"{a}hler case. We also depend heavily the calculations from \cite{MSY08} which have also appeared in different forms in \cite{CS15, DS15}.
\item 
As already mentioned, after Berman's work \cite{Ber15}, it is natural to expect that $(X, \xi_0)$ should actually be K-polystable. Since this requires more technical arguments involving geodesic rays and we do not need this stronger conclusion in this paper, we will leave its verification in \cite{LWX17}. 
\end{enumerate}
\end{rem}

As an immediate corollary, we can also verify that the volume density is equal to the normalized volume at any point on the Gromov-Hausdorff limit. In \cite{HS16} Hein-Sun pointed out the relationship between these two quantities. 
Here the volume density of the limit metric space $M_\infty$ at $o$ is defined to be:
\[
\Theta(M_\infty, o)=\lim_{r\rightarrow 0+}\frac{\vol(B(o,r))}{\omega_{2n} r^{2n}},
\]
where $\omega_{2n}=\frac{\pi^n}{n!}$ denotes the volume of the unit ball in the flat $\bC^n$.

\begin{cor}\label{c-v=v}
Let $o\in M_\infty$ be a closed point. Then we have the identity:
\begin{equation}
n^n \cdot \Theta(M_\infty, o)=\hvol(M_\infty, o).
\end{equation}
\end{cor}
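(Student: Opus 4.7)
The plan is to reduce $\Theta(M_\infty, o)$ to an algebraic computation on the metric tangent cone $C$ and then transport the identity back to $M_\infty$ via the chain of degenerations $M_\infty \rightsquigarrow W \rightsquigarrow C$ established in the proof of Theorem \ref{t-finiteunique}.

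First, since $C = C_o M_\infty$ is obtained as a pointed Gromov-Hausdorff limit $\lim_{r_k \to 0}(M_\infty, d_\infty/r_k, o)$, non-collapsed volume convergence in the sense of Cheeger-Colding together with rescaling gives $\Theta(M_\infty, o) = \Theta(C, o) = \vol(B(o,1))/\omega_{2n}$. Because $C$ carries a Ricci-flat K\"{a}hler cone metric with radius function $r_\vphi$ and Reeb field $2\,\mathrm{Im}(\xi_0)$, the metric ball $B(o,s)$ equals $\{r_\vphi < s\}$, and dilation by the flow of $r_\vphi\partial_{r_\vphi}$ gives $\vol(B(o,s)) = s^{2n}\vol(B(o,1))$, consistent with the conical structure.

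Next I would identify this Riemannian volume with the algebraic volume $\vol_C(\wt_{\xi_0})$. Using the Monge-Amp\`ere equation $(\sddb r_\vphi^2)^n = C_0 \cdot dV$ together with $A_C(\wt_{\xi_0}) = n$ (Theorem \ref{thm-irSE} and Lemma \ref{lem-ldwt}), and applying the index character/Hilbert series asymptotics for the weight decomposition of the coordinate ring of $C$ as in \cite{MSY08, CS15, DS15}, the standard computation yields
\[
\vol(B(o,1)) = \omega_{2n}\cdot \vol_C(\wt_{\xi_0}).
\]
I expect this to be the main technical step: while the formal calculation is exactly the one carried out in \cite{MSY08} in the isolated setting, here one must justify the integration by parts on the non-isolated singular cone $C$, which is handled by restricting the computation to $\{r_\vphi<1\}\cap C^{\rm reg}$ and using that $C$ is klt to control the contribution of the singular locus (as is already implicit in the arguments of \cite{DS15}).

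Finally, to transport the identity back to $M_\infty$, I would apply Lemma \ref{lem-irrid} to the valuation $v_0$ degenerating $(M_\infty, o)$ to $(W, o')$, and Lemma \ref{l-degpre} to the $T$-equivariant degeneration $W \rightsquigarrow C$ constructed in the proof of Theorem \ref{t-finiteunique}, to obtain
\[
A_{M_\infty}(v_0) = A_C(\wt_{\xi_0}) = n, \qquad \vol_{M_\infty}(v_0) = \vol_C(\wt_{\xi_0}).
\]
Combined with Theorem \ref{t-finiteunique}, which shows that $v_0$ is a minimizer of $\hvol_{M_\infty, o}$, this gives
\[
\hvol(M_\infty, o) = A_{M_\infty}(v_0)^n \cdot \vol_{M_\infty}(v_0) = n^n \cdot \vol_C(\wt_{\xi_0}) = n^n \cdot \Theta(C, o) = n^n \cdot \Theta(M_\infty, o),
\]
as required.
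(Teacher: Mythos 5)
Your proposal follows the same route as the paper: reduce $\Theta(M_\infty,o)$ to $\Theta(C,o)$ by non-collapsed volume convergence, identify $\Theta(C,o)$ with the algebraic volume $\vol_C(\wt_{\xi_0})$, use $A_C(\wt_{\xi_0})=n$ from Theorem~\ref{thm-irSE}, and transport the normalized volume back through $C \leftarrow W \leftarrow M_\infty$ using the fact (from the proof of Theorem~\ref{t-finiteunique}, equivalently Theorems~\ref{thm-irSE} and~\ref{t-KtoM}) that $v_0$ is a minimizer. The only differences are cosmetic: you sketch the derivation of $\Theta(C,o)=\vol_C(\wt_{\xi_0})$, whereas the paper simply cites \cite[Appendix~C]{HS16} (and records the integral formula in the remark after the corollary); and you invoke Lemma~\ref{l-degpre} for the step $W\rightsquigarrow C$, which is stated only for Kollár-component test configurations, though the needed $A$- and $\vol$-invariance of $\wt_{\xi_0}$ under $T$-equivariant special degeneration holds in the generality used here, as the paper treats implicitly.
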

\begin{proof}
If $(C,d_C)$ denotes the metric tangent cone of $(M_{\infty}, d_\infty)$ at $o$, then by standard metric geometry it is known that 
\begin{equation}\label{eq}
\Theta(M_\infty,o)=\Theta(C,o)=\frac{\vol(B_{d_C}(o,1))}{\pi^n/n!}.
\end{equation} 
Let $\xi_0$ be the holomorphic vector field such that $2{\rm Im}(\xi_0)$ is the Reeb vector field of the Ricci-flat K\"{a}hler cone metric $\sddb r_\vphi^2$. Then we claim that 
\begin{equation}\label{eq-hvol2Theta}
\hvol(\xi_0)=n^n\cdot \Theta(C,o).
\end{equation}
As explained in \cite[Appendix C]{HS16} (see also \cite{CS12, DS15}) we know that
\begin{eqnarray}\label{eq-vol2Theta}
\vol_C(\xi_0)=\Theta(C,o).
\end{eqnarray} 
By Theorem \ref{thm-irSE}, 
$A_C(\xi_0)=n$. So we have $n^n\Theta(C,o)=\hvol(\xi_0)=\hvol(M_\infty, o)$, where the second identity is by Theorem \ref{thm-irSE} and Theorem \ref{t-KtoM}.
\end{proof}

\begin{rem}
Both sides of \eqref{eq-vol2Theta} is equal to:
\begin{equation}
\frac{1}{n!(2\pi)^n}\int_C e^{-r^2_\vphi}(\sddb r^2_\vphi)^n=\frac{n!}{\pi^n}\vol\left(\{x\in C; r_\vphi \le 1\}\right),
\end{equation}
where $r_\vphi$ is the radius function for the K\"{a}hler cone metric on $C$.
\end{rem}

The rest of this section is devoted to the proof of Theorem \ref{thm-irSE}. Since $A_X(\xi_0)=n$ has been shown, we will focus on the second statement. 

Now assume that $(\cX, \xi_0;\eta)$ is a special test configuration of $X$ that is induced by a Koll\'{a}r component. Because $\eta$ commutes with $\xi_0$ and generates a $\bC^*$-action, we can assume that $\eta=\sum_i b_i z_i\frac{\partial}{\partial z_i}$ with $b_i\in \bZ$.
Let $\sigma(t): \bC^*\rightarrow Aut(\bC^N)$ be the one-parameter subgroup generated by the vector field $\eta$. 
Then $\sigma(t)(z_i)=t^{b_i} z_i$ and thus for the choice of radius function in \eqref{eq-radius}
\[
r(t)^2:=\sigma(t)^*(r^2)=r^2 \left(\frac{\sum_i |t|^{2 b_i/(\delta a_i)}|z_i|^{2/a_i}}{\sum_i |z_i|^{2/(\delta a_i)}}\right)^{\delta}= r^2 e^{\varphi(t)},
\]
where the function $\vphi(t)$ is given by:
\[
\vphi(t)=\delta\left[\log \left(\sum_i |t|^{2b_i/(\delta a_i)}|z_i|^{2/(\delta a_i)}\right)- \log \left(\sum_i |z_i|^{2/(\delta a_i)}\right)\right].
\]
Notice that $\vphi(t)$ satisfies the condition $\xi_0(\vphi)=\bar{\xi}_0(\vphi)=0$, which corresponds to the fact that $\vphi(t)$ descends to become a basic function on the link of $(X,x)$.

Following \cite{CS15}, we consider the following cone version of Ding energy:
\begin{defn}
For any function $\vphi\in PSH(X, \xi_0)$, we define:
\begin{equation}
D(\varphi)=E(\varphi)-\log\left(\int_X e^{-r_\varphi^2} dV\right)=: E(\vphi)-G(\vphi),
\end{equation}
where $E(\vphi)$ is defined by its variations:
\begin{equation}\label{eq-defMAE}
\delta E(\vphi)\cdot \delta\vphi= -\frac{1}{(n-1)!(2\pi)^n \vol_{X}(\xi_0)}\int_X (\delta\vphi) e^{-r_\vphi^2} (\sddb r_\vphi^2)^n.
\end{equation}
\end{defn}
By \cite{CS15}, $E(\vphi)$ is a well-defined function of $\vphi$. Moreover by calculating in the polar coordinate with respect to $r^2_\vphi$, one easily sees that $D(\vphi+c)=D(\vphi)$. The Euler-Lagrangian equation of $D(\vphi)$ is the equation of Ricci-flat K\"{a}hler cone metric in \eqref{eq-RFKC}.
 The following lemma is a generalization of a well known fact in the regular case. 
\begin{lem}
$E(\vphi(t))$ is a concave function with respect to $s=-\log|t|^2 \in \bR$.
\end{lem}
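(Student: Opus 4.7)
The plan is to show that the family $\vphi(t)$ is a weak subgeodesic in the space of K\"ahler cone potentials on $(X,\xi_0)$, and then to compute $\ddot{E}(\vphi(t))$ as a negative multiple of a positive Monge--Amp\`ere-type measure. Set
$$u(x,t)\;:=\;r^2_{\vphi(t)}(x)\;=\;r^2(\sigma(t)^{-1}\cdot x),$$
viewed as a function on $X\times \bC^*$ via the fixed equivariant embedding $X\hookrightarrow \bC^N$. Since $r^2$ is smooth plurisubharmonic on $\bC^N$ and $(x,t)\mapsto \sigma(t)^{-1}x$ is a holomorphic map $X\times\bC^*\to \bC^N$, the composition $u$ is plurisubharmonic on $X^{\rm reg}\times\bC^*$. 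Equivalently, $(\sddb_{(x,t)}\, u)^{n+1}\ge 0$ as a nonnegative measure on $X^{\rm reg}\times\bC^*$; this is exactly the subgeodesic condition for the relative potential $\vphi(t)$.

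First I would reduce all computations to the regular locus. Since $X$ is normal, $X^{\rm sing}$ has complex codimension at least two, and $u$ together with its $s$-derivatives is bounded on relatively compact subsets of $X\times\bC^*$, while $e^{-u}$ provides integrability at the vertex thanks to the cone structure. A cutoff-and-exhaustion argument in $X^{\rm reg}$ then ensures that all boundary contributions in the forthcoming integration-by-parts vanish in the limit.

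Next, starting from the variational formula (\ref{eq-defMAE}),
$$\frac{d}{ds} E(\vphi(t)) \;=\; -\frac{1}{C}\int_X \dot\vphi\; e^{-u}(\sddb u)^n, \qquad C=(n-1)!(2\pi)^n\vol(\xi_0),$$
I would differentiate once more in $s=-\log|t|^2$. Three pieces appear: a $\ddot\vphi$-piece, a $-\dot u\,\dot\vphi$-piece from $\partial_s e^{-u}$, and an $n$-fold $\sddb\dot u$-piece from $\partial_s(\sddb u)^n$. Using the identity $\dot u=u\,\dot\vphi$ and integrating by parts on $X^{\rm reg}$, these three assemble into a single expression realizing $\ddot{E}(\vphi(t))$ as $-\frac{1}{C}$ times the $t$-slice of $e^{-u}(\sddb u)^{n+1}$ on $X\times\bC^*$. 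The subgeodesic property $(\sddb u)^{n+1}\ge 0$ then yields $\ddot{E}\le 0$, i.e. concavity. Note that in the degenerate case $\eta\in\ft$ (product test configuration), the map $(x,t)\mapsto \sigma(t)^{-1}x$ factors through $X$ itself, so $(\sddb u)^{n+1}=0$ and $\ddot{E}=0$; this recovers the expected fact that $E$ is affine along one-parameter subgroups that preserve the cone structure.

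The main technical obstacle is the bookkeeping of the weighted integration by parts, in particular identifying the cross terms that couple the $X$-directions and the $s$-direction of $u$ and reassembling them cleanly into $(\sddb u)^{n+1}$ on $X\times\bC^*$, while verifying the vanishing of all contributions from $X^{\rm sing}$ and from the vertex. A clean way to organize the algebra is to work from the outset with a Deligne-pairing-style formula for $E$ on $X\times\bC^*$, in which the second $s$-derivative of $E(\vphi(t))$ is \emph{by construction} the integral against a positive form of $(\sddb u)^{n+1}$, so that the concavity follows immediately from the plurisubharmonicity of $u$ established in the first paragraph.
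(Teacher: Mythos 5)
Your proposal takes essentially the same route as the paper: both recognize that $\sddb_{(x,t)}\,u = \sddb\,\sigma(t)^*(r^2)$ is a positive $(1,1)$-form on $X\times\bC^*$ because it is the pullback of a (possibly degenerate) K\"ahler cone metric on $\bC^N$ under the holomorphic map $(x,t)\mapsto\sigma(t)^{-1}x$, and both express $\ddot E$ as minus a positive multiple of the fiber integral of $\Omega^{n+1}$ against a positive weight, so concavity is immediate. The only bookkeeping difference is that the paper's derived weight is $r(s)^{-2}e^{-r(s)^2}=u^{-1}e^{-u}$ rather than your stated $e^{-u}$ (both positive, so the conclusion is unaffected), and the paper justifies the integration by parts near the vertex directly via the explicit bound $|\partial\dot\vphi|_{\omega_X}\le C/r$ coming from $\bC^N$, rather than through a cutoff-and-exhaustion argument.
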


\begin{proof}
 We want to show that $\frac{d^2}{ds^2} E(\vphi)\le 0$ for any $s\in \bR$. By a change of variable, it's clear that we just need to show this when $|t|=1$ or equivalently when $s=0$. Denoting $\dot{\vphi}=\frac{\partial}{\partial s}\vphi=\frac{\partial}{\partial (-\log|t|^2)}\vphi(t)$, we calculate the second order derivative of $E(\vphi)$ with respect to $s\in \bR$. For the simplicity of notation, we denote $C(n, \xi_0)=(n-1)!(2\pi)^n\vol(\xi_0)$.
\begin{eqnarray}\label{eq-d2E1}
C(n, \xi_0)\left.\frac{d^2}{ds^2} E(\vphi)\right|_{s=0}&=&\left.-\frac{\partial}{\partial s}\int_X \dot{\vphi} e^{-r_\vphi^2}(\sddb r^2_\vphi)^n\right|_{s=0}\nonumber\\
&=&-\int_X \left(\ddot{\vphi}e^{-r^2}-\dot{\vphi}^2 r^2 \right) e^{-r^2} (\sddb r^2)^n\nonumber\\
&&\hskip 1.5cm -\int_X \dot{\vphi} e^{-r^2} n \sddb (r^2 \dot{\vphi})\wedge (\sddb r^2)^{n-1} 
\end{eqnarray}
To simplify the result, we use the identity $\partial_r(\dot{\vphi})=0$ to calculate:
\begin{eqnarray}\label{eq-intbypart}
&&-\int_X \dot{\vphi}e^{-r^2} n \sddb (r^2\dot{\vphi}) (\sddb r^2)^{n-1}\nonumber\\
&=&\int_X n \sqrt{-1}    \left[\partial \dot{\vphi}\wedge \bar{\partial} (r^2\dot{\vphi})-\dot{\vphi} \partial (r^2)\wedge \bar{\partial} (r^2 \dot{\vphi}) \right] e^{-r^2}(\sddb r^2)^{n-1}\nonumber \\
&=&\int_X n \sqrt{-1} \left[r^2 \partial \dot{\vphi}\wedge \bar{\partial} \dot{\vphi}-\dot{\vphi}^2 (\partial r^2)\wedge (\bar{\partial} r^2) \right]\wedge  e^{-r^2} (\sddb r^2)^{n-1}\nonumber \\
&=&\int_X \left(r^2 |\partial \dot{\vphi}|_{\omega_X}^2-r^2 \dot{\vphi}^2\right)e^{-r^2}(\sddb r^2)^n.
\end{eqnarray}
The integration by parts in the first identity is valid, because we have the following estimate, which can be derived from the invariance of $\dot{\vphi}$ under $\partial_r$:
\[
|\partial \dot{\vphi}|_{\omega_X}\le |\partial \dot{\vphi}|_{\omega_{\bC^N}}\le  \frac{C}{r}.
\]
Substituting \eqref{eq-intbypart} into \eqref{eq-d2E1}, we get:
\begin{eqnarray}\label{eq-d2Evphi}
\left.\frac{d^2}{ds^2}E(\vphi)\right|_{s=0}&=&-C(n,\xi_0)^{-1}\int_X (\ddot{\vphi}-r^2 |\partial\dot{\vphi}|_{\omega_X}^2) e^{-r^2} (\sddb r^2)^n.
\end{eqnarray}
To see the negativity of $\frac{d^2 E}{ds^2}$, we can define a (1,1)-form on $X\times \bC$ with respect to the variable $(z, s)$ by the formula:
\begin{eqnarray*}
\Omega&=&\sddb (r(t)^2)=\sddb \left( r^2e^\vphi \right)\\
&=&\sqrt{-1} \left(\partial_X\bar{\partial}_X r(t)^2+r(t)^2 (\dot{\vphi}^2+\ddot{\vphi})dt\wedge d\bar{t}\right)\\
&&\hskip 5mm +dt\wedge \left((\bar{\partial}_X r(t)^2)\dot{\vphi}+r(t)^2 \bar{\partial}_X\dot{\vphi} \right)+ \left((\partial_X r^2(t)) \dot{\vphi}+r(t)^2 \partial_X\dot{\vphi}\right) \wedge d\bar{t}.
\end{eqnarray*}
Because $\Omega$ is the pull back of positive $(1,1)$-form $\sddb r^2$ on $\bC^N$ under the holomorphic mapping $(p, t)\mapsto \sigma(t)\cdot p$, $\Omega$ itself is a smooth positive $(1,1)$-form. 
Using identities \eqref{eq-xi0r}-\eqref{eq-xi0u}, 
it's easy to verify that \eqref{eq-d2Evphi} can be expressed as:
\begin{equation}
-\left(\frac{d^2}{ds^2}E(\vphi(s))\right) \sqrt{-1} ds\wedge d\bar{s}=\frac{1}{C(n,\xi_0)(n+1)}\int_{X\times\bC/\bC} \Omega^{n+1} r(s)^{-2} e^{-r(s)^2}\ge 0.
\end{equation}
The inequality is in the sense of positivity of currents.

\end{proof}

Now set $\xi_{\epsilon}=\xi-\epsilon \eta=\sum_i (a_i-\epsilon b_i)z_i\partial_{z_i}$ for $0\le \epsilon\ll 1$ and consider a new radius function with Reeb vector field $\xi_\epsilon$. For example, corresponding to \eqref{eq-radius} we can choose
\[
r_{\epsilon}^2:=\left(\sum_i |z_i|^{2/(\delta (a_i-\epsilon b_i))}\right)^{\delta}.
\]
We use the following identity expressing the volume of $\wt_{\xi_\epsilon}$ (see \cite{MSY08, CS12, DS15, HS16}):
\begin{equation}
\vol_{X_0}(\xi_\epsilon):=\vol_{X_0}(\wt_{\xi_\epsilon})=\frac{1}{(2\pi)^n n!}\int_{X_0} e^{-r^2_{\epsilon}}(\sddb r_{\epsilon}^2)^n.
\end{equation}

We need the following important formula due to Martelli-Sparks-Yau (see also \cite{DS15})
\begin{lem}[{\cite[Appendix C]{MSY08}}]
The first order derivative of $\vol_{X_0}(\xi_\epsilon)$ is given by the formula:
\begin{eqnarray}\label{eq-dvolint}
\left.\frac{d}{d\epsilon}\vol(\xi_\epsilon)\right|_{\epsilon=0}&=&\frac{1}{(2\pi)^nn!}\int_{X_0} (r^2\theta) e^{-r^2}(\sddb r^2)^n\nonumber\\
&=&\frac{1}{(2\pi)^n (n-1)!}\int_{X_0}\theta e^{-r^2} (\sddb r^2)^n,
\end{eqnarray}
where $\theta=\theta_\eta=\eta(\log r^2)$ is a bounded function on $\bC^N\setminus\{0\}$.
\end{lem}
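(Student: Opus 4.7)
The plan is to differentiate the integral formula for $\vol(\xi_\epsilon)$ directly under the integral sign and then use integration by parts together with the defining Reeb relation for $r_\epsilon^2$. Set $v := \frac{d}{d\epsilon}\big|_{\epsilon=0} r_\epsilon^2$ and $\omega := \sddb r^2$. Differentiating the cone condition $\xi_\epsilon(r_\epsilon^2) = r_\epsilon^2$ at $\epsilon=0$ gives the crucial relation
\[
\xi_0(v) - v = \eta(r^2) = r^2\theta.
\]
Then
\[
\frac{d}{d\epsilon}\bigg|_{\epsilon=0}\vol(\xi_\epsilon) = \frac{1}{(2\pi)^n n!}\int_{X_0}\bigl[-v\, e^{-r^2}\omega^n + n\, e^{-r^2}\,\sddb v\wedge \omega^{n-1}\bigr].
\]

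For the second integrand I would integrate by parts on $\bar\partial$, moving the derivative from $v$ onto $e^{-r^2}$; since $\sddb\omega = 0$ and $\bar\partial e^{-r^2} = -e^{-r^2}\bar\partial r^2$, this yields
\[
n\int_{X_0} e^{-r^2}\sddb v\wedge \omega^{n-1} = n\int_{X_0} \sqrt{-1}\,\partial v\wedge\bar\partial r^2\wedge \omega^{n-1}\,e^{-r^2}.
\]
The identity \eqref{eq-xi0u} (after rewriting $\bar\partial r = \bar\partial r^2/(2r)$) gives $n\sqrt{-1}\,\partial v\wedge\bar\partial r^2\wedge\omega^{n-1} = \xi_0(v)\cdot \omega^n$. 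Substituting and using $\xi_0(v)-v = r^2\theta$ produces
\[
\frac{d}{d\epsilon}\bigg|_{\epsilon=0}\vol(\xi_\epsilon) = \frac{1}{(2\pi)^n n!}\int_{X_0}(\xi_0(v)-v)\,e^{-r^2}\omega^n = \frac{1}{(2\pi)^n n!}\int_{X_0}r^2\theta\, e^{-r^2}\omega^n,
\]
which is the first claimed equality.

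For the second equality I would exploit that $X_0$ is a cone preserved by the flow $\phi_s$ of $\frac{1}{2}\mathrm{Re}(r\partial_r)$, which satisfies $\phi_s^*r^2 = e^{2s}r^2$ and $\phi_s^*\omega = e^{2s}\omega$. A change of variables shows that for any bounded $\xi_0$-invariant function $f$ the quantity $\int_{X_0} f\, e^{-r^2 e^{2s}}\omega^n\, e^{2ns}$ is independent of $s$; differentiating at $s=0$ yields
\[
\int_{X_0} f\,r^2\, e^{-r^2}\omega^n \;=\; n\int_{X_0} f\, e^{-r^2}\omega^n.
\]
Since $[\xi_0,\eta]=0$ and $\xi_0(\log r^2)=1$ imply $\xi_0(\theta)=0$, applying this identity with $f = \theta$ converts the first expression into the second.

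The main technical obstacle is justifying the differentiation under the integral sign and the integration by parts near the vertex $o$ and along the singular locus of $X_0$. This requires controlling $v$ and $\partial v$ with uniform bounds of the type $|\partial v|_{\omega_X}\le C/r$, mirroring the estimate used in \eqref{eq-intbypart}, and exploiting the exponential decay of $e^{-r^2}$ at infinity. A convenient way to secure these bounds is to take $r_\epsilon^2$ to be the explicit perturbation of the reference weighted radius function in \eqref{eq-radius} with weights $a_i-\epsilon b_i$, so that $v$ depends smoothly on $\epsilon$ and is manifestly $T$-invariant, allowing the same boundary analysis carried out earlier in the proof of concavity of $E(\varphi(t))$ to be applied verbatim.
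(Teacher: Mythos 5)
Your proof is correct and follows essentially the same route as the paper: differentiate under the integral, integrate by parts and invoke identity \eqref{eq-xi0u}, use the variation of the relation $\xi_\epsilon(r_\epsilon^2)=r_\epsilon^2$ to reduce to $\xi_0(v)-v=\eta(r^2)=r^2\theta$ (the paper writes $v=r^2u$ and derives $r^2\xi_0(u)=\eta(r^2)$, which is the same statement), and then pass from the $r^2\theta$-weighted integral to the $\theta$-weighted one by exploiting scale invariance, which is what the paper abbreviates as ``polar coordinates.'' One small bookkeeping slip: the flow you want is that of $r\partial_r$ (equivalently $2\mathrm{Re}\,\xi_0$), not $\tfrac12\mathrm{Re}(r\partial_r)$, so that $\phi_s^*r^2=e^{2s}r^2$ and $\phi_s^*\omega=e^{2s}\omega$; the factor does not affect the resulting identity $\int f\,r^2 e^{-r^2}\omega^n=n\int f\,e^{-r^2}\omega^n$, but the normalization as written is inconsistent.
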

Since our notations may be different from that in the literature, for the reader's convenience we provide a brief calculation.
\begin{proof}
Let $\left.\frac{d}{d\epsilon}\right|_{\epsilon=0}r^2_\epsilon=r^2 u$. Then we have
\begin{eqnarray}\label{eq-depvol1}
\left.\frac{d}{d\epsilon}\right|_{\epsilon=0}\vol_{X_0}(\xi_\epsilon)&=&\frac{1}{(2\pi)^n n!}\left(\int_{X_0} (-r^2u)e^{-r^2}(\sddb r^2)^n\right.\nonumber\\
&&\hskip 1.5cm +\left.\int_{X_0} e^{-r^2} n\sddb (r^2u)\wedge (\sddb r^2)^{n-1}\right).
\end{eqnarray}
To simplify the expression, we do the integration by parts:
\begin{eqnarray}\label{eq-depvol2}
&&\int_{X_0}e^{-r^2} n \sddb (r^2u)\wedge (\sddb r^2)^{n-1}\nonumber\\
&=&\int_{X_0} e^{-r^2} n \sqrt{-1} \partial(r^2u)\wedge (\bar{\partial} r^2)\wedge (\sddb r^2)^{n-1}\nonumber\\
&=&\int_{X_0} e^{-r^2} n \sqrt{-1}(2r u\partial r+r^2 \partial u)\wedge 2r \bar{\partial}r\wedge (\sddb r^2)^{n-1}\nonumber\\
&=&\int_{X_0} e^{-r^2} (r^2u+r^2 \xi_0(u)) (\sddb r^2)^{n}.
\end{eqnarray}
In the last identity, we have used \eqref{eq-xi0u} for $v=r$ and $v=u$ respectively. Now the key is to take the variation of the following identity:
\[
r^2_{\epsilon}=\xi_{\epsilon}(r_{\epsilon}^2)
\]
to get:
\begin{equation}
r^2u=
-{\eta}(r^2)+\xi_0(r^2 u)= -\eta(r^2)+ r^2 u+ r^2\xi_0(u),
\end{equation}
which implies $r^2\xi_0(u)=\eta(r^2)$. Combining this with \eqref{eq-depvol1}-\eqref{eq-depvol2}, we get the first identity of \eqref{eq-dvolint}. 
The second identity follows from the first one by using polar coordinate and the fact that $\theta$ does not depend on $r$. 
\end{proof}

\begin{prop}[see \cite{CS15}]\label{prop-Eslope}
The limiting slope of $E(\vphi(s))$ is equal to the derivative of the volume up to a constant:
\begin{equation}\label{eq-limitslopeE}
\lim_{s\rightarrow+\infty} \frac{d}{ds}E(\vphi(s))=\frac{(D_{-\eta} \vol)(\xi)}{\vol(\xi_0)}.
\end{equation}
\end{prop}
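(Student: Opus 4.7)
The strategy is to identify $\dot\vphi(s)$ with (the pull-back of) the Hamiltonian $\theta_\eta$, then recognize $\frac{d}{ds}E(\vphi(s))$ as an integral over the deformed fiber $X_t=\sigma(t)\cdot X$, and finally pass to the limit $t\to 0$ where $X_t$ flatly degenerates to the central fiber $X_0$ so that the MSY formula \eqref{eq-dvolint} applies directly.

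First I will carry out the explicit calculation of $\dot\vphi$. Setting $w_i(x)=e^{-sb_i/(\delta a_i)}|z_i(x)|^{2/(\delta a_i)}=|z_i(\sigma(t)x)|^{2/(\delta a_i)}$, one computes
\[
\dot\vphi(s)(x)=-\frac{\sum_i(b_i/a_i)\,w_i(x)}{\sum_i w_i(x)}=-\theta_\eta\bigl(\sigma(t)\cdot x\bigr)=-\sigma(t)^*\theta_\eta(x),
\]
using the identities $\xi_0=\sum_i a_iz_i\partial_{z_i}$, $\eta=\sum_i b_iz_i\partial_{z_i}$, and $\theta_\eta=\eta(\log r^2)$ obtained directly from \eqref{eq-radius}. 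Because $r_\vphi^2=\sigma(t)^*r^2$ by construction, the Monge–Amp\`ere measure satisfies $e^{-r_\vphi^2}(\sddb r_\vphi^2)^n=\sigma(t)^*\bigl(e^{-r^2}(\sddb r^2)^n\bigr)$. Substituting into \eqref{eq-defMAE} and applying the change-of-variables formula along the biholomorphism $\sigma(t):X\to X_t\subset\bC^N$ yields
\[
\frac{d}{ds}E(\vphi(s))=\frac{1}{(n-1)!(2\pi)^n\vol(\xi_0)}\int_{X_t}\theta_\eta\,e^{-r^2}(\sddb r^2)^n.
\]

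Next I will let $s\to+\infty$, equivalently $t\to 0$. The family $\{X_t\}_{t\in\bC}$ is a $T$-equivariant flat family with generic fiber $X$ and special fiber $X_0$, embedded in $\bC^N$; the function $\theta_\eta$ is a bounded function on $\bC^N\setminus\{0\}$ that is annihilated by both $r\partial_r$ and the $S^1$-action generated by $\mathrm{Im}(\xi_0)$, hence descends to a bounded basic function on the link. The measure $e^{-r^2}(\sddb r^2)^n|_{X_t}$ has total mass $(2\pi)^n n!\,\vol_{X_t}(\xi_0)=(2\pi)^n n!\,\vol_X(\xi_0)$, which is independent of $t$, and is concentrated in a bounded region $\{r\le R\}$ uniformly in $t$ up to a small tail. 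Combining flat convergence of $X_t\to X_0$ with the uniform mass bound and uniform boundedness of $\theta_\eta$, a standard dominated-convergence argument gives
\[
\lim_{s\to+\infty}\int_{X_t}\theta_\eta\,e^{-r^2}(\sddb r^2)^n=\int_{X_0}\theta_\eta\,e^{-r^2}(\sddb r^2)^n.
\]
Dividing by $(n-1)!(2\pi)^n\vol(\xi_0)$ and using the Martelli–Sparks–Yau formula \eqref{eq-dvolint} to rewrite the right-hand side as $(D_{-\eta}\vol)(\xi_0)/\vol(\xi_0)$ produces the desired identity.

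The principal technical point is the passage to the limit in the last step: one needs that the Monge–Amp\`ere-type measure $e^{-r^2}(\sddb r^2)^n|_{X_t}$ converges weakly to $e^{-r^2}(\sddb r^2)^n|_{X_0}$ against bounded, $T$-invariant, basic test functions as $t\to 0$. Since $r^2$ is the restriction of a fixed smooth psh function on the ambient $\bC^N$, this weak convergence follows from the flatness of the family $\cX\to\bA^1$ (which controls the pushforward of $e^{-r^2}\omega_{\bC^N}^n$ onto cycles) together with the pointwise convergence of the embedded cycles $[X_t]\to[X_0]$ in $\bC^N$; the uniform mass control then upgrades weak convergence along basic bounded test functions to the identity above. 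Once this limit is justified, the rest of the argument is a bookkeeping calculation.
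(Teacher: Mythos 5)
Your proof follows the same route as the paper: identify $\dot\vphi=-\sigma(t)^*\theta_\eta$ (you fill in the explicit computation the paper omits), change variables to rewrite $\frac{d}{ds}E(\vphi(s))$ as $\frac{1}{(2\pi)^n(n-1)!\vol(\xi_0)}\int_{X_t}\theta_\eta\,e^{-r^2}(\sddb r^2)^n$, pass to the limit $t\to 0$ using flatness and boundedness of $\theta_\eta$, and invoke the Martelli--Sparks--Yau formula \eqref{eq-dvolint}. The one place the paper is more careful is the limit step: it first uses the polar-coordinate identity $\int_{X_t}=C_n\int_{\{r\le 1\}\cap X_t}$ to reduce exactly to a compact region before citing the cycle-convergence argument of \cite[pp.~67--68]{Li13}, whereas your appeal to ``dominated convergence'' on the noncompact varying domain $X_t$ is really this same argument but needs that compactification spelled out to be airtight.
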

\begin{proof}
Let $\sigma(t)$ be the $\bC^*$-action generated by $\eta=\sum_i b_i z_i \partial_{z_i}$. 
Recall that we have $r(t)^2=(\sigma^* r^2)|_{X_1}$ and $\dot{\vphi}=\frac{\partial}{\partial s}\vphi$ is equal to $-\sigma(t)^*(\theta)$ (recall that $s=-\log|t|^2$).
So we get the following identities:
\begin{eqnarray*}
\frac{d}{ds}E(\vphi(s))&=&-\frac{1}{(2\pi)^n(n-1)!\vol(\xi_0)}\int_{X}\dot{\vphi}(t) e^{-r(t)^2}(\sddb r(t)^2)^n\\
&=&\frac{1}{(2\pi)^n(n-1)!\vol(\xi_0)}\int_X\sigma(t)^*(\theta) e^{-\sigma^*(r^2)}(\sddb \sigma^*(r^2))^n\\
&=&\frac{1}{(2\pi)^n(n-1)!\vol(\xi_0)}\int_{X_t}\theta e^{-r^2}(\sddb r^2)^n.
\end{eqnarray*}
By using polar coordinate it's easy to see that:
\begin{equation}
\int_{X_t}\theta e^{-r^2}(\sddb r^2)^n=C_n\cdot \int_{\{r\le 1\}\cap X_t}\theta e^{-r^2} (\sddb r^2)^n,
\end{equation}
where
\[
C_n=\frac{\int_0^{\infty} e^{-r^2} r^{2n-1}dr}{\int_0^1 e^{-r^2}r^{2n-1}dr}=\frac{1}{1-e^{-1}\sum_{i=0}^{n-1}1/i!}.
\]
Then using the boundedness of $\theta$ and the argument in \cite[pp.67-68]{Li13} (see also \cite{Ber15}), we get that:
\begin{eqnarray}\label{eq-cutlim}
\lim_{s\rightarrow +\infty}\int_{\{r\le 1\}\cap X_t} \theta e^{-r^2} (\sddb r^2)^n &=& \int_{\{r\le 1\}\cap X_0}\theta e^{-r^2}(\sddb r^2)^n.
\end{eqnarray}
Combining the above identities and \eqref{eq-dvolint}, we get the identity \eqref{eq-limitslopeE}. 
\end{proof}

Next we need to deal with the part $G(\vphi(s))$:
\begin{eqnarray*}
G(\vphi(s))&=&  \log\left(\int_{X} e^{-r(t)^2} dV\right).
\end{eqnarray*}
The flat family $\cX\rightarrow \bC$ has a $\bC^*$-equivariant volume form $dV_{\cX/\bC}$ such that $\left.dV_{\cX/\bC}\right|_{X_t}$ is a volume form on $X_t$. In the case when $\cX$ is induced by a Koll\'{a}r component which is the main case that we used in the main text, we have given an explicit description in Proposition \ref{prop-Teqsec}. Moreover, by Remark \ref{rem-Ltpt}, we have
$
\cL_{\eta}dV_{\cX/\bC}=A(\eta) \cdot dV_{\cX/\bC},
$
which implies 
\begin{equation}
\sigma(t)^*dV_{\cX/\bC}=e^{A(\eta)\log |t|^2}\cdot dV_{\cX/\bC}.
\end{equation}
Then we have:
\begin{eqnarray}\label{eq-limitslopeG}
G(\vphi_t)&=& \log\left( \int_{X} e^{-\sigma(t)^* r^2} (\sigma^*dV_{X_t})e^{-A(\eta)\log |t|^2}\right)\nonumber\\
&=&A(\eta)(-\log|t|^2)+\log\left(\int_{X_t}e^{-r^2}dV_{X_t}\right)\nonumber\\
&=:& A(\eta)s+\tilde{G}(t),
\end{eqnarray}
where we have denoted 
\begin{equation}\label{eq-tildeG}
\tilde{G}(t)=\log\left(\int_{X_t} e^{-r^2} dV_{X_t}\right).
\end{equation}
We need the following variation of a result from \cite[Lemma 3.7]{Li13}:
\begin{lem}
The function $\tilde{G}(t)$ in \eqref{eq-tildeG} is a bounded continuous function with respect to $t\in \bC$.  
\end{lem}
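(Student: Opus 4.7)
The plan is to reduce the integral $F(t) := \int_{X_t} e^{-r^2}\, dV_{X_t}$ to a ``link volume'' on each fiber via polar coordinates adapted to the Reeb vector field, and then establish boundedness, positivity, and continuity of that link volume across the degeneration. On each fiber $X_t$ we have $\xi_0(r^2) = r^2$ because the embedding $X_t \subset \mathbb{C}^N$ is $T$-equivariant and $r^2$ from \eqref{eq-radius} is $\xi_0$-homogeneous of weight one. Moreover, the relative pluricanonical form $\mathscr{S}$ from Proposition \ref{prop-Teqsec} satisfies $\mathcal{L}_{\xi_0}\mathscr{S} = mn \cdot \mathscr{S}$ on every $X_t$ (using $A_{X_t}(\wt_{\xi_0}) = n$ from the first part of the theorem), so $dV_{X_t}$ is homogeneous of weight $2n$ under $r\partial_r$. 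A radial change of variable $u = r^2$ then yields the identity
\[
F(t) = n!\cdot V(t), \qquad V(t) := \int_{X_t \cap \{r^2 \le 1\}} dV_{X_t},
\]
reducing the problem to showing that $V(t)$ is a positive, bounded, continuous function of $t \in \mathbb{C}$.

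Boundedness of $V(t)$ on any compact subset $K \subset \mathbb{C}$ follows because $K' := \mathcal{X} \cap \{r^2 \le 1\} \cap \pi^{-1}(K)$ is compact (since $r^2$ is proper on $\mathbb{C}^N$) and $dV_{\mathcal{X}/\mathbb{C}}$ is a continuous family of fiber measures on $K'$; a uniform upper bound follows by comparing the fiber integral with a fixed smooth test function supported in a neighborhood of $K'$. Strict positivity $V(0) > 0$ follows from the fact that $\mathscr{S}|_{X_0}$ is nowhere vanishing near the vertex, and positivity on $\mathbb{C}^*$ is immediate from the $\sigma(t)$-equivariance $V(t) = |t|^{2A(\eta)} \int_{X \cap \{r(t)^2 \le 1\}} dV_X$ and the smoothness of $r(t)^2$ in $t \in \mathbb{C}^*$.

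Continuity of $V(t)$ at any $t_0 \in \mathbb{C}^*$ is immediate from the same $\sigma(t)$-equivariance. The essential point is continuity at $t_0 = 0$. I will approximate $\mathbf{1}_{\{r^2 \le 1\}}$ from above and below by smooth $T$-invariant cutoffs $\chi_\epsilon^{\pm}$ satisfying $|\chi_\epsilon^\pm - \mathbf{1}_{\{r^2 \le 1\}}| \le \mathbf{1}_{\{|r^2 - 1| \le \epsilon\}}$. The radial homogeneity of $dV_{X_t}$ yields the uniform error estimate
\[
\bigl|V_{\chi_\epsilon^\pm}(t) - V(t)\bigr| \;\le\; \bigl((1+\epsilon)^n - (1-\epsilon)^n\bigr) V(t) \;\le\; 2n\epsilon \cdot V(t),
\]
which is uniformly small on compact subsets of $\mathbb{C}$ once boundedness of $V(t)$ is in hand. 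Since $\chi_\epsilon^\pm \cdot dV_{\mathcal{X}/\mathbb{C}}$ is a smooth compactly supported top-form on $\mathcal{X}$, the fiber integrals $V_{\chi_\epsilon^\pm}(t)$ depend continuously on $t\in\mathbb{C}$ by Fubini applied to the flat family $\pi: \mathcal{X}\to\mathbb{C}$; sending $\epsilon \to 0$ yields the continuity of $V(t)$ at $t=0$.

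The main obstacle is the rigorous justification of the continuity of the fiber integral $V_{\chi}(t)$ for smooth compactly supported $\chi$ across the possibly singular special fiber $X_0$. This is accessible because $\mathscr{S}$ extends as a nowhere-vanishing holomorphic section of $m(K_{\mathcal{X}/\mathbb{C}} + \mathcal{D})$ across $X_0$ by Proposition \ref{prop-Teqsec}, so $\chi \cdot dV_{\mathcal{X}/\mathbb{C}}$ is a locally integrable top form on $\mathcal{X}$ with compact support; the flatness of $\pi$ combined with the coarea/Fubini formula forces the fiber integral to depend continuously on the base parameter, which is precisely what is needed to close the argument.
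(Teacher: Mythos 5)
Your reduction to the compact slice $\{r \le 1\}\cap X_t$ via the radial homogeneity $\cL_{r\partial_r}dV_{X_t}=2n\,dV_{X_t}$ matches the paper's first step exactly (only the choice of normalizing constant differs, and both are correct). The divergence is in how the continuity across the singular central fiber $X_0$ is handled: the paper at this point simply cites \cite[Section 4]{Li13}, and your attempt to make that step self-contained contains a genuine gap.

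The problem is the claim that ``$\chi_\epsilon^\pm \cdot dV_{\cX/\bC}$ is a smooth compactly supported top-form on $\cX$, [so] the fiber integrals $V_{\chi_\epsilon^\pm}(t)$ depend continuously on $t$ by Fubini applied to the flat family.'' First, $\cX$ is a \emph{singular} variety (the special test configuration has a singular total space, and $X_0$ is singular as well), and $dV_{\cX/\bC}$ is smooth only on $\cX^{\rm reg}$; near the singularities it is at best an $L^p$-density relative to any ambient Lebesgue measure, not a smooth form. Second, local integrability of a top form on the total space plus flatness of $\pi$ does \emph{not} force the fiber integrals to be continuous --- for a general $L^1_{\rm loc}$ density, the fiber integral is only defined for a.e.\ parameter value, and even when it is defined everywhere it may be discontinuous. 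Your sentence ``the flatness of $\pi$ combined with the coarea/Fubini formula forces the fiber integral to depend continuously on the base parameter'' is exactly the statement that needs a proof, and it does not follow from what precedes it. What actually closes the argument (and is the content of the citation the paper makes) is a quantitative input: because $(\cX, X_0)$ is plt/klt, the fiber volume densities written against the ambient Euclidean measure on $\bC^N$ admit a uniform $L^p$ bound for some $p>1$ on the compact set $\cX\cap\{r\le 1\}\cap\pi^{-1}(K)$, and this uniform higher integrability (not mere $L^1$) is what yields the equicontinuity/continuity of the fiber integrals across $t=0$. Your smooth cutoff reduction and the homogeneity estimate $|V_{\chi_\epsilon^\pm}(t)-V(t)|\le 2n\epsilon\,V(t)$ are both correct and useful, but they sit on top of the continuity-of-fiber-integrals statement rather than proving it, so the load-bearing step is still missing.
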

\begin{proof}
As in the proof of Proposition \ref{prop-Eslope}, we first transform the integration domain to a compact set. Because $\cL_{\xi_0}dV_{X_t}=A(\xi_0)dV_{X_t}$ and $r\partial_r=2 {\rm Re}(\xi_0)$, we also have
$\cL_{r\partial_r}dV_{X_t}=2 A(\xi_0) dV_{X_t}$. So it's easy to verify that:
\begin{equation}
\int_{X_t} e^{-r^2} dV_{X_t} = C_n\cdot \int_{\{r\le 1\}\cap X_t}e^{-r^2}dV_{X_t}.
\end{equation}
Now on the part $\cX\cap (\{r\le 1\}\times\bC) \subset \bC^N\times\bC$, we can then use the same calculation as in \cite[section 4]{Li13} to get the conclusion.
\end{proof}

\begin{prop}[see also \cite{CS15}]
If $A_X(\xi_0)=n$, then the asymptotic slope of the Ding energy is equal to the Futaki invariant of the special test configuration:
\begin{equation}\label{eq-limitslopeD}
\lim_{s\rightarrow+\infty} \frac{D(\vphi(s))}{s}=\frac{\Fut(X_0, \xi_0; \eta)}{n^{n}\cdot \vol(\xi_0)}.
\end{equation}
\end{prop}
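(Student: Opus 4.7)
The plan is to decompose $D(\vphi(s)) = E(\vphi(s)) - G(\vphi(s))$ and compute the asymptotic slope of each piece separately, then match the result with the definition of the Futaki invariant under the hypothesis $A_X(\xi_0) = n$.

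First I would handle $E(\vphi(s))$. Since $E(\vphi(s))$ is concave in $s$ by the preceding lemma, its derivative $\frac{d}{ds}E(\vphi(s))$ is non-increasing, and so the classical fact that $\lim_{s\to+\infty} E(\vphi(s))/s$ exists and equals $\lim_{s\to+\infty}\frac{d}{ds}E(\vphi(s))$. By Proposition \ref{prop-Eslope}, this limit is $(D_{-\eta}\vol)(\xi_0)/\vol(\xi_0)$. Next, for $G(\vphi(s))$, the identity \eqref{eq-limitslopeG} reads $G(\vphi(s)) = A(\eta)\, s + \tilde{G}(t)$, and since $\tilde{G}(t)$ is a bounded continuous function of $t$, we immediately get $\lim_{s\to+\infty} G(\vphi(s))/s = A(\eta)$. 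Combining:
\[
\lim_{s\to+\infty}\frac{D(\vphi(s))}{s} = \frac{(D_{-\eta}\vol)(\xi_0)}{\vol(\xi_0)} - A(\eta).
\]

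It then remains to compare this expression with $\Fut(X_0,\xi_0;\eta)/(n^n \vol(\xi_0))$. From Definition \ref{def-Futaki} and the linearity of $A$ (so that $A(-\eta)=-A(\eta)$),
\[
\Fut(X_0,\xi_0;\eta) = n\, A(\xi_0)^{n-1}\, A(-\eta)\,\vol(\xi_0) + A(\xi_0)^n\, (D_{-\eta}\vol)(\xi_0).
\]
Plugging in the hypothesis $A(\xi_0)=n$ yields $\Fut(X_0,\xi_0;\eta) = -n^n A(\eta)\vol(\xi_0) + n^n (D_{-\eta}\vol)(\xi_0)$, so dividing by $n^n \vol(\xi_0)$ produces exactly the right-hand side obtained above. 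This completes the identification \eqref{eq-limitslopeD}.

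No serious obstacle is expected: both slope computations are already available (Proposition \ref{prop-Eslope} and the representation \eqref{eq-limitslopeG} together with boundedness of $\tilde{G}$), and the matching with the Futaki invariant is a purely algebraic manipulation that uses the hypothesis $A_X(\xi_0)=n$ crucially to cancel the factor $A(\xi_0)^{n-1}\cdot n$ against $n^n$. The only minor point to be careful about is the justification for passing from $\lim dE/ds$ to $\lim E/s$, which follows from concavity of $E(\vphi(s))$ in $s$ together with finiteness of $E(\vphi(0))$.
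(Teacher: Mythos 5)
Your proof is correct and follows essentially the same route as the paper: decompose $D=E-G$, compute the slope of $E$ via Proposition \ref{prop-Eslope}, read off the slope of $G$ from \eqref{eq-limitslopeG} and boundedness of $\tilde{G}$, and then match the result against Definition \ref{def-Futaki} using $A(\xi_0)=n$. You also supply a small justification the paper leaves implicit, namely that concavity of $E(\vphi(s))$ in $s$ lets one pass from $\lim_{s\to\infty}\frac{d}{ds}E(\vphi(s))$ (what Proposition \ref{prop-Eslope} actually gives) to $\lim_{s\to\infty}E(\vphi(s))/s$.
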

\begin{proof}
Combining \eqref{eq-limitslopeE} and \eqref{eq-limitslopeG}, we get:
\begin{eqnarray*}
\lim_{s\rightarrow+\infty}\frac{D(\vphi(s))}{s}&=&\lim_{s\rightarrow +\infty} \frac{E(\vphi(s))}{s}-\lim_{s\rightarrow+\infty}\frac{G(\vphi(s))}{s}\\
&=& \frac{(D_{-\eta}\vol)(\xi_0)}{\vol(\xi_0)}-A(\eta)\\
&=& \frac{(D_{-\eta}\vol)(\xi_0)+A(-\eta)  \vol(\xi_0)}{\vol(\xi_0)}.
\end{eqnarray*}
On the other hand, using the assumption that $A_X(\xi_0)=A_{X_0}(\xi_0)=n$, we have
\begin{eqnarray*}
\Fut(X_0, \xi_0; \eta)&=&(D_{-\eta}\hvol_{X_0})(\xi_0)\\
&=&
A^n_{X_0} (\xi_0) (D_{-\eta}\vol_{X_0}) (\xi_0)+n A_{X_0}(\xi_0)^{n-1} A_{X_0}(-\eta) \vol_{X_0}(\xi_0)\\
&=&n^n \left((D_{-\eta}\vol)(\xi_0)+ A(-\eta) \vol(\xi_0)\right)\\
&=&n^n \cdot \vol(\xi_0) \cdot \lim_{s\rightarrow+\infty} \frac{D(\vphi(s))}{s}.
\end{eqnarray*}
Now \eqref{eq-limitslopeD} follows from the above identity. 
\end{proof}

Finally we can complete the proof of Theorem \ref{thm-irSE}.
\begin{proof}[Completion of the proof of Theorem \ref{thm-irSE}]If there exists a Ricci-flat K\"{a}hler cone metric on $(X, \xi)$, then the Ding energy $D(\vphi)$ is bounded from below. As pointed out in \cite{DS15} this can be proved by following the same proof for the K\"{a}hler-Einstein case. Indeed, for any $\vphi\in PSH(X, \xi_0)$, we get transversal K\"{a}hler potential which is still denoted by $\vphi$. By using the same proof as in \cite[pp. 156-157]{Bern15}, there exists a bounded geodesic $\vphi_t$ connecting $0$ and $\vphi$. On the other hand, adapting Berndtsson's proof of subharmonicity to the Sasakian case, Donaldson-Sun showed that $D(\vphi_t)$ is convex with respect to $t$. Because $\vphi_0=0$ is a critical point of $D(\vphi_t)$, one knows that $D(\vphi)\ge D(0)$. 

 Since $D(\vphi(s))$ is uniformly bounded from below, by \eqref{eq-limitslopeD}, $\Fut(\cX, \xi_0; \eta) \ge 0$. As this holds for any special test configuration induced by any Koll\'{a}r component, we get the conclusion.
\end{proof}

\subsection{Finite degree formula}\label{s-finitedegree}

Now we can verify the degree multiplication formula in Theorem \ref{t-finitedegree}.

\begin{proof}[Proof of Theorem \ref{t-finitedegree}] We first assume $\pi$ is a Galois covering with the Galois group $G$.

Let $v_0$ be the valuation defined in Section \ref{ss-DS}, which induces the degeneration of $(o\in M_{\infty})$ to $W$.
We can fix a sequence of $v_i\to v_0$ such that $v_i$ is a rescaling of Koll\'ar component. Since the pull back of a Koll\'ar component is a $G$-invariant Koll\'ar component, we conclude that we can pull back $v_0$ to get a $G$-invariant valuation $v'\in \Val_{Y,y}$.  It suffices to prove $v'$ is a minimizer of $\hvol_{Y}$ (see e.g., \cite[Theorem 2.6]{LX17}).

\medskip

Since $v'$ is $G$-invariant,  $v'(f)=\frac{1}{G}v'({\rm Nm}(f))$, we know that a $G$-invariant element in $R'$ has its valuation under $v'$ is at least $k$ if and only if it is an element in $R$ whose valuation under $v$ is at least $k$, i.e. $(\fa_{v'})^G_k=(\fa_v)_k$. In particular, ${\rm gr}_{v'}R'$ is finitely generated as it is finite over  ${\rm gr}_{v}R$. We denote by $W_Y={\rm Spec}({\rm gr}_{v'}R')$. Then $\pi_W\colon W_Y\to W$ is quasi-\'etale with Galois group $G$. In fact this is clear in the quasi-regular case, and in the general case, we can use Lemma \ref{l-finiteass} to reduce to the quasi-regular case.  Furthermore, $G$ commutes with the $T$-action on $W_Y$ as it preserves the $v$-degree, and the vector associated to $v'$ on $W_Y$ is $\pi^*(\xi_0)$, which is denoted by $\xi_0'$.

\medskip

Consider the special test configuration $(\mathcal{W},\xi_0;\eta)$ which degenerates $(W,\xi_0)$ to $(C, \xi_0)$. Taking the finite normalization of $\mathcal{W}$ in the function field $K(W_Y\times \mathbb{A}^1)$, we obtain a normal variety $\Pi\colon \mathcal{W}_Y\to \mathcal{W}$ with the special fiber denoted by $C_Y$, which is reduced as $G$ acts on trivially on the base $\mathbb{C}^*$. Since $(\mathcal{W},C)$ is plt, it implies $(\mathcal{W}_Y, C_Y)$ is plt.  Therefore, $(\mathcal{W}_Y,\xi_0';\eta')$ is a special test configuration of $(W_Y,\xi'_0)$ to a Fano cone singularity  $(C_Y,\xi'_0)$ which is a $G$-covering of $(C,\xi_0)$. The latter has a Ricci-flat K\"ahler cone metric (see \cite{CCT02,DS15}) which can be pulled back to give such a metric on $(C_Y,\xi_0)$. More precisely, there exists a
radius function $r_\vphi=r e^{\vphi/2}$ that is a solution to the Monge-Amp\'{e}re equation (see \eqref{eq-RFKC})
\begin{equation}\label{eq-RFMA}
(\sddb r^2_\vphi)^n=C\cdot \left(\sqrt{-1}^{mn^2}s\wedge \bar{s}\right)^{1/m},
\end{equation}
where $s$ is a $T$-equivariant non-vanishing holomorphic section of $\left|mK_{C}\right|$. We can pull back both sides of \eqref{eq-RFMA} to $C_Y$ and get a solution to the corresponding Monge-Amp\'{e}re equation on $C_Y$:
\begin{equation}
(\sddb \pi^*(r_\vphi^2))=C\cdot \left(\sqrt{-1}^{mn^2}(\pi^*s')\wedge (\overline{\pi^*s'})\right)^{1/m}.
\end{equation}
The identity holds in the sense of pluripotential theory. Moreover, because $\pi$ is quasi-\'{e}tale, it is easy to see that $\pi^*(r_\vphi^2)$ is a Ricci-flat K\"{a}hler cone metric in the sense of Definition \ref{defn-RFKC} and the associated Reeb vector field on the regular part of $C_Y$ is nothing but $2{\rm Im}(\xi'_0)$. So by Theorem \ref{thm-irSE}, $\wt_{\xi'_0}$ is indeed a minimizer of $\hvol_{C_Y}$. Arguing in the proof of Theorem \ref{t-finiteunique}, we know that $v'$ is indeed a minimizer of $\hvol_{Y}$.

\bigskip

Now we treat the case that $\pi$ is a general quasi-\'etale morphism. Let $\pi'\colon (Y',y')\to (M_{\infty},o)$ be the Galois morphism generated by $\pi$ which factors through $\pi$ and  denote its Galois group by $G$. Then we know that the minimizer $v'$ of $(Y',y')$ is $G$-invariant by the above discussion on the Galois case. Therefore it is also ${\rm Galois}(Y'/Y)$-invariant, which implies $\hvol(y',Y')=\deg(Y'/Y)\cdot\hvol(y,Y)$. So we conclude 
$$\hvol(Y, y)=\deg(Y'/Y)\cdot\hvol(M_{\infty}, o),$$
as $\deg(\pi)=|G|/\deg(Y'/Y)$.
\end{proof}

\appendix
\section{Example: $D_{k+1}$-singularities}\label{sec-exmp}
In this section, we verify that the candidate minimizers computed in \cite{Li15a} for $D_{k+1}$ singularities induced by monomial valuations on the ambient spaces are indeed the unique quasi-monomial minimizers of $\hvol$, except possibly for the case of $4$-dimensional $D_4$ singularity for which we can not confirm yet.

\begin{exmp}
Consider the 3-dimensional $D_{k+1}$ singularity for $k\ge 4$:
\[
o:=\{0,0,0,0\}\in X=\left\{f(z_1,\dots, z_4):=z_1z_2+z_3^2z_4+z_4^k=0\right\}.
\]
$X={\rm Spec} R$ with $R=\bC[z_1, \dots, z_4]/(f(z))$. In \cite{Li15a}, we calculated the candidate minimizing valuation $v_0$ of $\hvol_{X,x}$. $v_0$ is induced by the weight 
\[
w_0=(1, 1, \sqrt{3}-1, 4-2\sqrt{3}).
\]
We verify here that this is indeed a global minimizer of $\hvol_{X,x}$. First notice that the weight $w_0$ degenerates $X$ to the following klt singularity:
\begin{equation}\label{eq-spinch}
X_0=\left\{z_1 z_2+z_3^2 z_4=0 \right\}.
\end{equation}
$X_0$ is called the suspended pinch point in \cite{MSY06}. It is a toric singularity. Indeed it admits an effective action by $T=(\bC^*)^3$ given by:
\[
(t_1, t_2, t_3)\circ (z_1, \dots, z_4)=(t_1 z_1, t_2 z_2, t_3 z_3, t_1 t_2 t_3^{-2} z_4).
\]
It is easy to see that the polyhedral cone $\sigma$ and its dual (moment cone) $\sigma^{\vee}$ are given by:
\[
\sigma={\rm Span}\left\{
\left(
\begin{array}{c}
1\\0\\0
\end{array}
\right), 
\left(
\begin{array}{c}
0\\1\\0
\end{array}
\right),
\left(
\begin{array}{c}
2\\0\\1
\end{array}
\right),
\left(
\begin{array}{c}
0\\2\\1
\end{array}
\right)
\right\};
\]
\[
\sigma^\vee={\rm Span}\left\{
\left(
\begin{array}{c}
1\\0\\0
\end{array}
\right), 
\left(
\begin{array}{c}
0\\1\\0
\end{array}
\right),
\left(
\begin{array}{c}
0\\0\\1
\end{array}
\right),
\left(
\begin{array}{c}
1\\1\\-2
\end{array}
\right)
\right\}.
\]
Moreover, it was known that there exists a Sasaki-Einstein metric on $X_0$ and its Reeb vector field can be calculated explicitly (see \cite{MSY06}). Here we can calculate  the Reeb vector field using the above combinatorial data. $J(r\partial_r)=2 {\rm Im}(\xi_0)$ where the holomorphic vector field $\xi_0$ corresponds to an element $\xi_0\in\ft^{+}_\bR$ which satisfies two conditions:
(i) $A_X(\xi_0)=3$, (ii) $\xi_0$ minimizes $\hvol(\xi)$ among all $\xi\in \ft^{+}_\bR$. Notice that $X_0$ is a Gorenstein singularity and $A(\xi)=\langle u_0, \xi\rangle$ with $u_0=(1,1,-1)$. By using the $\bZ_2$ symmetry of the cones, it's elementary to get the unique minimizer
\begin{equation}
\xi_0=\left(\frac{3+\sqrt{3}}{2}, \frac{3+\sqrt{3}}{2}, \sqrt{3}\right).
\end{equation}
Now the weight corresponding to $\xi_0$ on the $(z_1, \dots, z_4)$ is equal to:
\begin{equation}
\left(\frac{3+\sqrt{3}}{2}, \frac{3+\sqrt{3}}{2}, \sqrt{3}, 3-\sqrt{3} \right)=\frac{3+\sqrt{3}}{2}\left(1, 1, \sqrt{3}-1, 4-2\sqrt{3} \right)=\frac{3+\sqrt{3}}{2}w_0.
\end{equation}
So $w_0$ is indeed a global minimizer of $\hvol_{X,x}$. 
\end{exmp}

\begin{exmp}\label{exmp-Tvar}
Consider the $4$-dimensional $D_{k+1}$ singularity for $k\ge 4$:
\[
X=\left\{z_1z_2+z_3^2+z_4^2z_5+z_5^k=0\right\}.
\]
The candidate minimizing valuation calculated in \cite{Li15a} is induced by the following weight:
\[
w_0=\left(1,1, \frac{-3+\sqrt{33}}{4}, \frac{7-\sqrt{33}}{2}\right).
\]
The weight $w_0$ degenerates $X$ to the non-isolated singularity:
\[
X_0=\left\{z_1 z_2+z_3^2+z_4^2z_5=0\right\}.
\]
We observe that $X_0$ is a $T$-variety of complexity one. $T=(\bC^*)^3$ acts by:
\[
t\cdot z=(t_1 z_1,  t_1^{-1} t_2^2 z_2, t_2 z_3, t_3 z_4, t_2^2 t_3^{-2} z_5).
\] 
We want to show that $(X_0, \xi_0)$ is K-semistable by using the theory of $T$-varieties as has been used in \cite{CS15} which is based on the study of $T$-equivariant special test configurations in \cite{IS17}. Notice that because we have been studying the question purely algebraically, we can indeed deal with K-semistability of general (non-isolated) klt singularities like $X_0$. 

Using the process in \cite[Section 11]{AH06}, we can write down the polyhedral divisor determining $X_0$. First we write down the polyhedral divisor for $\bC^5$ as the $T$-variety. Following \cite{AH06}, for the above $T$-action, we have the exact sequence:
\begin{equation}
0\longrightarrow N_1:=\bZ^3\stackrel{F}{\longrightarrow} N_2:=\bZ^5 \stackrel{P}{\longrightarrow} N_3:=\bZ^2\longrightarrow 0,
\end{equation}
where $F$ and $P$ are given by the following matrices:
\begin{equation}
F=\left(
\begin{array}{ccc}
1&0&0\\
-1&2&0\\
0&1&0\\
0&0&1\\
0&2&-2
\end{array}
\right), 
\quad
P=
\left(
\begin{array}{ccccc}
-1&-1&0&2&1\\
-1&-1&2&0&0
\end{array}
\right)
\end{equation}
We then find $s: N_2\rightarrow N_1$ satisfying $s\circ F={\rm id}_{N_1}$. $s$ can be chosen simply to be:
\begin{equation}
s=\left(
\begin{array}{ccccc}
1&0&0&0&0\\
0&0&1&0&0\\
0&0&0&1&0
\end{array}
\right).
\end{equation}
The generic fiber of $\widetilde{\bC^5}\rightarrow Y_{\rm toric}$ is the toric variety associated to the following cone:
\begin{eqnarray*}
\sigma&=&s\left(\bQ^5_{\ge 0}\cap F(\bQ^3)\right)=\left\{x\ge 0, y\ge 0, z\ge 0, -x+2y\ge 0, y-z\ge 0 \right\}\\
&=&{\rm Span}_{\bR_{\ge 0}}\left\{
\left(
\begin{array}{c}
0\\1\\0
\end{array}
\right), 
\left(
\begin{array}{c}
2\\1\\0
\end{array}
\right),
\left(
\begin{array}{c}
2\\1\\1
\end{array}
\right),
\left(
\begin{array}{c}
0\\1\\1
\end{array}
\right)
\right\}.
\end{eqnarray*}
The dual cone $\sigma^{\vee}$ is given by:
\begin{eqnarray*}
\sigma^{\vee}&=&{\rm Span}_{\bR_{\ge 0}}\left\{
\left(
\begin{array}{c}
1\\0\\0
\end{array}
\right), 
\left(
\begin{array}{c}
0\\0\\1
\end{array}
\right),
\left(
\begin{array}{c}
-1\\2\\0
\end{array}
\right),
\left(
\begin{array}{c}
0\\1\\-1
\end{array}
\right)
\right\}\\
&=&
\left\{y\ge 0, 2x+y\ge 0, 2x+y+z\ge 0, y+z\ge 0\right\}.
\end{eqnarray*}

The base of $Y_{\rm toric}$ of $\bC^5$ as the $T$-variety is given by the toric variety associated to the fan cutted out by the column vectors of $P$. So it's clear that $Y_{\rm toric}=\bP^2$. The associated polyhedral divisor, denoted by
\begin{equation}
\fD=\Delta_{(1,0)}\otimes \{w_0=0\}+\Delta_{(0,1)}\otimes \{w_1=0\}+\Delta_{(-1,-1)}\otimes \{w_2=0\},
\end{equation}
can be calculated using the recipe from \cite{AH06}:
\begin{align*}
\Delta_{(1,0)}&=s\left(\bQ^5_{\ge 0} \cap P^{-1}(1,0)\right)=\{x\ge 0, y\ge 0, z\ge 0, -x+2y\ge 0, 2y-2z+1\ge 0\}\\
&=\{(0,0,t); 0\le t\le 1/2\}+\sigma=: \Delta_0\\
\Delta_{(0,1)}&=s\left(\bQ^5_{\ge 0} \cap P^{-1}(0,1)\right)=\{x\ge 0, y\ge 0, z\ge 0, -x+2y-1\ge 0, 2y-2z-1\ge 0\}\\
&=\{(0,t,0); 0\le t\le 1/2\}+\sigma=: \Delta_1\\
\Delta_{(-1,-1)}&=s\left(\bQ^5_{\ge 0}\cap P^{-1}(-1,-1)\right)=\{x\ge 0, y\ge 0, z\ge 0, -x+2y+1\ge 0, 2y-2z\ge 0\} \\
&=\{(t,0,0); 0\le t\le 1\}+\sigma=: \Delta_2.
\end{align*} 
Notice that $\Delta_0$ and $\Delta_1$ are non-integral while $\Delta_2$ is integral. Now the base $Y$ of $X$ is the normalization of the closure of image
of $X\cap (\bC^*)^5$ in $Y_{\rm toric}$. The map $(\bC^*)^5\rightarrow (\bC^*)^2$ is induced by the ring homomorphism $\bC[N_3^\vee]\rightarrow \bC[N_2^\vee]$ and hence is given under the coordinate by: 
\[
(\bC^*)^5\rightarrow (\bC^*)^2,\quad (z_1, z_2, z_3, z_4, z_5)=\left(\frac{z_4^2 z_5}{z_1z_2}, \frac{z_3^2}{z_1z_2}\right).
\]
So $Y$ is given by 
\[
Y=\{w_0+w_1+w_2=0\}\cong \bP^1.
\] 
We can restrict $\fD_{\rm toric}$ to $Y$ and thus obain a proper polyhedral divisor for the $T$-variety $X$:
\[
\fD=\Delta_0\otimes\{0\}+\Delta_1\otimes \{1\}+\Delta_\infty\otimes\{\infty\}.
\]
By the argument in \cite{IS17}, one knows that normal test configurations are determined by a triple $(q, v, m)$ where $q\in \bP^1$, $v$ is a vertex of $\sigma\cap (N_1)_\bQ$ and $m\in \bZ$ and they need to satisfy the following admissible condition (\cite[Definition 3.8]{IS17}): for all $u\in \sigma^\vee\cap N_1^\vee$, there is at most one $p\in \bP^1$ with $p\neq q$ such that the function
\[
\Delta_p(u)=\min_{v\in \Delta_p} \langle u, v\rangle
\] 
is non-integral. In the current example, if we choose $u=(0,1,-1)\in\sigma^\vee\cap N_1^\vee$ then
\[
\Delta_0(u)=-\frac{1}{2}, \quad \Delta_1(u)=\frac{1}{2}.
\]
So to get a normal test configuration, by the admissibility condition we are forced to choose either $q=0$ or $q=1$. On the other hand, the data $(v, m)$ only changes the action and does not change the total space of the test configuration. We can now easily guess the special test configurations whose special fibers are given by 
\begin{align*}
&X'_0=\{z_1z_2+z_3^2=0\}=\bC^2/\bZ_2\times \bC^2;\\
&X''_0=\{z_1z_2+z_4^2z_5=0\}=\hat{X}^3 \times \bC.
\end{align*}
Here $\hat{X}^3$ is the $3$-dimensional suspended pinch point that appeared in \eqref{eq-spinch}. One can verify by the same calculation in \cite{Li15a} or \cite{CS15} that these two special test configurations have positive Futaki invariants. So we conclude that $(X_0, \xi_0)$ is K-semistable and hence $v_0$ induced by $w_0$ is indeed a global minimizer of $\hvol$.

\end{exmp}

\begin{thm}For any $(n+1)$-dimensional $D_{k+1}$ singularity, except for 4-dimensional $D_4$ singularity, we know its unique quasi-monomial minimizer. \end{thm}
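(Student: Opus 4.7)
The plan is to follow the blueprint established in the two preceding examples. First I would write the $(n+1)$-dimensional $D_{k+1}$ singularity in the standard form
\[
X=\{z_1z_2+z_3^2+\cdots+z_{n-1}^2+z_n^2z_{n+1}+z_{n+1}^k=0\}\subset \bC^{n+2},
\]
and take the candidate weight $w_0$ computed in \cite{Li15a} as the monomial weight on $(z_1,\dots,z_{n+2})$ induced by solving the Lagrange-type equations for the minimizer of $\hvol$ among ambient monomial valuations. For $k$ large enough (and outside the excluded $4$-dimensional $D_4$) one checks that $w_0(z_1z_2)=w_0(z_3^2)=\cdots=w_0(z_n^2z_{n+1})<w_0(z_{n+1}^k)$, so the weighted degeneration drops the pure-power term and produces the central fiber
\[
X_0=\{z_1z_2+z_3^2+\cdots+z_{n-1}^2+z_n^2z_{n+1}=0\},
\]
which is a $T$-variety of complexity one.

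The next step is to verify that $v_0:=\wt_{w_0}$, viewed as a toric valuation on $X_0$ associated with a vector $\xi_0\in\mathfrak{t}^+_\bR$, minimizes $\hvol$ on the Reeb cone. This is an elementary constrained convex-optimization problem on the polyhedral cone $\sigma$ dual to the moment cone of $X_0$, using $A_{X_0}(\xi)=\langle u_0,\xi\rangle$ with $u_0$ computed from the Gorenstein index (as in the $3$- and $4$-dimensional examples); the uniqueness and strict convexity of this minimization are supplied by Proposition~\ref{prop-Tconvex1}. Then I would classify the $T$-equivariant special test configurations via the polyhedral divisor $\fD$ on $Y=\bP^1$ describing $X_0$, following the framework of \cite{AH06, IS17} that is used in the $4$-dimensional example; each special degeneration of $X_0$ must collapse one of the terms in the defining equation, producing a product of a lower-dimensional suspended pinch point with an affine space, and the Futaki invariant of each can be computed by the same combinatorial recipe as in the examples and shown to be strictly positive.

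With $(X_0,\xi_0)$ shown to be K-semistable via Proposition~\ref{prop-testKollar}, Theorem~\ref{t-KtoM} implies that $v_0$ is a global minimizer of $\hvol_{X,x}$, and Theorem~\ref{t-unique}(3) then upgrades this to uniqueness among all quasi-monomial valuations. The main obstacle will be the combinatorial verification in the test-configuration step: the polyhedral divisor of $X_0$ lives on $\bP^1$ but the vertices of $\fD_0,\fD_1,\fD_\infty$, and the shape of $\sigma\subset (N_1)_\bR$, become increasingly involved as $n$ grows, so one must carry out the admissibility analysis of \cite[Def.~3.8]{IS17} in the general family and check positivity of the resulting Futaki integrals uniformly in $n$. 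The case of $4$-dimensional $D_4$ is precisely the one in which the candidate weight $w_0$ no longer separates the pure power $z_{n+1}^k$ from the remaining monomials (because $k=3$ is too small relative to the dimension), so the degeneration to a $T$-variety of controlled complexity breaks down and this direct approach does not apply.
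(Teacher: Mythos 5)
Your proposal treats all cases with a single uniform recipe modeled on Example~\ref{exmp-Tvar}, but this blueprint does not match the actual case structure of the theorem, and it contains concrete errors.

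First, the claimed degeneration is wrong in high dimensions. You assert that $w_0(z_1z_2)=w_0(z_3^2)=\cdots=w_0(z_n^2z_{n+1})<w_0(z_{n+1}^k)$ always holds, so that the initial degeneration keeps the $z_n^2z_{n+1}$ term and drops only the pure power. But for $n+1\ge 6$ the candidate weight from \cite{Li15a} is $w_0=\left(1,\dots,1,\frac{n-2}{n-1},\frac{n-2}{n-3}\right)$, under which the term $z_n^2z_{n+1}$ also has weight strictly greater than $2$, so the degeneration passes all the way to $\{z_1^2+\dots+z_n^2=0\}\cong A_1^{n-1}\times\bC^2$. Your $X_0$ is therefore not the central fiber, and the base $Y$ is not $\bP^1$ in those cases, so the \cite{IS17} complexity-one machinery you invoke is being applied to the wrong variety. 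Moreover, in several rational-weight cases (e.g.\ $n+1=5$, $k=3$, weight $(1,1,1,1,2/3,2/3)$) the candidate weight gives \emph{all} monomials of the defining equation equal degree, so $w_0$ preserves $X$ itself and no initial term is dropped; your strict inequality is false there too.

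Second, your diagnosis of why the $4$-dimensional $D_4$ case is excluded does not hold up. You say it is the case where $w_0$ no longer separates $z_{n+1}^k$ from the rest of the equation. But exactly the same non-separation occurs for $D_4$ in dimension $3$ (weight $(1,1,2/3,2/3)$) and for $D_4$ in dimension $5$ (weight $(1,1,1,1,2/3,2/3)$), and the paper does handle both: dimension $3$ by citing the quasi-regular Sasaki--Einstein metric of \cite{CS15}, and dimension $5$ by producing a further special degeneration to $A_1^3\times\bC^2$ with zero Futaki invariant and invoking Proposition~\ref{l-degKsemi}. The $4$-dimensional $D_4$ case fails for a different reason: none of these auxiliary inputs is available there, so one is stuck having to verify K-semistability of the quasi-regular cone directly, which the paper can only conjecture. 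Finally, you yourself flag the ``main obstacle'' --- carrying out the Ilten--S\"uss admissibility and Futaki positivity check uniformly in $n$ --- and leave it unresolved; the paper avoids this entirely by using different tools case by case (an elementary toric convex-geometry minimization, a single complexity-one computation in low dimension, citations to \cite{LL16} and \cite{CS15} in dimensions $2$ and $3$, and the degeneration-to-$A_1^{n-1}\times\bC^2$ argument in high dimension). So the final conclusion you want to draw (K-semistability of the degeneration, hence global minimization via Theorem~\ref{t-KtoM} and uniqueness via Theorem~\ref{t-unique}(3)) is correct, but the path you describe to get there has a genuine gap at the identification of the degeneration and at the verification step.
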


In fact, combining the above examples with the calculations in \cite{Li15a} and the arguments in \cite{LX16}, we have the following almost complete picture:
\begin{enumerate}
\item $n+1=2$, then $X\cong \{z_1^2+z_2^2z_3+z_3^k=0\}=\bC^2/D_{k+1}$ where $D_{k+1}$ is the $(k+1)$-th binary dihedral group. By \cite{LL16}, the valuation $v_0$ induced by the weight $\left(1,1-\frac{1}{k} , \frac{2}{k}\right)$ is a global minimizer of $\hvol$.
\item $n+1=3$, $k=3$. $X=\{z_1z_2+z_3^2z_4+z_4^3=0\}$ is a $T$-variety of complexity one with an isolated singularity. By \cite{CS15}, $X$ admits a quasi-regular Ricci flat K\"{a}hler cone metric whose Reeb vector field up to rescaling is associated to the natural weight $(1, 1, 2/3, 2/3)$. 
\item $n+1=4$, $k=3$. In this case, we expect that $X$ admits a quasi-regular Ricci-flat K\"{a}hler cone metric whose Reeb vector field is associated to the natural weight $(1, 1,1,2/3,2/3)$.
\item $n+1=5$, $k=3$. $X=\{z_1z_2+z_3^2+z_4^2+z_5^2z_6+z_6^3=0\}$. The minimizer $v_0$ is induced by the weight $w_0=(1,1,1,1,2/3,2/3)$ which preserves $X$. $X$ is strictly semistable because it specially degenerates to $X'=\{z_1z_2+z_3^2+z_4^2=0\}\cong A^3_1\times \bC^2$ with zero Futaki invariant. 
\item $n+1=3$ or $4$, and $k\ge 4$. These are the examples considered above. The minimizers found are quasi-monomial valuations of rational rank 3.
\item $n+1=5$ and $k\ge 4$. The minimizer $v_0$ is induced by the weight $w_0=(1,1,1,2/3,2/3)$. $w_0$ specially degenerates $X$ to 
$X_0=\{z_1z_2+z_3^2+z_4^2+z_5^2z_6=0\}$ which is strictly semistable since $X_0$ further degenerates to $X'=\{z_1 z_2+z_3^2+z_4^2=0\}\cong A^3_1\times \bC^2$ with zero Futaki invariant. 

\item $n+1\ge 6$ and $k\ge 3$. The minimizer $v_0$ is induced by $w_0=\left(1, \dots, 1, \frac{n-2}{n-1}, \frac{n-2}{n-3}\right)$. $w_0$ degenerates $X$ to 
$\{z_1^2+\dots+z_n^2=0\}=A^{n-1}_1\times \bC^2$. 
\end{enumerate}


\vspace{9mm}

\noindent
Chi Li, Purdue University.   \\
li2285@purdue.edu

\medskip
\noindent
Chenyang Xu, Beijing International Center for Mathematical Research. \\
cyxu@math.pku.edu.cn.

\end{document}